\newcommand{\indicator}[1]{\ensuremath{\mathbf{1}_{\{#1\}}}}
\newcommand{\oindicator}[1]{\ensuremath{\mathbf{1}_{{#1}}}}
\DeclareMathOperator{\var}{Var}
\DeclareMathOperator{\tr}{tr}
\DeclareMathOperator{\diag}{diag}
\DeclareMathOperator{\rank}{rank}
\newcommand{\Prob}{\mathbb{P}}
\newcommand{\E}{\mathbb{E}}
\newcommand{\C}{\mathbb{C}}
\renewcommand{\P}{\mathbb{P}}
\renewcommand\Re{\operatorname{Re}}
\renewcommand\Im{\operatorname{Im}}
\newcommand{\eps}{\varepsilon}
\theoremstyle{plain}
  \newtheorem{theorem}{Theorem}[section]
  \newtheorem{lemma}[theorem]{Lemma}
  \newtheorem{corollary}[theorem]{Corollary}
  \newtheorem{proposition}[theorem]{Proposition}
\theoremstyle{definition}
  \newtheorem{definition}[theorem]{Definition}
  \newtheorem{example}[theorem]{Example}
  \newtheorem{assumption}[theorem]{Assumption}
\theoremstyle{remark}
  \newtheorem{remark}[theorem]{Remark}
\newcommand{\Var}{\text{Var}}
\newcommand{\lnorm}{\left\lVert}
\newcommand{\rnorm}{\right\rVert}
\newcommand{\iidmat}[3]{{#1}_{#2,#3}}
\newcommand{\triidmat}[3]{\hat{{#1}}_{#2,#3}}
\newcommand{\blmat}[2]{\mathcal{#1}_{#2}}
\newcommand{\trblmat}[2]{\hat{\mathcal{#1}}_{#2}}
\newcommand{\iident}[4]{#1_{#2,(#3,#4)}} 
\newcommand\diagA{\diag(\iidmat A n 1, \dots, \iidmat A n m)}
\begin{document}
\title[Outliers in the spectrum]{Outliers in the spectrum for products of independent random matrices} 

\author[N. Coston]{Natalie Coston}
\address{Department of Mathematics, University of Colorado at Boulder, Boulder, CO 80309 }
\email{natalie.coston@colorado.edu}

\author[S. O'Rourke]{Sean O'Rourke}
\address{Department of Mathematics, University of Colorado at Boulder, Boulder, CO 80309 }
\email{sean.d.orourke@colorado.edu}

\author[P. Wood]{Philip Matchett Wood}
\thanks{S. O'€™Rourke has been supported in part by NSF grant ECCS-1610003. P. Wood was partially supported by National Security Agency (NSA) Young Investigator Grant number H98230-14-1-0149.} 
\address{Department of Mathematics, University of Wisconsin-Madison, 480 Lincoln Dr., Madison, WI 53706 }
\email{pmwood@math.wisc.edu}

\begin{abstract}
For fixed $m \geq 1$, we consider the product of $m$ independent $n \times n$ random matrices with iid entries as $n \to \infty$.  Under suitable assumptions on the entries of each matrix, it is known that the limiting empirical distribution of the eigenvalues is described by the $m$-th power of the circular law.  Moreover, this same limiting distribution continues to hold if each iid random matrix is additively perturbed by a bounded rank deterministic error.  However, the bounded rank perturbations may create one or more outlier eigenvalues.  We describe the asymptotic location of the outlier eigenvalues, which extends a result of Tao \cite{Tout} for the case of a single iid matrix.  Our methods also allow us to consider several other types of perturbations, including multiplicative perturbations.  
\end{abstract}

\maketitle

\setcounter{tocdepth}{1}
\tableofcontents

\newpage

\section{Introduction}
\label{Sec:Intro}

This paper is concerned with the asymptotic behavior of outliers in the spectrum of bounded-rank perturbations of large random matrices.  We begin by fixing the following notation and introducing an ensemble of random matrices with independent entries.  

The \emph{eigenvalues} of an $n \times n$ matrix $M_n$ are the roots in $\mathbb{C}$ of the characteristic polynomial $\det (M_n- zI)$, where $I$ is the identity matrix.  We let $\lambda_1(M_n), \ldots, \lambda_n(M_n)$ denote the eigenvalues of $M_n$ counted with (algebraic) multiplicity.  The \emph{empirical spectral measure} $\mu_{M_n}$ of $M_n$ is given by
$$ \mu_{M_n} := \frac{1}{n} \sum_{j=1}^n \delta_{\lambda_j(M_n)}. $$
If $M_n$ is a random $n \times n$ matrix, then $\mu_{M_n}$ is also random.  In this case, we say $\mu_{M_n}$ converges weakly in probability (resp. weakly almost surely) to another Borel probability measure $\mu$ on the complex plane $\mathbb{C}$ if, for every bounded and continuous function $f:\mathbb{C} \to \mathbb{C}$, 
$$ \int_{\mathbb{C}} f d \mu_{M_n} \longrightarrow \int_{\mathbb{C}} f d \mu $$
in probability (resp. almost surely) as $n \to \infty$.  

Throughout the paper, we use asymptotic notation (such as $O, o$) under the assumption that $n \to \infty$; see Section \ref{sec:notation} for a complete description of our asymptotic notation. 

\subsection{iid random matrices}
In this paper, we focus on random matrices whose entries are independent and identically distributed.   

\begin{definition}[iid random matrix]
Let $\xi$ be a complex-valued random variable.  We say $X_n$ is an $n \times n$ \emph{iid random matrix} with atom variable $\xi$ if $X_n$ is an $n \times n$ matrix whose entries are independent and identically distributed (iid) copies of $\xi$.  
\end{definition}

The circular law describes the limiting distribution of the eigenvalues of an iid random matrix.  For any matrix $M$, we denote the Hilbert-Schmidt norm $\|M\|_2$ by the formula
\begin{equation} \label{eq:def:hs}
	\|M\|_2 := \sqrt{ \tr (M M^\ast) } = \sqrt{ \tr (M^\ast M)}. 
\end{equation}

\begin{theorem}[Circular law; Corollary 1.12 from \cite{TVesd}] \label{thm:circ}
Let $\xi$ be a complex-valued random variable with mean zero and unit variance.  For each $n \geq 1$, let $X_n$ be an $n \times n$ iid random matrix with atom variable $\xi$, and let $A_n$ be a deterministic $n \times n$ matrix.  If $\rank(A_n) = o(n)$ and $\sup_{n \geq 1} \frac{1}{n} \|A_n\|^2_2 < \infty$, then the empirical measure $\mu_{\frac{1}{\sqrt{n}} X_n + A_n}$ of $\frac{1}{\sqrt{n}} X_n + A_n$ converges weakly almost surely to the uniform probability measure on the unit disk centered at the origin in the complex plane as $n \to \infty$.  
\end{theorem}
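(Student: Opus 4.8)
The plan is to use Girko's Hermitization together with the logarithmic potential method, thereby reducing the perturbed ensemble to the circular law for unperturbed iid matrices. Set $Y_n := \frac{1}{\sqrt{n}}X_n + A_n$. Since a probability measure $\mu$ on $\C$ is determined (as a distribution) by its logarithmic potential $U_\mu(z) := -\int_{\C}\log|z-w|\,d\mu(w)$, and since $U_{\mu_{Y_n}}(z) = -\frac{1}{n}\log|\det(Y_n - zI)| = -\int_0^\infty \log s\, d\nu_{z,n}(s)$, where $\nu_{z,n}$ is the empirical distribution of the singular values of $Y_n - zI$, it suffices to show that for Lebesgue-almost every $z \in \C$: (i) $\nu_{z,n}$ converges weakly almost surely to a deterministic measure $\nu_z$ on $[0,\infty)$ with $\int_0^\infty \log s\, d\nu_z(s) = -U_{\mu_{\mathrm{circ}}}(z)$, where $\mu_{\mathrm{circ}}$ is the uniform measure on the unit disk; and (ii) almost surely, $s \mapsto \log s$ is uniformly integrable with respect to the family $\{\nu_{z,n}\}_n$. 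The identification of the limit $\nu_z$ in (i) is exactly the content of the iid circular law (the case $A_n = 0$), obtained in \cite{TVesd} via Stieltjes transform analysis of the empirical singular value distribution of $\frac{1}{\sqrt{n}}X_n - zI$.

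First I would strip the perturbation out of both (i) and (ii). Because $\rank(A_n) = o(n)$, the matrices $Y_n - zI$ and $\frac{1}{\sqrt{n}}X_n - zI$ differ by a matrix of rank $o(n)$, so Weyl's interlacing inequalities for singular values give $\|F_{\nu_{z,n}} - F_{\nu_{z,n}^{(0)}}\|_\infty = o(1)$ for the associated cumulative distribution functions, where $\nu_{z,n}^{(0)}$ denotes the unperturbed empirical singular value distribution; hence $\nu_{z,n}$ inherits the weak limit $\nu_z$ of $\nu_{z,n}^{(0)}$, which gives (i). The same rank bound together with the hypothesis $\sup_n \frac{1}{n}\|A_n\|_2^2 < \infty$ settles the large-$s$ part of (ii): one has $\int_0^\infty (\log_+ s)^2\, d\nu_{z,n}(s) \le C\int_0^\infty s\, d\nu_{z,n}(s) = \frac{C}{n}\|Y_n - zI\|_2^2$, and this is bounded almost surely because $\frac{1}{n^2}\|X_n\|_2^2 \to \E|\xi|^2 = 1$ almost surely by the strong law of large numbers while $\frac{1}{n}\|A_n\|_2^2$ is bounded; such a uniform second-moment bound forces uniform integrability of $\log_+ s$. (The hypothesis on $\|A_n\|_2$ is used precisely here, to rule out a perturbation of small rank but enormous norm creating a few gigantic singular values.)

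The hard part will be the small-$s$ portion of (ii): controlling the accumulation of tiny singular values of $Y_n - zI = \frac{1}{\sqrt{n}}X_n + (A_n - zI)$. Here $A_n - zI$ is deterministic but no longer of small rank, since the shift $zI$ is full rank, so the needed estimates are a polynomial lower bound $\sigma_{\min}\!\big(\frac{1}{\sqrt{n}}X_n + B_n\big) \ge n^{-C}$ holding with probability $1 - o(1)$ uniformly over deterministic $B_n$ of operator norm $n^{O(1)}$, together with a bound $\#\{i : \sigma_i(Y_n - zI) \le \delta\} \le (o(1) + O(\delta))n$ holding with high probability. Under the sole hypothesis of a finite second moment these require care: one truncates the atom variable $\xi$ to render the entries bounded — a replacement that changes $\mu_{Y_n}$ by $o(1)$, again controlled via rank inequalities — and then invokes the least-singular-value bounds of Rudelson--Vershynin type and the small-ball and counting estimates of \cite{TVesd}. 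Finally, convergence in probability is upgraded to almost sure convergence by a standard subsequence and Borel--Cantelli argument. Assembling (i) with the large- and small-$s$ uniform integrability yields $U_{\mu_{Y_n}}(z) \to U_{\mu_{\mathrm{circ}}}(z)$ for almost every $z$, and hence $\mu_{Y_n} \to \mu_{\mathrm{circ}}$ weakly almost surely, which is the assertion.
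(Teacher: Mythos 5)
This theorem is not proven in the paper; the authors cite it as Corollary 1.12 of Tao--Vu \cite{TVesd} and explicitly defer to that reference, so there is no in-paper argument to compare against. That said, your outline correctly reproduces the strategy used to establish this result in the literature: Girko's Hermitization together with the logarithmic potential method, identification of the singular-value limits via Stieltjes transforms, rank inequalities to absorb the low-rank perturbation $A_n$, and uniform integrability of $\log s$ split into a soft large-$s$ estimate and a hard small-$s$ estimate via least-singular-value and counting bounds. This is essentially the architecture of \cite{TVesd} and of Bai's circular law proof before it, so on the level of a proof sketch you have the right route.

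Two small corrections. First, when you pass from $\nu_{z,n}$ to $\nu_{z,n}^{(0)}$ via the low-rank perturbation, the relevant fact is the rank inequality $\|F_{\nu_{z,n}} - F_{\nu_{z,n}^{(0)}}\|_\infty \le \rank(A_n)/n = o(1)$ for empirical singular-value CDFs, which follows from Cauchy interlacing (Weyl's inequality for eigenvalues of the Hermitizations, equivalently). Second, and substantively: you write $\int_0^\infty s\,d\nu_{z,n}(s) = \frac{1}{n}\|Y_n - zI\|_2^2$, but the Hilbert--Schmidt norm is the sum of squared singular values, so the correct identity is $\int_0^\infty s^2\,d\nu_{z,n}(s) = \frac{1}{n}\|Y_n - zI\|_2^2$. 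You should therefore use the bound $(\log_+ s)^2 \le C s^2$ (or simply $\log_+ s \le C s$); either closes the argument, since $\frac{1}{n}\|Y_n - zI\|_2^2 \le \frac{3}{n^2}\|X_n\|_2^2 + \frac{3}{n}\|A_n\|_2^2 + 3|z|^2$ is almost surely bounded by the strong law of large numbers, the hypothesis on $\|A_n\|_2$, and the identity matrix term. The small-$s$ portion, which you correctly flag as the technical heart under the sole hypothesis of finite variance, genuinely requires the Rudelson--Vershynin-type singular value bounds and the small-ball counting estimates developed in \cite{TVesd}; it cannot be shortcut at this level of generality, so citing those is the right move rather than a gap.
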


This result appears as \cite[Corollary 1.12]{TVesd}, but is the culmination of work by many authors.  We refer the interested reader to the excellent survey \cite{BC} for further details.

From Theorem \ref{thm:circ}, we see that the low-rank perturbation $A_n$ does not affect the limiting spectral measure (i.e., the limiting measure is the same as the case when $A_n = 0$).  However, the perturbation $A_n$ may create one or more outliers.  An example of this phenomenon is illustrated in Figure \ref{Fig:GaussianM1}.

\begin{figure}[h]
	\includegraphics[scale=.4]{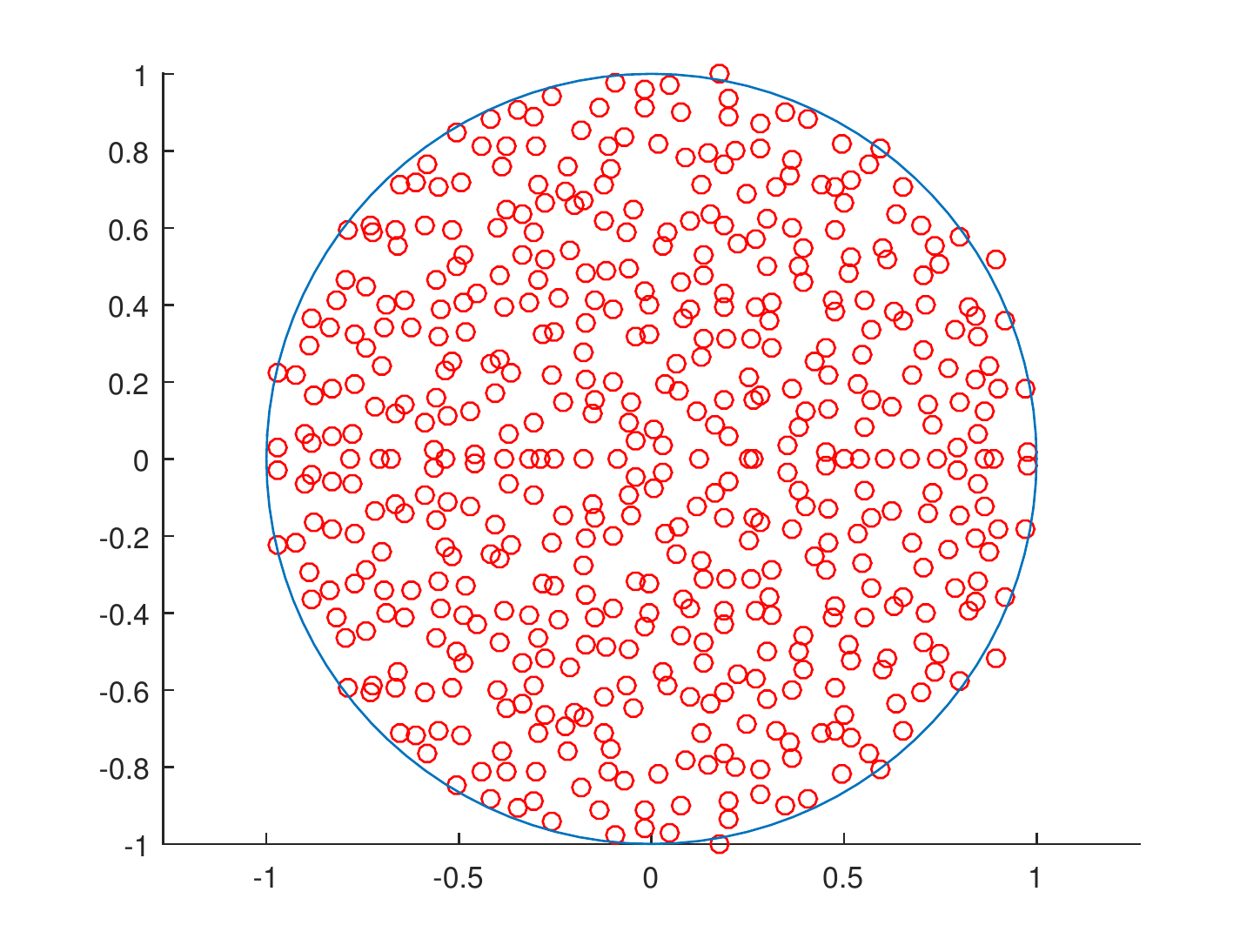}
	\includegraphics[scale=.4]{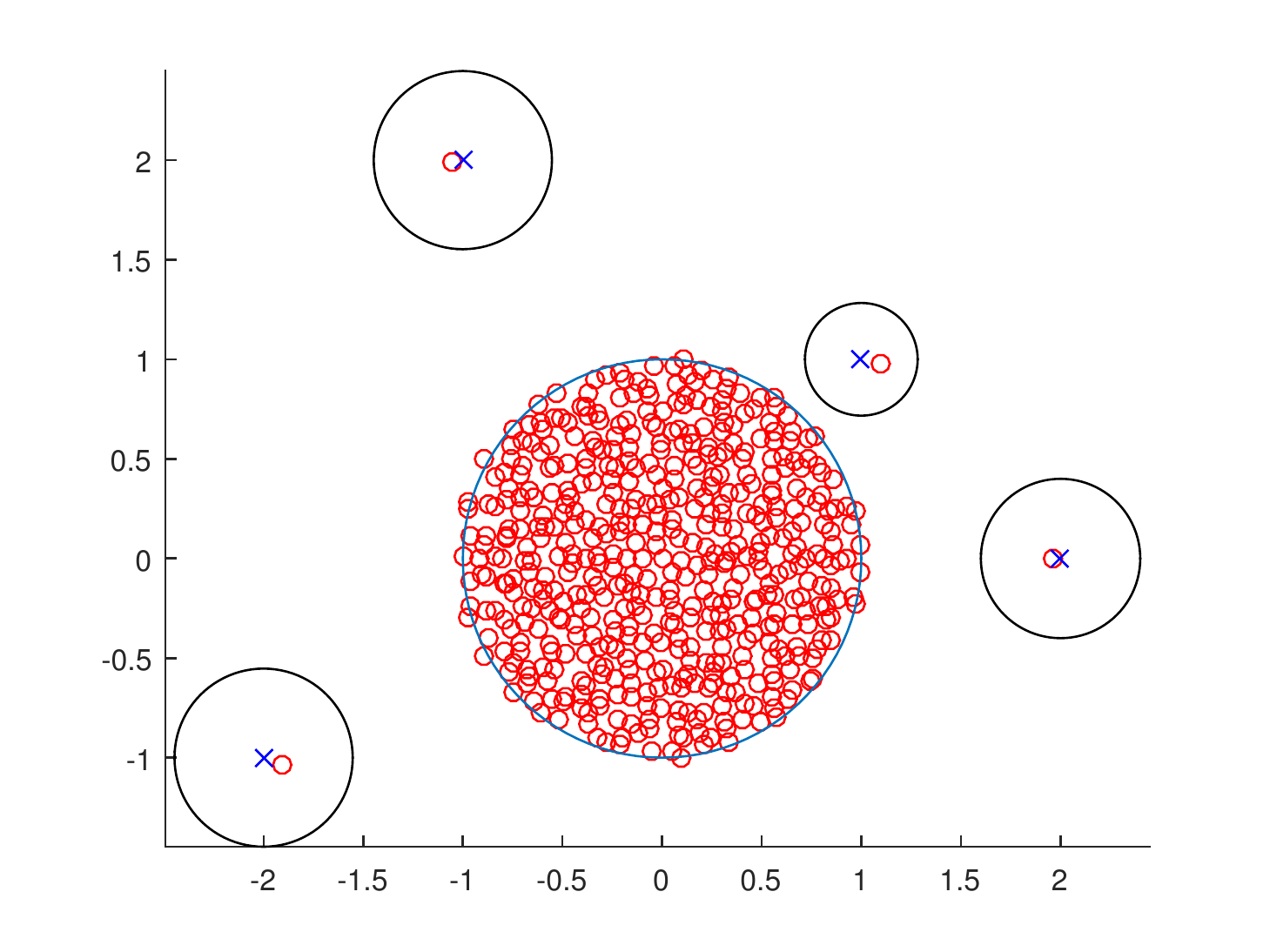}
	\caption{On the left, we have plotted the eigenvalues of a $500\times 500$ random matrix with iid standard Gaussian entries scaled by $\frac{1}{\sqrt{500}}$. Additionally, the unit circle is plotted for reference. The image on the right contains the eigenvalues of $\frac{1}{\sqrt{500}} X + A$, where $X$ is a $500\times 500$ random matrix with iid symmetric $\pm 1$ Bernoulli entries and $A=\text{diag}(1+i,2i-1,2,-i-2,0,\ldots,0)$.  For reference, we have also plotted each nonzero eigenvalue of $A$ with a cross.}
	\label{Fig:GaussianM1}
\end{figure}

Recall that the spectral radius of a square matrix $M$ is the largest eigenvalue of $M$ in absolute value.  Among other things, Theorem \ref{thm:circ} implies that with probability tending to one, the spectral radius of $\frac{1}{\sqrt{n}}X_n$ is at least $1 - o(1)$.  When the atom variable $\xi$ has finite fourth moment, it is possible to improve the lower bound on the spectral radius and give a matching upper bound.

\begin{theorem}[No outliers for iid matrices; Theorem 5.18 from \cite{BSbook}] \label{thm:nooutlier:iid}
Let $\xi$ be a complex-valued random variable with mean zero, unit variance, and finite fourth moment.  For each $n \geq 1$, let $X_n$ be an iid random matrix with atom variable $\xi$.  Then the spectral radius of $\frac{1}{\sqrt{n}}X_n$ converges to $1$ almost surely as $n \to \infty$.  
\end{theorem}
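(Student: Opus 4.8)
The plan is to establish the two one‑sided bounds $\liminf_{n\to\infty}\rho(n^{-1/2}X_n)\ge 1$ and $\limsup_{n\to\infty}\rho(n^{-1/2}X_n)\le 1$ almost surely, where $\rho(\cdot)$ denotes the spectral radius; together these give the theorem. The lower bound is a soft consequence of the circular law and needs only mean zero and unit variance. Taking $A_n=0$ in Theorem~\ref{thm:circ}, the empirical measure $\mu_{n^{-1/2}X_n}$ converges weakly almost surely to the uniform measure $\mu_{\mathrm{circ}}$ on the unit disk. Fix $\eps\in(0,1/2)$ and pick a continuous $f=f_\eps\colon\C\to[0,1]$ supported in the annulus $\{1-2\eps<|z|<1\}$ with $f\equiv 1$ on $\{1-\eps\le|z|\le 1-\eps/2\}$. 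Then $\int f\,d\mu_{\mathrm{circ}}\ge(1-\eps/2)^2-(1-\eps)^2>0$, while $\int f\,d\mu_{n^{-1/2}X_n}$ is at most the fraction of eigenvalues of $n^{-1/2}X_n$ of modulus $\ge 1-2\eps$; hence almost surely, for all large $n$, that fraction is bounded below by a positive constant, so $n^{-1/2}X_n$ has an eigenvalue of modulus $\ge 1-2\eps$ and $\rho(n^{-1/2}X_n)\ge 1-2\eps$. Letting $\eps$ run over a sequence tending to $0$ gives $\liminf_n\rho(n^{-1/2}X_n)\ge1$ almost surely.

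The upper bound is the substantive part; here the fourth moment is used, via the classical moment (trace) method going back to Bai and Yin. For any $n\times n$ matrix $M$ and any positive integer $k$, since $\lambda^k$ is an eigenvalue of $M^k$ whenever $\lambda$ is an eigenvalue of $M$, bounding the spectral radius by the operator norm and the operator norm by the Hilbert--Schmidt norm gives
\[
\rho(M)^{2k}\le\|M^k\|_2^2=\tr\!\big(M^k(M^\ast)^k\big).
\]
(The cruder bound $\rho(n^{-1/2}X_n)\le\|n^{-1/2}X_n\|$ is insufficient, since the operator norm converges to $2$, which forces us to let $k\to\infty$.) Applying this with $M=n^{-1/2}X_n$, taking expectations, and using Markov's inequality, it suffices to prove that for a suitable $k=k_n$ tending to infinity,
\[
\E\,\tr\!\big[(n^{-1/2}X_n)^k(n^{-1/2}X_n^\ast)^k\big]\le n^{1+o(1)},
\]
since then $\Prob\big(\rho(n^{-1/2}X_n)\ge 1+\eps\big)\le n^{1+o(1)}(1+\eps)^{-2k_n}$, and choosing $k_n$ to be a large constant (depending on $\eps$) times $\log n$ makes the right‑hand side summable in $n$; the Borel--Cantelli lemma together with a countable intersection over $\eps\downarrow0$ then yields $\limsup_n\rho(n^{-1/2}X_n)\le1$ almost surely.

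To prove the trace estimate I would first truncate: replace $\xi$ by $\tilde\xi=\xi\,\indicator{|\xi|\le\delta_n\sqrt n}$, recentered and rescaled to have mean zero and unit variance, with $\delta_n\to0$ slowly enough that $n^2\,\Prob(|\xi|>\delta_n\sqrt n)\to0$; this is possible because $\E|\xi|^4<\infty$, and a routine argument shows the truncation changes $\rho(n^{-1/2}X_n)$ by a negligible amount except on an event of summable probability. Writing $\tilde X_n$ for the truncated matrix, one expands
\[
\E\,\tr\!\big[\tilde X_n^{\,k}(\tilde X_n^\ast)^k\big]=\sum_{i,j}\E\,\big|(\tilde X_n^{\,k})_{ij}\big|^2
\]
as a sum over pairs of length‑$k$ walks on $\{1,\dots,n\}$ sharing both endpoints. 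Since the entries are centered, only configurations in which every entry‑variable appears with total multiplicity at least two contribute, and a connectedness/Euler count shows that the dominant contribution comes from the ``diagonal'' terms, where the two walks coincide and trace out a path with distinct vertices; these contribute $\sim n^{k+1}$, i.e.\ $\sim n$ after the normalization by $n^{-k}$. Every other term --- those with a repeated edge inside a walk, or with a nontrivial pairing between the two walks --- is of lower order, the needed bounds on the higher moments of $\tilde\xi$ being furnished by the truncation level $\delta_n\sqrt n$ together with the finiteness of the fourth moment.

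The crux --- and the only genuinely hard point --- is the \emph{uniformity in $k$}: the error estimates above must hold with $k=k_n$ growing at least like $\log n$, so one must control, uniformly, both the number of walk‑configurations carrying a prescribed number of ``excess'' coincidences and the way the truncation scale $\delta_n$ offsets the growth of $k$, ensuring that the resulting series (geometric‑type in the number of excess coincidences) converges and the total stays $n^{1+o(1)}$. This bookkeeping is the combinatorial heart of the Bai--Yin argument, and it is precisely where a finite fourth moment is essential: without it the upper bound may fail, so no such estimate can hold in that generality.
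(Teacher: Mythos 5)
The paper does not prove this theorem: it is quoted verbatim from Bai--Silverstein \cite[Theorem 5.18]{BSbook}, so there is no in-paper argument to compare against. Your sketch is the standard Geman/Bai--Yin trace-moment route underlying that reference; the lower bound via the circular law is correct, and the upper-bound plan --- dominate $\rho(M)^{2k}$ by $\tr\bigl[M^k(M^\ast)^k\bigr]$ with $k\asymp\log n$ and run a path-counting argument after truncation --- is the right one.

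There is, however, a real gap where you call the truncation transfer a ``routine argument.'' The condition $n^2\,\Prob(|\xi|>\delta_n\sqrt n)\to0$ with $\delta_n\to0$ is achievable under a finite fourth moment, but Borel--Cantelli summability $\sum_n n^2\,\Prob(|\xi|>\delta_n\sqrt n)<\infty$ is \emph{not}: for fixed $\delta$ this sum is comparable to $\E|\xi|^6$, and taking $\delta_n\to0$ only enlarges the tail events. So almost surely the truncated matrix differs from $X_n$ for infinitely many $n$, and you cannot obtain the almost-sure conclusion by restricting to an exceptional event of summable probability. What one actually controls is the \emph{operator norm} $\|X_n-\hat X_n\|/\sqrt n$ of the difference (small, via a Bai--Yin norm bound on a centered iid matrix with small variance), and one must then transfer an absence-of-eigenvalues statement across an operator-norm-small perturbation. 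Since the spectral radius of a non-normal matrix is not Lipschitz in the operator norm, that transfer is not routine; it requires a quantitative resolvent or least-singular-value bound for $\hat X_n/\sqrt n - zI$ outside the unit disk, to be combined with Weyl's inequality for singular values. That is exactly the extra machinery this paper develops for its stronger product version (Theorem~\ref{Thm:LeastTruncSingValNonZero}, Corollary~\ref{Cor:YsClose}, Lemma~\ref{Lem:LeastSingValAwayFromZero}), and it is not supplied by the trace bound alone; your outline needs a step of this kind.

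A smaller notational point: for $k\asymp\log n$ the estimate you actually need has the form $\E\,\tr\bigl[(n^{-1/2}\hat X_n)^k(n^{-1/2}\hat X_n^\ast)^k\bigr]\le C\,n\,(1+\eta)^{2k}$ with $\eta$ adjustable, and $(1+\eta)^{2k}$ is then a genuine \emph{polynomial} in $n$, not $n^{o(1)}$. The Markov/Borel--Cantelli step still closes by choosing $\eta<\eps$, but writing $n^{1+o(1)}$ obscures where the balance actually happens.
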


\begin{remark} \label{rem:twomomsr}
It is conjectured in \cite{BCCT} that the spectral radius of $\frac{1}{\sqrt{n}}X_n$ converges to $1$ in probability as $n \to \infty$ only assuming that $\xi$ has mean zero and unit variance.  
\end{remark}

Theorem \ref{thm:nooutlier:iid} asserts that almost surely all eigenvalues of $\frac{1}{\sqrt{n}}X_n$ are contained in the disk of radius $1 + o(1)$ centered at the origin.  However, as we saw in Figure \ref{Fig:GaussianM1}, this need not be the case for the eigenvalues of the additive perturbation $\frac{1}{\sqrt{n}}X_n + A_n$.  In this case, it is possible for eigenvalues of the perturbation to be larger than $1 +o(1)$, and Tao precisely describes the location of these outlying eigenvalues  in \cite{Tout}.  

\begin{theorem}[Outliers for small low-rank perturbations of iid matrices; Theorem 1.7 from \cite{Tout}] \label{thm:outlier:iid}
Let $\xi$ be a complex random variable with mean zero, unit variance, and finite fourth moment.  For each $n \geq 1$, let $X_n$ be a $n \times n$ random matrix whose entries are iid copies of $\xi$, and let $A_n$ be a deterministic matrix with rank $O(1)$ and operator norm $O(1)$.  Let $\eps > 0$, and suppose that for all sufficiently large $n$, there are no eigenvalues of $A_n$ in the band $\{z \in \C : 1 + \eps < |z| < 1 + 3\eps\}$, and there are $j$ eigenvalues $\lambda_1(A_n), \ldots, \lambda_j(A_n)$ for some $j = O(1)$ in the region $\{z \in \C : |z| \geq 1+3 \eps\}$.  Then, almost surely, for sufficiently large $n$, there are precisely $j$ eigenvalues $\lambda_1(\frac{1}{\sqrt{n}} X_n + A_n), \ldots, \lambda_j(\frac{1}{\sqrt{n}} X_n + A_n)$ of $\frac{1}{\sqrt{n}}X_n + A_n$ in the region $\{z \in \C : |z| \geq 1 + 2\eps\}$, and after labeling these eigenvalues properly, 
$$ \lambda_i\left( \frac{1}{\sqrt{n}} X_n + A_n \right) = \lambda_i(A_n) + o(1) $$
as $n \rightarrow \infty$ for each $1 \leq i \leq j$.  
\end{theorem}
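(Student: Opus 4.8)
Write $M_n:=\tfrac1{\sqrt n}X_n$ and $k:=\sup_n\rank(A_n)<\infty$. The plan is to reduce the eigenvalue problem to a $k\times k$ determinantal equation and then perturb it. Using the singular value decomposition and padding with zeros, write $A_n=B_nC_n^\ast$ with $B_n,C_n$ deterministic $n\times k$ matrices of operator norm $O(1)$. By Theorem \ref{thm:nooutlier:iid}, almost surely for all large $n$ the spectral radius of $M_n$ is at most $1+\eps$, so the resolvent $R_n(z):=(M_n-zI)^{-1}$ is holomorphic on $\{|z|>1+\eps\}$, and $\|M_n\|_{\mathrm{op}}\le 3$, so all eigenvalues of $M_n+A_n$ lie in a fixed disk $\{|z|\le R_0\}$ with $R_0>\sup_n\|A_n\|_{\mathrm{op}}$. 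On the annular region $\Omega:=\{1+\tfrac32\eps\le|z|\le R_0+1\}$ the Schur/Sylvester identity gives
\[
\det(zI-M_n-A_n)=\det(zI-M_n)\,\det\!\bigl(I_k+C_n^\ast R_n(z)B_n\bigr),
\]
so that the eigenvalues of $M_n+A_n$ in $\Omega$ are exactly the zeros, with multiplicity, of the holomorphic function $F_n(z):=\det\!\bigl(I_k+C_n^\ast R_n(z)B_n\bigr)$.

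The heart of the matter is the resolvent estimate: \emph{almost surely}, $\sup_{z\in\Omega}\bigl\|C_n^\ast R_n(z)B_n+\tfrac1z C_n^\ast B_n\bigr\|\to0$; equivalently, for deterministic (possibly $n$-dependent) unit vectors $u,v$, one has $v^\ast R_n(z)u=-\tfrac1z v^\ast u+o(1)$ uniformly on $\Omega$. I would prove this from (i) the a priori bound $\|R_n(z)\|_{\mathrm{op}}\le n^{C}$, valid on $\Omega$ with probability $1-O(n^{-10})$, coming from uniform lower bounds on the least singular value of $M_n-zI$ (over a polynomial $z$-net, using the $1$-Lipschitz dependence of the least singular value on $z$); (ii) the truncated Neumann expansion $R_n(z)=-\sum_{\ell=0}^{K-1}z^{-\ell-1}M_n^\ell+z^{-K}M_n^K R_n(z)$ with $K\asymp\log n$, whose remainder is $O\!\bigl(n^{C}(1+\tfrac32\eps)^{-K}\bigr)=o(1)$; and (iii) concentration estimates showing $|v^\ast M_n^\ell u|=o(1)$ for $1\le\ell\le K$, obtained from moment bounds (truncating the atom variable $\xi$ and using the finite fourth moment) or via a four-moment comparison reducing the estimate to the explicitly computable Gaussian case; Borel--Cantelli together with a net/Lipschitz argument in $z$ then upgrades this to the uniform almost-sure statement. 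I expect this resolvent estimate---in particular reconciling the possibly polynomially large resolvent norm in (i) with adequate control of the bilinear forms $v^\ast M_n^\ell u$ for $\ell$ as large as $O(\log n)$ in (iii)---to be the main obstacle; the rest is soft.

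Granting the estimate, put $G_n(z):=\det\!\bigl(I_k-\tfrac1z C_n^\ast B_n\bigr)$. Since the determinant is a polynomial in the $O(1)$ many uniformly bounded entries of its argument, $\sup_{z\in\Omega}|F_n(z)-G_n(z)|\to0$ almost surely. Because $C_n^\ast B_n$ and $B_nC_n^\ast=A_n$ share the same nonzero eigenvalues with multiplicity, the zeros of $G_n$ in $\Omega$ are precisely the eigenvalues of $A_n$ of modulus $\ge1+\tfrac32\eps$, and the hypotheses on $A_n$ (the empty band $\{1+\eps<|z|<1+3\eps\}$ and, for large $n$, nothing of modulus $>R_0$) force these to be exactly the $j$ eigenvalues of modulus $\ge1+3\eps$, counted with multiplicity. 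Writing $G_n(z)=\prod_i(1-\nu_i/z)$ over the eigenvalues $\nu_i$ of $C_n^\ast B_n$, one gets a lower bound $|G_n(z)|\ge c(\delta)>0$, uniform in $n$, on $\Omega$ minus the union of the $\delta$-disks about the $j$ large eigenvalues of $A_n$ (every factor is bounded below there: those with $|\nu_i|\le1+\eps$ because $|z|\ge1+\tfrac32\eps$, those with $|\nu_i|\ge1+3\eps$ because $z$ avoids the $\delta$-disks, while the empty-band hypothesis rules out intermediate $\nu_i$). A routine application of the argument principle on $\{1+2\eps<|z|<R_0\}$---where $F_n'/F_n\to G_n'/G_n$ uniformly on the two bounding circles, on which $|G_n|$ is bounded below---then shows that $F_n$ has exactly $j$ zeros there for all large $n$, all of them inside the $\delta$-disks; letting $\delta\downarrow0$ and noting that $F_n\ne0$ on $\{|z|=1+2\eps\}$ for large $n$, this yields precisely $j$ eigenvalues of $M_n+A_n$ with $|z|\ge1+2\eps$ together with a labeling for which $\lambda_i(M_n+A_n)=\lambda_i(A_n)+o(1)$, completing the proof.
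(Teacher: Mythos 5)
Your framework is exactly the one used in the paper (and in Tao's original proof): write $A_n=B_nC_n^\ast$ via SVD, convert the eigenvalue problem to the scalar equation $\det(I_k+C_n^\ast R_n(z)B_n)=0$ using Sylvester's identity (this is Lemma \ref{lemma:eigenvalue}), approximate $C_n^\ast R_n(z)B_n$ by $-\tfrac1z C_n^\ast B_n$ uniformly (the isotropic law, Theorem \ref{thm:isotropic} with $m=1$), and finish with Rouch\'e/argument principle. The endgame you describe is essentially identical to the paper's proof of Theorem \ref{thm:outliers}.

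The gap is in your remainder estimate for the truncated Neumann expansion in step (ii). You bound $\bigl|v^\ast z^{-K}M_n^K R_n(z)u\bigr|$ by $O\bigl(n^C(1+\tfrac32\eps)^{-K}\bigr)$, which silently drops the factor $\lVert M_n^K\rVert$. But $M_n=\tfrac1{\sqrt n}X_n$ is not normal, and almost surely $\lVert M_n\rVert\to2$ even though its spectral radius tends to $1$; all one can say crudely is $\lVert M_n^K\rVert\le\lVert M_n\rVert^K\approx 2^K$. Thus for $|z|$ near $1+\tfrac32\eps$ (and any $\eps<\tfrac23$) the true upper bound is $n^C\bigl(\tfrac{2+o(1)}{1+3\eps/2}\bigr)^K$, which \emph{diverges} in $K$; taking $K\asymp\log n$ gives $n^{C+C'}$, not $o(1)$. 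Gelfand's formula $\lVert M_n^K\rVert^{1/K}\to\rho(M_n)$ as $K\to\infty$ does not rescue this, since the rate at which the limit is attained is not uniform in $n$ and $K\asymp\log n$ is far too small. So as written, the Neumann tail does not close on the inner part of your annulus.

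Both the paper and Tao's original proof sidestep this by proving a \emph{constant} (not polynomial) lower bound on $s_n(M_n-zI)$ uniformly over $|z|\ge1+\delta$: in the paper this is Lemma \ref{Lem:LeastSingValAwayFromZero} (built on Nemish's local-law estimate, Theorem \ref{Thm:LeastTruncSingValNonZero}), giving $\sup_{|z|>1+\delta}\lVert R_n(z)\rVert\le c$ almost surely. With that in hand, one does the Neumann expansion only on a region where it converges absolutely with a geometric rate bounded away from $1$: after truncating the entries so that $\lVert M_n\rVert\le4.5$ with overwhelming probability, one has $\lVert M_n/z\rVert\le9/10$ for $|z|\in[5,6]$; there only \emph{fixed}-$k$ moment bounds for $u^\ast M_n^k v$ (plus a martingale concentration step, Lemma \ref{Lem:concentration}) are needed, and the geometric tail handles the rest. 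The extension from $\{5\le|z|\le6\}$ to $\{|z|\ge1+\delta\}$ is then done by Vitali's/Montel's theorem for the holomorphic functions $z\mapsto u^\ast R_n(z)v+\tfrac1z u^\ast v$. This is exactly where the constant resolvent bound is indispensable: Vitali requires uniform local boundedness over $n$, so your polynomial $n^C$ bound would not suffice there either. If you replace your (i)--(ii) by this constant-bound-plus-Vitali scheme, your proposal matches the paper's proof.
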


Analogous results describing the location and fluctuation of the outlier eigenvalues have been obtained for many ensembles of random matrices; we refer the reader to \cite{BBC, BBP, BGCR, BGM, BGM2, BR, BR2, BGR, BCap, BelC, CDF1, CDF, CDFF, FP, KY, KY2, OR, OW, P, PRS, R, RS, Rochet, Tout} and references therein.  In particular, the results in \cite{R} extend Theorem \ref{thm:outlier:iid} by also describing the joint fluctuations of the outlier eigenvalues about their asymptotic locations.

\subsection{Products of independent iid matrices}

In this paper, we focus on the product of several independent iid matrices.  In this case, the analogue of the circular law (Theorem \ref{thm:circ}) is the following result from \cite{ORSV}, due to Renfrew, Soshnikov, Vu, and the second author of the current paper.    

\begin{theorem}[Theorem 2.4 from \cite{ORSV}] \label{thm:ORSV}
Fix an integer $m \geq 1$, and let $\tau > 0$.  Let $\xi_1, \ldots, \xi_m$ be real-valued random variables with mean zero, and assume, for each $1 \leq k \leq m$,  that $\xi_k$ has nonzero variance $\sigma^2_k$ and satisfies $\E|\xi_k|^{2 + \tau} < \infty$.  For each $n \geq 1$ and $1 \leq k \leq m$, let $X_{n,k}$ be an $n \times n$ iid random matrix with atom variable $\xi_k$, and let $A_{n,k}$ be a deterministic $n \times n$ matrix.  Assume $X_{n,1}, \ldots, X_{n,m}$ are independent.  If
$$ \max_{1 \leq k \leq m} \rank(A_{n,k}) = O(n^{1 - \eps}) \quad \text{and} \quad \sup_{n \geq 1} \max_{1 \leq k \leq m} \frac{1}{n} \|A_{n,k}\|_2^2 < \infty $$
for some fixed $\eps > 0$, then the empirical spectral measure $\mu_{P_n}$ of the product\footnote{Here and in the sequel, we use Pi (product) notation for products of matrices.  To avoid any ambiguity, if $M_1, \ldots, M_m$ are $n \times n$ matrices, we define the order of the product \[ \prod_{k=1}^m M_k := M_1 \cdots M_m. \]  In many cases, such as in Theorem \ref{thm:ORSV}, the order of matrices in the product is irrelevant by simply relabeling indices.}
$$ P_n := \prod_{k=1}^m \left( \frac{1}{\sqrt{n}} X_{n,k} + A_{n,k} \right) $$
converges weakly almost surely to a (non-random) probability measure $\mu$ as $n \to \infty$.  Here, the probability measure $\mu$ is absolutely continuous with respect to Lebesgue measure on $\mathbb{C}$ with density
\begin{equation} \label{eq:density}
    f(z) := \left\{
     \begin{array}{rr}
       \frac{1}{m \pi} \sigma^{-2/m} |z|^{\frac{2}{m} - 2}, & \text{if } |z| \leq \sigma, \\
       0, & \text{if } |z| > \sigma,
     \end{array}
   \right. 
\end{equation}
where $\sigma := \sigma_1 \cdots \sigma_m$.  
\end{theorem}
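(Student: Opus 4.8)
The plan is to prove this by Girko's Hermitization method, which converts the statement about eigenvalues into one about singular values, together with a \emph{linearization} that embeds the product into a single large block matrix so that circular-law technology becomes available; the low-rank perturbations are removed at the very start.

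\textbf{Reduction to the unperturbed case.} Let $P_n^{(0)} := \prod_{k=1}^m \frac{1}{\sqrt n} X_{n,k}$. Expanding the product $P_n = \prod_{k=1}^m(\frac{1}{\sqrt n}X_{n,k} + A_{n,k})$, every one of the $2^m$ resulting terms except $P_n^{(0)}$ contains at least one factor $A_{n,k}$, and a matrix product containing a factor of rank $r$ has rank at most $r$; hence $\rank(P_n - P_n^{(0)}) = O(\max_k \rank(A_{n,k})) = O(n^{1-\eps})$. One then invokes the principle underlying Theorem~\ref{thm:circ}: modifying a matrix by a perturbation of rank $o(n)$ with controlled Hilbert--Schmidt norm does not change the limiting empirical spectral measure. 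Concretely this is executed at the level of logarithmic potentials, showing that for Lebesgue-a.e.\ $z$ one has $\frac1n \log|\det(P_n - zI)| - \frac1n\log|\det(P_n^{(0)} - zI)| \to 0$ almost surely, which follows from the rank bound together with a uniform control on the number and size of the small singular values of $P_n - zI$ and $P_n^{(0)} - zI$.

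\textbf{Hermitization and linearization.} By Girko's lemma it then suffices to show that (i) for Lebesgue-a.e.\ $z\in\C$, the empirical spectral distribution $\nu_{n,z}$ of the Hermitian matrix $(P_n^{(0)}-zI)(P_n^{(0)}-zI)^\ast$ converges weakly a.s.\ to a deterministic probability measure $\nu_z$, and (ii) $x\mapsto \log x$ is a.s.\ uniformly integrable against $\{\nu_{n,z}\}_n$; the limiting eigenvalue measure is then $\mu = \tfrac{1}{4\pi}\,\Delta\!\int_0^\infty \log x\, d\nu_z(x)$ in the distributional sense. To identify $\nu_z$, introduce the $mn\times mn$ block matrix
$$ \mathcal{Y}_n := \begin{pmatrix} 0 & \tfrac{1}{\sqrt n}X_{n,1} & & \\ & \ddots & \ddots & \\ & & 0 & \tfrac{1}{\sqrt n}X_{n,m-1} \\ \tfrac{1}{\sqrt n}X_{n,m} & & & 0 \end{pmatrix}, $$
whose $m$-th power is block-diagonal with the $m$ cyclic products of the $\tfrac1{\sqrt n}X_{n,k}$ down the diagonal, each carrying the same nonzero spectrum as $P_n^{(0)}$. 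The entries of $\mathcal Y_n$ are independent with a variance profile supported on the off-diagonal blocks; relating the Stieltjes transform of the symmetrization of $P_n^{(0)}-zI$ to that of an enlarged block matrix built from $\mathcal Y_n$, the standard resolvent / self-consistent-equation analysis for such inhomogeneous ensembles yields a deterministic limiting Stieltjes transform solving an explicit (here essentially polynomial) fixed-point equation, and hence $\nu_z$.

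\textbf{Identifying the limiting measure.} Taking the distributional Laplacian of $\tfrac12\int_0^\infty\log x\,d\nu_z(x)$ reproduces \eqref{eq:density}. Equivalently, one can argue that the limiting spectral distribution of $\mathcal Y_n$ is a scaled circular law and transport it through the map $w\mapsto w^m$: the pushforward of the uniform measure on the disk of radius $\sigma = \sigma_1\cdots\sigma_m$ under $w\mapsto w^m$ is, by a direct change of variables in polar coordinates, exactly the rotationally invariant measure on $\{|z|\le\sigma\}$ with radial density $\tfrac{1}{m\pi}\sigma^{-2/m}|z|^{2/m-2}$, which is the density in \eqref{eq:density}.

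\textbf{Main obstacle.} The delicate point, entering both the reduction step and the verification of uniform integrability, is the lower tail of the least singular value: controlling $\int_0^\delta \log x\, d\nu_{n,z}(x)$ so that no mass escapes to $-\infty$. For a product this is harder than for a single iid matrix, since $s_{\min}(M_1\cdots M_m)$ is not simply governed by the $s_{\min}(M_k)$; one instead needs a quantitative invertibility estimate of the form $\Prob(s_{\min}(\mathcal Y_n - wI)\le n^{-B})\to 0$ for suitable $B$, proved via an $\eps$-net / compressible--incompressible decomposition adapted to the block structure, together with enough uniformity in $w$ to cover the $m$ branches of $z^{1/m}$ and to upgrade convergence in probability to the almost sure statement. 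Combining this with the three steps above completes the proof.
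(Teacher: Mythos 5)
This theorem is quoted from \cite{ORSV} and is not proved in the present paper, so there is no internal proof here to compare against; your sketch can only be weighed against the method of \cite{ORSV} itself. With that caveat, your outline does capture the hermitization--linearization strategy used there: introduce the $mn\times mn$ cyclic block matrix $\mathcal Y_n$ (this is precisely the matrix \eqref{def:Y} used throughout the present paper, with Proposition~\ref{prop:linear} as the linearization bridge), run Girko's hermitization with a self-consistent resolvent analysis adapted to the block variance profile, and close the argument with a least-singular-value bound that makes $\log$ uniformly integrable against the singular value distribution of $\mathcal Y_n - wI$ — you correctly identify that last ingredient as the delicate one. One remark on the reduction step: your rank bound $\rank(P_n-P_n^{(0)})=O(n^{1-\eps})$ is correct, but the mixed terms are random and $\|A_{n,k}\|$ is allowed to be as large as $O(\sqrt n)$ under the stated Hilbert--Schmidt hypothesis, so one cannot invoke Theorem~\ref{thm:circ} verbatim; the comparison of log-determinants has to be done at the level of singular values as you indicate, and it is cleaner (and closer to what \cite{ORSV} actually does) to carry the perturbation $\mathcal A_n$ through the linearization rather than strip it off at the start.

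There is also a quantitative slip in your final paragraph. The limiting spectral measure of $n^{-1/2}\mathcal Y_n$ is the uniform measure on the disk of radius $\sigma^{1/m}$, not $\sigma$. Since $n^{-m/2}\mathcal Y_n^m$ carries the eigenvalues of $P_n^{(0)}$ with multiplicity $m$, the measure you push forward under $w\mapsto w^m$ must live on a disk of radius $\sigma^{1/m}$ in order for the image to land in $\{|z|\le\sigma\}$; the pushforward of the uniform measure on the disk of radius $r$ under $w\mapsto w^m$ has density $\tfrac{1}{m\pi r^2}|z|^{2/m-2}$ on $\{|z|\le r^m\}$, which matches \eqref{eq:density} exactly when $r=\sigma^{1/m}$ (and not when $r=\sigma$, where you would get support $\{|z|\le\sigma^m\}$ and coefficient $\sigma^{-2}$). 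This is bookkeeping rather than a conceptual error, but it is worth noting also that $n^{-1/2}\mathcal Y_n$ is genuinely inhomogeneous — most of its entries are deterministically zero and the nonzero blocks carry distinct variances $\sigma_1^2,\dots,\sigma_m^2$ — so identifying its limiting ESD as a circular law already requires the variance-profile machinery you allude to, and the radius $\sigma^{1/m}$ emerges from the fixed-point equation rather than from any direct appeal to the iid circular law.
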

\begin{remark}
When $\sigma = 1$, the density in \eqref{eq:density} is easily related to the circular law (Theorem \ref{thm:circ}).  Indeed, in this case, $f$ is the density of $\psi^m$, where $\psi$ is a complex-valued random variable uniformly distributed on the unit disk centered at the origin in the complex plane.  
\end{remark}

Theorem \ref{thm:ORSV} is a special case of \cite[Theorem 2.4]{ORSV}.  Indeed, \cite[Theorem 2.4]{ORSV} applies to so-called elliptic random matrices, which generalize iid matrices.  Theorem \ref{thm:ORSV} and the results in \cite{ORSV} are stated only for real random variables, but the proofs can be extended to the complex setting.  Similar results have also been obtained in \cite{B, GTprod, OS}.  The Gaussian case was originally considered by Burda, Janik, and Waclaw \cite{BJW}; see also \cite{Bsurv}.  We refer the reader to \cite{AB,ABK, AIK, AIK2, AKW, AS, BJLNS, F, F2, KZ, S} and references therein for many other results concerning products of random matrices with Gaussian entries. 


\section{Main results}
\label{Sec:Main}

From Theorem \ref{thm:ORSV}, we see that the low-rank deterministic perturbations $A_{n,k}$ do not affect the limiting empirical spectral measure.  However, as was the case in Theorem \ref{thm:circ}, the perturbations may create one or more outlier eigenvalues.  The goal of this paper is to study the asymptotic behavior of these outlier eigenvalues.  In view of Theorem \ref{thm:outlier:iid}, we will assume the atom variables $\xi_1, \ldots, \xi_m$ have finite fourth moment.  

\begin{assumption} \label{assump:4th}
The complex-valued random variables $\xi_1, \ldots, \xi_m$ are said to satisfy Assumption \ref{assump:4th} if, for each $1 \leq k \leq m$, 
\begin{itemize}
\item the real and imaginary parts of $\xi_k$ are independent, 
\item $\xi_k$ has mean zero and finite fourth moment, and
\item $\xi_k$ has nonzero variance $\sigma_k^2$.
\end{itemize}
\end{assumption}

We begin with the analogue of Theorem \ref{thm:nooutlier:iid} for the product of $m$ independent iid matrices.  

\begin{theorem}[No outliers for products of iid matrices] \label{thm:nooutlier}
Let $m \geq 1$ be a fixed integer, and assume $\xi_1, \ldots, \xi_m$ are complex-valued random variables which satisfy Assumption \ref{assump:4th}.  For each $n \geq 1$, let $X_{n,1}, \ldots, X_{n,m}$ be independent $n \times n$ iid random matrices with atom variables $\xi_1, \ldots, \xi_m$, respectively.  Define the products
$$ P_n := n^{-m/2} X_{n,1} \cdots X_{n,m} $$
and $\sigma := \sigma_1 \cdots \sigma_m$.  Then, almost surely, the spectral radius of $P_n$ is bounded above by $\sigma + o(1)$ as $n \to \infty$.  In particular, for any fixed $\eps > 0$, almost surely, for $n$ sufficiently large, all eigenvalues of $P_n$ are contained in the disk $\{ z \in \mathbb{C} : |z| < \sigma + \eps \}$.  
\end{theorem}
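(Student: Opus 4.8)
The plan is to reduce the spectral radius bound for the product $P_n = n^{-m/2} X_{n,1} \cdots X_{n,m}$ to the single-matrix bound of Theorem~\ref{thm:nooutlier:iid}. The key observation is that for any square matrices $M_1, \dots, M_m$, the nonzero eigenvalues of the product $M_1 \cdots M_m$ are invariant under cyclic permutation of the factors, and more importantly, the spectral radius of a product is controlled by submultiplicativity of the operator norm: $\|M_1 \cdots M_m\|_{\mathrm{op}} \leq \prod_{k=1}^m \|M_k\|_{\mathrm{op}}$, and the spectral radius never exceeds the operator norm. So the first step is to establish almost surely that $\|n^{-1/2} X_{n,k}\|_{\mathrm{op}} \leq \sigma_k + o(1)$ for each fixed $k$. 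This is the standard Bai--Yin-type bound on the largest singular value of an iid matrix with finite fourth moment: under Assumption~\ref{assump:4th}, the entries of $X_{n,k}$ have mean zero, variance $\sigma_k^2$, and finite fourth moment, so $\|n^{-1/2} X_{n,k}\|_{\mathrm{op}} \to \sigma_k$ almost surely. (If one wants to be careful about the complex case and the independence of real and imaginary parts, one can either cite the complex Bai--Yin theorem directly or pass through the $2n \times 2n$ real representation.)

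Given that, the second step is immediate: on the almost sure event where all $m$ operator norm bounds hold simultaneously (a finite intersection of almost sure events, hence almost sure), we have
\[
  \text{spectral radius of } P_n \;\leq\; \|P_n\|_{\mathrm{op}} \;=\; \Bigl\| \prod_{k=1}^m n^{-1/2} X_{n,k} \Bigr\|_{\mathrm{op}} \;\leq\; \prod_{k=1}^m \bigl\| n^{-1/2} X_{n,k} \bigr\|_{\mathrm{op}} \;\leq\; \prod_{k=1}^m (\sigma_k + o(1)) \;=\; \sigma + o(1),
\]
using that $m$ is fixed so the product of the $o(1)$ error terms is still $o(1)$. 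The final sentence of the theorem then follows: for fixed $\eps > 0$, almost surely for $n$ large the spectral radius is below $\sigma + \eps$, so every eigenvalue lies in $\{z : |z| < \sigma + \eps\}$.

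I do not expect any real obstacle here; the statement is genuinely a soft consequence of Theorem~\ref{thm:nooutlier:iid} (or rather the singular-value version underlying it) plus submultiplicativity. The only points requiring a little care are (i) confirming that the finite fourth moment hypothesis in Assumption~\ref{assump:4th}, together with the independence of real and imaginary parts, is enough to invoke the almost sure convergence of the largest singular value to $\sigma_k$ in the complex setting, and (ii) noting that the spectral radius $\rho(M) \leq \|M\|_{\mathrm{op}}$ for every square matrix $M$, which is elementary. One might also remark that, unlike the eigenvalue problem, no cyclic-invariance or Schur-complement machinery is needed for the upper bound — that machinery will be needed later for locating the outliers, but not here.
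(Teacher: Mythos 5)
There is a critical gap in your argument: the Bai--Yin law gives $\|n^{-1/2}X_{n,k}\|_{\mathrm{op}} \to 2\sigma_k$ almost surely, \emph{not} $\sigma_k$. For an iid matrix with mean-zero variance-$\sigma_k^2$ entries and finite fourth moment, the largest singular value of $n^{-1/2}X_{n,k}$ converges to $2\sigma_k$ (the edge of the Mar\v{c}enko--Pastur support), whereas it is the \emph{spectral radius} that converges to $\sigma_k$ --- that is exactly Theorem~\ref{thm:nooutlier:iid}, and it is a genuinely harder statement than the singular-value bound precisely because non-Hermitian iid matrices are highly non-normal: their operator norm is roughly twice their spectral radius. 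Consequently submultiplicativity gives only
\[
\rho(P_n) \;\le\; \|P_n\|_{\mathrm{op}} \;\le\; \prod_{k=1}^m \|n^{-1/2}X_{n,k}\|_{\mathrm{op}} \;\le\; \prod_{k=1}^m\bigl(2\sigma_k + o(1)\bigr) \;=\; 2^m\sigma + o(1),
\]
which is far too weak, and there is no way to repair this by a norm argument: $\|P_n\|_{\mathrm{op}}$ really is of order $2^m\sigma$, so the slack is lost at the very first inequality $\rho \le \|\cdot\|_{\mathrm{op}}$, not in the submultiplicativity step.

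The remark in your last paragraph that ``no cyclic-invariance or Schur-complement machinery is needed for the upper bound'' is exactly backwards: that machinery \emph{is} what the paper uses. The paper linearizes via the $mn\times mn$ block companion matrix $\mathcal{Y}_n$ of \eqref{def:Y}, notes (Proposition~\ref{prop:linear}) that the eigenvalues of $P_n$ are the $m$-th powers of the eigenvalues of $n^{-1/2}\mathcal{Y}_n$, and then invokes part~\ref{item:invertible} of Theorem~\ref{thm:isotropic} to get $\limsup_n \rho(n^{-1/2}\mathcal{Y}_n) \le 1$ almost surely (after rescaling to $\sigma_k=1$). That isotropic law in turn rests on a quantitative lower bound for the least singular value of $n^{-1/2}\mathcal{Y}_n - zI$ uniformly over $|z| \ge 1+\delta$ (Lemma~\ref{Lem:LeastSingValAwayFromZero}, ultimately Nemish's local law), which is a genuinely non-perturbative input with no soft norm-inequality substitute. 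So this theorem is not a ``soft consequence'' of Theorem~\ref{thm:nooutlier:iid}; you should view the whole isotropic-law apparatus as the engine behind it.
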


%

\begin{remark}
A version of Theorem \ref{thm:nooutlier} was proven by Nemish in \cite{N} under the additional assumption that the atom variables $\xi_1, \ldots, \xi_m$ satisfy a sub-exponential decay condition.  In particular, this condition implies that all moments of $\xi_1, \ldots, \xi_m$ are finite.  Theorem \ref{thm:nooutlier} only requires the fourth moments of the atom variables to be finite. 
\end{remark}
\begin{remark}
In view of Remark \ref{rem:twomomsr}, it is natural to also conjecture that the spectral radius of $P_n$ is bounded above by $\sigma + o(1)$ in probability as $n \to \infty$ only assuming the atom variables $\xi_1, \ldots, \xi_m$ have mean zero and unit variance.  Here, we need the result to hold almost surely, and hence require the atom variables have finite fourth moment.  
\end{remark}  

In view of Theorem \ref{thm:ORSV}, it is natural to consider perturbations of the form
\[ P_n := \prod_{k=1}^m \left( \frac{1}{\sqrt{n}} X_{n,k} + A_{n,k} \right). \]
However, there are many other types of perturbations one might consider, such as multiplicative perturbations
\begin{equation} \label{eq:prodmult}
	P_n := \frac{1}{\sqrt{n}} X_{n,1} (I + A_{n,1}) \frac{1}{\sqrt{n}} X_{n,2} (I + A_{n,2}) \cdots \frac{1}{\sqrt{n}} X_{n,m} (I + A_{n,m}) 
\end{equation}
or perturbations of the form
\[ P_n := n^{-m/2} \prod_{k=1}^m X_{n,k} + A_n. \]
In any of these cases, the product $P_n$ can be written as
\[ P_n = n^{-m/2} X_{n,1} \cdots X_{n,m} + M_n + A_n, \]
where $A_n$ is deterministic and $M_n$ represents the ``mixed'' terms, each containing at least one random factor and one deterministic factor.  Our main results below show that only the deterministic term $A_n$ determines the location of the outliers.  The ``mixed'' terms $M_n$ do not effect the asymptotic location of the outliers.  

This phenomenon is most easily observed in the case of multiplicative perturbations \eqref{eq:prodmult}, for which there is no deterministic term (i.e., $A_n=0$ and the perturbation consists entirely of ``mixed'' terms).  In this case, the heuristic above suggests that there should be no outliers, and this is the content of the following theorem.  

\begin{theorem}[No outliers for products of iid matrices with multiplicative perturbations] Let $m \geq 1$ be a fixed integer, and assume $\xi_1, \ldots, \xi_m$ are complex-valued random variables which satisfy Assumption \ref{assump:4th}.  For each $n \geq 1$, let $X_{n,1}, \ldots, X_{n,m}$ be independent $n \times n$ iid random matrices with atom variables $\xi_1, \ldots, \xi_m$, respectively. In addition for any fixed integer $s\geq 1$, let $\iidmat{A}{n}{1},\iidmat{A}{n}{2},\ldots,\iidmat{A}{n}{s}$ be $n\times n$ deterministic matrices, each of which has rank $O(1)$ and operator norm $O(1)$. Define the product $P_{n}$ to be the product of the terms $$\frac{1}{\sqrt{n}}\iidmat{X}{n}{1},\ldots,\frac{1}{\sqrt{n}}\iidmat{X}{n}{m},\left(I+\iidmat{A}{n}{1}\right),\ldots,\left(I+\iidmat{A}{n}{s}\right)$$ 
in some fixed order. Then for any $\delta > 0$, almost surely, for sufficiently large $n$, $P_n$ has no eigenvalues in the region $\{z\in\C\;:\;|z|>\sigma+\delta\}$ where $\sigma := \sigma_1 \cdots \sigma_m$.  
\label{Thm:NoOutlierInProductPert}
\end{theorem}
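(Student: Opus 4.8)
The plan is to linearize the product into a single $(m+s)n \times (m+s)n$ random matrix and then to observe that under this linearization the perturbation becomes a \emph{deterministic, nilpotent} matrix, which therefore cannot create an outlier. Set $d := m + s$ and write $P_n = F_1 F_2 \cdots F_d$, where $F_1, \dots, F_d$ are the factors $\frac{1}{\sqrt{n}}\iidmat{X}{n}{1}, \dots, \frac{1}{\sqrt{n}}\iidmat{X}{n}{m}, (I + \iidmat{A}{n}{1}), \dots, (I + \iidmat{A}{n}{s})$ in the prescribed order, and let $\mathcal{F}_n$ be the $dn \times dn$ cyclic block matrix whose $(t, t+1)$ block equals $F_t$ for $1 \le t \le d$ (indices modulo $d$), all other blocks being zero. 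Then $\mathcal{F}_n^{\,d}$ is block diagonal with $t$-th diagonal block the cyclic product $F_t F_{t+1} \cdots F_{t-1}$; as cyclic permutations preserve the nonzero spectrum, every such block has the same nonzero eigenvalues as $P_n$, and by the spectral mapping theorem a nonzero $w$ is an eigenvalue of $\mathcal{F}_n$ if and only if $w^d$ is an eigenvalue of $P_n$. Hence $P_n$ has an eigenvalue in $\{z \in \C : |z| > \sigma + \delta\}$ only if $\mathcal{F}_n$ has an eigenvalue in $\{w \in \C : |w| \ge (\sigma+\delta)^{1/d}\}$, so it suffices to rule out the latter.

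Writing each perturbation factor as $(I + \iidmat{A}{n}{i}) = I + \iidmat{A}{n}{i}$, split $\mathcal{F}_n = \mathcal{Y}_n + \mathcal{A}_n$: the $(t, t+1)$ block of $\mathcal{Y}_n$ is $\frac{1}{\sqrt{n}}\iidmat{X}{n}{k}$ when $F_t = \frac{1}{\sqrt{n}}\iidmat{X}{n}{k}$ and is $I$ when $F_t$ is a perturbation factor, while $\mathcal{A}_n$ has $\iidmat{A}{n}{i}$ in the $(t, t+1)$ block when $F_t = (I + \iidmat{A}{n}{i})$ and is zero elsewhere. Then $\mathcal{A}_n$ is deterministic with $\rank(\mathcal{A}_n) = O(1)$ and $\|\mathcal{A}_n\| = O(1)$, and $\mathcal{A}_n$ is \emph{nilpotent}: the $t$-th diagonal block of $\mathcal{A}_n^{\,d}$ is the cyclic product of the block entries of $\mathcal{A}_n$, and since $m \ge 1$ at least one such entry is zero, so $\mathcal{A}_n^{\,d} = 0$ and $\mathcal{A}_n$ has only the eigenvalue $0$. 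On the other hand, the $t$-th diagonal block of $\mathcal{Y}_n^{\,d}$, after deleting the identity factors, is a cyclic reordering of $n^{-m/2}\iidmat{X}{n}{1}\cdots\iidmat{X}{n}{m}$; by Theorem~\ref{thm:nooutlier} (applied after relabeling the independent factors), almost surely its spectral radius is at most $\sigma + o(1)$, and by standard bounds $\|\frac{1}{\sqrt{n}}\iidmat{X}{n}{k}\| = O(1)$ almost surely. Hence, almost surely, for all large $n$, every eigenvalue of $\mathcal{Y}_n$ lies in $\{|w| < (\sigma+\delta/2)^{1/d}\}$ and $\|\mathcal{F}_n\| \le C$ for a constant $C = C(\delta)$.

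Suppose $w$ with $(\sigma+\delta)^{1/d} \le |w| \le C$ is an eigenvalue of $\mathcal{F}_n = \mathcal{Y}_n + \mathcal{A}_n$. Then $\mathcal{Y}_n - wI$ is invertible, so writing $\mathcal{A}_n = \mathcal{U}\mathcal{V}^{\ast}$ with deterministic $dn \times r$ matrices ($r = O(1)$ with columns of norm $O(1)$) and using Sylvester's identity,
\[
0 = \det(\mathcal{F}_n - wI) = \det(\mathcal{Y}_n - wI)\,\det\!\bigl(I_r + \mathcal{V}^{\ast}(\mathcal{Y}_n - wI)^{-1}\mathcal{U}\bigr) .
\]
The essential ingredient is a uniform isotropic resolvent estimate: almost surely, for deterministic unit vectors $a, b$,
\[
\sup_{(\sigma+\delta)^{1/d} \le |w| \le C}\bigl|\, a^{\ast}(\mathcal{Y}_n - wI)^{-1} b - a^{\ast}\Gamma_n(w)\, b \,\bigr| \longrightarrow 0 ,
\]
where $\Gamma_n(w)$ is the deterministic equivalent of $(\mathcal{Y}_n - wI)^{-1}$; for the structured ensemble $\mathcal{Y}_n$ (of the type studied in \cite{ORSV}) this follows by the strategy of \cite{Tout}, i.e.\ a lower bound on the least singular value of $\mathcal{Y}_n - wI$ uniform over the compact annulus together with an $\varepsilon$-net argument. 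A direct computation — valid for $|w|$ large via the Neumann series and then by analytic continuation over the annulus — identifies $\Gamma_n(w)$ as block constant, $\Gamma_n(w) = (g_{t,t'}(w)\, I_n)_{t, t'}$, with $g_{t,t'}(w) = 0$ unless $F_t, F_{t+1}, \dots, F_{t'-1}$ (read forward cyclically) are all perturbation factors. Consequently the $(t,l)$ block of $\Gamma_n(w)\mathcal{A}_n$ vanishes unless $F_t, \dots, F_{l-1}$ are all perturbation factors, so a closed walk of length $d$ through the nonzero blocks of $\Gamma_n(w)\mathcal{A}_n$ would traverse all of $\{1, \dots, d\}$ using only perturbation factors, which is impossible because $m \ge 1$; thus $(\Gamma_n(w)\mathcal{A}_n)^{d} = 0$ and $\det(I_r + \mathcal{V}^{\ast}\Gamma_n(w)\mathcal{U}) = \det(I_{dn} + \Gamma_n(w)\mathcal{A}_n) = 1$. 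Since also $\sup_{w} \|\Gamma_n(w)\| = O(1)$ on the annulus, the isotropic estimate gives $\det(I_r + \mathcal{V}^{\ast}(\mathcal{Y}_n - wI)^{-1}\mathcal{U}) = 1 + o(1)$ uniformly over the annulus, contradicting, for $n$ large, its vanishing. Therefore, almost surely, for all large $n$, $\mathcal{F}_n$ — and hence $P_n$ — has no eigenvalue in the stated region.

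The routine parts are the linearization, the two nilpotency computations, and the bookkeeping of block supports. The main obstacle is the uniform isotropic resolvent estimate for $\mathcal{Y}_n - wI$ on the compact annulus; the difficulty is concentrated in the range $(\sigma+\delta)^{1/d} \le |w| \le \|\mathcal{Y}_n\|$, where the Neumann series for $(\mathcal{Y}_n - wI)^{-1}$ diverges and the resolvent must instead be controlled through quantitative least–singular–value estimates for the structured matrix $\mathcal{Y}_n - wI$, combined with a net and continuity argument, following \cite{Tout}.
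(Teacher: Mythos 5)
Your high-level strategy coincides with the paper's: linearize the product as a cyclic block matrix, split off the deterministic part, and show via a Sylvester/determinant identity that the deterministic part's nilpotency forces $\det(I+\mathcal{R}(w)\mathcal{A}_n)\to 1$ uniformly on the annulus, so no outliers can appear. Your block-support bookkeeping and the nilpotency argument for $\Gamma_n(w)\mathcal{A}_n$ are correct. The gap is in the step you yourself flag as the ``main obstacle'': the uniform isotropic resolvent estimate for your $\mathcal{Y}_n$, which is a $(m+s)n\times(m+s)n$ cyclic block matrix in which some off-diagonal blocks are $\frac{1}{\sqrt{n}}X_{n,k}$ and others are $I_n$. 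The paper's Theorem~\ref{thm:isotropic} is proved only for the purely random cyclic matrix \eqref{def:Y}, all of whose blocks are iid matrices; it does not cover a cyclic matrix with identity blocks interspersed, and the deterministic equivalent you correctly compute is indeed not $-\tfrac{1}{z}I$ precisely because of those identity blocks. Asserting that this ``follows by the strategy of \cite{Tout}'' replaces the paper's entire technical core (Sections~\ref{Sec:TruncAndTools}--\ref{sec:combin}) by a citation that does not actually prove what you need.

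The paper sidesteps this obstacle rather than proving the missing isotropic law. It first normalizes: consecutive perturbation factors $(I+A_i)(I+A_j)$ can be merged into a single $(I+A')$, and $I+0_n$ can be inserted between consecutive random factors; after a cyclic permutation of the factors (which preserves the spectrum), the product strictly alternates random and deterministic factors with exactly $m$ of each. It then takes a \emph{doubled} $2mn\times 2mn$ linearization $\mathcal{L}_n=\mathcal{X}_n+\Acorner$ in which $\mathcal{X}_n$ has $I_{mn}$ in one off-diagonal block and the purely random $\frac{1}{\sqrt{n}}\mathcal{Y}_n$ of \eqref{def:Y} in the other. Its resolvent $\mathcal{R}_n(z)=(\mathcal{X}_n-zI)^{-1}$ is computed exactly via Schur complement in terms of $\mathcal{G}_n(z^2)$, so Theorem~\ref{thm:isotropic} applies directly and there is nothing new to prove. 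Your proposal would be complete if you carried out the analogous block-inverse computation expressing blocks of $(\mathcal{Y}_n-wI)^{-1}$ in terms of $(n^{-m/2}X_{n,1}\cdots X_{n,m}-w^d I)^{-1}$ and then invoked Corollary~\ref{cor:ProductIsotropic}, but as written the proposal neither does this nor proves the isotropic law it relies on.
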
 

%
%

We now consider the case when there is a deterministic term and no ``mixed'' terms.  

\begin{theorem}[Outliers for small low-rank perturbations of product matrices] \label{thm:nomixed}
Let $m \geq 1$ be a fixed integer, and assume $\xi_1, \ldots, \xi_m$ are complex-valued random variables which satisfy Assumption \ref{assump:4th}.  For each $n \geq 1$, let $X_{n,1}, \ldots, X_{n,m}$ be independent $n \times n$ iid random matrices with atom variables $\xi_1, \ldots, \xi_m$, respectively.  In addition, let $A_n$ be an $n \times n$ deterministic matrix with rank $O(1)$ and operator norm $O(1)$.  Define
\begin{equation} \label{eq:nomixedproduct}
	P_n := n^{-m/2} \prod_{k=1}^m X_{n,k} + A_n 
\end{equation}
and $\sigma := \sigma_1 \cdots \sigma_m$.  Let $\eps > 0$, and suppose that for all sufficiently large $n$, there are no eigenvalues of $A_n$ in the band $\{z \in \C : \sigma + \eps < |z| < \sigma + 3\eps\}$, and there are $j$ eigenvalues $\lambda_1(A_n), \ldots, \lambda_j(A_n)$ for some $j = O(1)$ in the region $\{z \in \C : |z| \geq \sigma+3 \eps\}$.  Then, almost surely, for sufficiently large $n$, there are precisely $j$ eigenvalues $\lambda_1(P_n), \ldots, \lambda_j(P_n)$ of $P_n$ in the region $\{z \in \C : |z| \geq \sigma + 2\eps\}$, and after labeling these eigenvalues properly, 
$$ \lambda_i\left(P_n \right) = \lambda_i(A_n) + o(1) $$
as $n \rightarrow \infty$ for each $1 \leq i \leq j$.  
\end{theorem}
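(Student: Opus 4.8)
The plan is to adapt Tao's proof of Theorem~\ref{thm:outlier:iid} to the product setting. Write $Y_n := n^{-m/2}X_{n,1}\cdots X_{n,m}$, so that $P_n = Y_n + A_n$, let $r=O(1)$ be a fixed upper bound for $\rank(A_n)$, and write $\|\cdot\|$ for the operator norm. Using a singular value decomposition, factor $A_n = B_nC_n$ with $B_n$ of size $n\times r$, $C_n$ of size $r\times n$, and $\|B_n\|,\|C_n\|=O(1)$. By Theorem~\ref{thm:nooutlier}, almost surely for all large $n$ the spectral radius of $Y_n$ is less than $\sigma+\eps$, so $zI-Y_n$ is invertible on $\{|z|\ge\sigma+\eps\}$, where the standard identity
\[ \det(zI-P_n)=\det(zI-Y_n)\,\det\!\big(I_r-C_n(zI-Y_n)^{-1}B_n\big) \]
(multiplicativity of $\det$, followed by $\det(I_n-MN)=\det(I_r-NM)$) shows that the eigenvalues of $P_n$, with algebraic multiplicity, are precisely the zeros of $g_n(z):=\det\big(I_r-C_n(zI-Y_n)^{-1}B_n\big)$. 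Likewise the zeros of $h_n(z):=\det\big(I_r-z^{-1}C_nB_n\big)=\prod_{l=1}^r\big(1-z^{-1}\mu_l\big)$ are the nonzero eigenvalues $\mu_l$ of $C_nB_n$, which are exactly the nonzero eigenvalues of $A_n=B_nC_n$; since $A_n$ has no eigenvalue in the band, $h_n$ has exactly $j$ zeros of modulus $>\sigma+2\eps$, namely $\lambda_1(A_n),\dots,\lambda_j(A_n)$. So the theorem reduces to comparing the zeros of $g_n$ and $h_n$.

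The heart of the matter is the resolvent estimate: for every fixed $R>0$, almost surely
\[ \sup_{\sigma+2\eps\le|z|\le R}\big\|C_n(zI-Y_n)^{-1}B_n-z^{-1}C_nB_n\big\|\longrightarrow 0, \]
whence $\sup_{\sigma+2\eps\le|z|\le R}|g_n(z)-h_n(z)|\to 0$ by Lipschitz continuity of $\det$ on bounded sets. To prove it, set $N=N(n):=\lceil A\log n\rceil$ with $A$ a large constant and write
\[ C_n(zI-Y_n)^{-1}B_n-z^{-1}C_nB_n=\sum_{k=1}^{N-1}z^{-k-1}C_nY_n^kB_n+z^{-N}C_nY_n^N(zI-Y_n)^{-1}B_n. \]
A moment-method computation gives $\E\|C_nY_n^kB_n\|^2\le k^{O(1)}\sigma^{2k}/n$; since $|z|\ge\sigma+2\eps$ forces $\sigma/|z|\le\rho<1$, the finite sum is $n^{-1/2+o(1)}$ uniformly in $z$. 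For the remainder, split $Y_n^N=Y_n^{\lceil N/2\rceil}Y_n^{\lfloor N/2\rfloor}$ and bound
\[ \big\|C_nY_n^N(zI-Y_n)^{-1}B_n\big\|\le\|C_n\|\,\big\|Y_n^{\lceil N/2\rceil}\big\|_2\,\big\|Y_n^{\lfloor N/2\rfloor}\big\|_2\,\big\|(zI-Y_n)^{-1}\big\|\,\|B_n\|; \]
a second-moment computation gives $\E\big\|Y_n^{N'}\big\|_2^2\le n\,\sigma^{2N'}(N')^{O(1)}$, and a polynomial lower bound on the least singular value of $zI-Y_n$ gives $\|(zI-Y_n)^{-1}\|\le n^{C}$ uniformly on the annulus, so the remainder is at most $n^{C+1+o(1)}\rho^{N}$, which tends to $0$ once $A>(C+1)/\log(1/\rho)$. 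The almost-sure and uniform-in-$z$ statements follow by truncating the atom variables at a slowly growing level (permissible since $\E|\xi_k|^4<\infty$), using the resulting higher moment bounds with the Borel--Cantelli lemma, and discretizing $z$ over a polynomially fine net.

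To conclude, observe that on $\{|z|=\sigma+2\eps\}$ the hypothesis that $A_n$ has no eigenvalues in the band forces $|1-z^{-1}\mu_l|\ge\eps/(\sigma+2\eps)$ for every eigenvalue $\mu_l$ of $C_nB_n$, so $|h_n(z)|\ge(\eps/(\sigma+2\eps))^r>0$ there, uniformly in $n$; and for a suitable large constant $R$ (exceeding, for all large $n$, both $\|A_n\|$ and $\|Y_n\|+\|A_n\|$, the latter almost surely bounded, e.g.\ by the Bai--Yin theorem) one has $|h_n|,|g_n|\ge\tfrac12$ on $\{|z|=R\}$. Since $\sup|g_n-h_n|\to0$, Rouch\'e's theorem on these two circles gives that $g_n$ and $h_n$ have the same number, $j$, of zeros in $\{\sigma+2\eps\le|z|\le R\}$; as $P_n$ has no eigenvalue of modulus $>R$, it has exactly $j$ eigenvalues with $|z|\ge\sigma+2\eps$. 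For the locations, it suffices that the matching distance $\eta_n:=\min_\pi\max_{1\le i\le j}|\lambda_i(P_n)-\lambda_{\pi(i)}(A_n)|$ (over permutations $\pi$ of $\{1,\dots,j\}$) tends to $0$ almost surely: along any subsequence on which the eigenvalues of $C_nB_n$ converge, $h_n$ and hence $g_n$ converge locally uniformly on $\{\sigma+2\eps<|z|<R\}$ to a fixed rational function, and Hurwitz's theorem identifies the limits of the zeros of $g_n$ there with the zeros of that function, i.e.\ with the limits of $\lambda_1(A_n),\dots,\lambda_j(A_n)$; thus $\eta_n\to0$ along the subsequence, and, the subsequence being arbitrary, along the whole sequence.

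I expect the resolvent estimate to be the main obstacle, and within it two product-specific points: the bound $\E\|C_nY_n^kB_n\|^2\le k^{O(1)}\sigma^{2k}/n$, whose combinatorics is complicated by the fact that each factor $X_{n,l}$ reappears in every one of the $k$ blocks of $Y_n^k$, so that coincidences between distinct occurrences of the same matrix must be controlled (this is where the finite fourth moment enters); and the uniform polynomial lower bound on the least singular value of $zI-Y_n$, which for a product of independent iid matrices can be reduced to a least-singular-value bound for a matrix with independent entries by passing to the $mn\times mn$ cyclic block matrix whose $m$-th power has $Y_n$ as a diagonal block. Upgrading the in-probability bounds to the almost-sure statement, via truncation and higher moments together with Borel--Cantelli, is the remaining technical ingredient.
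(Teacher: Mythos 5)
Your outline is correct and reaches the same conclusion --- Rouch\'e's theorem applied to $\det(I_r - C_n(zI-Y_n)^{-1}B_n)$ versus $\det(I_r - z^{-1}C_nB_n)$ after the SVD factorization $A_n=B_nC_n$, the same reduction the paper performs via its eigenvalue criterion (Lemma~\ref{lemma:eigenvalue}) --- but your route to the key resolvent estimate is genuinely different. You prove the isotropic estimate on $Y_n := n^{-m/2}X_{n,1}\cdots X_{n,m}$ directly via the finite algebraic expansion $(zI-Y_n)^{-1}=\sum_{k<N}z^{-k-1}Y_n^k + z^{-N}Y_n^N(zI-Y_n)^{-1}$ with $N=O(\log n)$, controlling the head by moment bounds $\E\|C_nY_n^kB_n\|^2\le k^{O(1)}\sigma^{2k}/n$ and the tail by Hilbert--Schmidt and least-singular-value estimates, and then get uniformity in $z$ and almost-sure convergence by truncation, higher moments, Borel--Cantelli, and a polynomially fine net; this is essentially Tao's $m=1$ argument from \cite{Tout} pushed to the product setting. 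The paper instead passes to the $mn\times mn$ block-cyclic linearization $\mathcal{Y}_n$, proves the isotropic law for $\mathcal{G}_n(z)=(\mathcal{Y}_n/\sqrt n - zI)^{-1}$ first only in the band $5\le|z|\le 6$ --- on the high-probability event $\|\mathcal{Y}_n/\sqrt n\|\le 4.5$, where the full Neumann series converges geometrically --- via a martingale-difference concentration argument together with the fixed-$k$ moment estimate $\E[u^*(\mathcal{Y}_n/\sqrt n)^k v]=o_k(1)$ of Lemma~\ref{Lem:momentstozero} (established by an $m$-colored path-graph count), and then extends to $|z|>1+\delta$ by Vitali's convergence theorem and Nemish-type least-singular-value bounds (Theorem~\ref{Thm:LeastTruncSingValNonZero}), finally specializing to $Y_n$ via Corollary~\ref{cor:ProductIsotropic}. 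The paper's detour buys two things: only fixed-$k$ moment estimates are needed, so the $k$-dependence of the implicit constants is irrelevant, and the least-singular-value input is a bound away from zero by a constant, much stronger than the polynomial lower bound your remainder analysis requests. Your route avoids the linearization and Vitali but pays with a uniform-in-$k$ polynomial moment bound up to $k=O(\log n)$ --- the place the product structure really bites, since each $X_{n,l}$ recurs in all $k$ copies of $Y_n$ inside $Y_n^k$, so coincidences across the $mk$ matrix factors must be controlled --- which you correctly flag as the main obstacle. If that moment bound is supplied with the stated $k^{O(1)}$ dependence, the rest of your plan is sound.
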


\begin{figure}[h]
	\includegraphics[scale=.5]{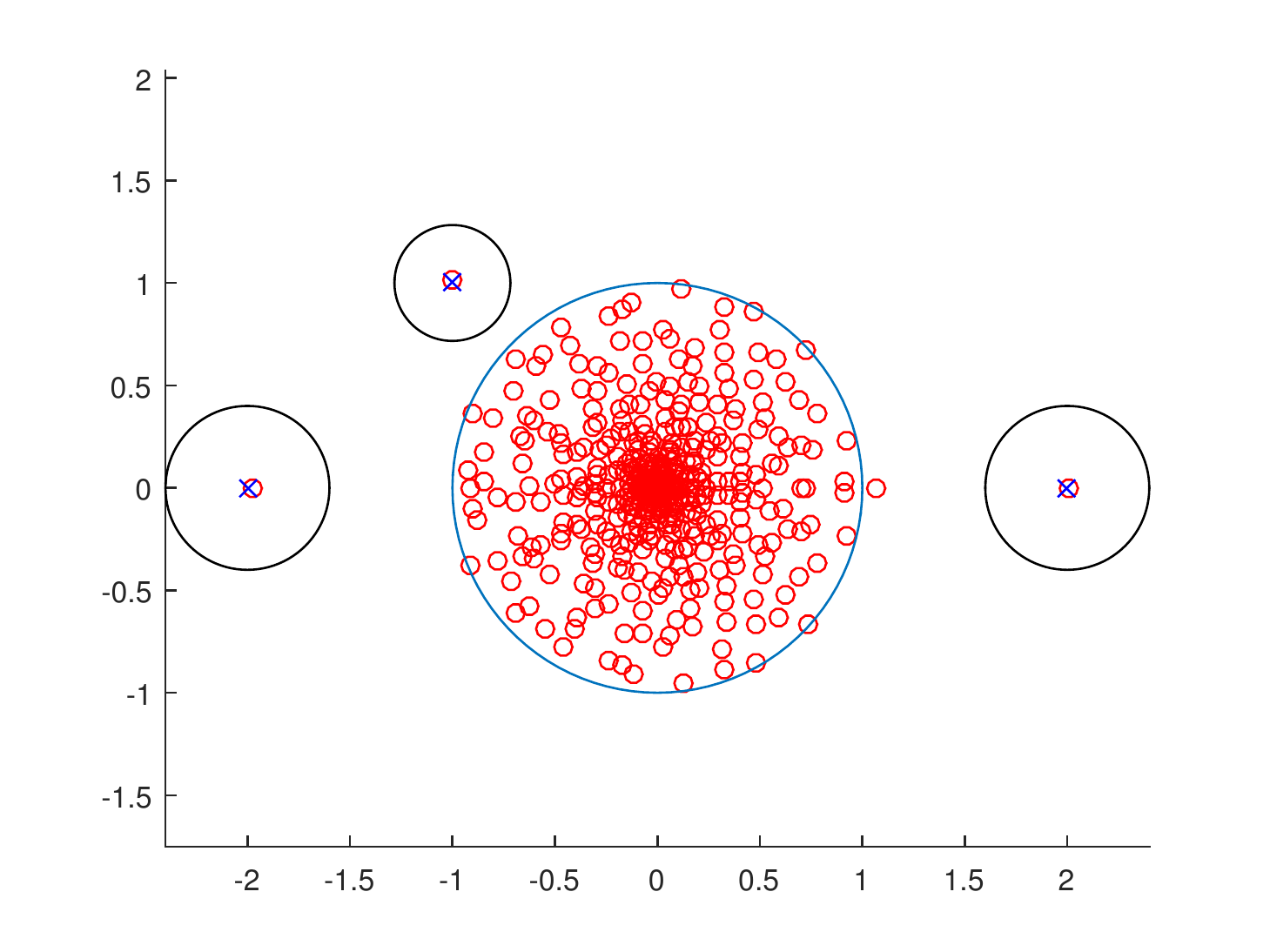}
	\caption{In this figure, we have plotted the eigenvalues of $(500)^{-2} X_1 X_2 X_3 X_4 + A$, where $X_1, \ldots, X_4$ are independent $500\times 500$ iid random matrices with symmetric $\pm 1$ Bernoulli entries and $A=\text{diag}(-1+i,-2,2,0,\ldots,0)$.  The majority of the eigenvalues cluster inside the unit disc with the exception of three outliers. These outliers are close to the eigenvalues of $A$, each of which is marked with a cross.}
	\label{Fig:PertProdWithOne}
\end{figure}

Figure \ref{Fig:PertProdWithOne} presents a numerical simulation of Theorem \ref{thm:nomixed}.  In the case that all the entries of $A_n$ take the same value, the product $P_n$ in \eqref{eq:nomixedproduct} can be viewed as a product matrix whose entries have the same nonzero mean.  Technically, Theorem \ref{thm:nomixed} cannot be applied in this case, since such a matrix $A_n$ does not have operator norm $O(1)$.  However, using a similar proof, we establish the following result.  

\begin{theorem}[Outliers for a product matrix with nonzero mean] \label{thm:nonzeromean}
Let $m \geq 1$ be an integer, and let $\mu \in \mathbb{C}$ be nonzero.  Assume $\xi_1, \ldots, \xi_m$ are complex-valued random variables which satisfy Assumption \ref{assump:4th}.  For each $n \geq 1$, let $X_{n,1}, \ldots, X_{n,m}$ be independent $n \times n$ iid random matrices with atom variables $\xi_1, \ldots, \xi_m$, respectively.  Let $\phi_n := \frac{1}{\sqrt{n}} (1, \ldots, 1)^\ast$ and fix $\gamma > 0$.  Define
$$ P_n := n^{-m/2} \prod_{k=1}^m X_{n,k} + \mu n^{\gamma} \phi_n \phi_n^\ast $$ 
and $\sigma := \sigma_1 \cdots \sigma_m$, and fix $\eps > 0$.  Then, almost surely, for $n$ sufficiently large, all eigenvalues of $P_n$ lie in the disk $\{z \in \mathbb{C} : |z| \leq \sigma + \eps\}$ with a single exception taking the value $\mu n^{\gamma} + o(1)$.   
\end{theorem}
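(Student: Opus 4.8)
The plan is to reduce this rank-one perturbation question to a resolvent (Schur-complement) identity, just as in the proof of Theorem \ref{thm:nomixed}, but now tracking the single growing eigenvalue of magnitude $\approx |\mu| n^\gamma$. Write $Y_n := n^{-m/2} \prod_{k=1}^m X_{n,k}$, so $P_n = Y_n + \mu n^\gamma \phi_n \phi_n^\ast$ with $\phi_n$ a unit vector. By Theorem \ref{thm:nooutlier}, almost surely the spectral radius of $Y_n$ is at most $\sigma + o(1)$; fix $\eps>0$ and work on the event that $\|Y_n\| $ in the sense of eigenvalue location stays inside $|z| \le \sigma + \eps/2$ (more precisely, that the resolvent $(Y_n - zI)^{-1}$ is controlled for $|z| = \sigma + \eps$ and on a large circle near $|z| = |\mu|n^\gamma$). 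First I would establish that for $z$ with $|z|$ bounded away from the spectrum of $Y_n$, any eigenvalue $z$ of $P_n$ not equal to an eigenvalue of $Y_n$ must satisfy the scalar equation
\[
1 + \mu n^\gamma \, \phi_n^\ast (Y_n - zI)^{-1} \phi_n = 0,
\]
which is the standard determinant identity $\det(P_n - zI) = \det(Y_n - zI)\bigl(1 + \mu n^\gamma \phi_n^\ast (Y_n - zI)^{-1}\phi_n\bigr)$ for a rank-one update.

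The heart of the argument is then a two-sided estimate on the bilinear form $\phi_n^\ast (Y_n - zI)^{-1} \phi_n$. On the one hand, for $|z| = \sigma + \eps$ (and more generally on any fixed annulus outside the limiting support), I would argue that $\phi_n^\ast (Y_n - zI)^{-1} \phi_n = O(1)$ almost surely — this should follow from the same resolvent/least-singular-value machinery used for Theorem \ref{thm:nomixed}, combined with a concentration bound showing the quadratic form of a fixed unit vector against the resolvent of a product of iid matrices does not blow up (since $\phi_n$ is a flat vector, isotropy-type estimates apply). Consequently on that circle $|\mu n^\gamma \phi_n^\ast(Y_n-zI)^{-1}\phi_n| \gg 1$, so the scalar function $1 + \mu n^\gamma \phi_n^\ast(Y_n - zI)^{-1}\phi_n$ has no zeros there, and a Rouché argument (comparing to the $Y_n=0$ contribution, i.e. to $\det(-zI)$ times the correction) shows that inside $|z| \le \sigma + \eps$ the matrix $P_n$ has exactly the same eigenvalue count as $Y_n$, hence no outliers there. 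On the other hand, for $|z|$ of order $n^\gamma$ we have $(Y_n - zI)^{-1} = -z^{-1}(I - Y_n/z)^{-1} = -z^{-1}\sum_{\ell \ge 0} (Y_n/z)^\ell$, and since $\|Y_n\| \cdot |z|^{-1} \le (\sigma + \eps)/( |\mu| n^\gamma - \text{something}) \to 0$ — here I must be slightly careful because $\|Y_n\|$ is the operator norm, which for iid products can be polynomially large, but it is still $n^{O(1)}$ and $|z|$ can be taken comparable to $n^\gamma$ only if $\gamma$ is large; in general I would instead use that $\phi_n^\ast Y_n^\ell \phi_n$ is controlled for each fixed $\ell$ and that the tail is negligible, giving $\phi_n^\ast(Y_n - zI)^{-1}\phi_n = -z^{-1} + O(|z|^{-2} \cdot \text{poly}(n))$. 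Plugging into the scalar equation yields $1 + \mu n^\gamma(-z^{-1} + \text{lower order}) = 0$, i.e. $z = \mu n^\gamma + o(1)$, and a final Rouché argument on a small circle around $\mu n^\gamma$ confirms there is exactly one such eigenvalue.

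The step I expect to be the main obstacle is reconciling the operator-norm growth of $Y_n$ with the size $n^\gamma$ of the perturbation: a naive geometric-series bound on $(Y_n - zI)^{-1}$ requires $|z| > \|Y_n\|$, which need not hold for small $\gamma$ since $\|Y_n\|$ can be as large as $n^{(m-1)/2}$ or so. To get around this I would not expand in the operator norm but instead control the specific scalar quantities $\phi_n^\ast Y_n^\ell \phi_n$ directly (these are genuinely $O(1)$-type quantities for fixed $\ell$ because $\phi_n$ is the flat vector and $Y_n$ has iid-type structure), truncate the Neumann series at a fixed level $L$, and bound the remainder $\sum_{\ell > L} z^{-\ell}\phi_n^\ast Y_n^\ell \phi_n$ using $\|Y_n\| \le n^{C}$ together with $|z| \ge |\mu| n^\gamma/2$ and choosing $L = L(\gamma, C)$ large enough that $(n^C/n^\gamma)^L$ is summably small — this forces a case distinction on the size of $\gamma$ but is routine once set up. The secondary technical point is the almost-sure control of $\phi_n^\ast(Y_n - zI)^{-1}\phi_n$ uniformly for $|z| = \sigma + \eps$; this I would import wholesale from the resolvent estimates developed for the proof of Theorem \ref{thm:nomixed}, since the flat vector $\phi_n$ is no worse than a generic unit vector in those bounds.
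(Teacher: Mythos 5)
Your reduction to the rank-one scalar equation $1 + \mu n^\gamma\,\phi_n^\ast(Y_n - zI)^{-1}\phi_n = 0$, the comparison with $g(z) = 1 - \mu n^\gamma/z$, and the two-stage Rouch\'e strategy near $|z|\approx |\mu| n^\gamma$ all match the overall shape of the paper's argument. However, the ``main obstacle'' you flag is built on a false premise. The operator norm of $Y_n := n^{-m/2}\prod_{k=1}^m X_{n,k}$ is $O(1)$ almost surely, not $n^{(m-1)/2}$ or anything polynomially large: by submultiplicativity of the operator norm and \cite[Theorem 1.4]{Tout}, each factor satisfies $n^{-1/2}\|X_{n,k}\| \to 2$ almost surely, so $n^{-m/2}\|X_{n,1}\cdots X_{n,m}\| \le (2+o(1))^m$; the paper records this in \eqref{eq:tout:normbound}. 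Consequently $(Y_n - zI)^{-1}$ admits an ordinary convergent Neumann series for every $|z| > (2.5)^m$, and since $\gamma > 0$ the region $|z| \asymp |\mu| n^\gamma$ is eventually contained there --- no truncation at a $\gamma$-dependent level $L$ and no case analysis on the size of $\gamma$ is needed. (You may have had in mind the unnormalized product $X_{n,1}\cdots X_{n,m}$, whose norm is $\Theta(n^{m/2})$, or the Hilbert--Schmidt norm.)

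It is worth pointing out that the workaround you sketch would not rescue the argument even if the worry were real: bounding the Neumann tail $\sum_{\ell > L}|z|^{-\ell}\phi_n^\ast Y_n^\ell\phi_n$ by $\sum_{\ell > L}(n^C/|z|)^\ell$ with $|z|\asymp n^\gamma$ requires $C < \gamma$, and for small $\gamma$ and $C\asymp (m-1)/2$ the geometric ratio exceeds $1$, so no fixed $L(\gamma,C)$ makes the tail small. The genuinely delicate point in this theorem is different, and you only partially anticipate it: the crude expansion gives $f(z) = g(z) + O(n^\gamma/|z|^2)$, which on $|z|\asymp n^\gamma$ only locates the root at $\mu n^\gamma + O(1)$, not $\mu n^\gamma + o(1)$. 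Sharpening to $o(1)$ requires the specific estimate $\phi_n^\ast Y_n \phi_n = o(1)$ almost surely --- this is Lemma~\ref{Lem:ProductToZero}, which rests on \cite[Lemma 2.3]{Tout} and genuinely exploits that $\phi_n$ is the normalized all-ones vector --- together with a second Rouch\'e comparison on a circle of arbitrarily small fixed radius $\delta$ around $\mu n^\gamma$, using the improved bound $|z|\,|f(z)-g(z)| = o(n^\gamma/|z|) = o(1)$.
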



Lastly, we consider the case of Theorem \ref{thm:ORSV}, where there are both ``mixed'' terms and a deterministic term.  

\begin{theorem} \label{thm:outliers}
Let $m \geq 1$ be an integer, and assume $\xi_1, \ldots, \xi_m$ are complex-valued random variables which satisfy Assumption \ref{assump:4th}.  For each $n \geq 1$, let $X_{n,1}, \ldots, X_{n,m}$ be independent $n \times n$ iid random matrices with atom variables $\xi_1, \ldots, \xi_m$, respectively.  In addition, for each $1 \leq k \leq m$, let $A_{n,k}$ be a deterministic $n \times n$ matrix with rank $O(1)$ and operator norm $O(1)$.  Define the products
\begin{equation} \label{eq:products}
	P_n := \prod_{k=1}^m \left( \frac{1}{\sqrt{n}} X_{n,k} + A_{n,k} \right), \quad A_n := \prod_{k=1}^m A_{n,k}, 
\end{equation}
and $\sigma := \sigma_1 \cdots \sigma_m$.  Let $\eps > 0$, and suppose that for all sufficiently large $n$, there are no eigenvalues of $A_n$ in the band $\{z \in \C : \sigma + \eps < |z| < \sigma + 3\eps\}$, and there are $j$ eigenvalues $\lambda_1(A_n), \ldots, \lambda_j(A_n)$ for some $j = O(1)$ in the region $\{z \in \C : |z| \geq \sigma+3 \eps\}$.  Then, almost surely, for sufficiently large $n$, there are precisely $j$ eigenvalues $\lambda_1(P_n), \ldots, \lambda_j(P_n)$ of the product $P_n$ in the region $\{z \in \C : |z| \geq \sigma + 2\eps\}$, and after labeling these eigenvalues properly, 
$$ \lambda_i\left(P_n \right) = \lambda_i(A_n) + o(1) $$
as $n \rightarrow \infty$ for each $1 \leq i \leq j$.  
\end{theorem}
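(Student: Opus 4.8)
The plan is to reduce Theorem~\ref{thm:outliers} to the already-established Theorem~\ref{thm:nomixed} by showing that the ``mixed'' terms are a perturbation that is too small (in an operator-norm sense after projecting off a small-dimensional subspace) to move any outlier by more than $o(1)$. Write
\[
 P_n = n^{-m/2} X_{n,1} \cdots X_{n,m} + M_n + A_n,
\]
where $A_n = \prod_{k=1}^m A_{n,k}$ is the fully deterministic term and $M_n$ is the sum of all mixed terms, each of which is a product of $m$ factors, at least one being some $\frac{1}{\sqrt n}X_{n,k}$ and at least one being some $A_{n,\ell}$. The key structural observation is that every mixed term contains at least one factor $A_{n,\ell}$, which has rank $O(1)$; hence $\rank(M_n) = O(1)$, and moreover $M_n$ has a fixed low-dimensional column space and row space determined by the (low-dimensional) ranges of the $A_{n,\ell}$'s — these are deterministic subspaces. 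So $M_n + A_n$ is a deterministic-range-plus-random-coefficient perturbation of bounded rank.

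The first step is to record the deterministic and spectral a priori inputs: by Theorem~\ref{thm:nooutlier}, almost surely the spectral radius of $n^{-m/2}X_{n,1}\cdots X_{n,m}$ is $\sigma + o(1)$; and by standard bounds the operator norm of each $\frac{1}{\sqrt n}X_{n,k}$ is $O(1)$ almost surely (this also follows from the fourth-moment hypothesis). The second, and main, step is an eigenvalue-counting argument via the Sherman–Morrison/Woodbury or resolvent-expansion technique, exactly as in the proof of Theorem~\ref{thm:nomixed}: for $z$ with $|z| \ge \sigma + 2\eps$, write $z$ as an eigenvalue of $P_n$ iff $\det(z I - n^{-m/2}X_{n,1}\cdots X_{n,m} - M_n - A_n) = 0$, factor out $zI - n^{-m/2}X_{n,1}\cdots X_{n,m}$ (which is invertible with $\|(zI - n^{-m/2}X_{n,1}\cdots X_{n,m})^{-1}\| = O(1)$ on this region, by Theorem~\ref{thm:nooutlier}), and reduce to a determinant of a bounded-size matrix built from the resolvent sandwiched between the low-dimensional ranges of $M_n + A_n$. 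The third step is to show that all the blocks of this bounded-size matrix that involve $M_n$ (rather than $A_n$ alone) are $o(1)$ almost surely: this is where the hypothesis that the $A_{n,\ell}$ are fixed deterministic low-rank matrices, together with concentration of bilinear forms $u^\ast (\text{resolvent}) v$ for fixed vectors $u,v$ and iid random matrices, is used — each mixed term, when paired against the resolvent and the fixed range vectors, produces a quadratic form in the entries of some $X_{n,k}$ with no diagonal contribution (because of the deterministic factor separating it from the range vectors), and such forms concentrate around their mean, which is $0$ by mean-zero-ness of $\xi_k$. Hence the bounded-size determinant equation converges, uniformly on compact subsets of $\{|z| \ge \sigma + 2\eps\}$, to the corresponding equation for $n^{-m/2}X_{n,1}\cdots X_{n,m} + A_n$, whose outliers are controlled by Theorem~\ref{thm:nomixed}. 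A Rouché / Hurwitz argument on the reduced determinant (holomorphic in $z$) then transfers the count of $j$ zeros and their $o(1)$-locations from the $A_n$-only model to $P_n$.

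The main obstacle I expect is the concentration step for the mixed terms: one must handle products of several independent iid matrices sandwiched with resolvents of the product, and show that the relevant bilinear forms vanish in the limit uniformly in $z$ on the outlier region. The subtlety is that the resolvent $(zI - n^{-m/2}X_{n,1}\cdots X_{n,m})^{-1}$ is itself a complicated function of the same random matrices that appear in $M_n$, so the bilinear forms are not simply ``fixed deterministic matrix against independent randomness.'' The standard way around this is a resolvent-expansion / linearization trick: pass to the $mn \times mn$ block-companion-type matrix whose spectrum encodes that of the product (as in \cite{ORSV}), where the product structure is replaced by a linear pencil in independent blocks, so that resolvent entries become genuinely amenable to martingale/concentration estimates against the fixed low-rank data. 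Implementing this linearization and then pushing the bilinear-form concentration through it — while keeping track of the deterministic low-rank ranges of the $A_{n,\ell}$'s — is the technical heart of the argument; the rest is bookkeeping parallel to the proof of Theorem~\ref{thm:nomixed}.
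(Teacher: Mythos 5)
Your plan to write $P_n = n^{-m/2}X_{n,1}\cdots X_{n,m} + M_n + A_n$ and control the mixed terms $M_n$ as a low-rank perturbation of the product $n^{-m/2}X_{n,1}\cdots X_{n,m}$ has a genuine gap: the claim that $M_n$ ``has a fixed low-dimensional column space and row space determined by the (low-dimensional) ranges of the $A_{n,\ell}$'s --- these are deterministic subspaces'' is false for $m\geq 2$.  Consider $m=2$: the mixed term $\frac{1}{\sqrt n}X_{n,1}A_{n,2}$ has range $\frac{1}{\sqrt n}X_{n,1}\cdot\mathrm{range}(A_{n,2})$, which is a random subspace depending on $X_{n,1}$; dually $A_{n,1}\frac{1}{\sqrt n}X_{n,2}$ has random corange.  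So $M_n$ is a random-range, random-corange, rank-$O(1)$ matrix.  This is precisely why the isotropic limit law (Theorem~\ref{thm:isotropic}, which concerns $u^\ast\mathcal{G}_n(z)v$ for \emph{deterministic} unit vectors $u,v$) cannot be applied to the factors of $M_n$ in the low-dimensional determinant reduction --- you would be conjugating the resolvent by vectors that depend on the same $X_{n,k}$'s that appear inside the resolvent, and there is no uniform-over-all-unit-vectors version of the isotropic law (that would amount to a spectral-norm estimate on $\mathcal{G}_n(z)+\frac{1}{z}I$, which is false).  Your later paragraph correctly senses this difficulty (``the resolvent\ldots is itself a complicated function of the same random matrices that appear in $M_n$'') and correctly names linearization as the escape, but it does not articulate the move that actually resolves it.

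The crucial point the paper exploits, and which your proposal misses, is that one should \emph{not} try to handle $M_n$ at all.  Rather than linearizing $n^{-m/2}X_{n,1}\cdots X_{n,m}$ and treating $M_n + A_n$ as the perturbation, the paper linearizes the entire perturbed product $\prod_{k=1}^m\left(\frac{1}{\sqrt n}X_{n,k}+A_{n,k}\right)$ at once via Proposition~\ref{prop:linear}: its eigenvalues are encoded by those of $\frac{1}{\sqrt n}\mathcal{Y}_n + \mathcal{A}_n$, where $\mathcal{Y}_n$ is the superdiagonal block matrix of the $X_{n,k}$ (see \eqref{def:Y}) and $\mathcal{A}_n$ is the superdiagonal block matrix of the $A_{n,k}$ (see \eqref{def:A}).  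In the linearized picture the random and deterministic parts are \emph{additive}, the perturbation $\mathcal{A}_n$ is genuinely deterministic with rank and operator norm $O(1)$, and there is no $M_n$: the mixed terms of the product only reappear when you raise the linearized matrix to the $m$-th power, which is exactly what you avoid doing.  From there the argument is as you imagine for the additive case: Lemma~\ref{lemma:eigenvalue} reduces the eigenvalue condition to $\det(I_k+\mathcal{C}_n\mathcal{G}_n(z)\mathcal{B}_n)=0$ with $\mathcal{A}_n=\mathcal{B}_n\mathcal{C}_n$ a deterministic low-rank factorization, the isotropic limit law gives $\mathcal{C}_n\mathcal{G}_n(z)\mathcal{B}_n\to -\frac{1}{z}\mathcal{C}_n\mathcal{B}_n$ uniformly on $\{|z|\geq 1+\eps'\}$, and Rouch\'{e}'s theorem transfers the count and location of zeros; finally one undoes the linearization by taking $m$-th powers.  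So the fix is not ``push the bilinear-form concentration through the linearization while keeping track of the ranges of $M_n$'', but rather ``linearize first so that $M_n$ never arises.''
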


Theorem \ref{thm:outliers} can be viewed as a generalization of Theorem \ref{thm:outlier:iid}.  In fact, when $m = 1$, Theorem \ref{thm:outliers} is just a restatement of Theorem \ref{thm:outlier:iid}.  However, the most interesting cases occur when $m \geq 2$.  Indeed, in these cases, Theorem \ref{thm:outliers} implies that the outliers of $P_n$ are asymptotically close to the outliers of the product $A_n$.  Specifically, if even one of the deterministic matrices $A_{n,k}$ is zero, asymptotically, there cannot be any outliers for the product $P_n$.  
Figure \ref{Fig:PertProdEach} presents a numerical simulation of Theorem \ref{thm:outliers}.

\begin{figure}[h]
	\includegraphics[scale=.5]{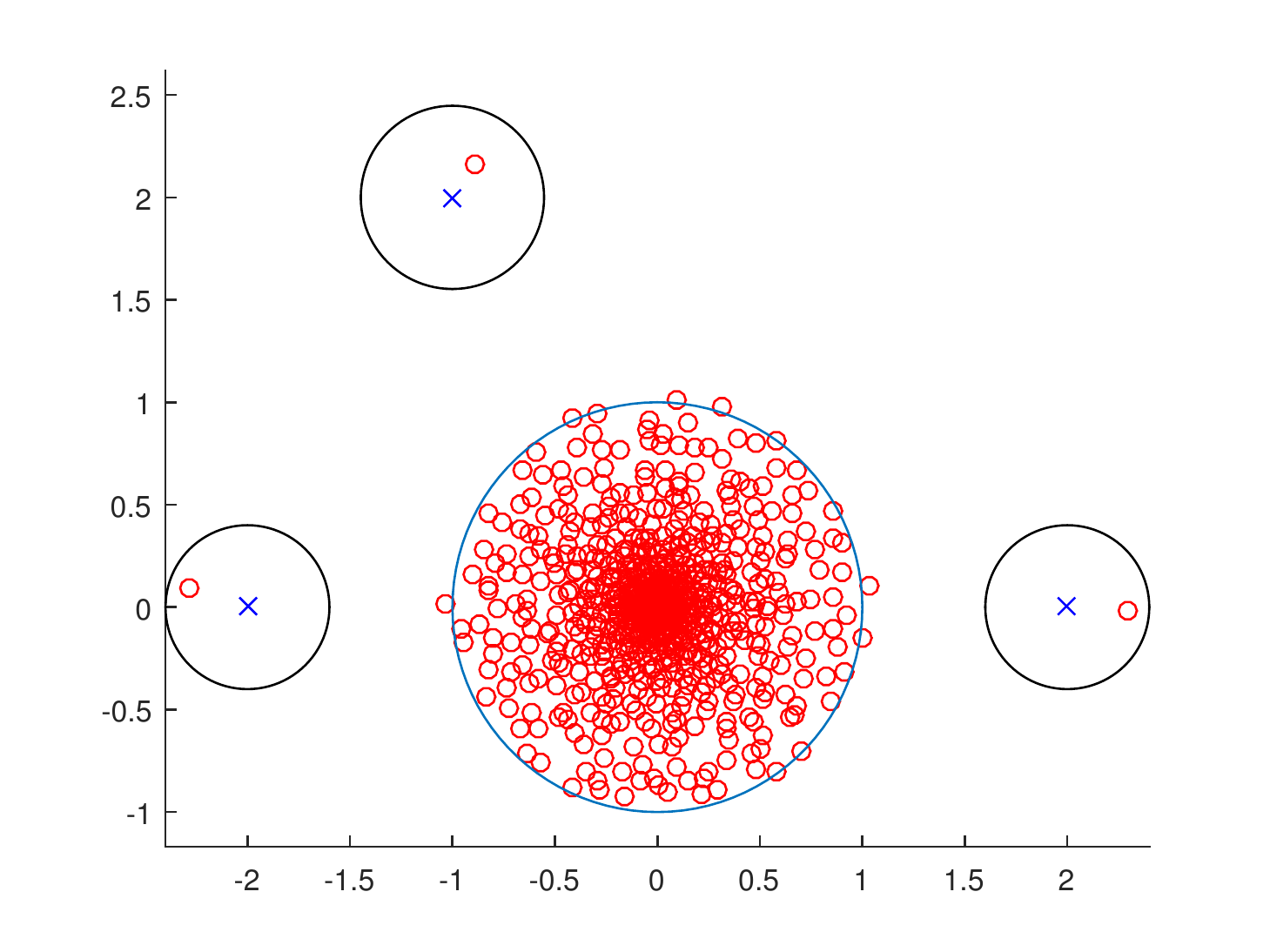}
	\caption{In the above figure, we display the eigenvalues of products of random matrices of the form $\prod_{k=1}^{5}\left(\frac{1}{\sqrt{1000}}X_{k}+A_{k}\right)$, where $X_1, \ldots, X_5$ are $1000\times 1000$ independent iid matrices with symmetric $\pm 1$ Bernoulli entries, and the product of the deterministic matrices $A_1, \ldots, A_5$ is $\text{diag}(-2,-1+2i,2,0,\ldots,0)$.  Each nonzero eigenvalue of the product $A_1 \cdots A_5$ is marked with a cross.  }
	\label{Fig:PertProdEach}
\end{figure}

\subsection{Outline}
The main results of this paper focus on iid matrices. However, one may ask if these results may be generalized to other matrix ensembles. These questions and others are discussed in Section \ref{Sec:RelatedResults}.  In Sections \ref{Sec:PreToolsAndNotation} and \ref{sec:overview}, we present some preliminary results and tools. In particular, Section \ref{Sec:PreToolsAndNotation} presents our notation and some standard linearization results that will be used throughout the paper. Section \ref{sec:overview} contains a key eigenvalue criterion lemma and a brief overview of the proofs of the main results. In Section \ref{sec:isotropic}, we state an isotropic limit law and use it to prove our main results.  
The majority of the paper (Sections \ref{Sec:TruncAndTools}--\ref{sec:combin}) is devoted to the proof of this isotropic limit law.  Section \ref{Sec:RelatedResultProofs} contains the proofs of the related results from Section \ref{Sec:RelatedResults}. A few auxiliary results and proofs are presented in the appendices.

\section{Related results, applications, and open questions}
\label{Sec:RelatedResults}
There are a number of related results which are similar to the main results of this paper, but do not directly follow from the theorems in Section 2. We discuss these results as well as some applications of our main results in this section.

\subsection{Related Results}

While our main results have focused on independent iid matrices, it is also possible to consider the case when the random matrices $X_{n,1}, \ldots, X_{n,m}$ are no longer independent.  In particular, we consider the extreme case where $X_{n,1} = \cdots = X_{n,m}$ almost surely.  In this case, we obtain the following results, which are analogs of the results from Section \ref{Sec:Main}.

\begin{theorem}[No outliers for multiplicative perturbations] Assume that $\xi$ is a complex-valued random variable which satisfies Assumption \ref{assump:4th} with $\sigma^2:=\var(\xi)$.  For each $n \geq 1$, let $X_{n}$ be an $n \times n$ iid random matrix with atom variable $\xi$. In addition for any finite integer $s\geq 1$, let $\iidmat{A}{n}{1},\iidmat{A}{n}{2},\ldots,\iidmat{A}{n}{s}$ be $n\times n$ deterministic matrices, each of which has rank $O(1)$ and operator norm $O(1)$. Define the product $P_{n}$ to be the product of $m$ copies of $\frac{1}{\sqrt{n}}X_{n}$ with the terms $$\left(I+\iidmat{A}{n}{1}\right),\left(I+\iidmat{A}{n}{2}\right),\ldots,\left(I+\iidmat{A}{n}{s}\right)$$ 
	in some fixed order. Then for any $\delta > 0$, almost surely, for sufficiently large $n$, $P_n$ has no eigenvalues in the region $\{z\in\C\;:\;|z|>\sigma^{m}+\delta\}$.
	\label{Thm:NoOutlierInPowerPert}
\end{theorem}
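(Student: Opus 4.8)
\medskip
\noindent\textbf{Proof plan.}
The plan is to mirror the proof of Theorem~\ref{Thm:NoOutlierInProductPert}, the analogous statement for a product of \emph{independent} iid matrices, with the isotropic limit law for such products replaced by its counterpart for a power of a single iid matrix. Write $Y_n := \frac{1}{\sqrt n}X_n$ and $B_n := Y_n^m$, and decompose
\[
	P_n = B_n + M_n ,
\]
where $M_n$ collects the ``mixed'' terms produced on expanding $\prod_{i=1}^s(I+\iidmat{A}{n}{i})$ in the definition of $P_n$; each such term contains at least one factor $\iidmat{A}{n}{i}$ and hence has rank $O(1)$. Since each $\iidmat{A}{n}{i}$ has operator norm $O(1)$ and $\|Y_n\|_{\mathrm{op}} = O(1)$ almost surely (a Bai--Yin type bound, using $\E|\xi|^4 < \infty$), the matrix $M_n$ almost surely has rank $O(1)$ and operator norm $O(1)$. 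One then factors $M_n = C_n D_n$ with $C_n \in \C^{n\times R}$, $D_n \in \C^{R\times n}$, $R = O(1)$, $\|C_n\|_{\mathrm{op}},\|D_n\|_{\mathrm{op}} = O(1)$ almost surely, recording the combinatorial structure of the factorization (which powers of $Y_n$ and which matrices $\iidmat{A}{n}{i}$ appear in $C_n$ and in $D_n$) precisely as in the proof of Theorem~\ref{Thm:NoOutlierInProductPert}.

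By Theorem~\ref{thm:nooutlier:iid} applied to the iid matrix $\sigma^{-1}X_n$ (which has mean zero, unit variance, and finite fourth moment), the spectral radius of $Y_n$ tends to $\sigma$ almost surely, so by the spectral mapping theorem the spectral radius of $B_n = Y_n^m$ tends to $\sigma^m$ almost surely. Hence, for fixed $\delta > 0$, almost surely for all large $n$ the matrix $B_n - zI$ is invertible for every $z$ with $|z| \ge \sigma^m + \delta$, and for such $z$ the eigenvalue criterion of Section~\ref{sec:overview} (a Schur complement identity applied to $P_n - zI = (B_n - zI) + C_n D_n$) shows that $z$ is an eigenvalue of $P_n$ if and only if
\[
	\det\!\bigl( I_R + D_n (B_n - zI)^{-1} C_n \bigr) = 0 .
\]
It therefore suffices to prove that, almost surely, $D_n(B_n - zI)^{-1}C_n = N_n(z) + o(1)$ uniformly for $|z| \ge \sigma^m + \delta$ as $n\to\infty$, where each $N_n(z)$ is a \emph{nilpotent} matrix; since $\det(I_R + N) = 1$ for every nilpotent $N$, this yields $\det(I_R + D_n(B_n - zI)^{-1}C_n) = 1 + o(1)$ uniformly in $z$, which is nonzero for all $|z| \ge \sigma^m + \delta$ once $n$ is large, so $P_n$ has no eigenvalue with $|z| > \sigma^m + \delta$.

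The heart of the matter is this resolvent statement, and it is here that the loss of independence among the $m$ factors of $Y_n$ must be absorbed. Since $B_n = Y_n^m$ commutes with $Y_n$, one uses the partial-fraction identity
\[
	(B_n - zI)^{-1} = \sum_{k=0}^{m-1} \frac{1}{m\,\zeta_k^{\,m-1}}\,(Y_n - \zeta_k I)^{-1},
\]
where $\zeta_0,\dots,\zeta_{m-1}$ are the $m$-th roots of $z$, valid whenever $z$ is not an eigenvalue of $B_n$. Substituting this, commuting the $O(1)$ powers of $Y_n$ hidden in $C_n$ and $D_n$ through the resolvent, and performing polynomial division, one writes $D_n(B_n - zI)^{-1}C_n$ as a linear combination, with coefficients rational in $z$, of bilinear-type forms in which $Y_n$ occurs only through finitely many powers and a single resolvent $(Y_n - \zeta_k I)^{-1}$, sandwiched between the rank-$O(1)$ deterministic matrices $\iidmat{A}{n}{i}$. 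By the isotropic limit law of Section~\ref{sec:isotropic} for the single iid matrix $Y_n$ (equivalently, the case $m=1$ of that law together with a routine extension covering a bounded number of $Y_n$-powers and one resolvent between rank-$O(1)$ deterministic factors), each such form converges almost surely, uniformly on compact subsets of $\{|z|\ge\sigma^m+\delta\}$, with only the variance $\sigma^2$ surviving in the limit (a manifestation of the asymptotic freeness of $Y_n$ from deterministic matrices). Two features then force the limit to be nilpotent, exactly as in Theorem~\ref{Thm:NoOutlierInProductPert} (the heuristic: a perturbation with no deterministic term creates no outliers). First, any of these forms in which a power $Y_n^c$ with $c\ge 1$ remains ``exposed'' --- flanked by rank-$O(1)$ deterministic matrices and not absorbed into the resolvent --- tends to $0$, since $w_n^\ast Y_n^c v_n \to 0$ for deterministic unit vectors $v_n, w_n$. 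Second, a non-vanishing entry of $D_n(B_n - zI)^{-1}C_n$ can occur only in the block indexed by a pair of mixed terms $(t,t')$ such that all $Y_n$-factors of $t$ precede its first $\iidmat{A}{n}{i}$ and the first $\iidmat{A}{n}{i}$ of $t'$ precedes all of its $Y_n$-factors; no single mixed term has both properties (as $m\ge 1$), so $N_n(z)$ is strictly block-triangular, and in fact $N_n(z)^2 = 0$. The mechanism is already visible for the model perturbation $P_n = Y_n^a(I+\iidmat{A}{n}{1})Y_n^b$ with $a+b=m$ and $\iidmat{A}{n}{1} = v_n w_n^\ast$ of rank one, for which $C_n = Y_n^a v_n$, $D_n = w_n^\ast Y_n^b$ and
\[
	D_n(B_n - zI)^{-1}C_n = w_n^\ast Y_n^m (B_n - zI)^{-1} v_n = w_n^\ast v_n + z\, w_n^\ast (B_n - zI)^{-1} v_n \longrightarrow w_n^\ast v_n - w_n^\ast v_n = 0 ,
\]
using $w_n^\ast (B_n - zI)^{-1} v_n \to -\tfrac{1}{z} w_n^\ast v_n$; the general case is an organized version of this computation.

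It remains to pass from pointwise to uniform convergence in $z$. The map $z\mapsto D_n(B_n - zI)^{-1}C_n$ is analytic on $\{|z| > \rho(B_n)\}$, which almost surely eventually contains $\{|z|\ge\sigma^m+\delta/2\}$, and tends to $0$ as $|z|\to\infty$; by the maximum modulus principle (in the variable $1/z$) it is enough to control it on the circle $\{|z| = \sigma^m+\delta\}$. There a standard covering-net argument, combined with an almost-sure polynomial bound $\|(B_n - zI)^{-1}\| \le n^{O(1)}$ on a neighbourhood of that circle (from a polynomial lower bound on the least singular values of the matrices $Y_n - \zeta_k I$, as in the proofs of the circular law) and the quantitative rate of convergence in the isotropic law, promotes the pointwise convergence of the previous step to the uniform statement; a union bound over the polynomially many net points is afforded by the small exceptional probabilities in these estimates. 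Combining the four steps proves the theorem. The main obstacle is the resolvent estimate of the third paragraph: establishing the required isotropic limit law in the presence of the $m$-fold dependence among the factors of $Y_n$, and organizing the combinatorics so that all the surviving contributions of the mixed terms assemble into a nilpotent matrix. The remaining ingredients --- the decomposition, the input from Theorem~\ref{thm:nooutlier:iid}, the eigenvalue criterion, and the net argument --- carry over essentially verbatim from the proof of Theorem~\ref{Thm:NoOutlierInProductPert}.
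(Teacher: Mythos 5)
Your proposal is correct in its essential structure and conclusion, but it takes a genuinely different route from the paper. The paper recycles the two-level linearization strategy of Theorem~\ref{Thm:NoOutlierInProductPert}: it passes to the $mn\times mn$ cyclic block matrix $\mathcal{Y}_n$, then to a second $2mn\times 2mn$ corner-block linearization $\mathcal{L}_n=\mathcal{X}_n+\Acorner$ in which \emph{all} the low-rank perturbation is collected into the deterministic, strictly block upper-triangular (hence manifestly nilpotent) matrix $\Acorner$, so one only ever applies the isotropic limit law with deterministic rank-$O(1)$ factors $\mathcal{B}_n,\mathcal{C}_n$ around the resolvent $\mathcal{R}_n(z)$; the dedicated repeated-matrix isotropic law (Theorem~\ref{Thm:RepeatedProdIsotropic}) does the heavy lifting. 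You instead stay in dimension $n$, write $P_n=Y_n^m+M_n$ with $M_n$ \emph{random} but rank $O(1)$, factor $M_n=C_nD_n$ by cutting each mixed term at its first $\iidmat{A}{n}{i}$, and show $D_n(Y_n^m-zI)^{-1}C_n$ converges to a nilpotent matrix. The price is that $C_n,D_n$ are random, which is unusual in the Sylvester-identity game; you pay it by commuting the $O(1)$ powers of $Y_n$ hidden in $C_n,D_n$ through the resolvent, exploiting $Y_n B_n = B_n Y_n$ (something unavailable in the independent-matrix setting), thereby reducing everything to the $m=1$ isotropic law plus the elementary fact $w_n^\ast Y_n^c v_n\to 0$ for deterministic unit vectors when $c\ge 1$. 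The bookkeeping in your nilpotency argument is sound — with the "cut at the first $A$" factorization, a block $(t,t')$ of the limit survives only if all $Y_n$-factors of $t$ precede its first $A$ (forcing $a_t=m$, hence $c_t=0$ and the intermediate scalar in $D_n^{(t)}$ is deterministic) and $t'$ has no $Y_n$-prefix ($a_{t'}=0$); since no mixed term can satisfy both when $m\ge 1$, the limit is strictly block off-diagonal and squares to zero — though you should present that reduction explicitly rather than through a single rank-one model case, since for general rank and general ordering one must also track the vanishing of the random intermediate scalars. In short: the paper's approach is structurally uniform across all its outlier theorems at the cost of an extra doubling linearization and a full repeated-matrix isotropic law; your approach is more economical and more transparent in the $n\times n$ picture but is specific to powers of one matrix, and the nilpotency is a combinatorial fact about mixed terms rather than a visible structural feature of a linearization.
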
 

For a single additive perturbation, we have the following analog of Theorem~\ref{thm:nomixed}.

\begin{theorem}[Outliers for a single additive perturbation] \label{thm:nomixedinpower}
	Assume $\xi$ is a complex-valued random variable which satisfies Assumption \ref{assump:4th} with $\sigma^2 := \var(\xi)$.  For each $n \geq 1$, let $X_{n}$ be an $n \times n$ iid random matrix with atom variable $\xi$. In addition, let $A_n$ be an $n \times n$ deterministic matrix with rank $O(1)$ and operator norm $O(1)$.  Define
	\begin{equation} \label{eq:nomixedpower}
	P_n := n^{-m/2} X_{n}^{m} + A_n .
	\end{equation}
	Let $\eps > 0$, and suppose that for all sufficiently large $n$, there are no eigenvalues of $A_n$ in the band $\{z \in \C : \sigma^{m} + \eps < |z| < \sigma^{m} + 3\eps\}$, and there are $j$ eigenvalues $\lambda_1(A_n), \ldots, \lambda_j(A_n)$ for some $j = O(1)$ in the region $\{z \in \C : |z| \geq \sigma^{m}+3 \eps\}$. Then, almost surely, for sufficiently large $n$, there are precisely $j$ eigenvalues $\lambda_1(P_n), \ldots, \lambda_j(P_n)$ of $P_n$ in the region $\{z \in \C : |z| \geq \sigma^{m} + 2\eps\}$, and after labeling these eigenvalues properly, 
	$$ \lambda_i\left(P_n \right) = \lambda_i(A_n) + o(1) $$
	as $n \rightarrow \infty$ for each $1 \leq i \leq j$.  
\end{theorem}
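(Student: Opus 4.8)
The plan is to follow the strategy of Theorem~\ref{thm:nomixed}, but to exploit the fact that the resolvent of $n^{-m/2}X_n^m$ can be written through resolvents of the single matrix $n^{-1/2}X_n$, so that only the $m=1$ instance of the isotropic limit law (Section~\ref{sec:isotropic}; for a single matrix this essentially goes back to Tao~\cite{Tout}) is needed. First I would reduce the problem to a bounded-size determinantal equation. Factor $A_n=B_nC_n^\ast$ with $B_n,C_n\in\C^{n\times r}$, $r=\rank(A_n)=O(1)$ and $\|B_n\|,\|C_n\|=O(1)$ (e.g.\ via the singular value decomposition of $A_n$). Since the eigenvalues of $n^{-m/2}X_n^m$ are the $m$-th powers of those of $n^{-1/2}X_n$, a rescaled form of Theorem~\ref{thm:nooutlier:iid} shows that almost surely, for all large $n$, the spectral radius of $n^{-m/2}X_n^m$ is at most $\sigma^m+\eps$ and (using the classical bound $\|n^{-1/2}X_n\|=O(1)$ under the fourth-moment hypothesis) $\|P_n\|\le R$ for some fixed $R$. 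Hence on the compact annulus $K:=\{z:\sigma^m+2\eps\le|z|\le R\}$ the resolvent $(n^{-m/2}X_n^m-zI)^{-1}$ exists, and by Sylvester's determinant identity, $z\in K$ is an eigenvalue of $P_n$, with the same algebraic multiplicity, if and only if
\[ F_n(z):=\det\!\big(I_r+C_n^\ast(n^{-m/2}X_n^m-zI)^{-1}B_n\big)=0. \]
As every eigenvalue of $P_n$ with $|z|\ge\sigma^m+2\eps$ lies in $K$, it suffices to count and locate the zeros of $F_n$ in $K$.

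Next I would identify the deterministic limit of $F_n$ on $K$. Let $\zeta_0,\dots,\zeta_{m-1}$ be the $m$-th roots of $z$. The elementary partial-fraction identity
\[ (Y^m-zI)^{-1}=\frac{1}{mz}\sum_{j=0}^{m-1}\zeta_j\,(Y-\zeta_jI)^{-1}, \]
valid whenever no $\zeta_j$ is an eigenvalue of $Y$ (it reduces to a degree-$(m-1)$ polynomial identity), applied with $Y=n^{-1/2}X_n$ expresses the sandwiched resolvent in terms of single-matrix resolvents at the points $\zeta_j$, each satisfying $|\zeta_j|=|z|^{1/m}\ge(\sigma^m+2\eps)^{1/m}>\sigma$. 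Feeding in the single-matrix isotropic law, which gives $C_n^\ast(n^{-1/2}X_n-\zeta I)^{-1}B_n=-\zeta^{-1}C_n^\ast B_n+o(1)$ almost surely and uniformly for $\zeta$ in the relevant compact subset of $\{|\zeta|>\sigma\}$, and summing, I obtain almost surely and uniformly on $K$
\[ C_n^\ast(n^{-m/2}X_n^m-zI)^{-1}B_n=\frac{1}{mz}\sum_{j=0}^{m-1}\zeta_j\big(-\zeta_j^{-1}C_n^\ast B_n+o(1)\big)=-\frac1z\,C_n^\ast B_n+o(1), \]
so that $F_n(z)=z^{-r}\det(zI_r-C_n^\ast B_n)+o(1)=:g_n(z)+o(1)$ uniformly on $K$.

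Finally I would run the argument-principle bookkeeping. The nonzero eigenvalues of $C_n^\ast B_n$ coincide, with multiplicity, with those of $B_nC_n^\ast=A_n$; by hypothesis these avoid the band $\sigma^m+\eps<|z|<\sigma^m+3\eps$, exactly $j$ of them lie in $|z|\ge\sigma^m+3\eps$ (all with $|z|\le R$), and the remaining eigenvalues of $C_n^\ast B_n$ (including spurious zeros) lie in $|z|\le\sigma^m+\eps$. Thus $g_n$ has exactly $j$ zeros in $K$, and for each fixed $\rho>0$, $|g_n|$ is bounded below by a constant depending only on $\eps,R,r,\rho$ on the set $\{z\in K:\dist(z,\{\lambda_1(A_n),\dots,\lambda_j(A_n)\})\ge\rho\}$, in particular on $\partial K$. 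Since $\sup_K|F_n-g_n|\to0$ almost surely, Rouché's theorem gives that $F_n$ has exactly $j$ zeros in $K$ (so $P_n$ has precisely $j$ eigenvalues with $|z|\ge\sigma^m+2\eps$) and that all of them lie within $\rho$ of $\{\lambda_1(A_n),\dots,\lambda_j(A_n)\}$; applying Rouché on small disks around (clusters of) the $\lambda_i(A_n)$ matches multiplicities, and letting $\rho\to0$ along a sequence produces a labeling with $\lambda_i(P_n)=\lambda_i(A_n)+o(1)$.

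The only genuinely new ingredient beyond inputs already available in the paper is the partial-fraction reduction, which routes everything through the single-matrix isotropic law; the serious analysis is imported. The two points that need care are: (i) the transfer of the isotropic law through the finite sum over $\zeta_j$ is uniform on $K$ — immediate because the single-matrix law is uniform in the spectral parameter on compact sets bounded away from the disk of radius $\sigma$; and (ii) the standard but slightly delicate Rouché bookkeeping, which must be carried out without assuming that $A_n$ or its outlier eigenvalues converge. I expect (ii) to be the main, though still routine, obstacle.
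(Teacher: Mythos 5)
Your proposal is correct, but it takes a genuinely different route from the paper.  The paper proves Theorem~\ref{thm:nomixedinpower} by running the proof of Theorem~\ref{thm:nomixed} verbatim with Lemma~\ref{lem:PowerIsotropic} (an isotropic law for the resolvent of $n^{-m/2}X_n^m$) in place of Corollary~\ref{cor:ProductIsotropic}; that lemma is in turn deduced from Theorem~\ref{Thm:RepeatedProdIsotropic} via the block-inverse identity $\mathcal{G}_n^{[1,1]}(z)=z^{m-1}(n^{-m/2}X_n^m-z^mI)^{-1}$, and part~(iii) of Theorem~\ref{Thm:RepeatedProdIsotropic} is established by a Neumann-series expansion combined with \cite[Lemma~2.3]{Tout}.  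So the paper still passes through the $mn\times mn$ linearization $\mathcal{Y}_n$ even though all blocks are equal.  Your partial-fraction identity
\[
(Y^m-zI)^{-1}=\frac{1}{mz}\sum_{j=0}^{m-1}\zeta_j\,(Y-\zeta_jI)^{-1},
\]
with $\zeta_j$ the $m$-th roots of $z$ and coefficients $c_j = 1/p'(\zeta_j)=\zeta_j/(mz)$ for $p(w)=w^m-z$, bypasses the linearization entirely and reduces everything to the $m=1$ isotropic law applied at the $m$ points $\zeta_j$, which lie in a fixed compact subset of $\{|\zeta|>\sigma\}$ as $z$ ranges over the annulus $K$.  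The cancellation $\frac{1}{mz}\sum_j\zeta_j(-\zeta_j^{-1})=-1/z$ is exactly right, and the uniformity you need does hold because the $m=1$ isotropic law (parts~\ref{item:invtbnd} and~\ref{item:isotropic} of Theorem~\ref{thm:isotropic} with $m=1$, after rescaling by $\sigma$) is uniform on compact sets bounded away from the closed disk of radius $\sigma$.  Both arguments ultimately rely on the same $m=1$ inputs (a least-singular-value/resolvent bound and a single-matrix isotropic limit) and both finish with the same Rouch\'e bookkeeping; your route is the more direct one for the repeated-factor case and never introduces the block matrix, while the paper simply reuses the block machinery it needs anyway for the case of independent factors.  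Your caution about carrying out the Rouch\'e step without assuming the outliers $\lambda_i(A_n)$ converge is well placed; the paper is terse on this point, and the cluster-by-cluster argument you sketch is what is actually required.
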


Note that Theorem~\ref{thm:nonzeromean} can also be generalized in an analogous way to Theorem~\ref{thm:nomixedinpower} above.

Lastly, we have the following analog of Theorem \ref{thm:outliers}.  

\begin{theorem}
	\label{Thm:RepeatedProdOutliers}
	Assume $\xi$ is a complex-valued random variable which satisfies Assumption \ref{assump:4th} with $\sigma^2 := \var(\xi)$.  For each $n \geq 1$, let $X_{n}$ be an $n \times n$ iid random matrix with atom variable $\xi$.  In addition, let $m \geq 1$ be an integer and for each $1 \leq k \leq m$, let $A_{n,k}$ be a deterministic $n \times n$ matrix with rank $O(1)$ and operator norm $O(1)$.  Define the products
	\begin{equation} \label{Equ:RepeatedProds}
	P_n := \prod_{k=1}^m \left( \frac{1}{\sqrt{n}} X_{n} + A_{n,k} \right), \quad A_n := \prod_{k=1}^m A_{n,k}.
	\end{equation}
	Let $\eps > 0$, and suppose that for all sufficiently large $n$, there are no eigenvalues of $A_n$ in the band $\{z \in \C : \sigma^{m} + \eps < |z| < \sigma^{m} + 3\eps\}$, and there are $j$ eigenvalues $\lambda_1(A_n), \ldots, \lambda_j(A_n)$ for some $j = O(1)$ in the region $\{z \in \C : |z| \geq \sigma^{m}+3 \eps\}$.  Then, almost surely, for sufficiently large $n$, there are precisely $j$ eigenvalues $\lambda_1(P_n), \ldots, \lambda_j(P_n)$ of the product $P_n$ in the region $\{z \in \C : |z| \geq \sigma^{m} + 2\eps\}$, and after labeling these eigenvalues properly, 
	$$ \lambda_i\left(P_n \right) = \lambda_i(A_n) + o(1) $$
	as $n \rightarrow \infty$ for each $1 \leq i \leq j$.  
\end{theorem}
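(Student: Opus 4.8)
The plan is to linearize the product and then exploit the cyclic structure via a discrete Fourier transform, which decouples the (now repeated) random part into resolvents of a single iid matrix evaluated outside its spectrum. Set $Z_n:=\tfrac{1}{\sqrt n}X_n$. First I would replace $P_n$ by its $mn\times mn$ block companion matrix $\mathcal X_n$, having $Z_n+A_{n,k}$ in the $k$-th cyclic superdiagonal block, and write $\mathcal X_n=\mathcal Y_n+\mathcal A_n$, where $\mathcal Y_n=C\otimes Z_n$ with $C$ the $m\times m$ cyclic permutation matrix and $\mathcal A_n$ carries the $A_{n,k}$ in the corresponding blocks. By the standard linearization identities of Section~\ref{Sec:PreToolsAndNotation}, $\det(wI_{mn}-\mathcal X_n)$ equals the characteristic polynomial of $P_n$ evaluated at $w^m$, and likewise $\mathcal A_n$ relates to $A_n:=\prod_k A_{n,k}$; hence the nonzero eigenvalues of $\mathcal X_n$ (resp.\ $\mathcal A_n$) are the $m$-th roots, counted $m$-to-one and with matching algebraic multiplicities, of the nonzero eigenvalues of $P_n$ (resp.\ $A_n$). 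Since $\rho(\mathcal Y_n)=\rho(Z_n)\to\sigma$ almost surely (Theorem~\ref{thm:nooutlier:iid}, applied after rescaling $\xi$ to unit variance) and $\|\mathcal X_n\|=O(1)$ almost surely, it suffices --- writing $R_i:=(\sigma^m+i\eps)^{1/m}$, so $\sigma<R_1<R_2<R_3$, and noting that the hypotheses on $A_n$ become, via $w\mapsto w^m$, ``$\mathcal A_n$ has $mj$ eigenvalues with $|w|\ge R_3$ and none with $R_1<|w|<R_3$'' --- to prove that almost surely, for $n$ large, $\mathcal X_n$ has exactly $mj$ eigenvalues with $|w|\ge R_2$, each within $o(1)$ of an eigenvalue of $\mathcal A_n$.

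Diagonalizing $C=FDF^\ast$ with $F$ the unitary DFT matrix and $D=\diag(1,\omega,\dots,\omega^{m-1})$, $\omega:=e^{2\pi i/m}$, conjugation by the fixed unitary $F\otimes I_n$ turns $\mathcal Y_n$ into $\diag_{0\le\ell<m}(\omega^\ell Z_n)$ and $\mathcal A_n$ into a \emph{deterministic} matrix of rank $O(1)$ and operator norm $O(1)$, which I factor as $\mathcal U_n\mathcal V_n^\ast$ with $\mathcal U_n,\mathcal V_n$ deterministic $mn\times r$, $r=O(1)$. (Passing to the linearization is exactly what makes the perturbation deterministic: the ``mixed'' terms are absorbed into the off-diagonal blocks of $\mathcal X_n^m$ and never appear explicitly.) On the almost-sure event that $\rho(Z_n)<R_1$ for all large $n$, the eigenvalue criterion lemma of Section~\ref{sec:overview} says that for $|w|\ge R_1$, $w$ is an eigenvalue of $\mathcal X_n$ if and only if $D_n(w)=0$, with order of vanishing equal to the algebraic multiplicity, where
\[
D_n(w):=\det\Big(I_r+\sum_{\ell=0}^{m-1}V_{n,\ell}^\ast\,\big(\omega^\ell Z_n-w\big)^{-1}U_{n,\ell}\Big)
\]
and $U_{n,\ell},V_{n,\ell}$ are the $n\times r$ block rows of $\mathcal U_n,\mathcal V_n$ (deterministic, norm $O(1)$).

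Each summand in $D_n$ is a bilinear form in the resolvent of a single iid matrix at the point $w\omega^{-\ell}$, which lies outside its spectral radius. Invoking the isotropic limit law of Section~\ref{sec:isotropic} in the case $m=1$ (i.e.\ the isotropic law for a single iid matrix), one obtains $V_{n,\ell}^\ast(\omega^\ell Z_n-w)^{-1}U_{n,\ell}=-w^{-1}V_{n,\ell}^\ast U_{n,\ell}+o(1)$, almost surely and uniformly on $\{R_1\le|w|\le C\}$ for any fixed $C$. Summing over $\ell$ and undoing the DFT, $D_n(w)\to D_n^\infty(w):=\det(I_r-w^{-1}M_n)$ uniformly on $\{R_1\le|w|\le C\}$ almost surely, where $M_n:=\mathcal V_n^\ast\mathcal U_n$ has the same nonzero eigenvalues as $\mathcal A_n$; in particular every eigenvalue of $M_n$ has modulus $\le R_1$ or $\ge R_3$.

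Finally I would run a Rouché/Hurwitz argument. Fix $C$ larger than an almost-sure bound on $\|\mathcal X_n\|$, and an auxiliary radius $R'\in(R_1,R_2)$. Then $D_n^\infty$ has no zeros on $\{|w|=R'\}$ or $\{|w|=C\}$, and since the eigenvalues of $M_n$ are uniformly bounded and avoid the annulus $\{R_1<|w|<R_3\}$, $|D_n^\infty|$ is bounded below by a constant independent of $n$ on these two circles. The uniform convergence $D_n\to D_n^\infty$ then forces, for $n$ large, that $D_n$ has the same number of zeros (with multiplicity) as $D_n^\infty$ in $\{R'<|w|<C\}$ --- namely the $mj$ eigenvalues of $M_n$ of modulus $\ge R_3$ --- and that each zero of $D_n$ there lies within $o(1)$ of one of them. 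Hence $\mathcal X_n$ has exactly $mj$ eigenvalues with $|w|\ge R_2$, all of modulus $\ge R_3-o(1)$, each $o(1)$-close to an eigenvalue of $\mathcal A_n$. Unwinding the $m$-to-one linearization and raising to the $m$-th power (using $(\zeta+o(1))^m=\zeta^m+o(1)$ for bounded $\zeta$) yields exactly $j$ eigenvalues of $P_n$ with $|z|\ge\sigma^m+2\eps$, each equal to $\lambda_i(A_n)+o(1)$. I expect the only genuinely delicate point to be the isotropic law in the \emph{uniform} form used above --- convergence of the bilinear forms against the varying deterministic unit vectors, simultaneously for all $w$ in the annulus --- which requires upgrading pointwise almost-sure convergence by a net together with an equicontinuity (resolvent) estimate off the spectrum; but this is supplied, in its $m=1$ case, by the isotropic law proved in the body of the paper, so the real content of the present theorem lies in the Fourier decoupling and the attendant bookkeeping.
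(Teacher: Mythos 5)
Your proposal is correct, and it takes a genuinely different route from the paper's. The paper handles this theorem by separately establishing an isotropic limit law for the repeated-matrix linearization $\mathcal{Y}_n$ built from identical copies of $X_n$ (its Theorem~\ref{Thm:RepeatedProdIsotropic}), proved by combining the block-inverse formula
$\mathcal{G}_n^{[a,b]}(z)=z^{(m-1)-\alpha}\,n^{-\alpha/2}X_n^\alpha\,(n^{-m/2}X_n^m-z^m I)^{-1}$
with a Neumann expansion and Tao's Lemma~2.3 on $u^\ast(n^{-1/2}X_n)^k v$; it then says the Rouch\'e argument follows the proof of Theorem~\ref{thm:outliers} verbatim. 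You instead conjugate the linearization by the fixed unitary $F\otimes I_n$ (DFT in the cyclic index), which diagonalizes the random part $C\otimes Z_n$ into $\diag(\omega^\ell Z_n)$ while turning the low-rank perturbation into another deterministic matrix $\tilde{\mathcal{A}}_n$ of rank and norm $O(1)$; the eigenvalue criterion then produces a sum of $m$ bilinear forms in resolvents of the single matrix $Z_n$ at the points $\omega^{-\ell}w$, each of which is controlled by the $m=1$ case of the isotropic law. Both proofs are ultimately exploiting the same algebraic fact — that the cyclic structure is governed by the $m$-th roots of unity — but where the paper re-derives the needed resolvent asymptotics from scratch, your DFT step genuinely decouples the problem and lets you quote the one-matrix isotropic law directly; this is conceptually cleaner and makes the mechanism (``no new randomness appears; the mixed terms were always determined by the linearization'') transparent. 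The price is that the DFT trick is specific to the repeated case and would not help with the independent product. Two bookkeeping points you glossed over, both harmless: the isotropic law is stated for unit vectors, so one applies it column-by-column to the $n\times r$ blocks $U_{n,\ell},V_{n,\ell}$ (there are $O(1)$ pairs, so uniformity survives), and the law must be rescaled from unit variance to variance $\sigma^2$.
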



The proofs of these results are presented in Section \ref{Sec:RelatedResultProofs} and use similar techniques to the proofs of the main results from Section \ref{Sec:Main}. 

\subsection{Applications} Random matrices are useful tools in the study of many physically motivated systems, and we note here two potential applications for products of perturbed random matrices.  First, iid Gaussian matrices can be used to model neural networks as in, for example, \cite{AR, AbSu, ACHLM}. In the case of a linear version of the feed-forward networks in \cite{AbSu}, the model becomes a perturbation of a product of iid random matrices, which, if the interactions in the model were fixed, could potentially be analyzed using approaches in the current paper.  Second, one can conceive of a dynamical system (see, for example, \cite{I}) evolving according to a matrix equation, which, when iterated, would lead a matrix product of the form discussed in Theorem~\ref{thm:outliers}.

\subsection{Open Questions}

While our main results have focused on iid matrices, it is natural to ask if the same results can be extended to other matrix models.  For example, Theorem \ref{thm:ORSV} and the results in \cite{ORSV} also hold for products of so-called elliptic random matrices.  However, the techniques used in this paper (in particular, the combinatorial techniques in Section \ref{sec:combin}) rely heavily on the independence of the entries of each matrix.  It is an interesting question whether an alternative proof can be found for the case when the entries of each matrix are allowed to be dependent.  

In this paper, we have focused on the asymptotic location of the outlier eigenvalues.  One can also ask about the fluctuations of the outliers.  For instance, in \cite{R}, the joint fluctuations of the outlier eigenvalues from Theorem \ref{thm:outlier:iid} were studied.  We plan to pursue this question elsewhere.  


\section{Preliminary tools and notation}
\label{Sec:PreToolsAndNotation}

This section is devoted to introducing some additional concepts and notation required for the proofs of our main results.  
In Section \ref{sec:overview}, we present a brief overview of our proofs and explain how these concepts will be used.  

\subsection{Notation} \label{sec:notation}

We use asymptotic notation (such as $O,o, \Omega$) under the assumption that $n \to \infty$.  In particular, $X= O(Y)$, $Y = \Omega(X)$, $X \ll Y$, and $Y \gg X$ denote the estimate $|X| \leq C Y$, for some constant $C > 0$ independent of $n$ and for all $n \geq C$.  If we need the constant $C$ to depend on another constant, e.g. $C = C_k$, we indicate this with subscripts, e.g. $X = O_{k}(Y)$, $Y = \Omega_k(X)$, $X \ll_k Y$, and $Y\gg_k X$.  We write $X = o(Y)$ if $|X| \leq c(n) Y$ for some sequence $c(n)$ that goes to zero as $n \to \infty$.  Specifically, $o(1)$ denotes a term which tends to zero as $n \to \infty$.  If we need the sequence $c(n)$ to depend on another constant, e.g. $c(n) = c_k(n)$, we indicate this with subscripts, e.g. $X = o_k(Y)$.  

Throughout the paper, we view $m$ as a fixed integer.  Thus, when using asymptotic notation, we will allow the implicit constants (and implicit rates of convergence) to depend on $m$ without including $m$ as a subscript (i.e. we will not write $O_m$ or $o_m$).  

An event $E$, which depends on $n$, is said to hold with \emph{overwhelming probability} if $\Prob(E) \geq 1 - O_C(n^{-C})$ for every constant $C > 0$.  We let $\oindicator{E}$ denote the indicator function of the event $E$, and we let $E^{c}$ denote the complement of the event $E$.  We write a.s. for almost surely.  

For a matrix $M$, we let $\|M\|$ denote the spectral norm of $M$, and we let $\|M\|_2$ denote the Hilbert-Schmidt norm of $M$ (defined in \eqref{eq:def:hs}). We denote the eigenvalues of an $n \times n$ matrix $M$ by $\lambda_{1}(M),\ldots,\lambda_{n}(M)$, and we let $\rho(M):=\max\{|\lambda_{1}(M)|,\ldots,|\lambda_{n}(M)|\}$ denote its spectral radius. We let $I_n$ denote the $n \times n$ identity matrix and $0_n$ denote the $n \times n$ zero matrix.  Often we will just write $I$ (or $0$) for the identity matrix (alternatively, zero matrix) when the size can be deduced from context. 

The singular values of an $n\times n$ matrix $M_{n}$ are the non-negative square roots of the eigenvalues of the matrix $M_{n}^{*}M_{n}$ and we will denote their ordered values $s_{1}(M_{n})\geq s_{2}(M_{n})\geq \cdots \geq s_{n}(M_{n})$.

We let $C$ and $K$ denote constants that are non-random and may take on different values from one appearance to the next.  The notation $K_p$ means that the constant $K$ depends on another parameter $p$.  We allow these constants to depend on the fixed integer $m$ without explicitly denoting or mentioning this dependence. For a positive integer $N$, we let $[N]$ denote the discrete interval $\{1, \ldots, N\}$.  For a finite set $S$, we let $|S|$ denote its cardinality.  We let $\sqrt{-1}$ denote the imaginary unit and reserve $i$ as an index.

\subsection{Linearization}
Let $M_1, \ldots, M_m$ be $n \times n$ matrices, and suppose we wish to study the product $M_1 \cdots M_m$.  A useful trick is to linearize this product and instead consider the $mn \times mn$ block matrix 
\begin{equation} \label{def:M}
	\mathcal{M} := \begin{bmatrix} 
				0 &  {M}_{1} &     &            & 0       \\
                         		0 & 0    & {M}_{2} &            & 0        \\      
                           		&      & \ddots & \ddots     &         \\
                         		0 &      &     &          0 & {M}_{m-1} \\
                       		{M}_{m} &      &     &            &  0      
	\end{bmatrix}.
\end{equation}
The following proposition relates the eigenvalues of $\mathcal{M}$ to the eigenvalues of the product $M_1 \cdots M_m$.  We note that similar linearization tricks have been used previously; see, for example, \cite{A, BJW, HT, ORSV, OS} and references therein.  

\begin{proposition} \label{prop:linear}
Let $M_1, \ldots, M_m$ be $n \times n$ matrices.  Let $P := M_1 \cdots M_m$, and assume $\mathcal{M}$ is the $mn \times mn$ block matrix defined in \eqref{def:M}.  Then
$$ \det(\mathcal{M}^m - z I) = [\det( P - z I)]^m $$
for every $z \in \mathbb{C}$.  In other words, the eigenvalues of $\mathcal{M}^m$ are the eigenvalues of $P$, each with multiplicity $m$.  
\end{proposition}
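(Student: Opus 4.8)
The plan is to compute $\det(\mathcal{M}^m - zI)$ directly by exploiting the block structure of $\mathcal{M}$. First I would observe that $\mathcal{M}$ is a block companion-type matrix, and that raising it to the $m$-th power has a clean effect: writing $\mathcal{M}$ in terms of its blocks, the only nonzero block of $\mathcal{M}$ in block-row $k$ is $M_k$ sitting in block-column $k+1$ (indices mod $m$), so $\mathcal{M}$ acts as a "weighted cyclic shift" on the $m$ blocks of $\mathbb{C}^{mn} = (\mathbb{C}^n)^{\oplus m}$. Composing this shift with itself $m$ times returns each block-coordinate to itself, and the resulting diagonal block in position $(k,k)$ is the product $M_k M_{k+1} \cdots M_m M_1 \cdots M_{k-1}$ — a cyclic rotation of the product $P = M_1 \cdots M_m$. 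Hence $\mathcal{M}^m = \diag(P_1, \ldots, P_m)$ is block diagonal, where $P_k$ is a cyclic permutation of the factors of $P$.

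The next step is to note that $\det(\mathcal{M}^m - zI) = \prod_{k=1}^m \det(P_k - zI)$, so it suffices to show each $\det(P_k - zI) = \det(P - zI)$. This follows from the standard fact that cyclic permutations of a matrix product have the same characteristic polynomial: for square matrices $AB$ and $BA$ one has $\det(AB - zI) = \det(BA - zI)$ (true when one factor is invertible, hence everywhere by a density/continuity argument, or directly from $\det(zI - AB) = z^n \det\!\big(I - z^{-1}AB\big)$ and Sylvester's identity), and iterating this rotation moves $P$ to any $P_k$. Therefore $\det(\mathcal{M}^m - zI) = [\det(P - zI)]^m$, which gives the eigenvalue multiplicity statement since the characteristic polynomial of $\mathcal{M}^m$ is the $m$-th power of that of $P$.

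Alternatively — and perhaps more cleanly for a self-contained write-up — I would avoid computing $\mathcal{M}^m$ as a product and instead directly factor $\det(\mathcal{M} - wI)$ for a scalar $w$ using the block structure: a Schur-complement/row-reduction computation on the $mn \times mn$ matrix $\mathcal{M} - wI$ yields $\det(\mathcal{M} - wI) = \pm\det\!\big(w^m I_n - (-1)^{?} M_1 \cdots M_m\big)$ up to sign/normalization bookkeeping, i.e. the eigenvalues of $\mathcal{M}$ are exactly the $m$-th roots of the eigenvalues of $P$ (with multiplicity). Then $\det(\mathcal{M}^m - zI) = \prod_{j} (\lambda_j(\mathcal{M})^m - z)$, and grouping the $m$ roots $w$ of $w^m = \lambda_i(P)$ for each eigenvalue $\lambda_i(P)$ gives $\prod_i (\lambda_i(P) - z)^m = [\det(P - zI)]^m$.

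The main obstacle is purely bookkeeping: keeping track of the sign $(-1)^{(m-1)n}$ (or similar) that arises from the block permutation / cofactor expansion, and handling the non-invertible case in the $\det(AB-zI) = \det(BA-zI)$ step. Neither is deep — the sign cancels because it is raised to an $m$-th power or because we compare characteristic polynomials which are monic up to the same global sign, and the invertibility issue is dispatched by continuity of the determinant in the entries of the $M_k$ (perturb $M_k \to M_k + tI$) — but care is needed to state it cleanly. I expect the cleanest route is the first one: show $\mathcal{M}^m$ is block diagonal with cyclically-rotated products on the diagonal, then invoke invariance of the characteristic polynomial under cyclic rotation.
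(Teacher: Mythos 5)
Your primary route is exactly the paper's proof: compute $\mathcal{M}^m$ to find it is block diagonal with the cyclic rotations $Z_k = M_k \cdots M_m M_1 \cdots M_{k-1}$ on the diagonal, then observe each $Z_k$ has the same characteristic polynomial as $P$ via the $\det(AB-zI)=\det(BA-zI)$ identity (the paper cites Sylvester's determinant theorem \eqref{eq:sylvester} for this). The alternative Schur-complement computation you sketch is a valid second route but is not what the paper does.
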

\begin{proof}
A simple computation reveals that $\mathcal{M}^m$ is a block diagonal matrix of the form
$$ \mathcal{M}^m = \begin{bmatrix} 
				Z_1 &  & 0 \\
				 & \ddots &  \\
				0 &  & Z_m 
			\end{bmatrix}, $$
where $Z_1 := P$ and
$$ Z_k := M_k \cdots M_m M_1 \cdots M_{k-1} $$
for $1 < k \leq m$.  Since each product $Z_2, \ldots, Z_m$ has the same characteristic polynomial\footnote{This fact can easily be deduced from Sylvester's determinant theorem; see \eqref{eq:sylvester}.} as $P$, it follows that
$$ \det( \mathcal{M}^m - zI) = \prod_{k=1}^m \det(Z_k - z I) = [\det (P - zI)]^m $$
for all $z \in \mathbb{C}$.  
\end{proof}

We will exploit Proposition \ref{prop:linear} many times in the coming proofs.  For instance, in order to study the product $X_{n,1} \cdots X_{n,m}$, we will consider the $mn \times mn$ block matrix
\begin{equation} \label{def:Y}
	\mathcal{Y}_n := \begin{bmatrix} 
				0 &  {X}_{n,1} &     &            & 0       \\
                         		0 & 0    & {X}_{n,2} &            & 0        \\      
                           		&      & \ddots & \ddots     &         \\
                         		0 &      &     &          0 & {X}_{n,m-1} \\
                       		{X}_{n,m} &      &     &            &  0      
	\end{bmatrix}
\end{equation}
and its resolvent
\begin{equation} \label{def:G}
	\mathcal{G}_n(z) := \left( \frac{1}{\sqrt{n}} \mathcal{Y}_n - z I \right)^{-1},
\end{equation}
defined for $z \in \mathbb{C}$ provided $z$ is not an eigenvalue of $\frac{1}{\sqrt{n}} \mathcal{Y}_n$.  We study the location of the eigenvalues of $\frac{1}{\sqrt{n}} \mathcal{Y}_n$ in Theorem \ref{thm:isotropic} below.  

Similarly, when we deal with the deterministic $n \times n$ matrices $A_{n,1}, \ldots, A_{n,m}$, it will be useful to consider the analogous $mn \times mn$ block matrix 
\begin{equation} \label{def:A}
	\mathcal{A}_n := \begin{bmatrix} 
				0 &  {A}_{n,1} &     &            & 0       \\
                         		0 & 0    & {A}_{n,2} &            & 0        \\      
                           		&      & \ddots & \ddots     &         \\
                         		0 &      &     &          0 & {A}_{n,m-1} \\
                       		{A}_{n,m} &      &     &            &  0      
			\end{bmatrix}. 
\end{equation}

\subsection{Matrix notation}
Here and in the sequel, we will deal with matrices of various sizes.  The most common dimensions are $n \times n$ and $N \times N$, where we take $N := mn$.  Unless otherwise noted, we denote $n \times n$ matrices by capital letters (such as $M, X, A$) and larger $N \times N$ matrices using calligraphic symbols (such as $\mathcal{M}$, $\mathcal{Y}$, $\mathcal{A}$).  

If $M$ is an $n \times n$ matrix and $1 \leq i,j \leq n$, we let $M_{ij}$ and $M_{(i,j)}$ denote the $(i,j)$-entry of $M$.  Similarly, if $\mathcal{M}$ is an $N \times N$ matrix, we let $\mathcal{M}_{ij}$ and $\mathcal{M}_{(i,j)}$ denote the $(i,j)$-entry of $\mathcal{M}$ for $1 \leq i,j \leq N$.  However, in many instances, it is best to view $N \times N$ matrices as block matrices with $n \times n$ entries.  To this end, we introduce the following notation.  Let $\mathcal{M}$ be an $N \times N$ matrix.  For $1 \leq a, b \leq m$, we let $\mathcal{M}^{[a,b]}$ denote the $n \times n$ matrix which is the $(a,b)$-block of $\mathcal{M}$.    
For convenience, we extend this notation to include the cases where $a = m+1$ or $b= m+1$ by taking the value $m+1$ to mean $1$ (i.e., modulo $m$).  For instance, $\mathcal{M}^{[m+1, m]} = \mathcal{M}^{[1,m]}$.  
For $1 \leq i,j \leq n$, the notation $\mathcal{M}^{[a,b]}_{ij}$ or $\mathcal{M}^{[a,b]}_{(i,j)}$ denotes the $(i,j)$-entry of $\mathcal{M}^{[a,b]}$.  

Sometimes we will deal with $n \times n$ matrices notated with a subscript such as $M_n$.  In this case, for $1 \leq i,j \leq n$, we write $(M_n)_{ij}$ or $M_{n,(i,j)}$ to denote the $(i,j)$-entry of $M_n$.  Similarly, if $\mathcal{M}_n$ is an $N \times N$ matrix, we write $\mathcal{M}^{[a,b]}_{n, (i,j)}$ to denote the $(i,j)$-entry of the block $\mathcal{M}_n^{[a,b]}$. 

In the special case where we deal with a vector, the notation is the same, but only one index or block will be specified. In particular, if $v$ is a vector in $\C^{N}$, then $v_{i}$ denotes the $i$-th entry of $v$. If we consider $v$ to be a block vector with $m$ blocks of size $n$, then $v^{[1]}, \ldots, v^{[m]}$ denote these blocks, i.e., each $v^{[a]}$ is an $n$-vector, and 
$$ v = \begin{pmatrix} v^{[1]} \\ \vdots \\ v^{[m]} \end{pmatrix}. $$
In addition, $v^{[a]}_{i}$ denotes the $i$-th entry in block $a$.

\subsection{Singular Value Inequalities}

For an $n \times n$ matrix $M$, recall that $s_{1}(M)\geq\dots\geq s_{n}(M)$ denote its ordered singular values.  We will need the following elementary bound concerning the largest and smallest singular values.  
\begin{proposition}
	Let $M$ be an $n\times n$ matrix and assume $E\subseteq\C$ such that $$\inf_{z\in E} s_{n}(M-zI)\geq c$$ for some constant $c>0$. Then
	$$\sup_{z\in E} \|G(z) \|\leq \frac{1}{c}$$
	where $G(z)=(M-zI)^{-1}$.
	\label{Prop:LargeAndSmallSingVals} 
\end{proposition}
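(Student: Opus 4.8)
The plan is to reduce the operator-norm bound to a statement about singular values and then invoke the standard relation between the smallest singular value of $M - zI$ and the norm of its inverse. Recall that for any invertible $n \times n$ matrix $B$, the largest singular value of $B^{-1}$ equals $1/s_n(B)$, the reciprocal of the smallest singular value of $B$; equivalently, $\|B^{-1}\| = s_1(B^{-1}) = 1/s_n(B)$. Applying this with $B = M - zI$ gives $\|G(z)\| = 1/s_n(M - zI)$ for every $z$ that is not an eigenvalue of $M$.

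First I would observe that the hypothesis $\inf_{z \in E} s_n(M - zI) \geq c > 0$ in particular guarantees that $s_n(M - zI) > 0$ for every $z \in E$, so $M - zI$ is invertible and $G(z) = (M - zI)^{-1}$ is well-defined on all of $E$. Then, for each fixed $z \in E$, the identity above yields
\[
\|G(z)\| = \frac{1}{s_n(M - zI)} \leq \frac{1}{c},
\]
where the inequality uses the hypothesis $s_n(M - zI) \geq c$. Taking the supremum over $z \in E$ gives $\sup_{z \in E} \|G(z)\| \leq 1/c$, as desired.

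The only point requiring a word of justification is the identity $\|B^{-1}\| = 1/s_n(B)$ for invertible $B$: this follows because the singular values of $B^{-1}$ are precisely the reciprocals of the singular values of $B$ (if $B = U \Sigma V^\ast$ is a singular value decomposition, then $B^{-1} = V \Sigma^{-1} U^\ast$ is one for $B^{-1}$), so the largest singular value of $B^{-1}$ — which is its spectral norm — is $1/s_n(B)$. There is essentially no obstacle here; the proposition is a one-line consequence of this standard fact, and I would present it in two or three sentences.
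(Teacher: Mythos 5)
Your proof is correct and follows essentially the same route as the paper: both argue that the hypothesis forces $M - zI$ to be invertible on $E$, invoke the fact that the singular values of $(M-zI)^{-1}$ are the reciprocals of those of $M - zI$, and conclude $\|G(z)\| = 1/s_n(M-zI) \leq 1/c$ before taking the supremum over $E$. No gaps.
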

\begin{proof}
	First, observe that for any $z\in E$, we have that $z$ is not an eigenvalue of $M$ and so $M-zI$ is invertible and $G(z)$ exists. Recall that if $s$ is a singular value of $M-zI$ and $M - zI$ is invertible, then $1/s$ is a singular value of $(M-zI)^{-1}$. Thus, we conclude that
	\begin{align*}
	\sup_{z\in E} s_{1}(G(z))&=\sup_{z\in E}\frac{1}{s_{n}(M-zI)}\\
	&=\frac{1}{\inf_{z\in E}s_{n}(M-zI)}\\
	&\leq \frac{1}{c},
	\end{align*} 
	as desired.  
\end{proof}

\section{Eigenvalue criterion lemma and an overview of the proof} \label{sec:overview}

Let us now briefly overview the proofs of our main results.  One of the key ingredients is the eigenvalue criterion lemma presented below (Lemma \ref{lemma:eigenvalue}), which is based on Sylvester's determinant theorem:
\begin{equation} \label{eq:sylvester}
	\det (I + AB) = \det (I + BA) 
\end{equation}
whenever $A$ is an $n \times k$ matrix and $B$ is a $k \times n$ matrix.  In particular, the left-hand side of \eqref{eq:sylvester} is an $n \times n$ determinant and the right-hand side is a $k \times k$ determinant.  

For concreteness, let us focus on the proof of Theorem \ref{thm:outliers}.  That is, we wish to study the eigenvalues of
$$ P_n := \prod_{k=1}^m \left( \frac{1}{\sqrt{n}} X_{n,k} + A_{n,k} \right) $$
outside the disk $\{z \in \mathbb{C} : |z| \leq \sigma + 2\eps\}$.  We first linearize the problem by invoking Proposition \ref{prop:linear} with the matrix $\frac{1}{\sqrt{n}} \mathcal{Y}_n + \mathcal{A}_n$, where $\mathcal{Y}_n$ and $\mathcal{A}_n$ are defined in \eqref{def:Y} and \eqref{def:A}.  Indeed, by Proposition \ref{prop:linear}, it suffices to study the eigenvalues of $\frac{1}{\sqrt{n}} \mathcal{Y} _n + \mathcal{A}_n$ outside of the disk $\{z \in \mathbb{C} : |z| \leq \sigma^{1/m} + \delta\}$ for some $\delta > 0$ (depending on $\sigma$, $\eps$, and $m$). Let us suppose that $\mathcal{A}_n$ is rank one.  In other words, assume $\mathcal{A}_n = v u^\ast$ for some $u, v \in \mathbb{C}^{mn}$.  In order to study the outlier eigenvalues, we will need to solve the equation 
\begin{equation} \label{eq:firstdet}
	\det \left( \frac{1}{\sqrt{n}} \mathcal{Y}_n + \mathcal{A}_n - zI \right) = 0 
\end{equation}
for $z \in \mathbb{C}$ with $|z| > \sigma^{1/m} + \delta$.  Assuming $z$ is not eigenvalue of $\mathcal{Y}_n$, we can rewrite \eqref{eq:firstdet} as
$$ \det \left( I + \mathcal{G}_n(z) \mathcal{A}_n \right) = 0, $$
where the resolvent $\mathcal{G}_n(z)$ is defined in \eqref{def:G}.  
From \eqref{eq:sylvester} and the fact that $\mathcal{A}_n = v u^\ast$, we find that this reduces to solving
$$ 1 + u^\ast \mathcal{G}_n(z) v = 0. $$
Thus, the problem of locating the outlier eigenvalues reduces to studying the resolvent $\mathcal{G}_n(z)$.  In particular, we develop an isotropic limit law in Section \ref{sec:isotropic} to compute the limit of $u^\ast \mathcal{G}_n(z) v$.  This limit law is inspired by the isotropic semicircle law developed by Knowles and Yin in \cite{KY, KY2} for Wigner random matrices as well as the isotropic law verified in \cite{OR} for elliptic matrices.  

The general case, when $\mathcal{A}_n$ is not necessarily rank one, is similar.  In this case, we will exploit the following criterion to characterize the outlier eigenvalues.  

\begin{lemma}[Eigenvalue criterion] \label{lemma:eigenvalue}
Let $Y$ and $A$ be $n \times n$ matrices, and assume $A = BC$, where $B$ is an $n \times k$ matrix and $C$ is a $k \times n$ matrix.  Let $z$ be a complex number which is not an eigenvalue of $Y$.  Then $z$ is an eigenvalue of $Y + A$ if and only if 
$$ \det \left(I_k + C \left( Y - z I_n \right)^{-1} B \right) = 0. $$
\end{lemma}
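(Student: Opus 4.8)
The statement is essentially an instance of Sylvester's determinant theorem (equation~\eqref{eq:sylvester}), combined with the observation that a complex number is an eigenvalue of a matrix exactly when that matrix minus $z$ times the identity is singular. The plan is to start from the characterization that $z$ is an eigenvalue of $Y + A$ if and only if $\det(Y + A - zI_n) = 0$, and then to factor out the invertible matrix $Y - zI_n$ (which is invertible precisely because $z$ is not an eigenvalue of $Y$).

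Concretely, I would proceed as follows. First, since $z$ is not an eigenvalue of $Y$, the matrix $Y - zI_n$ is invertible, so $\det(Y - zI_n) \neq 0$. Writing $A = BC$, we have
\[
	Y + A - zI_n = (Y - zI_n)\left( I_n + (Y - zI_n)^{-1} BC \right),
\]
and therefore
\[
	\det(Y + A - zI_n) = \det(Y - zI_n) \cdot \det\left( I_n + (Y - zI_n)^{-1} B \, C \right).
\]
Since $\det(Y - zI_n) \neq 0$, the left-hand side vanishes if and only if $\det\left( I_n + \left((Y - zI_n)^{-1} B\right) C \right) = 0$. Now apply Sylvester's determinant identity \eqref{eq:sylvester} with the $n \times k$ matrix $(Y - zI_n)^{-1} B$ playing the role of $A$ and the $k \times n$ matrix $C$ playing the role of $B$: this gives
\[
	\det\left( I_n + \left((Y - zI_n)^{-1} B\right) C \right) = \det\left( I_k + C (Y - zI_n)^{-1} B \right).
\]
Combining, $z$ is an eigenvalue of $Y + A$ if and only if $\det\left( I_k + C (Y - zI_n)^{-1} B \right) = 0$, as claimed.

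There is no real obstacle here — the argument is a two-line application of standard determinant identities, and the only hypothesis doing any work is that $z$ is not an eigenvalue of $Y$, which is exactly what guarantees $Y - zI_n$ is invertible so that the resolvent $(Y-zI_n)^{-1}$ appearing in the statement is well-defined and so that we may divide by $\det(Y - zI_n)$. The one point worth a moment's care is making sure the dimensions line up for the Sylvester identity: $(Y-zI_n)^{-1}B$ is $n \times k$ and $C$ is $k \times n$, so the left side is an $n\times n$ determinant and the right side a $k \times k$ determinant, consistent with the statement of the lemma.
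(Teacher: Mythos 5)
Your proof is correct and follows essentially the same route as the paper's: factor out $Y - zI_n$, use that its determinant is nonzero since $z$ is not an eigenvalue of $Y$, substitute $A = BC$, and apply Sylvester's determinant identity \eqref{eq:sylvester} to pass from the $n \times n$ determinant to the $k \times k$ one.
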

\begin{remark} \label{rem:arg}
The proof of Lemma \ref{lemma:eigenvalue} actually reveals that
\begin{equation} \label{eq:iddet}
	\det \left(I_k + C \left( Y - z I_n \right)^{-1} B \right) = \frac{\det \left( Y + A - zI \right) }{\det \left( Y - z I \right) } 
\end{equation}
provided the denominator does not vanish.  Versions of this identity have appeared in previous publications including \cite{AGG,BGM,BGM2,BR,CDF1,CDF, KY, KY2, OR, OW, PRS, RS, Tout}.  
\end{remark}
\begin{proof}[Proof of Lemma \ref{lemma:eigenvalue}]
Assume $z$ is not an eigenvalue of $Y$. Then $\det (Y - z I) \neq 0$ and
\begin{align*}
	\det (Y + A - z I) &= \det (Y - z I) \det ( I + (Y - z I)^{-1} A) \\
	&= \det (Y- z I) \det (I + (Y - zI)^{-1} BC) .
\end{align*}
Thus, by \eqref{eq:sylvester}, $z$ is an eigenvalue of $Y +A$ if and only if 
$$ \det (I + C (Y-zI)^{-1} B) = 0, $$
as desired.  
\end{proof}

Another identity we will make use of is the Resolvent Identity, which states that \begin{equation}
A^{-1}-B^{-1}=A^{-1}(B-A)B^{-1}
\label{Equ:ResolventIndentity}
\end{equation}
whenever $A$ and $B$ are invertible.

\section{Isotropic limit law and the proofs of the main theorems}
\label{sec:isotropic}

This section is devoted to the proofs of Theorems \ref{thm:nooutlier}, \ref{Thm:NoOutlierInProductPert}, \ref{thm:nomixed}, \ref{thm:nonzeromean}, and \ref{thm:outliers}.  The key ingredient is the following result concerning the properties of the resolvent $\mathcal{G}_n(z)$.   

\begin{theorem}[Isotropic limit law] \label{thm:isotropic}
Let $m \geq 1$ be a fixed integer, and assume $\xi_1, \ldots, \xi_m$ are complex-valued random variables with mean zero, unit variance, finite fourth moments, and independent real and imaginary parts.  For each $n \geq 1$, let $X_{n,1}, \ldots, X_{n,m}$ be independent $n \times n$ iid random matrices with atom variables $\xi_1, \ldots, \xi_m$, respectively. Recall that $\mathcal{Y}_n$ is defined in \eqref{def:Y} and its resolvent $\mathcal{G}_n(z)$ is defined in \eqref{def:G}. Then, for any fixed $\delta > 0$, the following statements hold.  
\begin{enumerate}[label=(\roman*)]
\item \label{item:invertible} Almost surely, for $n$ sufficiently large, the eigenvalues of $\frac{1}{\sqrt{n}} \mathcal{Y}_n$ are contained in the disk $\{z \in \mathbb{C} : |z| \leq 1 + \delta \}$.  In particular, this implies that almost surely, for $n$ sufficiently large, the matrix $\frac{1}{\sqrt{n}} \mathcal{Y}_n - z I$ is invertible for every $z \in \mathbb{C}$ with $|z| > 1 + \delta$.
\item \label{item:invtbnd} There exists a constant $c > 0$ (depending only on $\delta$) such that almost surely, for $n$ sufficiently large, 
$$ \sup_{z \in \mathbb{C} : |z| > 1 + \delta} \| \mathcal{G}_n(z) \| \leq c. $$
\item \label{item:isotropic} For each $n \geq 1$, let $u_n, v_n \in \mathbb{C}^{mn}$ be deterministic unit vectors.  Then 
$$ \sup_{z \in \mathbb{C} : |z| > 1 + \delta} \left| u_n^\ast \mathcal{G}_n(z) v_n + \frac{1}{z} u_n^\ast v_n \right| \longrightarrow 0 $$
almost surely as $n \to \infty$. 
\end{enumerate}
\end{theorem}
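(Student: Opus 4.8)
Write $\mathcal{B}_n := \tfrac{1}{\sqrt{n}}\mathcal{Y}_n$, so that $\mathcal{G}_n(z) = (\mathcal{B}_n - zI)^{-1}$. My plan is to reduce all three parts to a pair of estimates at a single fixed spectral parameter $z$ and then transfer these to a uniform, almost sure statement by a net argument. As a preliminary step I would carry out the truncation of Section~\ref{Sec:TruncAndTools}: replacing each atom variable by its restriction to $\{|\xi_k|\le n^{\beta}\}$ for a small constant $\beta>0$, recentering and renormalizing, perturbs each $X_{n,k}$ by $o(1)$ in operator norm with overwhelming probability and changes the variances by $o(1)$, so it suffices to prove the theorem under the additional hypothesis that all entries are bounded by $n^\beta$; this is what makes the moment estimates below effective. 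Next, since $\|\mathcal{B}_n\|\le\max_{1\le k\le m}\|\tfrac{1}{\sqrt n}X_{n,k}\|=O(1)$ almost surely (the operator-norm bound for iid matrices underlying Theorem~\ref{thm:nooutlier:iid}), the Neumann series for $\mathcal{G}_n(z)$ converges in operator norm once $|z|>R$ for a large absolute constant $R$, and all three assertions are trivial there; so it suffices to treat $z$ in the compact annulus $\Gamma:=\{z:1+\delta\le|z|\le R\}$. Finally, $z\mapsto s_{mn}(\mathcal{B}_n-zI)$ is $1$-Lipschitz, so a lower bound $s_{mn}(\mathcal{B}_n-zI)\ge c(\delta)$ holding simultaneously on an $n^{-3}$-net of $\Gamma$ upgrades, via Proposition~\ref{Prop:LargeAndSmallSingVals}, to $\sup_{z\in\Gamma}\|\mathcal{G}_n(z)\|=O(1)$; this in turn makes $z\mapsto u_n^\ast\mathcal{G}_n(z)v_n$ Lipschitz on $\Gamma$ with derivative bounded by $\sup_\Gamma\|\mathcal{G}_n\|^2$, so a second net argument transfers the corresponding pointwise estimate to the uniform one. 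After a Borel--Cantelli step, everything thus reduces to proving, for each fixed $z\in\Gamma$ and each fixed pair of deterministic unit vectors $u_n,v_n\in\C^{mn}$, the following two statements: \textbf{(A)} with overwhelming probability $s_{mn}(\mathcal{B}_n-zI)\ge c(\delta)$; and \textbf{(B)} the quantity $u_n^\ast\mathcal{G}_n(z)v_n+\tfrac1z u_n^\ast v_n$ tends to $0$ in probability at a polynomial rate in $n$. Statement \textbf{(A)} yields parts~\ref{item:invertible} and~\ref{item:invtbnd} (via Proposition~\ref{Prop:LargeAndSmallSingVals}, together with the observation that $s_{mn}(\mathcal{B}_n-zI)>0$ forces $z$ not to be an eigenvalue), while \textbf{(B)} yields part~\ref{item:isotropic}.

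For \textbf{(A)} I would pass to the Hermitian dilation $\mathcal{H}_n(z)$ of $\mathcal{B}_n-zI$ --- the $2mn\times 2mn$ Hermitian matrix with diagonal blocks $0$ and off-diagonal blocks $\mathcal{B}_n-zI$ and $(\mathcal{B}_n-zI)^\ast$ --- whose eigenvalues are $\pm s_1(\mathcal{B}_n-zI),\dots,\pm s_{mn}(\mathcal{B}_n-zI)$; thus $s_{mn}(\mathcal{B}_n-zI)\ge c(\delta)$ is the statement that $\mathcal{H}_n(z)$ has no eigenvalue in $(-c(\delta),c(\delta))$. Now $\mathcal{H}_n(z)$ is a structured Wigner-type random matrix plus a deterministic perturbation of norm $|z|$; its limiting spectral distribution (the object behind Theorem~\ref{thm:ORSV}) has a spectral gap around $0$ whenever $|z|>1$, and a moment/resolvent estimate --- of exactly the combinatorial flavor of those discussed below --- shows that with overwhelming probability no eigenvalue of $\mathcal{H}_n(z)$ strays into that gap. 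This gives \textbf{(A)}, hence parts~\ref{item:invertible} and~\ref{item:invtbnd}. (Part~\ref{item:invertible} alone also follows more simply from the closed-walk bound $\E\,\|\mathcal{B}_n^{\,\ell}\|_2^2\le (mn)(1+o(1))^{2\ell}$ via $\rho(\mathcal{B}_n)\le\|\mathcal{B}_n^{\,\ell}\|_2^{1/\ell}$; and in place of the Hermitization one may alternatively invoke known least-singular-value bounds for matrices with independent entries perturbed by a shift.)

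For \textbf{(B)}, apply the resolvent identity~\eqref{Equ:ResolventIndentity} to the pair $\mathcal{B}_n-zI$ and $-zI$ to get the exact relation $\mathcal{G}_n(z)+\tfrac1z I=\tfrac1z\mathcal{G}_n(z)\mathcal{B}_n$, and iterate it $K$ times, with $K=A\lfloor\log n\rfloor$ for a suitably large constant $A=A(\delta)$, to obtain
\[
u_n^\ast\mathcal{G}_n(z)v_n+\tfrac1z u_n^\ast v_n \;=\; -\sum_{k=1}^{K-1} z^{-(k+1)}\,u_n^\ast\,\mathcal{B}_n^{\,k}\,v_n \;+\; z^{-K}\,u_n^\ast\,\mathcal{G}_n(z)\,\mathcal{B}_n^{\,K}\,v_n .
\]
The remainder is controlled using \textbf{(A)} (so $\|\mathcal{G}_n(z)\|=O(1)$) together with a closed-walk estimate showing $\|\mathcal{B}_n^{\,K}\|\le(1+\delta/3)^K$ with overwhelming probability for $K\gg\log n$ (the same combinatorics that bounds $\E\,\tr[\mathcal{B}_n^{\,k}(\mathcal{B}_n^\ast)^{k}]$ and its higher moments); since $|z|\ge1+\delta$, the remainder is then $O\big(((1+\delta/3)/(1+\delta))^K\big)=O(n^{-c'})$. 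The main sum is handled term by term, the crucial input being the bilinear moment estimate
\[
\E\,\big|u_n^\ast\,\mathcal{B}_n^{\,k}\,v_n\big|^{2p} \;\le\; C_p\,n^{-p}\,(1+o(1))^{2pk}
\]
for each fixed $p$, uniformly over unit vectors $u_n,v_n$ and over $k$ up to a small power of $n$. Granting this, Markov's inequality, a union bound over $k\le K$, and Borel--Cantelli (with $p$ and $A$ chosen large enough that both failure series converge) give $|u_n^\ast\mathcal{B}_n^{\,k}v_n|\le n^{-1/4}$ for all $k\le K$, almost surely for large $n$, so the main sum is at most $n^{-1/4}\sum_{k\ge1}|z|^{-(k+1)}=O_\delta(n^{-1/4})\to0$. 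Combining the two bounds proves \textbf{(B)}.

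The main obstacle is the pair of closed-walk moment estimates --- the bound on $\E\,\tr[\mathcal{B}_n^{\,k}(\mathcal{B}_n^\ast)^{k}]$ (and its higher moments) used in \textbf{(A)} and for the remainder, and the bilinear bound $\E|u_n^\ast\mathcal{B}_n^{\,k}v_n|^{2p}\le C_p n^{-p}(1+o(1))^{2pk}$ used for the main sum --- and in both the decisive point is that the geometric base is $1+o(1)$ rather than the operator norm $\|\mathcal{B}_n\|$, which is of order $1$ but strictly exceeds the spectral radius $1+o(1)$. Obtaining the correct base requires exhibiting the cancellation in the walk expansion to high precision: it is exactly the assertion that the bilinear form of the resolvent equals $-z^{-1}u_n^\ast v_n$ for \emph{every} $|z|>1+\delta$, and not merely for $|z|$ beyond $\|\mathcal{B}_n\|$ where a crude Neumann bound would suffice. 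Carrying this out amounts to a delicate combinatorial analysis of walks on the block-cyclic structure of $\mathcal{Y}_n$: one groups the $mn$ coordinates into the $m$ blocks, records which of the independent matrices $X_{n,1},\dots,X_{n,m}$ is used at each step (so $\mathcal{B}_n^{\,k}$ contributes only where the block displacement matches $k\bmod m$), and organizes the admissible pairings --- using mean-zero and unit variance, the independence of real and imaginary parts, and independence across the $m$ matrices --- so that the powers of $n$, the combinatorial multiplicities, and the contributions of $u_n,v_n$ balance to the stated exponents (the $n^{-p}$ in the bilinear bound being forced by the pairing of the $u_n$- and $v_n$-endpoints). All of this must be done with only fourth moments in hand, which is why the entries are truncated at scale $n^\beta$ and the resulting errors must be tracked with care, and with enough uniformity in $z$ to feed the net argument. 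This is precisely the program carried out in Sections~\ref{Sec:TruncAndTools}--\ref{sec:combin}.
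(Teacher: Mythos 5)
Your broad plan (truncate, restrict to a compact annulus, show a least-singular-value bound, and reduce part (iii) to a Neumann-series/closed-walk calculation) is aligned with the spirit of the paper, but the core of your argument for part (iii) is genuinely different from what Sections~\ref{Sec:Reductions}--\ref{sec:combin} do, and it has two real gaps where you instead cite the paper as if it proved your claims. The paper does \emph{not} establish a higher-moment bilinear estimate of the form $\E\,|u_n^\ast \mathcal{B}_n^{\,k} v_n|^{2p} \ll_p n^{-p}(1+o(1))^{2pk}$, uniformly over $k$ up to $\log n$. What it proves (Lemma~\ref{Lem:momentstozero}) is a bound on the \emph{first} moment, $|\E\,u_n^\ast \mathcal{B}_n^{\,k} v_n| = o_{L,k}(1)$, for each \emph{fixed} $k$, and it supplements this not with higher walk-moments but with a separate martingale concentration argument (Lemma~\ref{Lem:concentration}, via Burkholder/Rosenthal and the Sherman--Morrison identities) showing $u_n^\ast \mathcal{G}_n v_n$ is close to its mean; the Neumann series is only used on the band $5 \le |z| \le 6$ where it converges absolutely on $\Omega_n$, and Vitali's theorem then transports the conclusion to $|z| > 1+\delta$. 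By contrast, your argument term-by-terms the Neumann expansion up to $K \sim A\log n$ for every $|z|>1+\delta$, which requires both the higher-moment estimate above (with the $o(1)$ decaying at rate $O(1/\log n)$ so that $(1+o(1))^{2pK}$ stays bounded — a precision issue you do not address) and the remainder bound $\|\mathcal{B}_n^K\| \le (1+\delta/3)^K$ with overwhelming probability. The latter is not an independent closed-walk estimate; it is essentially equivalent to the uniform resolvent bound $\sup_{|z|\ge 1+\delta/3}\|\mathcal{G}_n(z)\| = O(1)$ you call \textbf{(A)}, via a contour integral $\mathcal{B}_n^K = \frac{1}{2\pi\sqrt{-1}}\oint_{|z|=1+\delta/3} z^K \mathcal{G}_n(z)\,dz$ — so it is available to you, but you should say so rather than gesture at combinatorics, since the combinatorics in the paper only treats bounded $k$.

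Two smaller discrepancies: you truncate at level $n^\beta$, whereas the paper truncates at a fixed level $L$ (and sends $L\to\infty$ only \emph{after} $n\to\infty$, since the truncation error in operator norm is $O(C/L)$ a.s.); both can be made to work, but they are not the same and your later moment bounds must be stated consistently with your choice. And your \textbf{(A)} via Hermitian dilation plus a gap-in-the-limiting-spectrum argument is a reasonable sketch, but the paper does not redo this — it directly invokes (and slightly upgrades) the least-singular-value bound of Nemish~\cite{N} for truncated entries (Theorem~\ref{Thm:LeastTruncSingValNonZero}) and then transfers it to the untruncated matrices with Weyl's inequality; if you plan to rebuild this from scratch you should name the limiting measure and the gap, since that is where the restriction $|z|>1+\delta$ actually enters. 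Finally, your closing sentence that your program ``is precisely the program carried out in Sections~\ref{Sec:TruncAndTools}--\ref{sec:combin}'' is not accurate: the paper replaces your $2p$-moment/union-bound-over-$k$ step with the concentration Lemma~\ref{Lem:concentration}, which is precisely what allows it to avoid proving high-moment bilinear estimates with $k$-uniformity.
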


We conclude this section with the proofs of Theorems \ref{thm:nooutlier}, \ref{Thm:NoOutlierInProductPert}, \ref{thm:nomixed}, \ref{thm:nonzeromean}, and \ref{thm:outliers} assuming Theorem \ref{thm:isotropic}. Sections \ref{Sec:TruncAndTools}--\ref{sec:combin} are devoted to the proof of Theorem \ref{thm:isotropic}.

\subsection{Proof of Theorem \ref{thm:nooutlier}}

Consider 
$$P_{n}:=n^{-m/2}\iidmat{X}{n}{1}\cdots\iidmat{X}{n}{m}$$ 
and note that by rescaling by $\frac{1}{\sigma}$, it is sufficient to assume that $\sigma_{i}=1$ for all $1\leq i\leq m$. By Proposition \ref{prop:linear}, the eigenvalues of $P_{n}$ are precisely the eigenvalues of $n^{-m/2}\blmat{Y}{n}^{m}$, each with multiplicity $m$. Additionally, the eigenvalues of $n^{-m/2}\blmat{Y}{n}^{m}$ are exactly the $m$-th powers of the eigenvalues of $n^{-1/2}\blmat{Y}{n}$.
Thus, it is sufficient to study the spectral radius of $n^{-1/2}\blmat{Y}{n}$.  By part  \ref{item:invertible}  of Theorem \ref{thm:isotropic}, we conclude that almost surely, 
\begin{equation*} 
\limsup_{n\rightarrow\infty}\rho\left(\frac{1}{\sqrt{n}}\blmat{Y}{n}\right)\leq 1
\end{equation*}
where $\rho(M)$ denotes the spectral radius of the matrix $M$.  This completes the proof of Theorem \ref{thm:nooutlier}.  

\subsection{Proof of Theorem~\ref{Thm:NoOutlierInProductPert}} 

	By rescaling by $\frac{1}{\sigma}$, it is sufficient to assume that $\sigma_{i}=1$ for $1\leq i\leq m$. Observe that if two deterministic terms 
	$$I+\iidmat{A}{n}{i}\text{ and }I+ \iidmat{A}{n}{j}$$
	appeared consecutively in the product $P_{n}$, then they could be rewritten
	$$\left(I+\iidmat{A}{n}{i}\right)\cdot\left(I+ \iidmat{A}{n}{j}\right)=I+\iidmat{A}{n}{j}'$$
	where all non-identity terms are lumped into the new deterministic matrix $\iidmat{A}{n}{j}'$ which still satisfies the assumptions on rank and norm. Additionally, if two random matrices $\iidmat{X}{n}{i}$ and $\iidmat{X}{n}{j}$ appeared in the product $P_{n}$ consecutively, then we could write 
	$$\left(\frac{1}{\sqrt{n}}\iidmat{X}{n}{i}\right)\left(\frac{1}{\sqrt{n}}\iidmat{X}{n}{j}\right)=\left(\frac{1}{\sqrt{n}}\iidmat{X}{n}{i}\right)(I_{n}+0_{n})\left(\frac{1}{\sqrt{n}}\iidmat{X}{n}{j}\right)$$
	where $0_{n}$ denotes the $n\times n$ zero matrix. Therefore, it is sufficient to consider products in which terms alternate between a random term $\frac{1}{\sqrt{n}}\iidmat{X}{n}{i}$ and a deterministic term $I+\iidmat{A}{n}{j}$. Next, observe that the eigenvalues of the product $P_{n}$ remain the same when the matrices in the product are cyclically permuted. Thus, without loss of generality and up to reordering the indices, we may assume that the product $P_{n}$ appears as
	\begin{equation}
	P_{n}= \left(I+\iidmat{A}{n}{1}\right)\left(\frac{1}{\sqrt{n}}\iidmat{X}{n}{1}\right)\left(I+\iidmat{A}{n}{2}\right)\left(\frac{1}{\sqrt{n}}\iidmat{X}{n}{2}\right)\cdots\left(I+\iidmat{A}{n}{m}\right)\left(\frac{1}{\sqrt{n}}\iidmat{X}{n}{m}\right).
	\label{equ:ProdAnyOrder}
	\end{equation} 
	Next, define the $2mn\times 2mn$ matrix 
	\begin{equation}
	\mathcal{L}_{n} :=\left[\begin{array}{cc}
	0_{mn} & I_{mn}+\diagA
	\\
	\frac{1}{\sqrt{n}}\blmat{Y}{n} & 0_{mn}\\ 
	\end{array} \right],
	\label{Def:Ln}
	\end{equation}
	where $0_{mn}$ denotes the $mn\times mn$ zero matrix, $\blmat{Y}{n}$ is as defined in \eqref{def:Y}, and $\diagA$ is defined so that \begin{equation*}
	I_{mn}+\diagA
	:=\left[\begin{array}{cccc}
	I+\iidmat{A}{n}{1} & 0 & \dots & 0\\
	0 & I+\iidmat{A}{n}{2} & \dots & 0\\
	\vdots & \vdots & \ddots & \vdots\\
	0 & 0 & \dots & I+\iidmat{A}{n}{m}\\
	\end{array}\right].
	\end{equation*}
	Here, we have to slightly adjust our notation to deal with the fact that $\blmat{L}{n}$ is a $2mn \times 2mn$ matrix instead of $mn \times mn$.  For the remainder of the proof, we view $\blmat{L}{n}$ as a $2\times 2$ matrix with entries which are $mn\times mn$ matrices, and we denote the $mn \times mn$ blocks as $\blmat{L}{n}^{[a,b]}$ for $1 \leq a,b \leq 2$.  We will use analogous notation for
	other $2mn \times 2mn$ matrices and $2mn$-vectors.  
	
	For $1 \le k \le 2m$, let $W_{k}$ denote the product $P_{n}$, but with terms cyclically permuted so that the product starts on the $k$th term. Note that there are $2m$ such products and each results in an $n\times n$ matrix. For instance, 
	$$W_{2}=\left(\frac{1}{\sqrt{n}}\iidmat{X}{n}{1}\right)(I+\iidmat{A}{n}{2})\left(\frac{1}{\sqrt{n}}\iidmat{X}{n}{2}\right)(I+\iidmat{A}{n}{3})\cdots\left(\frac{1}{\sqrt{n}}\iidmat{X}{n}{m}\right)(I+\iidmat{A}{n}{1})$$
	and 
	$$W_{3}=(I+\iidmat{A}{n}{2})\left(\frac{1}{\sqrt{n}}\iidmat{X}{n}{2}\right)(I+\iidmat{A}{n}{3})\cdots\left(\frac{1}{\sqrt{n}}\iidmat{X}{n}{m}\right)(I+\iidmat{A}{n}{1})\left(\frac{1}{\sqrt{n}}\iidmat{X}{n}{1}\right).$$
	A simple computation reveals that 
	\begin{equation*}
	\blmat{L}{n}^{2m}=\left[\begin{array}{cc}
	\mathcal{W} & 0_{mn}\\
	0_{mn} & \tilde{\mathcal{W}}\\
	\end{array}\right]
	\end{equation*}
	where 
	$$\mathcal{W}=\left[\begin{array}{cccc}
	W_{1} & 0_{n} & \dots & 0_{n}\\
	0_{n} & W_{3} & \dots & 0_{n}\\
	\vdots & \vdots & \ddots & \vdots\\
	0_{n} & 0_{n} & \dots & W_{2m-1}\\
	\end{array}\right]\;\;\;\;\text{and}\;\;\;\;\tilde{\mathcal{W}}=\left[\begin{array}{cccc}
	W_{2} & 0_{n} & \dots & 0_{n}\\
	0_{n} & W_{4} & \dots & 0_{n}\\
	\vdots & \vdots & \ddots & \vdots\\
	0_{n} & 0_{n} & \dots & W_{2m}\\
	\end{array}\right].$$
	Thus, the eigenvalues of $\blmat{L}{n}^{2m}$ are precisely the eigenvalues of the product $P_{n}$, each with multiplicity $2m$.

\newcommand\Acorner{\blmat{A}{n}^{\square}}
	Define
	\begin{equation*}
	\blmat{X}{n}:=\left[\begin{array}{cc}
	0_{mn} & I_{mn}\\
	\frac{1}{\sqrt{n}}\blmat{Y}{n} & 0_{mn}\\
	\end{array}\right]\;\;\;\;\text{ and }\;\;\;\;
	\Acorner:=\left[\begin{array}{cc}
	0_{mn} & \diagA\\
	0_{mn} & 0_{mn}\\
	\end{array}\right].
	\end{equation*}
	Then we can rewrite $\blmat{L}{n}=\blmat{X}{n}+\Acorner$. 
	
	For $1 \leq k \leq m$, let $Z_{k}:=\iidmat{X}{n}{k}\iidmat{X}{n}{k+1}\cdots\iidmat{X}{n}{m}\iidmat{X}{n}{1}\cdots\iidmat{X}{n}{k-1}$.  Then
	\begin{equation} \label{eq:XnZn}
	\blmat{X}{n}^{2m}=n^{-m/2}
	\left[\begin{array}{cccccccc}
	\mathcal{Z}_{n} & 0_{mn}\\
	0_{mn} & \mathcal{Z}_{n}\\
	\end{array}\right]
	\end{equation}
	where 
	$$\mathcal{Z}_{n}=\left[\begin{array}{ccc}
	Z_{1} & \dots & 0_{n}\\
	\vdots & \ddots & \vdots\\
	0_{n} & \dots & Z_{m}\\
	\end{array}\right].
	$$
	Thus, the eigenvalues of $\blmat{X}{n}^{2m}$ are precisely the eigenvalues of $n^{-m/2}\iidmat{X}{n}{1}\cdots\iidmat{X}{n}{m}$, each with multiplicity $2m$.
	
	
	Fix $\delta > 0$. By part \ref{item:invertible} of Theorem \ref{thm:isotropic} and Proposition \ref{prop:linear}, we find that almost surely, for $n$ sufficiently large, the eigenvalues of $n^{-m/2}X_{n,1}\cdots X_{n,m}$ are contained in the disk $\{ z \in \C : |z| \leq 1 + \delta\}$.   By \eqref{eq:XnZn}, we conclude that almost surely, for $n$ sufficiently large, the eigenvalues of $\blmat{X}{n}$ are contained in the disk $\{z \in \C : |z| \leq 1 + \delta\}$.  Therefore, almost surely, for $n$ sufficiently large $\blmat{X}{n} - zI$ is invertible for all $|z| > 1 + \delta$.  For all such values of $z$, we define
	\begin{equation*}
	\mathcal{R}_{n}(z):= \left(\blmat{X}{n}-zI_{2mn}\right)^{-1}.
	\end{equation*} 
	Since $\Acorner$ has rank $O(1)$ and operator norm $O(1)$, we can decompose (by the singular value decomposition) $\Acorner = \blmat{B}{n} \blmat{C}{n}$, where $\blmat{B}{n}$ is a $mn \times k$ matrix, $\blmat{C}{n}$ is a $k \times mn$ matrix, $k = O(1)$, and both $\blmat{B}{n}$ and $\blmat{C}{n}$ have rank $O(1)$ and operator norm $O(1)$.  
	
	Thus, for $|z| > 1 + \delta$, almost surely, for $n$ sufficiently large, 
	\begin{equation*}
	\det(\blmat{L}{n}-zI_{2mn})=\det(\blmat{X}{n}+\Acorner-zI_{2mn})=0
	\end{equation*}
	if and only if 
	\begin{equation}  \label{eq:proddecompcb}
	\det(I_{k}+\blmat{C}{n}\mathcal{R}_{n}(z)\blmat{B}{n})=0
	\end{equation}
	by Lemma \ref{lemma:eigenvalue}.
	
	Using Schur's Compliment to calculate the $mn\times mn$ blocks of $\mathcal{R}_n(z)$, we can see that 
$\mathcal{R}_n(z)=\begin{pmatrix}
z\mathcal{G}_{n}(z^{2}) &  \mathcal{G}_{n}(z^{2})\\ 
I+z^{2}\mathcal{G}_{n}(z^{2}) & z\mathcal{G}_{n}(z^{2})
\end{pmatrix}$
, where $\mathcal{G}_{n}(z):=\left(\frac{1}{\sqrt{n}}\mathcal{Y}_{n}-zI\right)^{-1}$ (which is defined for $|z| > 1 +\delta$ by Theorem~\ref{thm:isotropic}); hence
	\begin{equation*}
	\mathcal{R}_{n}(z)^{[a,b]}=\begin{cases}
	z\mathcal{G}_{n}(z^{2}) & \text{ if } a=b\\
	\mathcal{G}_{n}(z^{2}) & \text{ if } a=1,\;b=2\\
	I+z^{2}\mathcal{G}_{n}(z^{2}) & \text{ if } a=2,\;b=1.\\
	\end{cases}
	\end{equation*}
	Note that for $u=u_{2mn}$ and $v=v_{2mn}$ in $\C^{2mn}$, we have
	\begin{equation*}
	u^{*}\mathcal{R}_{n}(z)v = \sum_{1\leq a,b\leq 2}\left(u^{*}\right)^{[a]}\mathcal{R}_{n}(z)^{[a,b]}v^{[b]}
	\end{equation*}
	where $v^{[1]}, v^{[2]}$ denote the $mn\times 1$ sub-blocks of the vector $v$ and $\left(u^{*}\right)^{[1]}, \left(u^{*}\right)^{[2]}$ denote the $1 \times mn$ sub-blocks of the vector $u^*$. Additionally, if $u$ and $v$ have uniformly bounded norm for all $n$,  then by Theorem \ref{thm:isotropic}, almost surely
	\begin{align*}
	&\sup_{|z|>1+\delta}\left|\left(u^{*}\right)^{[1]}z\mathcal{G}_{n}(z^{2})v^{[1]}-z\left(-\frac{1}{z^{2}}\right)\left(u^{*}\right)^{[1]}v^{[1]}\right|=o(1),\\
	&\sup_{|z|>1+\delta}\left|\left(u^{*}\right)^{[2]}z\mathcal{G}_{n}(z^{2})v^{[2]}-z\left(-\frac{1}{z^{2}}\right)\left(u^{*}\right)^{[2]}v^{[2]}\right|=o(1),\\
	&\sup_{|z|>1+\delta}\left|(u^{*})^{[1]}\mathcal{G}_{n}(z^{2})v^{[2]}-\left(-\frac{1}{z^{2}}(u^{*})^{[1]}v^{[2]}\right)\right|=o(1), \text{ and }\\
	&\sup_{|z|>1+\delta}\left|(u^{*})^{[2]}(I+z^{2}\mathcal{G}_{n}(z^{2}))v^{[1]}-\left((u^{*})^{[2]}v^{[1]}-z^{2}\frac{1}{z^{2}}(u^{*})^{[2]}v^{[1]}\right)\right|=o(1).\\
	\end{align*}
	We note that the off-diagonal blocks of $\mathcal{R}_n(z)$ have a much different behavior than $\mathcal{G}_n(z)$. In order to keep track of this behavior, define 
	$$\mathcal{H}_{n}=\left[\begin{array}{cc}
	0_{mn} & I_{mn}\\
	0_{mn} & 0_{mn}\\ 
	\end{array}\right].$$
	Then, for deterministic $u$ and $v$ in $\C^{2mn}$ with uniformly bounded norm for all $n$, almost surely
	\begin{align*}
	\sup_{|z|>1+\delta}\left|u^{*}\mathcal{R}_{n}(z)v-\left(-\frac{1}{z}u^{*}v -\frac{1}{z^{2}}u^{*}\mathcal{H}_{n}v\right)\right|=o(1).
	\end{align*} 
	Applying this to \eqref{eq:proddecompcb}, we obtain that
	$$\sup_{|z|>1+\delta}\lnorm \blmat{C}{n}\mathcal{R}_{n}(z)\blmat{B}{n}-\left(-\frac{1}{z}\blmat{C}{n}\blmat{B}{n}-\frac{1}{z^{2}}\blmat{C}{n} \mathcal{H}_{n} \blmat{B}{n}\right)\rnorm=o(1)$$
	almost surely.  Hence, Lemma \ref{Lem:normtodet} reveals that almost surely
	$$\sup_{|z|>1+\delta}\left|\det\left(I-\mathcal{C}_{n}\mathcal{R}_{n}\mathcal{B}_{n}\right)-\det\left(I-\frac{1}{z}\blmat{C}{n}\blmat{B}{n}-\frac{1}{z^{2}}\blmat{C}{n}\mathcal{H}_{n}\blmat{B}{n}\right)\right|=o(1).$$
	By another application of Sylvester's determinant formula \eqref{eq:sylvester} and by noticing that $\mathcal{H}_{n}\Acorner$ is the zero matrix, this can be rewritten as
	$$\sup_{|z|>1+\delta}\left|\det\left(I+\mathcal{R}_{n}(z)\Acorner\right) - \det\left(I-\frac{1}{z}\Acorner\right)\right|=o(1).$$
	Finally, since the eigenvalues of $\Acorner$ are all zero, $\det\left(I-\frac{1}{z}\Acorner\right)=1$ so we can write the above statement as 
	$$\sup_{|z|>1+\delta}\left|\det \left( \mathcal R_n\right)\det\left(\mathcal{L}_{n}-zI\right) -1\right|=o(1)$$
	almost surely.  Almost surely, for $n$ sufficiently large, and for $|z|> 1+\delta$, we know that $\det(R_n(z))$ is finite, and thus	
	$\det\left(\mathcal{L}_{n}-zI\right)$ is nonzero for all $|z| > 1 + \delta$, implying that $\mathcal{L}_n$ has no eigenvalues outside the disk $\{z \in \C : |z| \leq 1 + \delta\}$.  By the previous observations, this implies the same conclusion for $P_n$ (since $\delta$ is arbitrary), and the proof is complete.

\subsection{Proof of Theorem \ref{thm:outliers}}

Recall that 
$$P_{n} :=\prod_{k=1}^{m}\left(\frac{1}{\sqrt{n}}\iidmat{X}{n}{k}+A_{n,k}\right), \qquad A_n := \prod_{k=1}^m A_{n,k}. $$ 
By rescaling $P_{n}$ by $\frac{1}{\sigma}$, we may assume that $\sigma_{i}=1$ for $1\leq i\leq n$. Let $\varepsilon>0$, and assume that for sufficiently large $n$, no eigenvalues of $A_{n}$ fall in the band $\{z\in\C\;:\;1+\varepsilon <|z|<1+3\varepsilon\}$. Assume that for some $j=O(1)$, there are $j$ eigenvalues $\lambda_{1}(A_{n}),\ldots,\lambda_{j}(A_{n})$ that lie in the region $\{z\in \C\;:\;|z|\geq 1+3\varepsilon\}$. 

Let $\mathcal{Y}_n$ and $\mathcal{A}_n$ be defined as in \eqref{def:Y} and \eqref{def:A}.  Using Proposition \ref{prop:linear}, it will suffice to study the eigenvalues of $\frac{1}{\sqrt{n}}\blmat{Y}{n}+\blmat{A}{n}$.  In particular, let $\eps' > 0$ such that $(1 + 2 \eps)^{1/m} = 1 + \eps'$.  Then we want to find solutions to
\begin{equation}
\det\left(\frac{1}{\sqrt{n}}\blmat{Y}{n}+\blmat{A}{n}-zI\right)=0
\label{Equ:detZero}
\end{equation}
for $|z|\geq 1+\eps'$. By part \ref{item:invertible} of Theorem \ref{thm:isotropic}, almost surely, for $n$ sufficiently large $\frac{1}{\sqrt{n}}\blmat{Y}{n} - zI$ is invertible for all $|z| \geq 1 + \eps'$.  By supposition, we can decompose (using the singular value decomposition) $\mathcal{A}_n = \mathcal{B}_n \mathcal{C}_n$, where $\mathcal{B}_n$ is $mn \times k$, $\mathcal{C}_n$ is $k \times mn$, $k = O(1)$, and both $\mathcal{B}_n$ and $\mathcal{C}_n$ have rank $O(1)$ and operator norm $O(1)$.  Thus, by Lemma \ref{lemma:eigenvalue}, we need to investigate the values of $z \in \C$ with $|z| \geq 1 + \eps'$ such that
\begin{equation*}
\det(I_{k}+\mathcal{C}_{n}\mathcal{G}_{n}(z)\mathcal{B}_{n})=0,  
\end{equation*}
where $\mathcal{G}_n(z)$ is defined in \eqref{def:G}.  

Since $k = O(1)$, Theorem \ref{thm:isotropic} implies that
\begin{equation}
\sup_{|z| \geq 1 + \eps'} \lnorm \mathcal{C}_{n}\mathcal{G}_{n}(z)\mathcal{B}_{n}-\left(-\frac{1}{z}\right)\mathcal{C}_{n}\mathcal{B}_{n}\rnorm \longrightarrow 0
\end{equation}
almost surely as $n\rightarrow\infty$. By Lemma \ref{Lem:normtodet}, this implies that 
\begin{equation*}
\sup_{|z| \geq 1 + \eps'} \left|\det\left(I_{k}+ \mathcal{C}_{n}\mathcal{G}_{n}(z)\mathcal{B}_{n}\right)-\det\left(I_{k}-\frac{1}{z}\mathcal{C}_{n}\mathcal{B}_{n}\right)\right|\longrightarrow 0
\end{equation*}
almost surely. By an application of Sylvester's determinant theorem \eqref{eq:sylvester}, this is equivalent to 
\begin{equation} \label{eq:rouchedet}
\sup_{|z| \geq 1 + \eps'} \left| \det\left(I_{k}+ \mathcal{C}_{n}\mathcal{G}_{n}\mathcal{B}_{n}\right) - \det\left(I_{n}-\frac{1}{z}\blmat{A}{n}\right)\right|\longrightarrow 0
\end{equation} 
almost surely as $n\rightarrow\infty$. Define
$$ g(z) := \det\left(I_{n}-\frac{1}{z}\blmat{A}{n}\right)=\prod_{i=1}^{k}\left(1-\frac{\lambda_{i}(\blmat{A}{n})}{z}\right). $$
Since the eigenvalues of $\blmat{A}{n}^m$ are precisely the eigenvalues of $A_n$, each with multiplicity $m$, it follows that $g$ has precisely $l := jm$ roots $\lambda_1(\blmat{A}{n}), \ldots, \lambda_l(\blmat{A}{n})$ outside the disk $\{ z \in \C : |z| < 1 + \eps'\}$.  Thus, by \eqref{eq:rouchedet} and Rouch\'{e}'s theorem, almost surely, for $n$ sufficiently large, 
\[ f(z):=\det\left(I_{k}+ \mathcal{C}_{n}\mathcal{G}_{n}\mathcal{B}_{n}\right) \]
has exactly $l$ roots outside the disk $\{ z \in \C : |z| < 1 + \eps' \}$ and these roots take the values $\lambda_i(\blmat{A}{n}) + o(1)$ for $1 \leq i \leq l$.  

Returning to \eqref{Equ:detZero}, we conclude that almost surely for $n$ sufficiently large, $\frac{1}{\sqrt{n}} \blmat{Y}{n} + \blmat{A}{n}$ has exactly $l$ roots outside the disk $\{z \in \mathbb{C} : |z| < 1 + \eps'\}$, and after possibly reordering the eigenvalues, these roots take the values
\[ \lambda_i \left( \frac{1}{\sqrt{n}} \blmat{Y}{n} + \blmat{A}{n} \right) = \lambda_i ( \blmat{A}{n} ) + o(1) \]
for $1 \leq i \leq l$.

We now relate these eigenvalues back to the eigenvalues of $P_n$.  Recall that $\left( \frac{1}{\sqrt{n}} \blmat{Y}{n} + \blmat{A}{n} \right)^m$ has the same eigenvalues as $P_n$, each with multiplicity $m$; and $\blmat{A}{n}^m$ has the same eigenvalues of $A_n$, each with multiplicity $m$.  Taking this additional multiplicity into account and using the fact that
\[ \left( \lambda_i( \blmat{A}{n} ) + o(1) \right)^m = \lambda_i( \blmat{A}{n}^m ) + o(1) \]
since $\blmat{A}{n}$ has spectral norm $O(1)$, we conclude that almost surely, for $n$ sufficiently large, $P_n$ has exactly $j$ eigenvalues in the region $\{ z \in \mathbb{C} : |z| \geq 1 + 2 \eps \}$, and after reordering the indices correctly 
\[ \lambda_i(P_n) = \lambda_i(A_n) + o(1) \]
for $1 \leq i \leq j$.  This completes the proof of Theorem \ref{thm:outliers}.

\subsection{Proof of Theorems \ref{thm:nomixed} and \ref{thm:nonzeromean}}
In the proofs of Theorems \ref{thm:nomixed} and \ref{thm:nonzeromean} we will make use of an alternative isotropic law which is a corollary of Theorem \ref{thm:isotropic}. We state and prove the result now.

\begin{corollary}[Alternative Isotropic Law] \label{cor:ProductIsotropic}
	Let $m \geq 1$ be a fixed integer, and assume $\xi_1, \ldots, \xi_m$ are complex-valued random variables with mean zero, unit variance, finite fourth moments, and independent real and imaginary parts.  For each $n \geq 1$, let $X_{n,1}, \ldots, X_{n,m}$ be independent $n \times n$ iid random matrices with atom variables $\xi_1, \ldots, \xi_m$, respectively. Then, for any fixed $\delta > 0$, the following statements hold.  
	\begin{enumerate}[label=(\roman*)]
		\item \label{item:alt:invertible} Almost surely, for $n$ sufficiently large, the eigenvalues of $n^{-m/2}\iidmat{X}{n}{1}\cdots\iidmat{X}{n}{m}$ are contained in the disk $\{z \in \mathbb{C} : |z| \leq 1 + \delta \}$.  In particular, this implies that almost surely, for $n$ sufficiently large, the matrix\\ $n^{-m/2}\iidmat{X}{n}{1}\cdots\iidmat{X}{n}{m}- z I$ is invertible for every $z \in \mathbb{C}$ with $|z| > 1 + \delta$.
		\item There exists a constant $c > 0$ (depending only on $\delta$) such that almost surely, for $n$ sufficiently large, 
		$$ \sup_{z \in \mathbb{C} : |z| > 1 + \delta} \lnorm \left(n^{-m/2}\iidmat{X}{n}{1}\cdots\iidmat{X}{n}{m}-zI\right)^{-1} \rnorm \leq c. $$
		\item For each $n \geq 1$, let $u_n, v_n \in \mathbb{C}^{n}$ be deterministic unit vectors.  Then 
		$$ \sup_{z \in \mathbb{C} : |z| > 1 + \delta} \left| u_n^\ast \left(n^{-m/2}\iidmat{X}{n}{1}\cdots\iidmat{X}{n}{m}-zI\right)^{-1} v_n + \frac{1}{z} u_n^\ast v_n \right| \longrightarrow 0 $$
		almost surely as $n \to \infty$. 
	\end{enumerate}
\end{corollary}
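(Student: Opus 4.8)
The plan is to derive the corollary directly from Theorem~\ref{thm:isotropic} by transferring information from the block matrix $\mathcal{Y}_n$ to the product, using the fact that raising the block-cyclic matrix to the $m$-th power decouples the factors. Since the $\xi_k$ are assumed to have unit variance we have $\sigma = 1$ and no rescaling is needed. Write $M := \frac{1}{\sqrt{n}}\mathcal{Y}_n$, an $mn \times mn$ matrix, and $P := n^{-m/2}\iidmat{X}{n}{1}\cdots\iidmat{X}{n}{m}$. Exactly as in the proof of Proposition~\ref{prop:linear} (applied with $\frac{1}{\sqrt{n}}\iidmat{X}{n}{k}$ in place of $M_k$), the matrix $M^m = n^{-m/2}\mathcal{Y}_n^m$ is block diagonal, $M^m = \diag(Z_1, \ldots, Z_m)$, with $Z_1 = P$; consequently, whenever $z$ is not an eigenvalue of $P$, the resolvent $(P - zI_n)^{-1}$ is precisely the $[1,1]$-block of $(M^m - zI_{mn})^{-1}$. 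All three parts of the corollary will follow from this observation together with an elementary partial-fraction identity.

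Fix $\delta > 0$ and set $\delta' := (1+\delta)^{1/m} - 1 > 0$. By Theorem~\ref{thm:isotropic}\ref{item:invertible} applied with parameter $\delta'$, almost surely, for $n$ sufficiently large, all eigenvalues of $M$ lie in $\{w \in \C : |w| \le 1 + \delta'\}$; by the spectral mapping theorem and Proposition~\ref{prop:linear}, the eigenvalues of $P$ (which coincide with those of $M^m$) then lie in $\{z \in \C : |z| \le (1+\delta')^m = 1+\delta\}$, which is part~(i). For $|z| > 1 + \delta$, choose any $w$ with $w^m = z$; the $m$-th roots of $z$ are $w\omega_0, \ldots, w\omega_{m-1}$, where $\omega_j := \exp(2\pi\sqrt{-1}\, j / m)$, and each has modulus $|z|^{1/m} > 1 + \delta'$, so almost surely none of them is an eigenvalue of $M$. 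Applying the scalar identity $\frac{1}{x^m - z} = \frac{1}{m}\sum_{j=0}^{m-1}\frac{\omega_j}{w^{m-1}}\cdot\frac{1}{x - w\omega_j}$ with the matrix $M$ in place of $x$ gives
$$
\left(M^m - zI_{mn}\right)^{-1} = \frac{1}{m}\sum_{j=0}^{m-1}\frac{\omega_j}{w^{m-1}}\,\mathcal{G}_n(w\omega_j),
$$
valid almost surely for $n$ large, for every $z$ with $|z| > 1 + \delta$.

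For part~(ii), since $(P - zI_n)^{-1}$ is a submatrix of the left-hand side above and $|w|^{m-1} \ge 1$, Theorem~\ref{thm:isotropic}\ref{item:invtbnd} (applied at each of the points $w\omega_j$, all of modulus $> 1 + \delta'$) yields $\|(P - zI_n)^{-1}\| \le \frac{1}{m}\sum_{j} \|\mathcal{G}_n(w\omega_j)\| \le c$, uniformly over $|z| > 1 + \delta$. For part~(iii), let $u_n, v_n \in \C^n$ be deterministic unit vectors and let $\tilde u_n, \tilde v_n \in \C^{mn}$ be their embeddings supported on the first block, so that $\tilde u_n, \tilde v_n$ are unit vectors with $\tilde u_n^\ast \tilde v_n = u_n^\ast v_n$ and $u_n^\ast (P - zI_n)^{-1} v_n = \tilde u_n^\ast (M^m - zI_{mn})^{-1}\tilde v_n$. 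Substituting the displayed identity and replacing each $\tilde u_n^\ast \mathcal{G}_n(w\omega_j)\tilde v_n$ by $-\frac{1}{w\omega_j}u_n^\ast v_n$ via Theorem~\ref{thm:isotropic}\ref{item:isotropic} (the error is $o(1)$ uniformly over $|w\omega_j| > 1 + \delta'$, and the coefficients $\omega_j/w^{m-1}$ are bounded by $1$ since $|w| > 1$), the surviving main term is $\frac{1}{m}\sum_{j}\frac{\omega_j}{w^{m-1}}\bigl(-\frac{1}{w\omega_j}\bigr)u_n^\ast v_n = -\frac{1}{w^m}u_n^\ast v_n = -\frac{1}{z}u_n^\ast v_n$, so $u_n^\ast (P - zI_n)^{-1} v_n + \frac{1}{z}u_n^\ast v_n \to 0$ almost surely, uniformly in $z$, as required.

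The corollary is essentially bookkeeping once Theorem~\ref{thm:isotropic} is in hand, so I do not anticipate a substantive obstacle. The only points needing care are the choice of $\delta'$ so that both the spectral containment and the resolvent estimates pass correctly through the map $w \mapsto w^m$, and the uniformity in $z$ of the $o(1)$ error in part~(iii); the latter holds because there is only a fixed number $m$ of summands and the coefficients $\omega_j/w^{m-1}$ are uniformly bounded on $\{z \in \C : |z| > 1 + \delta\}$.
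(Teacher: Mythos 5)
Your proof is correct, and it takes a genuinely different algebraic route from the paper's. The paper proves parts (ii) and (iii) in one line by reading off the block inverse (Schur complement) of $\frac{1}{\sqrt{n}}\mathcal{Y}_n - zI$: namely, $\mathcal{G}_n^{[1,1]}(z) = z^{m-1}\bigl(n^{-m/2}X_{n,1}\cdots X_{n,m} - z^m I\bigr)^{-1}$, so that for a single $m$-th root $z$ of $\zeta$ one has $(P - \zeta I)^{-1} = z^{-(m-1)}\mathcal{G}_n^{[1,1]}(z)$ and transfers the norm bound and the isotropic convergence directly. You instead pass to the block-diagonal power $M^m$ (so that $(P-zI)^{-1}$ is literally the $[1,1]$ block of $(M^m - zI)^{-1}$, no Schur complement needed), and then apply the scalar partial-fraction identity $\frac{1}{x^m-z} = \frac{1}{m}\sum_{j}\frac{\omega_j}{w^{m-1}}\,\frac{1}{x-w\omega_j}$ functionally in $M$, averaging Theorem~\ref{thm:isotropic} over all $m$ roots of $z$; the main terms telescope to $-u_n^\ast v_n/z$ and the errors stay $o(1)$ since there are only $m$ bounded coefficients. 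Both approaches rest on the same linearization and the same isotropic law, but yours trades the explicit block-inverse computation for a more symmetric averaging argument; neither buys anything quantitatively beyond the other, though your version has the small advantage of never needing to write down the off-diagonal blocks of $\mathcal{G}_n$, and the minor drawback of invoking $m$ resolvent evaluations per point $z$ rather than one. The choice of $\delta' = (1+\delta)^{1/m}-1$ and the embedding of $u_n,v_n$ into the first block are both handled correctly.
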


\begin{proof}
Part \ref{item:alt:invertible} follows from Theorem \ref{thm:nooutlier}.  Let $\blmat{Y}{n}$ be defined by \eqref{def:Y}, and let $\blmat{G}{n}(z)$ be defined by \eqref{def:G}.  Then the last two parts of Corollary \ref{cor:ProductIsotropic} follow from the last two parts of Theorem \ref{thm:isotropic} due to the fact that
\[ \mathcal{G}_n^{[1,1]}(z) = z^{m-1} ( n^{-m/2} X_{n,1} \cdots X_{n,m} - z^m I)^{-1}. \] 
\end{proof}

With this result in hand, we proceed to the remainder of the proofs.

\begin{proof}[Proof of Theorem \ref{thm:nomixed}.]
By rescaling by $\frac{1}{\sigma}$ it suffices to assume that $\sigma_i = 1$ for $1 \leq i \leq m$. Let $\varepsilon >0$. Assume that for sufficiently large $n$ there are no eigenvalues of $A_{n}$ in the band $\{z\in\C\;:\;1+\varepsilon < |z| < 1+3\varepsilon\}$ and there are $j$ eigenvalues, $\lambda_{1}(A_{n})$,\ldots,$\lambda_{j}(A_{n})$, in the region $\{z\in\C\;:\;|z|\geq 1+3\varepsilon\}$. 

By Corollary \ref{cor:ProductIsotropic}, almost surely, for $n$ sufficiently large, $n^{-m/2}\iidmat{X}{n}{1}\cdots\iidmat{X}{n}{m} - zI$ is invertible for all $|z| > 1 + \eps$. We decompose (using the singular value decomposition) $A_n = B_n C_n$, where $B_n$ is $n \times k$, $C_n$ is $k \times n$, $k = O(1)$, and both $B_n$ and $C_n$ have rank $O(1)$ and spectral norm $O(1)$.  By Lemma \ref{lemma:eigenvalue}, the eigenvalues of $P_n$ outside $\{ z \in \C : |z| < 1 + 2 \eps \}$ are precisely the values of $z \in \mathbb{C}$ with $|z| > 1 + 2 \eps$ such that
\[ \det\left(I_{k}+C_{n}\left(n^{-m/2}\iidmat{X}{n}{1}\cdots\iidmat{X}{n}{m}-zI_{n}\right)^{-1}B_{n}\right) = 0. \]

By Corollary \ref{cor:ProductIsotropic},
\begin{equation*}
\sup_{|z|>1+\eps}\lnorm C_{n}\left(n^{-m/2}\iidmat{X}{n}{1}\cdots\iidmat{X}{n}{m}-zI_{n}\right)^{-1}B_{n}+\frac{1}{z}C_{n}B_{n}\rnorm\longrightarrow 0
\end{equation*}
almost surely as $n\rightarrow\infty$. By applying \eqref{eq:sylvester} and Lemma \ref{Lem:normtodet}, this gives 
\begin{equation*}
\sup_{|z|>1+\eps}\left|\det\left(I_{k}+C_{n}\left(n^{-m/2}\iidmat{X}{n}{1}\cdots\iidmat{X}{n}{m}-zI_{n}\right)^{-1}B_{n}\right) - \det\left(I_{n}-\frac{1}{z}A_{n}\right)\right|\longrightarrow 0
\end{equation*}
almost surely as $n\rightarrow\infty$.  By recognizing the roots of $\det\left(I_{n}-\frac{1}{z}A_{n}\right)$ as the eigenvalues of $A_{n}$, Rouch\'{e}'s Theorem implies that almost surely for $n$ sufficiently large $P_n$ has exactly $j$ eigenvalues in the region $\{z \in \C : |z| > 1 + 2 \eps \}$, and after labeling the eigenvalues properly, 
\begin{equation*}
\lambda_{i}(P_{n})=\lambda_{i}(A_{n})+o(1)
\end{equation*}
for $1 \leq i \leq j$.  
\end{proof}

The proof of Theorem \ref{thm:nonzeromean} will require the following corollary of \cite[Lemma 2.3]{Tout}.  

\begin{lemma} \label{Lem:ProductToZero}
Let $\varphi_{n}$ and $X_{n,1}$,\ldots,$X_{n,m}$ be as in Theorem \ref{thm:nonzeromean}. Then almost surely, 
	$$n^{-m/2} \left| \varphi_{n}^{*}X_{n,1}X_{n,2}\cdots X_{n,m}\varphi_{n} \right| = o(1).$$
\end{lemma}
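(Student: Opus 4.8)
The plan is to prove the quantitative bound $\E\,|Q_n|^4 = O(n^{-2})$, where $Q_n := n^{-m/2}\,\varphi_n^\ast X_{n,1}\cdots X_{n,m}\,\varphi_n$; since $\sum_{n\geq 1} n^{-2} < \infty$, the almost sure conclusion $Q_n = o(1)$ then follows from Markov's inequality and the Borel--Cantelli lemma, applied with $\eps = 1/j$ for each integer $j \geq 1$ and intersecting the resulting full-measure events. The only input beyond elementary moment computations is a fourth-moment bound for linear and bilinear forms in an iid matrix (such a bound is standard, and a version of it appears as \cite[Lemma 2.3]{Tout}): if $\zeta$ has mean zero and finite fourth moment, $X$ is an $n \times n$ iid matrix with atom variable $\zeta$, and $u, v \in \C^n$ are deterministic, then there is a constant $C$ depending only on the first four moments of $\zeta$ with
\[
 \E\,\bigl|u^\ast X v\bigr|^4 \;\leq\; C\,\|u\|_2^4\,\|v\|_2^4
 \qquad\text{and, consequently,}\qquad
 \E\,\|X v\|_2^4 \;\leq\; C\,n^2\,\|v\|_2^4 ,
\]
where $\|\cdot\|_2$ denotes the Euclidean norm on $\C^n$. (The first inequality follows by expanding $u^\ast X v = \sum_{i,j}\bar u_i v_j X_{ij}$, keeping only the terms in which each index is repeated — the rest vanish since $\zeta$ has mean zero — and using $\|u\|_4 \leq \|u\|_2$, $\|v\|_4 \leq \|v\|_2$ to control the fully diagonal term; the second follows from the first by writing $\|Xv\|_2^2 = \sum_i |e_i^\ast X v|^2$ for the standard basis vectors $e_i$, applying the Cauchy--Schwarz inequality $(\sum_i Y_i)^2 \leq n \sum_i Y_i^2$, and taking $u = e_i$.)

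First I would strip off the leftmost factor and use independence. Set $v_n := n^{-(m-1)/2}\, X_{n,2}\cdots X_{n,m}\,\varphi_n$, so that $Q_n = \varphi_n^\ast\bigl(\tfrac{1}{\sqrt n} X_{n,1}\bigr) v_n$ and $v_n$ is independent of $X_{n,1}$. Conditioning on $X_{n,2},\dots,X_{n,m}$ (so that $v_n$ is frozen) and applying the bilinear bound to $X_{n,1}$ with $u = \varphi_n$, using $\|\varphi_n\|_2 = 1$, gives
\[
 \E\bigl[\,|Q_n|^4 \,\big|\, v_n\,\bigr]
 \;=\; \frac{1}{n^2}\,\E\bigl[\,|\varphi_n^\ast X_{n,1} v_n|^4 \,\big|\, v_n\,\bigr]
 \;\leq\; \frac{C}{n^2}\,\|v_n\|_2^4 .
\]
Taking expectations, it remains only to show $\E\,\|v_n\|_2^4 = O(1)$.

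That bound I would obtain by a short induction on the number of factors. Put $w^{(0)} := \varphi_n$ and $w^{(\ell)} := \tfrac{1}{\sqrt n} X_{n,\,m-\ell+1}\, w^{(\ell-1)}$ for $1 \leq \ell \leq m-1$, so that $w^{(m-1)} = v_n$ and, for each $\ell$, the matrix $X_{n,\,m-\ell+1}$ is independent of $w^{(\ell-1)}$ (which is a function of $X_{n,\,m-\ell+2},\dots,X_{n,m}$ only). Conditioning on $w^{(\ell-1)}$ and using the second displayed bound,
\[
 \E\bigl[\,\|w^{(\ell)}\|_2^4 \,\big|\, w^{(\ell-1)}\,\bigr]
 \;=\; \frac{1}{n^2}\,\E\bigl[\,\|X_{n,\,m-\ell+1}\, w^{(\ell-1)}\|_2^4 \,\big|\, w^{(\ell-1)}\,\bigr]
 \;\leq\; C\,\|w^{(\ell-1)}\|_2^4 ,
\]
whence $\E\,\|w^{(\ell)}\|_2^4 \leq C\,\E\,\|w^{(\ell-1)}\|_2^4$. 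Since $\|w^{(0)}\|_2 = 1$ and $m$ is a fixed integer, iterating gives $\E\,\|v_n\|_2^4 = \E\,\|w^{(m-1)}\|_2^4 \leq C^{m-1}$. Combining with the previous paragraph yields $\E\,|Q_n|^4 \leq C^{m}\,n^{-2}$, and the proof is complete.

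There is no serious obstacle here; the one point requiring mild care is that the implied constant in $\E\,\|v_n\|_2^4 = O(1)$ stays independent of $n$, which works precisely because $m$ is fixed, so the induction runs for a bounded number of steps, each multiplying the constant by a fixed factor (depending only on the fourth moments of $\xi_1,\dots,\xi_m$). Were $m$ allowed to grow with $n$, the geometric factor $C^{m-1}$ would need separate control and this simple argument would no longer suffice.
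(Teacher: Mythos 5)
Your proof is correct, but it takes a genuinely different route from the paper's. The paper strips off the \emph{rightmost} factor: it sets $u := n^{-(m-1)/2}\varphi_n^\ast X_{n,1}\cdots X_{n,m-1}$, uses the spectral norm bound \cite[Theorem 1.4]{Tout} (applied to each $\frac{1}{\sqrt n}X_{n,k}$ with $1 \le k \le m-1$) to conclude $\|u\| = O(1)$ almost surely, conditions on $X_{n,1},\ldots,X_{n,m-1}$, and then invokes the almost-sure bilinear form concentration result \cite[Lemma 2.3]{Tout} applied to $u\left(\frac{1}{\sqrt n}X_{n,m}\right)\varphi_n$. You instead strip off the \emph{leftmost} factor, avoid any appeal to the spectral norm, and control everything through a single quantitative fourth moment bound $\E|u^\ast X v|^4 \ll \|u\|_2^4\|v\|_2^4$, establishing $\E\|v_n\|_2^4 = O(1)$ by a finite induction of $m-1$ steps and then closing via Markov and Borel--Cantelli. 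What each buys: the paper's argument is shorter given the cited lemmas, but its phrase ``condition on $X_{n,1},\ldots,X_{n,m-1}$ so that $\|u\| = O(1)$'' is slightly informal (one must restrict to the almost-sure event where $\|u\|\le K$ for $n$ large and apply \cite[Lemma 2.3]{Tout} on that event). Your argument is more elementary and unconditional — it never invokes spectral norm bounds or almost-sure inputs mid-proof, and it yields the clean quantitative rate $\E|Q_n|^4 \ll n^{-2}$ — at the cost of being somewhat longer. Your closing remark that the geometric growth $C^{m-1}$ is harmless only because $m$ is fixed is a fair caveat and applies with equal force to the paper's proof, since the spectral-norm product there also picks up a factor per matrix.
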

\begin{proof}
Let $u := n^{-(m-1)/2}\varphi_{n}^{*}X_{n,1}X_{n,2}\cdots X_{n,m-1}$.  In view of \cite[Theorem 1.4]{Tout}, it follows that $\|u \| = O(1)$ almost surely.  We now condition on $X_{n,1}, \ldots, X_{n,m-1}$ so that $\| u \| = O(1)$.  As $X_{n,m}$ is independent of $u$, we apply \cite[Lemma 2.3]{Tout} to conclude that 
$$ u \left(\frac{1}{\sqrt{n}}X_{n,m}\right)\varphi_{n}=o(1) $$
almost surely, concluding the proof. 
\end{proof}

With this result, we may proceed to the proof of Theorem \ref{thm:nonzeromean}.

\begin{proof}[Proof of Theorem \ref{thm:nonzeromean}]
By rescaling by $\frac{1}{\sigma}$ it suffices to assume that $\sigma_i = 1$ for $1 \leq i \leq m$. Fix $\gamma > 0$ and let $\varepsilon>0$. By Corollary \ref{cor:ProductIsotropic} and Lemma \ref{lemma:eigenvalue}, almost surely, for $n$ sufficiently large, the only eigenvalues of 
	\begin{equation*}
	P_{n}:= n^{-m/2}\iidmat{X}{n}{1}\cdots\iidmat{X}{n}{m}+\mu n^{\gamma}\phi_{n}\phi_{n}^{*}
	\end{equation*}
	in the region $\{z \in \C : |z| > 1 + \eps \}$ are the values of $z \in \C$ with $|z| > 1 + \eps$ such that 
	\begin{equation}
	1+\mu n^{\gamma} \phi^{*}_{n}\left(n^{-m/2}\iidmat{X}{n}{1}\cdots\iidmat{X}{n}{m}-zI_{n}\right)^{-1}\phi_{n}=0.
	\label{equ:detInNonZeroMean}
	\end{equation}

	Define the functions
	\begin{align*}
	&f(z):=1+\mu n^{\gamma}\phi^{*}_{n}\left(n^{-m/2}\iidmat{X}{n}{1}\cdots\iidmat{X}{n}{m}-zI_{n}\right)^{-1}\phi_{n}\\
	&g(z):=1-\frac{\mu n^{\gamma}}{z}.
	\end{align*}
	Observe that $g(z)$ has one zero located at $\mu n^{\gamma}$ which will be outside the disk $\{z\in \C\;:\;|z| \leq 1+\varepsilon\}$ for large enough $n$. By Corollary \ref{cor:ProductIsotropic}, it follows that almost surely
	\begin{equation*}
	\sup_{|z|>1+\eps}|f(z)-g(z)| = o(n^{\gamma}).
	\end{equation*}
	Thus, almost surely 
	\begin{align*}
	f(z)=g(z)+o(n^{\gamma})
	\end{align*}
	for all $z \in \C$ with $|z| > 1+\eps$. 
	
	Observe that if $z$ is a root of $f$ with $|z| > 1 + \eps$, then $|g(z)|=\left|1-\frac{\mu n^{\gamma}}{z}\right|=o(n^{\gamma})$. We conclude that if $z$ is a root of $f$ outside the disk $\{z \in \C : |z| > 1 + \eps \}$, then the root must tend to infinity with $n$ almost surely.  We will return to this fact shortly.  
	
	For the next step of the proof, we will need to bound the spectral norm of $n^{-m/2} X_{n,1} \cdots X_{n,m}$.  To do so, we apply \cite[Theorem 1.4]{Tout} and obtain that, almost surely, for $n$ sufficiently large, 
	\begin{equation} \label{eq:tout:normbound}
		n^{-m/2} \| X_{n,1} \cdots X_{n,m} \| \leq (2.1)^m.
	\end{equation} 
	Thus, by a Neumann series expansion, for all $|z| > (2.5)^m$, we have
	\begin{align*}
	f(z)&=1-\frac{\mu n^{\gamma}}{z}+\frac{\mu n^{\gamma}}{z^{2}}\phi^{*}_{n}\left(n^{-m/2}\iidmat{X}{n}{1}\cdots\iidmat{X}{n}{m}\right)\phi_{n}+O\left(\frac{ n^{\gamma}}{|z|^{3}}\right)\\
	&= g(z)+\frac{\mu n^{\gamma}}{z^{2}}\phi^{*}_{n}\left(n^{-m/2}\iidmat{X}{n}{1}\cdots\iidmat{X}{n}{m}\right)\phi_{n}+O\left(\frac{n^{\gamma}}{|z|^{3}}\right).
	\end{align*}
	By Lemma \ref{Lem:ProductToZero},  $\varphi^{*}_{n}\left(n^{-m/2}\iidmat{X}{n}{1}\cdots\iidmat{X}{n}{m}\right)\varphi_{n}=o(1)$ almost surely, so one can see
	\begin{equation*}
	f(z)=g(z)+o\left(\frac{n^{\gamma}}{|z|^{2}}\right)+O\left(\frac{n^{\gamma}}{|z|^{3}}\right)
	\end{equation*}
	uniformly for all $|z|>(2.5)^{m}$.  In particular, when $|z| \to \infty$, we obtain
	\begin{equation} \label{eq:fztendinfty}
	|z| |f(z) - g(z)| = o \left( \frac{n^\gamma}{|z|} \right)
	\end{equation}
	almost surely.  Since any root of $zf(z)$ outside $\{z \in \C : |z| \leq 1 + \eps \}$ must tend to infinity with $n$, it follows from Rouch\'{e}'s theorem that almost surely, for $n$ sufficiently large, $zf(z)$ has precisely one root outside the disk $\{ z \in \C : |z| \leq 1 + \eps\}$ and that root takes the value $\mu n^\gamma + o(n^\gamma)$.  
	
	It remains to reduce the error from $o(n^\gamma)$ to $o(1)$.  Fix $\delta > 0$, and let $\Gamma$ be the circle around $\mu n^\gamma$ with radius $\delta$.  Then from \eqref{eq:fztendinfty} we see that almost surely
	\[ \sup_{z \in \Gamma} |z||f(z) - g(z)| = o(1). \]
	Hence, almost surely, for $n$ sufficiently large,
	\[ |z||f(z) - g(z)| < \delta = |z||g(z)| \]
	for all $z \in \Gamma$.  Therefore, by another application of Rouch\'{e}'s theorem, we conclude that almost surely, for $n$ sufficiently large, $zf(z)$ contains precisely one root outside of $\{z \in \C : |z| \leq 1 + \eps \}$ and that root is located in the interior of $\Gamma$.  Since $\delta$ was arbitrary, this completes the proof.  
\end{proof}

\section{Truncation and useful tools}
\label{Sec:TruncAndTools}

We now turn to the proof of Theorem \ref{thm:isotropic}.   We will require the following standard truncation results for iid random matrices.  

\begin{lemma}
	Let $\xi$ be a complex-valued random variable with mean zero, unit variance, finite fourth moment, and independent real and imaginary parts. Let $\Re(\xi)$ and $\Im(\xi)$ denote the real and imaginary parts of $\xi$ respectively, and let $\sqrt{-1}$ denote the imaginary unit. For $L > 0$, define 
	\begin{align*}
	\tilde{\xi}&:=\Re(\xi)\indicator{|\Re(\xi)|\leq L/\sqrt{2}}-\E\left[\Re(\xi)\indicator{|\Re(\xi)|\leq L/\sqrt{2}}\right]\\
	&\quad \quad \quad +\sqrt{-1}\left(\Im(\xi)\indicator{|\Im(\xi)|\leq L/\sqrt{2}}-\E\left[\Im(\xi)\indicator{|\Im(\xi)|\leq L/\sqrt{2}}\right]\right)
	\end{align*}
	and
	\begin{equation*}
		\hat{\xi}:=\frac{\tilde{\xi}}{\sqrt{\emph{Var}(\tilde{\xi})}}.
	\end{equation*}
	Then there exists a constant $L_0 > 0$ (depending only on $\E|\xi|^{4}$) such that the following statements hold for all $L > L_0$.  
	\begin{enumerate}[label=\emph{(\roman*)}]
		\item \label{item:truncation:i} $\emph{Var}(\tilde{\xi})\geq \frac{1}{2}$
		\item \label{item:truncation:ii} $|1-\emph{Var}(\tilde{\xi})|\leq\frac{4}{L^2}\E|\xi|^{4}$
		\item \label{item:truncation:iii} Almost surely, $|\hat{\xi}|\leq 4L$
		\item \label{item:truncation:iv} $\hat{\xi}$ has mean zero, unit variance, $\E|\hat{\xi}|^{4}\leq C\E|\xi|^{4}$ for some absolute constant $C>0$, and the real and imaginary parts of $\hat{\xi}$ are independent.  
	\end{enumerate}
	\label{lem:Truncate}
\end{lemma}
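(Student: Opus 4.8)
The plan is to carry out the classical truncation estimate on the real and imaginary parts of $\xi$ separately and then collect the pieces. Write $a := \Re(\xi)$ and $b := \Im(\xi)$; by hypothesis $a$ and $b$ are independent, real, mean zero, and satisfy $\E a^2 + \E b^2 = \E|\xi|^2 = 1$, while expanding $\E|\xi|^4 = \E(a^2+b^2)^2 = \E a^4 + \E b^4 + 2\,\E a^2\,\E b^2$ shows $\E a^4 + \E b^4 \le \E|\xi|^4 < \infty$. Put $t := L/\sqrt2$, so that $\tilde\xi = \tilde a + \sqrt{-1}\,\tilde b$ with $\tilde a := a\indicator{|a|\le t} - \E[a\indicator{|a|\le t}]$ a function of $a$ alone and $\tilde b$ the analogous function of $b$. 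Thus $\tilde a,\tilde b$ are independent and each has mean zero, which already gives $\E\tilde\xi = 0$, the identity $\var(\tilde\xi) = \E|\tilde\xi|^2 = \var(\tilde a) + \var(\tilde b)$, and — since $\hat\xi$ is $\tilde\xi$ divided by a deterministic positive scalar — the independence of $\Re(\hat\xi)$ and $\Im(\hat\xi)$ required in \ref{item:truncation:iv}.

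I would prove \ref{item:truncation:ii} first. A short computation gives $\var(a) - \var(\tilde a) = \E[a^2\indicator{|a|>t}] + \big(\E[a\indicator{|a|\le t}]\big)^2$, and the crude tail bounds $a^2\indicator{|a|>t} \le a^4/t^2$ and $|a|\indicator{|a|>t} \le a^4/t^3$ (the latter combined with $\E a = 0$, so that $\E[a\indicator{|a|\le t}] = -\E[a\indicator{|a|>t}]$) yield $0 \le \var(a) - \var(\tilde a) \le t^{-2}\E a^4 + t^{-6}(\E a^4)^2$. Adding the analogous estimate for $b$, using $\var(a)+\var(b)=1$, $\E a^4+\E b^4 \le \E|\xi|^4$, and $t^{-2} = 2L^{-2}$, I obtain $0 \le 1 - \var(\tilde\xi) \le 2L^{-2}\E|\xi|^4 + O(L^{-6})$. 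Fixing a threshold $L_0$, depending only on $\E|\xi|^4$, large enough that the $O(L^{-6})$ term is at most $2L^{-2}\E|\xi|^4$ for $L > L_0$, gives \ref{item:truncation:ii}. Part \ref{item:truncation:i} then follows at once: after possibly enlarging $L_0$ so that $L_0^2 \ge 8\,\E|\xi|^4$, for $L > L_0$ we have $\var(\tilde\xi) \ge 1 - 4L^{-2}\E|\xi|^4 \ge \tfrac12$.

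The remaining parts are quick. For \ref{item:truncation:iii}, bound $|\tilde a| \le t + |\E[a\indicator{|a|\le t}]| \le t + \E|a| \le t + 1$ (using $\E|a| \le \sqrt{\E a^2} \le 1$) and likewise $|\tilde b| \le t+1$, so $|\tilde\xi|^2 = \tilde a^2 + \tilde b^2 \le 2(t+1)^2 = (L+\sqrt2)^2$; enlarging $L_0$ again so that $L + \sqrt2 \le \sqrt2\,L$ for $L > L_0$, and invoking \ref{item:truncation:i}, we get $|\hat\xi| = |\tilde\xi|/\sqrt{\var(\tilde\xi)} \le \sqrt2\,|\tilde\xi| \le 2L \le 4L$ almost surely. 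For the remaining claims in \ref{item:truncation:iv}, $\E|\hat\xi|^2 = \var(\tilde\xi)/\var(\tilde\xi) = 1$ by construction, and the fourth moment follows from $|\hat\xi|^4 \le 4\,|\tilde\xi|^4$ (by \ref{item:truncation:i}), the elementary bound $(\tilde a^2+\tilde b^2)^2 \le 2\tilde a^4 + 2\tilde b^4$, and $\E\tilde a^4 \le 8\,\E a^4 + 8\big(\E[a\indicator{|a|\le t}]\big)^4 \le 9\,\E a^4$ for $L > L_0$ (again enlarging $L_0$ in terms of $\E|\xi|^4$), so that $\E|\hat\xi|^4 \le C(\E a^4 + \E b^4) \le C\,\E|\xi|^4$ for an absolute constant $C$.

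I do not expect a genuine obstacle: the lemma is a routine truncation argument. The one place that needs care is \ref{item:truncation:ii}, where one must keep the lower-order ($O(L^{-6})$) errors under control and choose the single threshold $L_0$ — as a function of $\E|\xi|^4$ alone — so that the clean inequality $1 - \var(\tilde\xi) \le \tfrac{4}{L^2}\E|\xi|^4$ holds for every $L > L_0$; once that threshold is in place, parts \ref{item:truncation:i}, \ref{item:truncation:iii}, and \ref{item:truncation:iv} all drop out with room to spare.
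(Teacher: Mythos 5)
Your proposal is correct and follows essentially the same route as the paper: decompose $\xi$ into its independent real and imaginary parts, establish the variance estimate \ref{item:truncation:ii} from the exact identity
\[
1-\var(\tilde\xi)=\bigl(\E[a\indicator{|a|\le t}]\bigr)^2+\E\bigl[a^2\indicator{|a|>t}\bigr]+\bigl(\E[b\indicator{|b|\le t}]\bigr)^2+\E\bigl[b^2\indicator{|b|>t}\bigr],
\]
deduce \ref{item:truncation:i}, and then read off \ref{item:truncation:iii} and \ref{item:truncation:iv} from $\var(\tilde\xi)\ge\tfrac12$. The one point of divergence is how you handle the squared centering terms $\bigl(\E[a\indicator{|a|\le t}]\bigr)^2=\bigl(\E[a\indicator{|a|>t}]\bigr)^2$: the paper applies Jensen (equivalently Cauchy--Schwarz) to bound this by $\E[a^2\indicator{|a|>t}]$, which folds into the same $t^{-2}\E a^4$ tail bound and yields $|1-\var(\tilde\xi)|\le\frac{4}{L^2}\E|\xi|^4$ for \emph{every} $L>0$; you instead bound $|a|\indicator{|a|>t}\le a^4/t^3$, which produces an extra $O(L^{-6})$ remainder that you then absorb by enlarging $L_0$. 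Both are valid since the lemma only asserts the estimates for $L>L_0$, but the Jensen step is the cleaner route and is worth knowing. The rest of your argument (the $|\hat\xi|\le 4L$ bound via $\var(\tilde\xi)\ge\tfrac12$, and the fourth-moment bound via $(x-y)^4\ll x^4+y^4$) matches the paper's with only cosmetic differences in the constants.
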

The proof of this theorem is a standard truncation argument. The full details of the proof can be found in Appendix \ref{Sec:ProofOfTruncation}.

Let $X$ be an $n\times n$ random matrix filled with iid copies of a random variable $\xi$ which has mean zero, unit variance, finite fourth moment, and independent real and imaginary parts.  We define matrices $\tilde{X}$ and $\hat{X}$ to be the $n\times n$ matrices with entries defined by
\begin{align}
\tilde{X}_{(i,j)}&:=\Re(X_{(i,j)})\indicator{|\Re(X_{(i,j)})|\leq L/\sqrt{2}}-\E\left[\Re(X_{(i,j)})\indicator{|\Re(X_{(i,j)})|\leq L/\sqrt{2}}\right]\notag\\
& \quad  +\sqrt{-1}\left(\Im(X_{(i,j)})\indicator{|\Im(X_{(i,j)})|\leq L/\sqrt{2}}-\E\left[\Im(X_{(i,j)})\indicator{|\Im(X_{(i,j)})|\leq L/\sqrt{2}}\right]\right)\label{def:Xtilde}
\end{align}
and
\begin{equation}
\hat{X}_{(i,j)}:= \frac{\tilde{X}_{(i,j)}}{\sqrt{\Var(\tilde{X}_{(i,j)})}}
\label{def:Xhat}
\end{equation}
for $1 \leq i,j \leq n$.  

\begin{lemma}
Let $X$ be an iid random matrix with atom variable $\xi$ which has mean zero, unit variance,  $m_{4}:=\E|\xi|^4< \infty$, and independent real and imaginary parts.  Let $\hat{X}$ be as defined in \eqref{def:Xhat}. Then, there exist constants $C, L_0 > 0$ (depending only on $m_4$) such that for all $L > L_0$ 
\begin{equation*}
\limsup_{n\rightarrow \infty} \frac{1}{\sqrt n}  \lnorm X-\hat{X}\rnorm \leq \frac{C}{L}
\end{equation*}
almost surely.  
\label{lem:XcloseXhat}
\end{lemma}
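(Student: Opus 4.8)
This is a routine comparison estimate once one has an operator-norm bound for i.i.d.\ matrices. The plan is to write
\[ X - \hat{X} = (X - \tilde{X}) + (\tilde{X} - \hat{X}) \]
and to bound $\tfrac{1}{\sqrt n}\|X - \tilde X\|$ and $\tfrac{1}{\sqrt n}\|\tilde X - \hat X\|$ separately, each by $O(1/L)$, then combine with the triangle inequality. The only external input is the operator-norm form of the Bai--Yin theorem: if $M_n$ is an $n \times n$ matrix with i.i.d.\ entries of mean zero, variance $v$, and finite fourth moment, then $\tfrac{1}{\sqrt n}\|M_n\| \to 2\sqrt v$ almost surely (see, e.g., \cite{BSbook}).

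\textbf{The term $X - \tilde X$.} Its entries are i.i.d.\ copies of $\xi - \tilde\xi$. Since $\E\tilde\xi = 0 = \E\xi$, this atom variable has mean zero, and since $\Re(\xi - \tilde\xi)$ depends only on $\Re\xi$ and $\Im(\xi - \tilde\xi)$ only on $\Im\xi$, its real and imaginary parts are independent. Using $\E\Re\xi = 0$, the real part of $\xi - \tilde\xi$ equals $\Re\xi\,\indicator{|\Re\xi| > L/\sqrt2} - \E[\Re\xi\,\indicator{|\Re\xi| > L/\sqrt2}]$, so by Markov's inequality
\[ \var\big(\Re(\xi - \tilde\xi)\big) \le \E\big[(\Re\xi)^2 \indicator{|\Re\xi| > L/\sqrt2}\big] \le \frac{2}{L^2}\,\E(\Re\xi)^4 \le \frac{2 m_4}{L^2}, \]
and likewise for the imaginary part, so $\var(\xi - \tilde\xi) \le 4 m_4/L^2$; the same bookkeeping gives $\E|\xi - \tilde\xi|^4 \le C m_4$ for an absolute $C$. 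The Bai--Yin theorem therefore yields
\[ \limsup_{n\to\infty} \frac{1}{\sqrt n}\|X - \tilde X\| = 2\sqrt{\var(\xi - \tilde\xi)} \le \frac{4\sqrt{m_4}}{L} \]
almost surely.

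\textbf{The term $\tilde X - \hat X$.} By \eqref{def:Xhat}, $\hat X = \var(\tilde\xi)^{-1/2}\,\tilde X$, hence $\tilde X - \hat X = \big(1 - \var(\tilde\xi)^{-1/2}\big)\tilde X$ and $\|\tilde X - \hat X\| = \big|1 - \var(\tilde\xi)^{-1/2}\big|\,\|\tilde X\|$. By Lemma~\ref{lem:Truncate}\,\ref{item:truncation:i}--\ref{item:truncation:ii}, for $L > L_0$ we have $\var(\tilde\xi) \ge \tfrac12$ and $|1 - \var(\tilde\xi)| \le 4 m_4/L^2$, whence $\big|1 - \var(\tilde\xi)^{-1/2}\big| \le \sqrt2\,|1 - \var(\tilde\xi)| \le C m_4/L^2$. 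For the remaining factor, apply Bai--Yin to $X$ itself (mean zero, unit variance, finite fourth moment) to get $\tfrac{1}{\sqrt n}\|X\| \to 2$ a.s.; combined with the previous step, $\limsup_n \tfrac{1}{\sqrt n}\|\tilde X\| \le 2 + 4\sqrt{m_4}/L \le 3$ once $L_0$ is enlarged so that $L > L_0 \ge \max(1, 4\sqrt{m_4})$. Hence $\limsup_n \tfrac{1}{\sqrt n}\|\tilde X - \hat X\| \le 3 C m_4/L^2 \le C'/L$, and putting the two pieces together,
\[ \limsup_{n\to\infty} \frac{1}{\sqrt n}\|X - \hat X\| \le \frac{4\sqrt{m_4}}{L} + \frac{C'}{L} =: \frac{C''}{L} \]
almost surely, with $C''$ and $L_0$ depending only on $m_4$, as required.

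\textbf{Main obstacle.} Everything is bookkeeping once the Bai--Yin operator-norm bound is in hand, and that is the only non-elementary point. One cannot bound $\|X - \tilde X\|$ by its Hilbert--Schmidt norm (which overshoots by a factor $\sqrt n$), nor peel off the deterministic part of $X - \tilde X$ separately, since that constant matrix has operator norm of order $n/L^3 \gg \sqrt n$; the centering must stay attached to the heavy-tailed part so that Bai--Yin is applied to a genuinely mean-zero ensemble. The only care point is to track that all constants depend solely on $m_4$, which is transparent from the estimates above together with Lemma~\ref{lem:Truncate}.
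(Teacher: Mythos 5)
Your proof is correct and follows the same decomposition $X-\hat X=(X-\tilde X)+(\tilde X-\hat X)$ and the same key input (the operator-norm Bai--Yin bound, i.e.\ \cite[Theorem 1.4]{Tout}) as the paper's; the only deviation is that you obtain $\limsup_n n^{-1/2}\lVert\tilde X\rVert\le 3$ indirectly via Bai--Yin applied to $X$ together with your bound on $\lVert X-\tilde X\rVert$, whereas the paper applies the spectral-norm bound directly to $\tilde X$ to get $\limsup_n n^{-1/2}\lVert\tilde X\rVert\le 2$. Both routes are valid and rest on the same ideas.
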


\begin{proof}
	Let $\tilde{X}$ be defined as in \eqref{def:Xtilde}, and let $L_0$ be the value from Lemma \ref{lem:Truncate}.  
	Begin by noting that 
	$$\lnorm X-\hat{X}\rnorm \leq \lnorm X-\tilde{X}\rnorm+\lnorm \tilde{X}-\hat{X}\rnorm$$
	and thus it suffices to show that
	$$\underset{n\rightarrow\infty}{\limsup}\frac{1}{\sqrt{n}}\lnorm X-\tilde{X}\rnorm \leq \frac{C}{L}$$
	and 
	$$\underset{n\rightarrow\infty}{\limsup}\frac{1}{\sqrt{n}}\lnorm \tilde{X}-\hat{X}\rnorm \leq \frac{C}{L}$$
	almost surely. First, by Lemma \ref{lem:Truncate},
	\begin{align}
	\frac{1}{\sqrt{n}}\lnorm \tilde{X}-\hat{X}\rnorm &=
	\frac{1}{\sqrt{n}}\lnorm \tilde{X}\rnorm\left|1-\frac{1}{\sqrt{\Var(\tilde{\xi})}}\right|\notag\\
	&\leq \frac{1}{\sqrt{n}}\lnorm \tilde{X}\rnorm\sqrt{2}\left|\sqrt{\Var(\tilde{\xi})}-1\right|\notag\\
	&\leq \frac{1}{\sqrt{n}}\lnorm \tilde{X}\rnorm\sqrt{2}\left|\Var(\tilde{\xi})-1\right|\notag\\
	&\leq \frac{1}{\sqrt{n}}\lnorm \tilde{X}\rnorm\sqrt{2}\left(\frac{4}{L^2}\E|\xi|^{4}\right).
	\label{Equ:XsCloseStep}
	\end{align}
	By \cite[Theorem 1.4]{Tout}, we find that almost surely
	\begin{equation*}
	\limsup_{n\rightarrow\infty}\frac{1}{\sqrt{n}}\lnorm \tilde{X}\rnorm \leq 2,
	\end{equation*}
	and thus by \eqref{Equ:XsCloseStep}
	\begin{equation*}
	\underset{n\rightarrow\infty}{\limsup}\frac{1}{\sqrt{n}}\lnorm \tilde{X}-\hat{X}\rnorm \leq\frac{C}{L}
	\end{equation*}
	almost surely for all $L \geq \max\{1, L_0\}$.
	
	Next consider $\underset{n\rightarrow\infty}{\limsup}\frac{1}{\sqrt{n}}\lnorm X-\tilde{X}\rnorm$.  Note that $X - \tilde{X}$ is an iid matrix with atom variable 
	\begin{align*}\Re(X_{(i,j)})&\indicator{|\Re(X_{(i,j)})|> L/\sqrt{2}}-\E\left[\Re(X_{(i,j)})\indicator{|\Re(X_{(i,j)})|> L/\sqrt{2}}\right]\\
	&+\sqrt{-1}\left(\Im(X_{(i,j)})\indicator{|\Im(X_{(i,j)})|> L/\sqrt{2}}-\E\left[\Im(X_{(i,j)})\indicator{|\Im(X_{(i,j)})|> L/\sqrt{2}}\right]\right).
	\end{align*}  
	Thus, each entry has mean zero, variance
	\begin{equation*}
	\Var((X-\tilde{X})_{i,j}) \leq \frac{8}{L^{2}}\E|\xi|^{4},
	\end{equation*}
	and finite fourth moment.  Thus, again by \cite[Theorem 1.4]{Tout},  
	\begin{equation*}
	\underset{n\rightarrow\infty}{\limsup}\frac{1}{\sqrt{n}}\lnorm X-\tilde{X}\rnorm \leq \frac{C}{L}
	\end{equation*}
	almost surely, and the proof is complete.  
\end{proof}

We now consider the iid random matrices $X_{n,1}, \ldots, X_{n,m}$ from Theorem \ref{thm:isotropic}.  For each $1\leq k\leq m$, define the truncation $\triidmat{X}{n}{k}$ as was done above for $\hat{X}$ in \eqref{def:Xhat}. Define $\trblmat{Y}{n}$ by
\begin{equation}
\trblmat{Y}{n}:=\left[\begin{matrix}
0 & \triidmat{X}{n}{1} & \dots & 0\\
\vdots & \vdots & \ddots & \vdots \\
0 & 0 & \dots & \triidmat{X}{n}{m-1}\\
\triidmat{X}{n}{m} & 0 & \dots & 0\\
\end{matrix}\right].
\label{Equ:trBlockMat}
\end{equation}
Using Theorem \ref{lem:Truncate}, we have the following corollary.  

\begin{corollary}
	Let $X_{n,1},\ldots,X_{n,m}$ be independent iid random matrices with atom variables $\xi_{1},\ldots,\xi_{m}$, each of which has mean zero, unit variance, finite fourth moment, and independent real and imaginary parts.  Let $\hat{X}_{n,1},\ldots,\hat{X}_{n,m}$ be the truncations of $X_{n,1}, \ldots, X_{n,m}$ as was done in \eqref{def:Xhat}.  In addition, let $\blmat{Y}{n}$ be as defined in \eqref{def:Y} and $\trblmat{Y}{n}$ be as defined in \eqref{Equ:trBlockMat}. Then there exist constants $C, L_0 > 0$ (depending only on the atom variables $\xi_1, \ldots, \xi_m$) such that
	\begin{equation}
	\limsup_{n\rightarrow\infty}\frac{1}{\sqrt{{n}}}\lnorm \blmat{Y}{n}-\trblmat{Y}{n}\rnorm \leq \frac{C}{L}
	\end{equation}
	almost surely for all $L > L_0$. 
	\label{Cor:YsClose}
\end{corollary}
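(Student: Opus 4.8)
The plan is to reduce the estimate for the block matrix $\blmat{Y}{n} - \trblmat{Y}{n}$ to the per-block estimates already furnished by Lemma~\ref{lem:XcloseXhat}. Observe first that $\blmat{Y}{n}$ and $\trblmat{Y}{n}$ have exactly the same cyclic block structure, so their difference $\mathcal{D}_n := \blmat{Y}{n} - \trblmat{Y}{n}$ is the $mn \times mn$ matrix whose $(k,k+1)$-block equals $\iidmat{X}{n}{k} - \triidmat{X}{n}{k}$ for $1 \le k \le m-1$, whose $(m,1)$-block equals $\iidmat{X}{n}{m} - \triidmat{X}{n}{m}$, and all of whose remaining blocks vanish.

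The first step is an elementary observation about matrices of this cyclic form. If $\mathcal{D}$ is an $mn\times mn$ matrix with $n \times n$ blocks $D_1, \dots, D_m$ arranged exactly as the blocks of $\blmat{Y}{n}$ are (that is, $D_k$ in the $(k,k+1)$-block for $1 \le k \le m-1$ and $D_m$ in the $(m,1)$-block, all other blocks zero), then a direct computation shows that $\mathcal{D}^\ast \mathcal{D}$ is block diagonal with diagonal blocks $D_m^\ast D_m, D_1^\ast D_1, \dots, D_{m-1}^\ast D_{m-1}$. Consequently the multiset of singular values of $\mathcal{D}$ is the union of the multisets of singular values of $D_1, \dots, D_m$, and therefore $\| \mathcal{D} \| = \max_{1 \le k \le m} \| D_k \|$. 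Applying this to $\mathcal{D} = \mathcal{D}_n$ yields
\[ \frac{1}{\sqrt{n}} \| \blmat{Y}{n} - \trblmat{Y}{n} \| = \max_{1 \le k \le m} \frac{1}{\sqrt{n}} \| \iidmat{X}{n}{k} - \triidmat{X}{n}{k} \|. \]

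The second step simply invokes Lemma~\ref{lem:XcloseXhat} for each of the finitely many indices $k$. For each $1 \le k \le m$, that lemma (applied to the iid matrix $\iidmat{X}{n}{k}$ with atom variable $\xi_k$) produces constants $C_k, L_{0,k} > 0$ depending only on $\E|\xi_k|^4$ such that, almost surely, $\limsup_{n \to \infty} \frac{1}{\sqrt{n}} \| \iidmat{X}{n}{k} - \triidmat{X}{n}{k} \| \le C_k/L$ for all $L > L_{0,k}$. Intersecting the $m$ corresponding almost sure events (again an almost sure event, since $m$ is finite), and setting $C := \max_{1 \le k \le m} C_k$ and $L_0 := \max_{1 \le k \le m} L_{0,k}$ --- both depending only on $\xi_1, \dots, \xi_m$ --- we obtain, on that event and for every $L > L_0$,
\[ \limsup_{n \to \infty} \frac{1}{\sqrt{n}} \| \blmat{Y}{n} - \trblmat{Y}{n} \| = \limsup_{n \to \infty} \max_{1 \le k \le m} \frac{1}{\sqrt{n}} \| \iidmat{X}{n}{k} - \triidmat{X}{n}{k} \| \le \frac{C}{L}, \]
using that the $\limsup$ of a finite maximum equals the maximum of the $\limsup$'s.

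There is essentially no obstacle in this argument: the only points requiring a word of care are the block-matrix singular value computation (routine, but worth writing out explicitly) and the bookkeeping ensuring that the final constants $C$ and $L_0$ depend only on the atom variables $\xi_1, \dots, \xi_m$, which they do, being maxima of the corresponding constants from Lemma~\ref{lem:XcloseXhat}. Everything else follows immediately.
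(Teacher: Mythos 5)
Your proof is correct and follows essentially the same route as the paper: both reduce the operator norm of $\blmat{Y}{n}-\trblmat{Y}{n}$ to a max over the individual blocks $\iidmat{X}{n}{k}-\triidmat{X}{n}{k}$ using the cyclic block structure, then invoke Lemma~\ref{lem:XcloseXhat}. The only difference is that you verify the norm identity explicitly (and prove equality via $\mathcal{D}^\ast\mathcal{D}$ rather than just the inequality $\leq$ the paper states), and spell out the bookkeeping for taking maxima of the $C_k$ and $L_{0,k}$, which the paper leaves implicit.
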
	
\begin{proof}
Due to the block structure of $\blmat{Y}{n}$ and $\trblmat{Y}{n}$, it follows that 
\begin{equation*}
\lnorm \blmat{Y}{n}-\trblmat{Y}{n}\rnorm \leq \max_{k}\lnorm \iidmat{X}{n}{k}-\triidmat{X}{n}{k}\rnorm .
\end{equation*}
Therefore, the claim follows from Lemma \ref{lem:XcloseXhat}.  
\end{proof}

\section{Least singular value bounds}
\label{Sec:LeastSingVal}

In this section, we study the least singular value of $\frac{1}{\sqrt{n}}\blmat{Y}{n}-zI$.  We begin by recalling Weyl's inequality for the singular values (see, for example, \cite[Problem III.6.5]{Bhatia}), which states that for $n\times n$ matrices $A$ and $B$, 
\begin{equation} \label{eq:weyl}
\max_{1 \leq i \leq n}\left|s_{i}(A)-s_{i}(B)\right|\leq \lnorm A-B\rnorm.
\end{equation}


We require the following theorem, which is based on \cite[Theorem 2]{N}.

\begin{theorem} \label{Thm:LeastTruncSingValNonZero} 
Fix $L > 0$, and let $\xi_1, \ldots, \xi_m$ be complex-valued random variables, each having mean zero, unit variance, independent real and imaginary parts, and which satisfy 
\[	\sup_{1 \leq k \leq m} |\xi_k| \leq L \]
almost surely.  Let $X_{n,1}, \ldots, X_{n,m}$ be independent iid random matrices with atom variables $\xi_1, \ldots, \xi_m$, respectively.  Define $\mathcal{Y}_n$ as in \eqref{def:Y}, and fix $\delta > 0$.  Then there exists a constant $c > 0$ (depending only on $\delta$) such that 
\begin{equation*}
	 \inf_{|z|\geq 1+\delta}s_{mn}\left(\frac{1}{\sqrt{n}}\blmat{Y}{n}-zI\right)\geq c
 \end{equation*} 
with overwhelming probability.  
\end{theorem}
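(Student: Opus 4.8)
The plan is to bound the least singular value of $\frac{1}{\sqrt n}\mathcal{Y}_n - zI$ uniformly over $|z| \geq 1+\delta$ by reducing to a least singular value estimate for the product matrix $n^{-m/2}X_{n,1}\cdots X_{n,m}$ via the linearization identity, and then invoking the estimate of Nemish (\cite[Theorem 2]{N}). First I would use Proposition~\ref{prop:linear}: since $\mathcal{Y}_n^m$ is block-diagonal with blocks $Z_1,\dots,Z_m$ (cyclic products of $X_{n,1},\dots,X_{n,m}$, each with the same singular values as the full product up to the identity just mentioned is about eigenvalues—here I need the singular-value version via Sylvester), the singular values of $\frac{1}{\sqrt n}\mathcal{Y}_n - zI$ can be related to those of $n^{-m/2}\prod_k X_{n,k} - z^m I$. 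More precisely, one writes
$$ \left(\frac{1}{\sqrt n}\mathcal{Y}_n\right)^m - z^m I = \left(\frac{1}{\sqrt n}\mathcal{Y}_n - zI\right)\prod_{\ell=1}^{m-1}\left(\frac{1}{\sqrt n}\mathcal{Y}_n - z\omega^\ell I\right), $$
where $\omega = e^{2\pi\sqrt{-1}/m}$, so that
$$ s_{mn}\!\left(\frac{1}{\sqrt n}\mathcal{Y}_n - zI\right) \geq \frac{s_{mn}\!\left(\left(\frac{1}{\sqrt n}\mathcal{Y}_n\right)^m - z^m I\right)}{\prod_{\ell=1}^{m-1}\left\|\frac{1}{\sqrt n}\mathcal{Y}_n - z\omega^\ell I\right\|}. $$
The denominator is $O(|z|^{m-1})$ with overwhelming probability (using the truncation bound $|\xi_k|\le L$ and a crude operator-norm bound on $\mathcal{Y}_n$, or \cite[Theorem 1.4]{Tout}), which is harmless since we only need a lower bound for $|z|$ in a bounded-above range is not guaranteed; but note that for $|z|$ large the least singular value is automatically $\geq |z| - \|\frac{1}{\sqrt n}\mathcal{Y}_n\| \gg 1$, so it suffices to handle $1+\delta \leq |z| \leq C_0$ for a large constant $C_0$, where the denominator is $O(1)$.

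Next, for $1+\delta \le |z| \le C_0$, the numerator $s_{mn}(( \frac{1}{\sqrt n}\mathcal{Y}_n)^m - z^m I)$ equals, via the block-diagonal structure, the minimum over the cyclic products $Z_k$ of $s_n(n^{-m/2}Z_k - z^m I)$; since each $Z_k$ is (up to relabeling which matrix is "first") a product of $m$ independent iid matrices with truncated atom variables, Nemish's theorem \cite[Theorem 2]{N} applies to each and yields $s_n(n^{-m/2}Z_k - wI) \geq c$ with overwhelming probability, uniformly over $w$ in the annulus $\{(1+\delta)^m \leq |w| \leq C_0^m\}$ (the band $|w|\ge (1+\delta)^m > 1$ keeps us strictly outside the spectral bulk of the product, which is the regime where \cite[Theorem 2]{N} gives the bound). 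Taking a union bound over the $m$ blocks—$m$ is fixed—preserves the overwhelming-probability conclusion. Combining the two displays gives the claimed uniform lower bound $c$ (with a new constant) on $|z| \ge 1+\delta$, after also absorbing the large-$|z|$ regime.

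The main obstacle is making the uniformity in $z$ rigorous: \cite[Theorem 2]{N} presumably gives, for each fixed $w$, a bound holding with overwhelming probability, whereas here I need it simultaneously for all $z$ in an (uncountable) annulus. The standard fix is a net argument: choose an $n^{-A}$-net of the annulus $\{1+\delta \le |z| \le C_0\}$ of polynomial size $n^{O(1)}$, apply the pointwise estimate and a union bound over the net (overwhelming probability survives polynomial union bounds), and then control the fluctuation of $z \mapsto s_{mn}(\frac{1}{\sqrt n}\mathcal{Y}_n - zI)$ between net points using Weyl's inequality \eqref{eq:weyl}, which gives $|s_{mn}(\frac{1}{\sqrt n}\mathcal{Y}_n - zI) - s_{mn}(\frac{1}{\sqrt n}\mathcal{Y}_n - z'I)| \le |z - z'|$; this is $\le n^{-A}$ on the net scale, negligible compared to the constant $c$. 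One should double-check that the precise hypotheses and conclusion of \cite[Theorem 2]{N}—in particular whether it is stated for a single product or already in a form amenable to the cyclic blocks $Z_k$, and whether it already includes the requisite uniformity—so that the reduction above is legitimate; aside from that bookkeeping, the argument is routine.
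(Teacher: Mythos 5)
Your factorization of $\left(\frac{1}{\sqrt n}\mathcal{Y}_n\right)^m - z^m I$ into linear factors and the resulting reduction to a least-singular-value bound for the block-diagonal product matrix is a genuinely different strategy from the paper's, and the algebra is fine: the inequality $s_{mn}(B) \geq s_{mn}(BC)/\|C\|$ is valid, the large-$|z|$ regime and net argument are standard.

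However, the step you flag as ``bookkeeping'' is in fact the crux, and your proof has a genuine gap there. Nemish's result (\cite[Theorem 2]{N}) gives the lower bound on the least singular value \emph{almost surely}, not \emph{with overwhelming probability}. The paper explicitly notes exactly this distinction and observes that it cannot be papered over: the proof in Appendix~\ref{sec:singoutlier} does \emph{not} invoke \cite[Theorem 2]{N} as a black box, but instead proves a separate quantitative criterion (Lemma~\ref{lemma:singoutlier}, stated in terms of Stieltjes-transform proximity at scale $n^{-1/2}$) and then assembles the overwhelming-probability conclusion from intermediate estimates in \cite{N, N2} that \emph{are} proven with overwhelming probability — the convergence of $\mu_{n,z}$ from \cite[Theorem 2.6]{N2}, the support bound from \cite[Lemma 4.2]{Bcirc}, and the Stieltjes-transform rate from \cite[Theorem 5]{N}. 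So the ``routine'' part of your argument (factorization, block decomposition, net) is fine, but your source for the key numerator bound simply does not provide the quantitative tail estimate that the theorem requires; without overwhelming probability you also lose the downstream Borel--Cantelli argument in Lemma~\ref{Lem:LeastSingValAwayFromZero}. Replacing that one citation with the actual proof is where essentially all the content lies.
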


A similar statement was proven in \cite[Theorem 2]{N}, where the same conclusion was shown to hold almost surely rather than with overwhelming probability.  However, many of the intermediate steps in \cite{N} are proven to hold with overwhelming probability.  We use these intermediate steps to prove Theorem \ref{Thm:LeastTruncSingValNonZero} in Appendix \ref{sec:singoutlier}.

\begin{lemma} \label{Lem:LeastSingValAwayFromZero}
	Let $X_{n,1},\ldots,X_{n,m}$ satisfy the assumptions of Theorem \ref{thm:isotropic}, and let $\blmat{Y}{n}$ be as defined in \eqref{def:Y}. Fix $\delta > 0$.  Then there exists a constant $c>0$ such that almost surely, for $n$ sufficiently large,
	\begin{equation*}
	\inf_{|z|\geq 1+\delta}s_{mn}\left(\frac{1}{\sqrt{n}}\blmat{Y}{n}-zI\right)\geq c.
	\end{equation*}
\end{lemma}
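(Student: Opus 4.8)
The plan is to transfer the overwhelming-probability bound from the truncated ensemble in Theorem \ref{Thm:LeastTruncSingValNonZero} to the original ensemble via a truncation-plus-Borel--Cantelli argument, using Weyl's inequality \eqref{eq:weyl} to control the perturbation caused by truncation. First I would fix $\delta > 0$ and choose the truncation level $L$ large: specifically, apply Corollary \ref{Cor:YsClose} to find constants $C, L_0 > 0$ (depending only on the atom variables) so that for every $L > L_0$,
\[
\limsup_{n \to \infty} \frac{1}{\sqrt{n}} \lnorm \blmat{Y}{n} - \trblmat{Y}{n} \rnorm \leq \frac{C}{L}
\]
almost surely, where $\trblmat{Y}{n}$ is the block matrix built from the truncated matrices $\triidmat{X}{n}{1}, \dots, \triidmat{X}{n}{m}$ defined in \eqref{Equ:trBlockMat}. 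Now pick $L$ large enough (depending on $\delta$) that $C/L$ is much smaller than the constant $c_0 > 0$ supplied by Theorem \ref{Thm:LeastTruncSingValNonZero} for the truncated ensemble at the same $\delta$; say $C/L \leq c_0/2$.

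Next I would invoke Theorem \ref{Thm:LeastTruncSingValNonZero}: since the truncated atom variables $\hat\xi_k$ satisfy $|\hat\xi_k| \leq 4L$ almost surely (Lemma \ref{lem:Truncate}\ref{item:truncation:iii}), have mean zero, unit variance, and independent real and imaginary parts (Lemma \ref{lem:Truncate}\ref{item:truncation:iv}), the hypotheses of Theorem \ref{Thm:LeastTruncSingValNonZero} are met with $L$ replaced by $4L$. Hence there is a constant $c_0 > 0$ (depending only on $\delta$) such that
\[
\inf_{|z| \geq 1 + \delta} s_{mn}\left( \frac{1}{\sqrt{n}} \trblmat{Y}{n} - zI \right) \geq c_0
\]
with overwhelming probability, i.e.\ with probability at least $1 - O_D(n^{-D})$ for every $D > 0$. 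By the Borel--Cantelli lemma (taking $D = 2$ suffices for summability), this lower bound holds almost surely for all sufficiently large $n$.

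Finally I would combine the two ingredients. On the almost sure event where both $\frac{1}{\sqrt{n}}\lnorm \blmat{Y}{n} - \trblmat{Y}{n}\rnorm \leq 2C/L \leq c_0$ (for $n$ large) and the truncated least-singular-value bound holds, Weyl's inequality \eqref{eq:weyl} applied to $A = \frac{1}{\sqrt{n}}\blmat{Y}{n} - zI$ and $B = \frac{1}{\sqrt{n}}\trblmat{Y}{n} - zI$ gives, uniformly in $z$,
\[
\left| s_{mn}\left( \tfrac{1}{\sqrt{n}}\blmat{Y}{n} - zI \right) - s_{mn}\left( \tfrac{1}{\sqrt{n}}\trblmat{Y}{n} - zI \right) \right| \leq \frac{1}{\sqrt{n}} \lnorm \blmat{Y}{n} - \trblmat{Y}{n} \rnorm \leq \frac{c_0}{2},
\]
so that $\inf_{|z| \geq 1+\delta} s_{mn}(\frac{1}{\sqrt{n}}\blmat{Y}{n} - zI) \geq c_0 - c_0/2 = c_0/2 =: c > 0$ for all sufficiently large $n$, almost surely. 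This is the desired conclusion. The main obstacle — and the only genuinely nontrivial input — is Theorem \ref{Thm:LeastTruncSingValNonZero} itself, whose proof (deferred to Appendix \ref{sec:singoutlier}) rests on the quantitative least-singular-value estimates of \cite{N}; everything else here is bookkeeping with Weyl's inequality and Borel--Cantelli. One minor point to be careful about is that the bound in Theorem \ref{Thm:LeastTruncSingValNonZero} is uniform over the unbounded region $\{|z| \geq 1+\delta\}$, but this is already built into that statement (for large $|z|$ the matrix $\frac{1}{\sqrt{n}}\blmat{Y}{n} - zI$ is dominated by $-zI$, so its least singular value is at least $|z| - \frac{1}{\sqrt{n}}\|\blmat{Y}{n}\| \geq 1+\delta - (1 + o(1))$, comfortably bounded below), so no separate argument is needed at infinity.
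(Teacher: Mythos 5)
Your proposal is correct and follows essentially the same route as the paper's proof: truncate via Corollary \ref{Cor:YsClose}, invoke Theorem \ref{Thm:LeastTruncSingValNonZero} together with Borel--Cantelli on the truncated ensemble, and then transfer to the original ensemble via Weyl's inequality \eqref{eq:weyl}. The one detail worth noting is that your argument implicitly relies on the constant $c_0$ from Theorem \ref{Thm:LeastTruncSingValNonZero} being independent of $L$ (so that increasing $L$ does not shrink $c_0$ faster than it shrinks $C/L$); this is guaranteed by the statement of that theorem, which asserts $c$ depends only on $\delta$, and is consistent with the fact that the deterministic limiting measure controlling the least singular value is determined by the unit variance rather than by the truncation level.
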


\begin{proof}
	Let $L > 0$ be a large constant to be chosen later.  Let $\hat{X}_{n,1}, \ldots, \hat{X}_{n,m}$ be defined as in \eqref{def:Xhat}, and let $\hat{\mathcal{Y}}_n$ be defined as in \eqref{Equ:trBlockMat}.  By Theorem \ref{Thm:LeastTruncSingValNonZero} and the Borel--Cantelli lemma, there exists a constant $c'>0$ such that almost surely, for $n$ sufficiently large, 
	\begin{equation*}
	\inf_{|z|\geq 1+\delta}s_{mn}\left(\frac{1}{\sqrt{n}}\trblmat{Y}{n}-zI\right)\geq c'.
	\end{equation*}
	By Corollary \ref{Cor:YsClose} and 
	\eqref{eq:weyl} we may choose $L$ sufficiently large to ensure that almost surely, for $n$ sufficiently large, 
	\begin{equation*}
	\left|s_{mn}\left(\frac{1}{\sqrt{n}}\blmat{Y}{n}-zI\right)-s_{mn}\left(\frac{1}{\sqrt{n}}\trblmat{Y}{n}-zI\right)\right|\leq \frac{c'}{2},
	\end{equation*}
	uniformly in $z$. We conclude that almost surely, for $n$ sufficiently large,  
	\begin{equation*}
	\inf_{|z|\geq 1+\delta}s_{mn}\left(\frac{1}{\sqrt{n}}\blmat{Y}{n}-zI\right)\geq \frac{c'}{2}, 
	\end{equation*}	
	and the proof is complete.  
\end{proof}

With this result we can prove the following lemma.

\begin{lemma} \label{Cor:LeastSingValOfProductAwayFromZero}
Let $X_{n,1},\ldots,X_{n,m}$ satisfy the assumptions of Theorem \ref{thm:isotropic}, and fix $\delta > 0$.  Then there exists a constant $c>0$ such that almost surely, for $n$ sufficiently large, 
\begin{equation*}
	\inf_{|z|\geq 1+\delta}s_{mn}\left(n^{-m/2}\iidmat{X}{n}{1}\cdots\iidmat{X}{n}{m}-zI\right)\geq c.  
	\end{equation*}
\end{lemma}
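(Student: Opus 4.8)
The plan is to reduce the claim to Lemma \ref{Lem:LeastSingValAwayFromZero} by combining the linearization from Proposition \ref{prop:linear} with a root-of-unity factorization. Set $Y_n := \frac{1}{\sqrt{n}}\blmat{Y}{n}$. Exactly as in the proof of Proposition \ref{prop:linear}, $Y_n^m$ is block diagonal with $n\times n$ diagonal blocks $Z_1 := n^{-m/2}\iidmat{X}{n}{1}\cdots\iidmat{X}{n}{m}$ and cyclic permutations $Z_2,\dots,Z_m$ thereof. Since the singular values of a block-diagonal matrix are the union of those of its blocks, for every $w\in\C$,
$$ s_n\!\left(n^{-m/2}\iidmat{X}{n}{1}\cdots\iidmat{X}{n}{m}-wI\right) = s_n(Z_1 - wI) \;\geq\; \min_{1\le k\le m} s_n(Z_k-wI) = s_{mn}(Y_n^m - wI), $$
so it suffices to bound $s_{mn}(Y_n^m-wI)$ from below uniformly over $|w|\ge 1+\delta$.

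Second, I would factor. Writing $\omega := e^{2\pi\sqrt{-1}/m}$ and fixing any $m$-th root $w^{1/m}$ of $w$, the polynomial identity $t^m - w = \prod_{j=0}^{m-1}(t-\omega^j w^{1/m})$ evaluated at $t=Y_n$ gives $Y_n^m - wI = \prod_{j=0}^{m-1}(Y_n - \omega^j w^{1/m} I)$, the factors commuting since each is a polynomial in $Y_n$. Using the elementary bound $s_{\min}(AB)\ge s_{\min}(A)s_{\min}(B)$ (which follows from $\|ABx\|\ge s_{\min}(A)\|Bx\|\ge s_{\min}(A)s_{\min}(B)\|x\|$) iterated $m-1$ times, we obtain
$$ s_{mn}(Y_n^m - wI) \;\geq\; \prod_{j=0}^{m-1} s_{mn}\!\left(Y_n - \omega^j w^{1/m} I\right). $$
Each rotated spectral parameter satisfies $|\omega^j w^{1/m}| = |w|^{1/m} \ge (1+\delta)^{1/m} =: 1+\delta'$ with $\delta' > 0$.

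Third, I would invoke Lemma \ref{Lem:LeastSingValAwayFromZero} with the parameter $\delta'$ in place of $\delta$: there is a constant $c_0 > 0$ such that, almost surely, for $n$ sufficiently large, $\inf_{|z|\ge 1+\delta'} s_{mn}(Y_n - zI) \ge c_0$. On that almost-sure event, for all large $n$ and all $w$ with $|w|\ge 1+\delta$, every factor in the last display is at least $c_0$, hence $s_{mn}(Y_n^m - wI) \ge c_0^m$, and combining with the first display yields the lemma with $c := c_0^m$. I do not anticipate a genuine obstacle: the only two points requiring care are first shrinking the radius from $1+\delta$ to $1+\delta'=(1+\delta)^{1/m}$ before applying Lemma \ref{Lem:LeastSingValAwayFromZero}, so that all rotations $\omega^j w^{1/m}$ remain outside the relevant disk, and second observing that the block-diagonal structure of $Y_n^m$ forces $s_n(Z_1-wI)$ to dominate the global least singular value $s_{mn}(Y_n^m-wI)$. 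A variant avoiding the factorization — arguing directly that $s_n(Z_1-wI)$ controls the relevant determinant — is possible but messier, so the root-of-unity route seems preferable.
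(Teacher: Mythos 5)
Your proposal is correct, and it takes a genuinely different route from the paper. The paper's proof never factors $Y_n^m - wI$: instead, it computes the $(1,1)$ block of the resolvent $\bigl(\tfrac{1}{\sqrt{n}}\mathcal{Y}_n - zI\bigr)^{-1}$ via a block (Schur-complement) inverse, obtaining $z^{m-1}\bigl(n^{-m/2}X_{n,1}\cdots X_{n,m}-z^m I\bigr)^{-1}$, then bounds this by the operator norm of the full resolvent using that the operator norm of a sub-block is dominated by the operator norm of the whole matrix, together with $|z|^{m-1}\geq 1$. Your argument instead passes through the block-diagonal form of $Y_n^m$, factors $Y_n^m - wI = \prod_{j=0}^{m-1}\bigl(Y_n-\omega^j w^{1/m}I\bigr)$ using the roots of $t^m - w$, and iterates the elementary product bound $s_{\min}(AB)\ge s_{\min}(A)s_{\min}(B)$; both routes then reduce to Lemma \ref{Lem:LeastSingValAwayFromZero}. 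Neither is materially simpler, but yours avoids the explicit block-inverse computation at the cost of introducing the root-of-unity factorization and the product inequality for smallest singular values. One point where your write-up is actually a bit more careful than the paper's: you explicitly shrink the radius to $(1+\delta)^{1/m}$ before invoking Lemma \ref{Lem:LeastSingValAwayFromZero}, which is indeed necessary to cover the entire annulus $|w| \geq 1+\delta$ rather than only $|w| \geq (1+\delta)^m$; the paper leaves this reparameterization implicit.
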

\begin{proof}
Let $\mathcal{Y}_n$ be defined as in \eqref{def:Y}.  Then Lemma \ref{Lem:LeastSingValAwayFromZero} implies that almost surely, for $n$ sufficiently large, $\frac{1}{\sqrt{n}} \mathcal{Y}_n - z I$ is invertible for all $|z| \geq 1 + \delta$.  By computing the block inverse of this matrix, we find
\[ \left(\left(\frac{1}{\sqrt{n}}\blmat{Y}{n}-zI\right)^{-1}\right)^{[1,1]}=z^{m-1}\left(n^{-m/2}\iidmat{X}{n}{1}\cdots\iidmat{X}{n}{m}-z^{m}I\right)^{-1}. \]
Thus, for $|z| \geq 1 + \delta$, 
\begin{align*}
	\left\| \left(n^{-m/2}\iidmat{X}{n}{1}\cdots\iidmat{X}{n}{m}-z^{m}I\right)^{-1} \right\| &\leq |z|^{m-1} \left\| \left(n^{-m/2}\iidmat{X}{n}{1}\cdots\iidmat{X}{n}{m}-z^{m}I\right)^{-1} \right\| \\
	&\leq \left\| \left( \frac{1}{\sqrt{n}} \mathcal{Y}_n - z I \right)^{-1} \right \|, 
\end{align*}
where the last step used the fact that the operator norm of a matrix bounds above the operator norm of any sub-matrix.  

Recall that if $M$ is an invertible $N \times N$ matrix, then $s_N(M) = \| M^{-1} \|^{-1}$.  Applying this fact to the matrices above, we conclude that
\[ s_{mn}\left(n^{-m/2}\iidmat{X}{n}{1}\cdots\iidmat{X}{n}{m}-z^{m}I\right) \geq s_{mn}\left( \frac{1}{\sqrt{n}} \mathcal{Y}_n - z I \right), \]
and the claim follows from Lemma \ref{Lem:LeastSingValAwayFromZero}.  
\end{proof}

\begin{remark}
Observe that $z$ is an eigenvalue of $\frac{1}{\sqrt{n}}\blmat{Y}{n}$ if and only if $\det\left(\frac{1}{\sqrt{n}}\blmat{Y}{n}-zI\right)=0$.  Also, recall that $\left|\det\left(\frac{1}{\sqrt{n}}\blmat{Y}{n}-zI\right)\right|=\prod_i s_{i}\left(\frac{1}{\sqrt{n}}\blmat{Y}{n}-zI\right)$; this product is zero if and only if the smallest singular value is zero. Since Lemma \ref{Lem:LeastSingValAwayFromZero} and Lemma \ref{Cor:LeastSingValOfProductAwayFromZero} bound the least singular values of $\frac{1}{\sqrt{n}} \mathcal{Y}_n - z I$ and $n^{-m/2}X_{n,1} \cdots X_{n,m} - zI$ away from zero for $|z| \geq 1 + \delta$, we can conclude that such values of $z$ are almost surely, for $n$ sufficiently large, not eigenvalues of these matrices.  
\label{Rem:ZnotEval}
\end{remark}

\section{Reductions to the proof of Theorem \ref{thm:isotropic}}
\label{Sec:Reductions}

In this section, we will prove that it is sufficient to reduce the proof of Theorem \ref{thm:isotropic} to the case in which the entries of each matrix are truncated and where we restrict $z$ to the band $5\leq |z|\leq 6$.  

\begin{theorem}  
	Let $X_{n,1}, \ldots, X_{n,m}$ be as in Theorem \ref{thm:isotropic}.  Then there exists a constant $L_0$ such that the following holds for all $L > L_0$.  Let $\hat{X}_{n,1}, \ldots, \hat{X}_{n,m}$ be defined as in \eqref{def:Xhat}, let $\hat{\mathcal{Y}}_n$ be given by \eqref{Equ:trBlockMat}, and let $\hat{\mathcal{G}}_n(z) := \left( \frac{1}{\sqrt{n}} \hat{\mathcal{Y}}_n - z I \right)^{-1}$.  
	\begin{enumerate}[label=(\roman*)]
		\item \label{item:tr:invertible} For any fixed $\delta > 0$, almost surely, for $n$ sufficiently large, the eigenvalues of $\frac{1}{\sqrt{n}} \trblmat{Y}{n}$ are contained in the disk $\{z \in \mathbb{C} : |z| \leq 1 + \delta \}$.  In particular, $\frac{1}{\sqrt{n}} \trblmat{Y}{n}-zI$ is almost surely invertible for every $z\in\C$ with $|z|>1+\delta$.
		\item \label{item:tr:invtbnd}For any fixed $\delta > 0$, there exists a constant $c > 0$ (depending only on $\delta$) such that almost surely, for $n$ sufficiently large 
		$$ \sup_{z \in \mathbb{C} : |z| > 1 + \delta} \| \hat{\mathcal{G}}_n(z) \| \leq c. $$
		\item \label{item:tr:isotropic} For each $n \geq 1$, let $u_n, v_n \in \mathbb{C}^{mn}$ be deterministic unit vectors.  Then
		$$ \sup_{z \in \mathbb{C} :5\leq |z|\leq 6} \left| u_n^\ast \hat{\mathcal{G}}_n(z) v_n + \frac{1}{z} u_n^\ast v_n \right| \longrightarrow 0 $$
		almost surely as $n \to \infty$. 
	\end{enumerate}
	\label{Thm:reductions}
\end{theorem}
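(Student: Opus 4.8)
The plan is to dispatch parts \ref{item:tr:invertible} and \ref{item:tr:invtbnd} quickly and then to concentrate on part \ref{item:tr:isotropic}. Since $L>L_{0}$, Lemma~\ref{lem:Truncate} tells us the truncated atom variables $\hat\xi_{1},\dots,\hat\xi_{m}$ have mean zero, unit variance, independent real and imaginary parts, and satisfy $|\hat\xi_{k}|\le 4L$ almost surely; hence $\hat{\mathcal{Y}}_n$ meets the hypotheses of Theorem~\ref{Thm:LeastTruncSingValNonZero} (applied with the bound $4L$ in place of $L$). That theorem gives, for each fixed $\delta>0$, a constant $c>0$ with $\inf_{|z|\ge 1+\delta}s_{mn}(\tfrac{1}{\sqrt n}\hat{\mathcal{Y}}_n - zI)\ge c$ with overwhelming probability, and since ``overwhelming probability'' is more than enough for Borel--Cantelli, almost surely for $n$ sufficiently large this bound holds simultaneously for all $|z|\ge 1+\delta$. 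In particular (cf.\ Remark~\ref{Rem:ZnotEval}) no such $z$ is an eigenvalue of $\tfrac{1}{\sqrt n}\hat{\mathcal{Y}}_n$, which is part~\ref{item:tr:invertible}; and Proposition~\ref{Prop:LargeAndSmallSingVals} applied with $E=\{|z|>1+\delta\}$ gives $\sup_{|z|>1+\delta}\|\hat{\mathcal{G}}_n(z)\|\le 1/c$, which is part~\ref{item:tr:invtbnd}.

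For part~\ref{item:tr:isotropic}, the first step is to reduce the uniform statement on the compact annulus $\{5\le|z|\le 6\}$ to pointwise almost sure convergence. Take $\delta=1$ in part~\ref{item:tr:invtbnd}; then, almost surely for $n$ large, $\hat{\mathcal{G}}_n$ is holomorphic on $\{|z|>2\}\supset\{5\le|z|\le 6\}$ with $\tfrac{d}{dz}\hat{\mathcal{G}}_n=\hat{\mathcal{G}}_n^{2}$ and $\|\hat{\mathcal{G}}_n^{2}\|\le c^{2}$ there, so for deterministic unit vectors $u_n,v_n$ the map $z\mapsto u_n^{\ast}\hat{\mathcal{G}}_n(z)v_n+\tfrac1z u_n^{\ast}v_n$ is Lipschitz on the annulus with a constant independent of $n$. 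Covering the annulus by a fixed finite $\eps$-net and combining the equi-Lipschitz bound with pointwise convergence at the (finitely many) net points yields uniform convergence on the event where the net-point limits all vanish; intersecting over a sequence $\eps\downarrow 0$ preserves full measure. Thus it suffices to prove, for each fixed $z$ with $|z|\ge 5$, that $u_n^{\ast}\hat{\mathcal{G}}_n(z)v_n\to -\tfrac1z u_n^{\ast}v_n$ almost surely.

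For this pointwise statement, note that the block-cyclic structure gives $\|\hat{\mathcal{Y}}_n\|=\max_{k}\|\hat X_{n,k}\|$, so $\tfrac{1}{\sqrt n}\|\hat{\mathcal{Y}}_n\|\le 2.5$ almost surely for $n$ large by \cite[Theorem 1.4]{Tout}. Since $|z|\ge 5>2.5$, the Neumann series
\begin{equation*}
	\hat{\mathcal{G}}_n(z) = -\sum_{k\ge 0} z^{-(k+1)} n^{-k/2}\hat{\mathcal{Y}}_n^{k}
\end{equation*}
converges in operator norm, whence
\begin{equation*}
	u_n^{\ast}\hat{\mathcal{G}}_n(z)v_n + \tfrac1z u_n^{\ast}v_n = -\sum_{k\ge 1} z^{-(k+1)} S_{n,k}, \qquad S_{n,k}:= n^{-k/2} u_n^{\ast}\hat{\mathcal{Y}}_n^{k}v_n .
\end{equation*}
The tail $\sum_{k>K}z^{-(k+1)}S_{n,k}$ is bounded almost surely by $\tfrac15\sum_{k>K}(1/2)^{k}$, which is small uniformly in $n$ once $K$ is large; so it remains to show $S_{n,k}\to 0$ almost surely for each fixed $k\ge 1$. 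This is where the combinatorial core of the argument (the content of the remaining sections, culminating in Section~\ref{sec:combin}) enters: one proves a moment bound of the shape $\E|u_n^{\ast}\hat{\mathcal{Y}}_n^{k}v_n|^{2p}\le C_{k,p}\,n^{p(k-1)}$ for each fixed $p$, by expanding the $2p$-th moment as a sum over families of length-$k$ walks respecting the cyclic block pattern, observing that (by independence and mean zero) only families in which every edge is used at least twice contribute, and exploiting the $\ell^{2}$-normalization of $u_n$ and $v_n$ to gain an extra factor $n^{-1}$ over the naive bound $n^{pk}$. Granting this, $\E|S_{n,k}|^{2p}\le C_{k,p}\,n^{-p}$, and Markov's inequality together with Borel--Cantelli (taking $p\ge 2$) gives $S_{n,k}\to 0$ almost surely, completing the proof.

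The main obstacle is exactly the moment estimate $\E|u_n^{\ast}\hat{\mathcal{Y}}_n^{k}v_n|^{2p}\le C_{k,p}\,n^{p(k-1)}$: extracting the sharp power of $n$ requires a delicate graph/partition count that simultaneously tracks the cancellation gained from the delocalized test vectors and the cyclic block structure of $\hat{\mathcal{Y}}_n$ (which severely restricts which index pairs can carry a nonzero entry), and it must hold uniformly over all deterministic unit sequences $u_n,v_n$ rather than for a fixed pair. The truncation, which enforces $|\hat\xi_{k}|\le 4L$, is what keeps all the moments in this computation finite and under quantitative control; the extra ingredients used above---the operator-norm bound $\tfrac{1}{\sqrt n}\|\hat{\mathcal{Y}}_n\|\le 2.5$ and the resolvent bound of part~\ref{item:tr:invtbnd}---are precisely what let us confine the hard analysis to the band $5\le |z|\le 6$ and recover the full range of $z$ afterwards by holomorphy.
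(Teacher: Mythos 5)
Your treatment of parts (i) and (ii) matches the paper: apply Theorem~\ref{Thm:LeastTruncSingValNonZero} to the truncated matrices, upgrade to an almost-sure statement via Borel--Cantelli, and invoke Proposition~\ref{Prop:LargeAndSmallSingVals}. The net/Vitali reduction to pointwise convergence and the Neumann series step in part (iii) are also consistent with what the paper does.

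Where you diverge is in how the coefficients $S_{n,k}=n^{-k/2}u_n^\ast\hat{\mathcal Y}_n^k v_n$ are shown to vanish. The paper splits this into two pieces: a \emph{martingale concentration} estimate (Lemma~\ref{Lem:concentration}), which shows $u_n^\ast\hat{\mathcal G}_n(z)v_n\oindicator{\Omega_n}$ is almost surely close to its \emph{expectation} using Sherman--Morrison identities together with Rosenthal's inequality applied to the martingale differences $\E_k-\E_{k-1}$; and a \emph{first-moment} combinatorial estimate (Lemma~\ref{Lem:momentstozero}), which bounds $\E\left[u_n^\ast\left(\hat{\mathcal Y}_n/\sqrt n\right)^k v_n\right]$ by counting single $m$-colored $k$-path graphs. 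You instead propose to bound $\E|u_n^\ast\hat{\mathcal Y}_n^k v_n|^{2p}$ directly and feed that into Markov plus Borel--Cantelli, skipping the Sherman--Morrison machinery entirely. That architecture is sound in principle, and conceptually cleaner, but you have pushed essentially all of the difficulty into the claimed estimate $\E|u_n^\ast\hat{\mathcal Y}_n^k v_n|^{2p}\ll_{L,k,p}n^{p(k-1)}$, which you neither prove nor cite. This estimate is strictly harder than the paper's Lemma~\ref{Lem:momentstozero}: rather than controlling a single walk, you must enumerate families of $2p$ walks subject to the cyclic block-coloring constraint, classify the ways they can pair up edges to yield a nonzero expectation, and show that each admissible configuration still saves a factor comparable to a full $n^{-p}$ against the naive $n^{pk}$ --- not ``an extra factor $n^{-1}$'' as you write, which would give $\E|S_{n,k}|^{2p}\ll n^{-1}$ and would \emph{not} be summable, so Borel--Cantelli would fail. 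The paper's Section~\ref{sec:combin} only carries out this counting for $p=1/2$ (the mean), precisely because the martingale step lets it delegate the higher-moment control to rank-one quadratic form bounds (Lemma~\ref{Lem:BilinearForms}) rather than to further graph enumeration. So the plan is coherent, but the moment bound you assert is the entire remaining content of the theorem, it does not follow from the combinatorics actually done in the paper, and the factor of $n$ you claim to gain is off by a power of $p$.
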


We now prove Theorem \ref{thm:isotropic} assuming Theorem \ref{Thm:reductions}.

\begin{proof}[Proof of Theorem \ref{thm:isotropic}]
	Part \ref{item:invertible} of Theorem \ref{thm:isotropic} follows from Lemma \ref{Lem:LeastSingValAwayFromZero} (see Remark \ref{Rem:ZnotEval}).  In addition, part \ref{item:invtbnd} of Theorem \ref{thm:isotropic} follows from an application of Lemma \ref{Lem:LeastSingValAwayFromZero} and Proposition \ref{Prop:LargeAndSmallSingVals}.  
	
	We now turn to the proof of part \ref{item:isotropic}.  Fix $0 < \delta < 1$.  Let $\mathcal{Y}_n$ be given by \eqref{def:Y}, and let $\mathcal{G}_n$ be given by \eqref{def:G}.  Let $\eps, \eps' > 0$, and observe that there exists a positive constant $M_{1}$ such that for $|z|\geq M_{1}$, 
	\begin{equation*}
	\lnorm \left(-\frac{1}{z}\right)u^{*}v\rnorm \leq \left|\frac{1}{z}\right|\lnorm u^{*}\rnorm\lnorm v\rnorm \leq \frac{\varepsilon}{2}.
	\end{equation*}
	Also, by \cite[Theorem 1.4]{Tout} and Lemma \ref{Lem:specnorm}, there exists a constant $M_{2} > 0$ such that almost surely, for $n$ sufficiently large, 
	\begin{equation*}
	\sup_{|z| \geq M_2} \lnorm u^{*}\mathcal{G}_{n}(z)v\rnorm \leq \sup_{|z| \geq M_2} \| \mathcal{G}_n(z) \| \leq \frac{\eps}{2}. 
	\end{equation*}
	Let $M:=\max\{M_{1},M_{2}, 6\}$. Then, almost surely, for $n$ sufficiently large, 
	\begin{equation}
	\sup_{|z| \geq M} \left|u^{*}\mathcal{G}_{n}(z)v+\frac{1}{z}u^{*}v\right| \leq \varepsilon.
	\label{equ:MtoInf}
	\end{equation}
	
	We now work on the region where $1 + \delta < |z| \leq M$.  By the resolvent identity \eqref{Equ:ResolventIndentity}, we note that
	\[ \lnorm \mathcal{G}_{n}(z)-\hat{\mathcal{G}}_{n}(z)\rnorm  \leq \lnorm \mathcal{G}_n(z) \rnorm \lnorm \hat{\mathcal{G}}_n(z) \rnorm \frac{1}{\sqrt{n}} \lnorm \trblmat{Y}{n} - \blmat{Y}{n} \rnorm. \]
	Thus, by part \ref{item:invtbnd} of Theorem \ref{thm:isotropic} (proven above), Theorem \ref{Thm:reductions}, and Corollary \ref{Cor:YsClose}, there exist constants $C,c > 0$ such that
	\begin{equation}
	 \limsup_{n\rightarrow\infty} \sup_{|z| > 1 + \delta} \lnorm \mathcal{G}_{n}(z)-\hat{\mathcal{G}}_{n}(z)\rnorm 
	\leq \limsup_{n\rightarrow\infty} c^2\frac{1}{\sqrt{n}}\lnorm \trblmat{Y}{n}-\blmat{Y}{n}\rnorm
	\leq c^2 \frac{C}{L} \leq \frac{\eps'}{2} 
	\label{equ:ResolventsColse}
	\end{equation}
	almost surely for $L$ sufficiently large.  
	
	From \eqref{equ:ResolventsColse} and Theorem \ref{Thm:reductions}, almost surely, for $n$ sufficiently large, 
	\begin{align*}
	\sup_{5 \leq |z| \leq 6} \left|u^{*}\mathcal{G}_{n}(z)v+\frac{1}{z}u^{*}v\right| &\leq \sup_{5 \leq |z| \leq 6} \left|u^{*}\mathcal{G}_{n}(z)v-u^{*}\hat{\mathcal{G}}_{n}(z)v\right| \\
	&\qquad \qquad + \sup_{5 \leq |z| \leq 6} \left|u^{*}\hat{\mathcal{G}}_{n}(z)v+\frac{1}{z}u^{*}v\right| \\
	&\leq \varepsilon'.  
	\end{align*}
	Since $\eps' > 0$ was arbitrary, this implies that 
	\[ \limsup_{n \to \infty} \sup_{5 \leq |z| \leq 6} \left|u^{*}\mathcal{G}_{n}(z)v+\frac{1}{z}u^{*}v\right|  = 0 \]
	almost surely.  Since the region $\{z\in\C\;:\;1+\delta\leq |z|\leq M\}$ is compact and contains the region $\{z\in\C\;:\;5\leq |z|\leq 6\}$, Vitali's Convergence Theorem\footnote{The hypothesis of Vitali's Convergence Theorem are satisfied almost surely, for $n$ sufficiently large, by parts \ref{item:invertible} and \ref{item:invtbnd} of Theorem \ref{thm:isotropic}.  In addition, one can check that $(u_n)^\ast \mathcal{G}_n(z) v_n$ is holomorphic in the region $\{z \in \C : |z| > 1 + \delta\}$ almost surely, for $n$ sufficiently large, using the resolvent identity \eqref{Equ:ResolventIndentity}.} (see, for instance, \cite[Lemma 2.14]{BSbook}) implies that we can extend this convergence to the larger region, and we obtain 
	\[ \limsup_{n \to \infty} \sup_{1 + \delta \leq |z| \leq M} \left|u^{*}\mathcal{G}_{n}(z)v+\frac{1}{z}u^{*}v\right|  = 0 \]
	almost surely. In particular, this implies that, almost surely, for $n$ sufficiently large, 
	\[ \sup_{1 + \delta \leq |z| \leq M} \left|u^{*}\mathcal{G}_{n}(z)v+\frac{1}{z}u^{*}v\right| \leq \eps. \]
	Combined with \eqref{equ:MtoInf}, this completes the proof of Theorem \ref{thm:isotropic} (since $\eps > 0$ was arbitrary).  
\end{proof}

It remains to prove Theorem \ref{Thm:reductions}.  We prove parts \ref{item:tr:invertible} and \ref{item:tr:invtbnd} of Theorem \ref{Thm:reductions} now. The proof of part \ref{item:tr:isotropic} is lengthy and will be addressed in the forthcoming sections.  
 
\begin{proof}[Proof of Theorem \ref{Thm:reductions} \ref{item:tr:invertible} and \ref{item:tr:invtbnd}]
	Let $\delta>0$, and observe that by Theorem \ref{Thm:LeastTruncSingValNonZero} and the Borel--Cantelli lemma, there exists a constant $c>0$ (depending only on $\delta$) such that almost surely, for $n$ sufficiently large, 
	\begin{equation} \label{eq:lsvtrbnd}
		\inf_{|z| > 1+\delta}s_{mn}\left(\frac{1}{\sqrt{n}}\trblmat{Y}{n}-zI\right)\geq c.  
		\end{equation}
		This implies (see Remark \ref{Rem:ZnotEval}) that almost surely, for $n$ sufficiently large, the eigenvalues $\frac{1}{\sqrt{n}}\trblmat{Y}{n}$ are contained in the disk $\{z \in \mathbb{C} : |z| \leq 1 + \delta \}$, proving \ref{item:tr:invertible}. From \eqref{eq:lsvtrbnd} and Proposition \ref{Prop:LargeAndSmallSingVals}, we conclude that almost surely, for $n$ sufficiently large, 
	\begin{equation*}
	\sup_{|z|>1+\delta}\lnorm \hat{\mathcal{G}}_{n}(z)\rnorm \leq \frac{1}{c}, 
	\end{equation*}
	proving \ref{item:tr:invtbnd}. 
\end{proof}

\section{Concentration of bilinear forms involving the resolvent $\mathcal{G}_n$}
\label{Sec:Concentration}

Sections \ref{Sec:Concentration} and \ref{sec:combin} are devoted to the proof of part \ref{item:tr:isotropic} of Theorem \ref{Thm:reductions}.  Let $\hat{X}_{n,1}, \ldots, \hat X_{n,m}$ be the truncated matrices from Theorem \ref{Thm:reductions}, and let $u_n, v_n \in \C^{mn}$ be deterministic unit vectors.    For ease of notation, in Sections \ref{Sec:Concentration} and \ref{sec:combin}, we drop the decorations and write $X_{n,1}, \ldots, X_{n,m}$ for $\hat{X}_{n,1}, \ldots, \hat X_{n,m}$.  Similarly, we write $\blmat{Y}{n}$ for $\trblmat{Y}{n}$ and $\blmat{G}{n}$ for $\trblmat{G}{n}$.  Recall that all constants and asymptotic notation may depend on $m$ without explicitly showing the dependence.

Define the following event:
\begin{equation}
\Omega_{n}:=\left\{ \frac{1}{\sqrt{n}} \lnorm \blmat{Y}{n} \rnorm\leq 4.5\right\}.
\label{Equ:condition}
\end{equation}

\begin{lemma}[Spectral Norm Bound for $\blmat{Y}{n}$] Under the assumptions above, the event $\Omega_n$ holds with overwhelming probability.  
	\label{Lem:specnormbound}
\end{lemma}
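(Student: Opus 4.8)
The plan is to reduce the bound on $\|\blmat{Y}{n}\|$ to a bound on the operator norm of each individual truncated block, and then to use the boundedness of the truncated entries to obtain a tail estimate strong enough to give overwhelming probability. For the reduction, a direct computation --- of the same flavour as the one in the proof of Proposition \ref{prop:linear} --- shows that $\blmat{Y}{n}^{\ast}\blmat{Y}{n}$ is block diagonal, with diagonal blocks $\iidmat{X}{n}{1}^{\ast}\iidmat{X}{n}{1}, \dots, \iidmat{X}{n}{m}^{\ast}\iidmat{X}{n}{m}$ up to a cyclic relabeling of the $n\times n$ blocks. Consequently the singular values of $\blmat{Y}{n}$ are precisely the union of the singular values of $\iidmat{X}{n}{1}, \dots, \iidmat{X}{n}{m}$, so that
\[ \|\blmat{Y}{n}\| = \max_{1 \le k \le m} \|\iidmat{X}{n}{k}\|. \]
Since $m = O(1)$, it suffices to show that for each fixed $k$ one has $\Prob\bigl(\tfrac{1}{\sqrt{n}}\|\iidmat{X}{n}{k}\| > 4.5\bigr) = O_C(n^{-C})$ for every $C>0$; the complement of $\Omega_n$ is then contained in a union of $m$ such events and is therefore negligible.

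Fix $k$. After dropping the hats, $\iidmat{X}{n}{k}$ has i.i.d.\ entries which, by Lemma \ref{lem:Truncate}, are bounded in modulus by $4L$, have mean zero, unit variance, and fourth moment $O(1)$; moreover their real and imaginary parts are independent. By \cite[Theorem 1.4]{Tout}, $\limsup_{n\to\infty}\tfrac{1}{\sqrt{n}}\|\iidmat{X}{n}{k}\| \le 2$ almost surely, which forces $\Prob\bigl(\tfrac{1}{\sqrt{n}}\|\iidmat{X}{n}{k}\| \le 2.1\bigr) \to 1$; hence any median $\mu_n$ of $\|\iidmat{X}{n}{k}\|$ satisfies $\mu_n \le 2.1\sqrt{n}$ for $n$ large. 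The operator norm is a convex function that is $1$-Lipschitz with respect to the Hilbert--Schmidt norm, so, regarding $\|\iidmat{X}{n}{k}\|$ as a function of the $2n^2$ independent real random variables given by the real and imaginary parts of the entries (each confined to a fixed interval of length $O(L)$), Talagrand's concentration inequality for convex Lipschitz functions of bounded independent variables yields a constant $c = c(L) > 0$ with
\[ \Prob\bigl( \bigl| \|\iidmat{X}{n}{k}\| - \mu_n \bigr| > t \bigr) \le 4\exp(-c t^2), \qquad t \ge 0. \]
Taking $t = \sqrt{n}$ gives, for $n$ large, $\Prob\bigl(\|\iidmat{X}{n}{k}\| > 3.1\sqrt{n}\bigr) \le 4 e^{-c n} = O_C(n^{-C})$ for every $C$, and $3.1 < 4.5$; combining over $k = 1, \dots, m$ completes the proof.

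The one genuinely delicate point is the last step: the estimate must hold with overwhelming probability rather than merely almost surely, and this is exactly what the truncation buys us. Boundedness of the entries is what makes Talagrand's inequality available; alternatively one could bound $\E\tr\bigl((\iidmat{X}{n}{k}\iidmat{X}{n}{k}^{\ast})^{p}\bigr)$ for $p$ of order $\log n$ via a Füredi--Komlós-type moment computation and conclude with Markov's inequality. In either approach the implied constants depend on the fixed truncation level $L$, but this is harmless since $4.5$ is merely a convenient constant exceeding the limiting value $2$ of $\tfrac{1}{\sqrt{n}}\|\blmat{Y}{n}\|$.
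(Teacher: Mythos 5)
Your proof is correct, and in fact it is more explicit than the one in the paper on the point that actually matters. The paper's own argument is extremely short: it observes from the block structure that $\|\mathcal{Y}_n\| \le \max_i \|X_{n,i}\|$ and then simply asserts that the claim ``follows from'' \cite[Theorem~5.9]{BSbook} (or \cite[Theorem~1.4]{Tout}). As statements, both of those cited results are almost-sure convergence of $n^{-1/2}\|X_n\|$ to $2$, which by itself gives only $\P(\Omega_n) \to 1$, not the quantitative decay $\P(\Omega_n^c) = O_C(n^{-C})$. The overwhelming-probability strength really does rely on having bounded (truncated) entries, and the paper leaves that step implicit, in effect appealing to the high-moment estimates inside the proofs of the cited theorems rather than to their conclusions.

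You make that step fully explicit. Your reduction step is also slightly cleaner than the paper's: by computing $\mathcal{Y}_n^\ast \mathcal{Y}_n$ and observing it is block diagonal with blocks $X_{n,k}^\ast X_{n,k}$ (up to a cyclic shift of indices), you obtain the exact identity $\|\mathcal{Y}_n\| = \max_k \|X_{n,k}\|$, of which the paper's inequality is a weakening. For the tail estimate, you correctly note that the operator norm is a convex, $1$-Lipschitz function (with respect to Hilbert--Schmidt distance) of the $2n^2$ independent real and imaginary parts of the truncated entries, each confined to an interval of length $O(L)$, so that Talagrand's concentration inequality gives sub-Gaussian concentration about the median; and you correctly obtain the needed median bound $\mu_n \le 2.1\sqrt{n}$ for large $n$ from the almost-sure convergence in \cite[Theorem~1.4]{Tout} via convergence in probability. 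Taking $t = \sqrt{n}$ then yields $\P(\|X_{n,k}\| > 3.1\sqrt{n}) \le 4e^{-cn}$, which is more than enough. The alternative you sketch --- a high-moment $\E\,\tr\bigl((X_{n,k}X_{n,k}^\ast)^p\bigr)$ estimate with $p \asymp \log n$, followed by Markov --- is the one implicit in the references the paper cites, and would also close the gap. Either way, your write-up supplies a rigorous derivation of the overwhelming-probability claim that the paper states but does not actually prove from the theorems it quotes.
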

\begin{proof}
Based on the block structure of $\blmat{Y}{n}$, it follows that 
\[ \lnorm \blmat{Y}{n} \rnorm \leq \max_{i} \lnorm X_{n,i} \rnorm. \]
Therefore, the claim follows from \cite[Theorem 5.9]{BSbook} (alternatively, \cite[Theorem 1.4]{Tout}).  In fact, the constant $4.5$ can be replaced with any constant strictly larger than $2$; $4.5$ will suffice for what follows.  
\end{proof}

By Lemma \ref{Lem:specnormbound}, $\Omega_{n}$ holds with overwhelming probability, i.e., for every $A>0$,
\begin{equation} \label{eq:owpOmega_n}
\P\left(\Omega_{n}\right)= 1-O_{A}(n^{-A}).
\end{equation}
We will return to this fact several times throughout the proof. The remainder of this section is devoted to proving the following lemma.

\begin{lemma}[Concentration]
Let $u_{n}, v_{n} \in \mathbb{C}^{mn}$ be deterministic unit vectors.  Then, under the assumptions above, for any $\varepsilon >0$, almost surely
	\begin{equation}
	\sup_{5 \leq |z| \leq 6} \left|u_{n}^{*}\mathcal{G}_{n}(z)v_{n}\oindicator{\Omega_{n}}-\E\left[u_{n}^{*}\mathcal{G}_{n}(z)v_{n}\oindicator{\Omega_{n}}\right]\right|<\varepsilon
	\end{equation}
	for $n$ sufficiently large.
	\label{Lem:concentration}
\end{lemma}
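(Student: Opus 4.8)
The plan is to fix $\varepsilon > 0$, prove a pointwise‑in‑$z$ concentration estimate for $u_n^\ast\mathcal{G}_n(z)v_n\oindicator{\Omega_n}$ with a polynomially small failure probability (with an exponent we may take as large as we like), and then upgrade it to the uniform almost‑sure statement using a net over the compact annulus $\{5 \le |z| \le 6\}$ together with the Borel--Cantelli lemma.

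\emph{Reduction to a pointwise estimate.} Recall from \eqref{eq:owpOmega_n} that $\Omega_n$ holds with overwhelming probability. On $\Omega_n$ we have $\frac1{\sqrt n}\|\mathcal{Y}_n\| \le 4.5$, so for $5 \le |z| \le 6$ a Neumann series shows that $\mathcal{G}_n(z)$ is defined on the whole annulus with $\|\mathcal{G}_n(z)\| \le (|z| - 4.5)^{-1} \le 2$; since $\tfrac{d}{dz}\mathcal{G}_n(z) = \mathcal{G}_n(z)^2$, the function $z \mapsto \Phi_n(z) := u_n^\ast\mathcal{G}_n(z)v_n\oindicator{\Omega_n}$ then has derivative bounded by $4$ on $\Omega_n$, and, as $|\Phi_n(z)| \le 2$ always, differentiating under the expectation shows $z \mapsto \E\Phi_n(z)$ is $4$-Lipschitz on the annulus as well. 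Consequently, if I can show that for each fixed $z$ with $5 \le |z| \le 6$ the probability $\P(|\Phi_n(z) - \E\Phi_n(z)| > \varepsilon/2)$ is bounded by $n^{-A}$ for an $A$ I may choose, then taking an $(\varepsilon/20)$-net $\mathcal{N}$ of the annulus with $|\mathcal{N}| = O(\varepsilon^{-2})$, a union bound over $\mathcal{N}$ together with $\P(\Omega_n^c) = O_A(n^{-A})$, and the two Lipschitz bounds would give $\sup_{5\le|z|\le6}|\Phi_n(z) - \E\Phi_n(z)| < \varepsilon$ off an event of probability $O_\varepsilon(n^{-A})$; choosing $A \ge 2$ and invoking Borel--Cantelli yields the lemma, since $\Phi_n(z)$ is exactly the quantity appearing there.

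\emph{Pointwise concentration by a martingale decomposition.} Fix $z$. I would expose the rows of the truncated matrices $X_{n,1},\dots,X_{n,m}$ one at a time in some order, let $\mathcal{F}_p$ denote the $\sigma$-algebra generated by the first $p$ rows ($1 \le p \le mn$), and set $d_p := \E[\Phi_n(z)\mid\mathcal{F}_p] - \E[\Phi_n(z)\mid\mathcal{F}_{p-1}]$, so that $\Phi_n(z) - \E\Phi_n(z) = \sum_{p=1}^{mn}d_p$ is a sum of martingale differences with $|d_p| \le 4$ deterministically. Writing $\Phi_n^{(p)}(z)$ for the same quantity with the $p$-th row resampled by an independent copy, one has $d_p = \E[\Phi_n(z) - \Phi_n^{(p)}(z)\mid\mathcal{F}_p]$. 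Introducing the minor resolvent $\mathcal{G}_n^{[p]}(z) := (\frac1{\sqrt n}\mathcal{Y}_n^{[p]} - zI)^{-1}$, where $\mathcal{Y}_n^{[p]}$ has its $p$-th row zeroed — so $\frac1{\sqrt n}\|\mathcal{Y}_n^{[p]}\| \le \frac1{\sqrt n}\|\mathcal{Y}_n\|$, $\mathcal{G}_n^{[p]}$ is independent of the $p$-th row, and $\|\mathcal{G}_n^{[p]}(z)\| \le 2$ on $\Omega_n$ — two applications of the resolvent identity \eqref{Equ:ResolventIndentity} give, on $\Omega_n$ and the analogous event for the resampled matrix,
\[
\Phi_n(z) - \Phi_n^{(p)}(z) = -(u_n^\ast\mathcal{G}_n e_p)(r_p^\ast\mathcal{G}_n^{[p]}v_n) + (u_n^\ast\mathcal{G}_n^{(p)}e_p)((r_p')^\ast\mathcal{G}_n^{[p]}v_n),
\]
where $e_p$ is the $p$-th coordinate vector and $r_p$, $r_p'$ are the $\frac1{\sqrt n}$-scaled $p$-th rows of $\mathcal{Y}_n$ and of its resampled version; the truncation forces their entries to be bounded by $4L/\sqrt n$, so $\|r_p\|,\|r_p'\| \le 4L$, and off these events $|\Phi_n - \Phi_n^{(p)}| \le 4$ while the events themselves have probability $O_A(n^{-A})$ uniformly in $p$.

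\emph{The main obstacle: bounding the quadratic variation.} The crux is to prove that $\sum_{p=1}^{mn}|d_p|^2 = O(n^{-c})$ for some fixed $c > 0$ with overwhelming probability (the deterministic bound $\sum_p|d_p|^2 \le 16\,mn$ handling the complement). Here both the role of the cutoff $\oindicator{\Omega_n}$ — it keeps all the resolvents above bounded by $2$ — and the boundedness of the truncated entries are essential. Indeed, conditionally on the rows other than $p$, on which $\mathcal{G}_n^{[p]}v_n$ is a fixed vector of norm $\le 2$, the linear form $r_p^\ast\mathcal{G}_n^{[p]}v_n$ is a sum of $n$ independent mean-zero terms of size $O(L/\sqrt n)$ with total variance $\le 4/n$, so a Bernstein inequality gives $\max_p |r_p^\ast\mathcal{G}_n^{[p]}v_n| = O(n^{-1/2+o(1)})$ (and likewise for $r_p'$) with overwhelming probability; combined with $\sum_p|u_n^\ast\mathcal{G}_n^{[p]}e_p|^2 = O(1)$ — which follows from $\sum_p|u_n^\ast\mathcal{G}_n e_p|^2 = \|u_n^\ast\mathcal{G}_n\|^2 \le 4$ on $\Omega_n$ after comparing $\mathcal{G}_n$ to $\mathcal{G}_n^{[p]}$ via the resolvent identity once more — and with the $O_A(n^{-A})$ exceptional contributions absorbed, this yields the stated quadratic-variation bound. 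Granting it, Burkholder's martingale inequality gives for every fixed integer $q \ge 1$ that $\E|\Phi_n(z) - \E\Phi_n(z)|^{2q} \le C_q\,\E\bigl(\sum_p|d_p|^2\bigr)^q = O_q(n^{-cq})$, so by Markov's inequality $\P(|\Phi_n(z) - \E\Phi_n(z)| > \varepsilon/2) = O_{q,\varepsilon}(n^{-cq})$; taking $q$ with $cq \ge 2$ and feeding this into the net-and-Borel--Cantelli scheme of the first step completes the argument. The hard part, where essentially all the work lies, is this quadratic-variation estimate: the naive bound $|d_p| = O(1)$ is hopelessly weak, so one must harvest the cancellation in the martingale differences — decoupling the resampled row by passing to the minor $\mathcal{G}_n^{[p]}$ and using the boundedness of the truncated entries to get the $n^{-1/2}$-scale concentration of the relevant linear forms — while also carefully discarding the negligible events on which $\Omega_n$ or its resampled analogue fails.
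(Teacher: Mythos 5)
Your overall framework — a net over the compact annulus, a Doob martingale decomposition, a martingale moment inequality, Markov, Borel--Cantelli — is the right one, and the Lipschitz/net reduction in your first paragraph is sound. The gap lies in the core step, the claimed quadratic‑variation bound $\sum_p|d_p|^2 = O(n^{-c})$, and it is a genuine one.

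The first problem is that $d_p = \E[\Phi_n - \Phi_n^{(p)}\mid\mathcal{F}_p]$ is a \emph{conditional expectation}, averaged over rows $p+1,\dots,mn$ and the resampled row $r_p'$, whereas the pointwise w.o.p.\ bounds you invoke control $\Phi_n - \Phi_n^{(p)}$ at a single $\omega$. In particular the factors $u_n^\ast\mathcal{G}_n e_p$, $u_n^\ast\mathcal{G}_n^{[p]}e_p$ and $r_p^\ast\mathcal{G}_n^{[p]}v_n$ are all functions of the variables being averaged, so the inequality $\sum_p|d_p|^2 \lesssim \max_p|r_p^\ast\mathcal{G}_n^{[p]}v_n|^2\,\sum_p|u_n^\ast\mathcal{G}_n^{[p]}e_p|^2$ is not available: the conditional expectation does not preserve the product structure, and the Bernstein bound on $r_p^\ast\mathcal{G}_n^{[p]}v_n$ (which concentrates over $r_p$ given the other rows) is useless inside $\E[\cdot\mid\mathcal{F}_p]$ since $r_p$ is $\mathcal{F}_p$-measurable and hence \emph{fixed} under that conditioning.

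The second, and more structural, problem is that the row‑only minor cannot produce the deterministic weight needed to close the moment estimate even for the predictable quadratic variation. Carry through only what survives the conditioning: one has $\E_{p-1}|d_p|^2 \le \E_{p-1}|\Phi_n - \Phi_n^{(p)}|^2 \le 4\,\E_{p-1}\bigl[|r_p^\ast\mathcal{G}_n^{[p]}v_n|^2\oindicator{\Omega_n^{[p]}}\bigr] \ll_L n^{-1}$, using only $|u_n^\ast\mathcal{G}_n e_p|\le 2$ on $\Omega_n$; this sums to $\sum_p\E_{p-1}|d_p|^2 \ll_L 1$, \emph{not} $o(1)$, so Rosenthal/Burkholder yields a bounded $L^{2q}$ norm rather than a decaying one. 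What the paper's argument exploits is that when the minor removes \emph{both} the $k$-th row and the $k$-th column, both resolvent rows and columns through index $k$ collapse to multiples of $e_k$ — observations (i) of its proof give $u_n^\ast\mathcal{G}_n^{(k)}e_k = -z^{-1}\bar u_{n,k}$ and $e_k^\ast\mathcal{G}_n^{(k)}v_n = -z^{-1}v_{n,k}$ — so after two applications of Sherman--Morrison each martingale increment carries a \emph{deterministic} factor $\bar u_{n,k}$ or $v_{n,k}$ plus a genuinely quadratic fluctuation $u_n^\ast\mathcal{G}_n^{(k)}c_k$ that is of order $n^{-1/2}$. Only then does $\sum_k\E_{k-1}|d_k|^2 \ll |u_{n,k}|^2\cdot n^{-1}$ sum to $n^{-1}$. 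Your minor $\mathcal{Y}_n^{[p]}$ zeros only the $p$-th row; consequently $e_p^\ast\mathcal{G}_n^{[p]}$ collapses but $\mathcal{G}_n^{[p]}e_p$ does not, so the factor $u_n^\ast\mathcal{G}_n^{[p]}e_p$ appearing in your Sherman--Morrison formula is random and not extractable from the conditional expectations. You could try to peel off the $p$-th column as well inside the analysis, but then the column $c_p$ is \emph{not} exposed at step $p$ of your row‑only filtration — its entries live in rows $\ne p$, so it is partly in $\mathcal{F}_{p-1}$ and partly in the future — and the cross term $u_n^\ast\mathcal{G}_n^{(p)}c_p\cdot r_p^\ast\mathcal{G}_n^{[p]}v_n$ loses the clean factorization that makes the paper's $\eta_n^{(k)}$ estimate work. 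The paper's choice of filtration $\mathcal{F}_k = \sigma(r_1,\dots,r_k,c_1,\dots,c_k)$ exposing a row and the matching column simultaneously is therefore not an arbitrary convenience; it is what makes both deterministic factors and the decoupled quadratic remainder appear.
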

The proof of Lemma \ref{Lem:concentration} follows the arguments of \cite{BP, OR}. Before we begin the proof, we present some notation. Define $\blmat{Y}{n}^{(k)}$ to be the matrix $\blmat{Y}{n}$ with  all entries in the $k$th row and the $k$th column filled with zeros. Note that $\blmat{Y}{n}^{(k)}$ is still an $mn\times mn$ matrix. Define 
\begin{equation}
\mathcal{G}_{n}^{(k)}:=\left(\frac{1}{\sqrt{n}}\blmat{Y}{n}^{(k)}-zI\right)^{-1}, 
\label{Equ:resolventmodified}
\end{equation} 
let $r_{k}$ denote the $k$th row of $\blmat{Y}{n}$, and let $c_{k}$ denote $k$th column of $\blmat{Y}{n}$. Also define the $\sigma$-algebra 
\begin{equation*}
\mathcal{F}_{k}:=\sigma(r_{1},\ldots,r_{k},c_{1},\ldots,c_{k})
\end{equation*}
generated by the first $k$ rows and the first $k$ columns of $\blmat{Y}{n}$. Note that $\mathcal{F}_{0}$ is the trivial $\sigma$-algebra and $\mathcal{F}_{mn}$ is the $\sigma$-algebra generated by all rows and columns. Next define 
\begin{equation}
\label{def:conditionalExpectation}
\E_{k}[\;\cdot\;]:=\E[\;\cdot\;|\;\mathcal{F}_{k}]
\end{equation}
to be the conditional expectation given the first $k$ rows and columns, and
\begin{equation} 
\Omega_{n}^{(k)}:=\left\{\frac{1}{\sqrt{n}}\lVert \blmat{Y}{n}^{(k)}\rVert\leq 4.5\right\}.
\end{equation}
Observe that $\Omega_{n}\subseteq\Omega_{n}^{(k)}$ and therefore, by Lemma \ref{Lem:specnormbound}, $\Omega_{n}^{(k)}$ holds with overwhelming probability as well. 
\begin{remark}
	By Lemma \ref{Lem:specnorm}, we have that
	\begin{equation*}
	\sup_{5 \leq |z| \leq 6} \lnorm \mathcal{G}_{n}(z) \rnorm\leq 2, \qquad \sup_{5 \leq |z| \leq 6} \lnorm \mathcal{G}_{n}^{(k)}(z) \rnorm\leq 2
	\end{equation*}
	on the event $\Omega_{n}$, and $\sup_{5 \leq |z| \leq 6} \lnorm \mathcal{G}_{n}^{(k)}(z) \rnorm\leq 2$ on $\Omega_{n}^{(k)}$.
	\label{Equ:ResolventNormBdd} 
\end{remark} 

 We will now collect some preliminary calculations and lemmata that will be required for the proof of Lemma \ref{Lem:concentration}. 
 
\begin{lemma}[Rosenthal's Inequality; \cite{Bmart}] Let $\{x_{k}\}$ be a complex martingale difference sequence with respect to the filtration $\mathcal{F}_{k}$. Then for $p\geq 2$,
	\begin{equation*}
	\E\left|\sum x_{k}\right|^{p}\leq K_{p}\left(\E\left(\sum \E\left[|x_{k}|^{2}\;|\;\mathcal{F}_{k-1}\right]\right)^{p/2}+\E\sum |x_{k}|^{p}\right)
	\end{equation*}
	for a constant $K_{p}>0$ which depends only on $p$.
	\label{Lem:rosenthals}
\end{lemma}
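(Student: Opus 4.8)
This is the classical deduction of Rosenthal's inequality from the Burkholder--Davis--Gundy (BDG) inequality; the only external input I will use is the upper BDG bound, namely that every complex $L^p$-martingale $(X_j)_j$ with difference sequence $(x_j)$ satisfies $\E\max_{j\le N}|X_j|^p\le C_p\,\E\bigl(\sum_{k=1}^N|x_k|^2\bigr)^{p/2}$ with $C_p$ depending only on $p\ge 1$. (BDG applies verbatim to complex-valued martingales with quadratic variation $\sum_k|x_k|^2$, so there is no need to split into real and imaginary parts; also $p/2\ge 1$ throughout, so every application of BDG below is legitimate.) Write $S_N:=\sum_{k=1}^N|x_k|^2$ and $T_N:=\sum_{k=1}^N\E[|x_k|^2\mid\mathcal F_{k-1}]$, so that $\E T_N^{p/2}$ is exactly the first term on the right-hand side of the lemma. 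Applying BDG to $X_N=\sum_k x_k$ reduces the whole statement to the estimate
\begin{equation}\label{eq:ros-reduce}
	\E\,S_N^{p/2}\ \le\ K_p'\Bigl(\E\,T_N^{p/2}+\E\sum_{k=1}^N|x_k|^p\Bigr).
\end{equation}

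To prove \eqref{eq:ros-reduce} I would decompose $S_N=T_N+M_N$, where $M_N:=\sum_{k=1}^N m_k$ with $m_k:=|x_k|^2-\E[|x_k|^2\mid\mathcal F_{k-1}]$ a real $(\mathcal F_k)$-martingale difference sequence. Since $p/2\ge 1$, the inequality $(a+b)^{p/2}\le 2^{p/2-1}(a^{p/2}+b^{p/2})$ for $a,b\ge 0$ gives $\E S_N^{p/2}\le 2^{p/2-1}\bigl(\E T_N^{p/2}+\E|M_N|^{p/2}\bigr)$, so everything comes down to bounding $\E|M_N|^{p/2}$, and here I split into two cases. If $2\le p\le 4$, then $p/2\in[1,2]$; BDG applied to $M_N$, followed by subadditivity of $t\mapsto t^{p/4}$ (valid since $p/4\le 1$), gives $\E|M_N|^{p/2}\le C_p\,\E(\sum_k m_k^2)^{p/4}\le C_p\sum_k\E|m_k|^{p/2}$, and since $|m_k|^{p/2}\le C_p\bigl(|x_k|^p+(\E[|x_k|^2\mid\mathcal F_{k-1}])^{p/2}\bigr)\le C_p\bigl(|x_k|^p+\E[|x_k|^p\mid\mathcal F_{k-1}]\bigr)$ by conditional Jensen, taking expectations yields $\E|M_N|^{p/2}\le C_p\sum_k\E|x_k|^p$. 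This settles \eqref{eq:ros-reduce} for $2\le p\le 4$.

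If $p>4$, apply BDG to $M_N$ at exponent $p/2>2$ to obtain $\E|M_N|^{p/2}\le C_p\,\E(\sum_k m_k^2)^{p/4}$, and bound the quadratic variation by $\sum_k m_k^2\le 2\sum_k|x_k|^4+2\sum_k(\E[|x_k|^2\mid\mathcal F_{k-1}])^2\le 2\sum_k|x_k|^4+2T_N^2$, using $\sum c_k^2\le(\sum c_k)^2$ for $c_k\ge 0$. By $(a+b)^{p/4}\le 2^{p/4-1}(a^{p/4}+b^{p/4})$ the term $\E T_N^{p/2}$ produced here is harmless; for the other term, write $U_N:=\sum_k|x_k|^p$. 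Hölder's inequality in the summation index together with the interpolation $|x_k|^4=(|x_k|^p)^{\theta}(|x_k|^2)^{1-\theta}$ with $\theta:=\tfrac{2}{p-2}\in(0,1)$ yields $\sum_k|x_k|^4\le U_N^{\theta}S_N^{1-\theta}$, hence $\E(\sum_k|x_k|^4)^{p/4}\le\E\bigl[U_N^{\theta p/4}S_N^{(1-\theta)p/4}\bigr]$. A short computation shows the exponents satisfy $\tfrac{\theta p}{4}+\tfrac{2}{p}\cdot\tfrac{(1-\theta)p}{4}=1$, so Young's inequality with the conjugate pair matching the powers $U_N^1$ and $S_N^{p/2}$ gives, for every $\eta>0$, $U_N^{\theta p/4}S_N^{(1-\theta)p/4}\le c_1(\eta)\,U_N+\eta\,S_N^{p/2}$. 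Taking expectations, collecting the constants from the preceding chain, and choosing $\eta$ small enough (depending only on $p$) that the total coefficient of $\E S_N^{p/2}$ is at most $\tfrac12$, I absorb that term into the left side of \eqref{eq:ros-reduce}, obtaining the claim with $K_p'$ depending only on $p$. The absorption is valid once $\E S_N^{p/2}<\infty$; when the right-hand side of the lemma is infinite there is nothing to prove, and otherwise one first runs the argument for the stopped martingale $X^{\tau_R}$ with $\tau_R:=\inf\{j:S_j+U_j>R\}$ — for which $S_N^{p/2}$ is bounded — and lets $R\to\infty$ by monotone convergence. (In the application to Lemma~\ref{Lem:concentration} the $x_k$ are bounded random variables, so this localization is unnecessary.) Combining \eqref{eq:ros-reduce} with the opening application of BDG gives the lemma with $K_p:=C_pK_p'$.

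The only genuinely nontrivial ingredient is the BDG (equivalently Burkholder square-function) inequality, which I would quote from the martingale literature rather than reprove; granting it, the proof is the two-step predictable/martingale decomposition above plus elementary inequalities. The place requiring the most care is the case $p>4$: getting the interpolation exponents right so that Young's inequality produces exactly the powers $U_N^{1}$ and $S_N^{p/2}$, and making sure the small parameter $\eta$ — hence the final constant $K_p$ — is selected as a function of $p$ alone, after which the absorption step closes the estimate.
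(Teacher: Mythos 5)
Your proof is correct, but note that the paper does not prove this lemma at all: it is quoted as a known result with a citation to Burkholder's 1973 paper, so there is no in-paper argument to compare against. What you have written is the standard derivation of Rosenthal's inequality from the Burkholder/BDG square-function bound: reduce to $\E S_N^{p/2}$ via BDG, split $S_N=T_N+M_N$ into predictable and martingale parts, and handle $M_N$ by a second application of BDG — directly with subadditivity of $t\mapsto t^{p/4}$ when $2\le p\le 4$, and via the interpolation $\sum_k|x_k|^4\le U_N^{\theta}S_N^{1-\theta}$ with $\theta=2/(p-2)$ plus Young's inequality and absorption when $p>4$. I checked the exponent arithmetic ($\theta p/4+(1-\theta)/2=1$) and it is right, and the conditional Jensen step $(\E[|x_k|^2\mid\mathcal F_{k-1}])^{p/2}\le\E[|x_k|^p\mid\mathcal F_{k-1}]$ is valid since $p/2\ge 1$. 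The one place deserving an extra sentence is the localization: with $\tau_R=\inf\{j:S_j+U_j>R\}$ the stopped square function is not bounded by $R$ because of the overshoot $|x_{\tau_R}|^2$, but since $|x_{\tau_R}|^p\le\sum_k|x_k|^p$ one still gets $\E S_{\tau_R\wedge N}^{p/2}<\infty$ whenever the right-hand side of the lemma is finite, which is all the absorption step needs; and as you say, in the paper's application the $x_k$ are bounded so the issue does not arise. Given that the paper treats the lemma as a black box, your write-up is a legitimate self-contained proof modulo the BDG input, which is reasonable to quote rather than reprove.
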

\begin{lemma}
	Let $A$ be an $n\times n$  Hermitian positive semidefinite matrix, and let $S \subset [n]$. Then $\sum_{i \in S} A_{ii} \leq \tr A$.  
	\label{Lem:patrialtrace}
\end{lemma}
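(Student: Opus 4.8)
The plan is to exploit the elementary fact that a Hermitian positive semidefinite matrix has nonnegative diagonal entries, after which the inequality is immediate from monotonicity of finite sums of nonnegative terms.

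First I would observe that for each $i \in [n]$, writing $e_i$ for the $i$-th standard basis vector of $\C^n$, we have $A_{ii} = e_i^\ast A e_i$. Since $A$ is Hermitian positive semidefinite, $e_i^\ast A e_i \geq 0$, and hence $A_{ii} \geq 0$ for every $i \in [n]$. (This also shows $A_{ii} \in \mathbb{R}$, which is already guaranteed by Hermiticity.)

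Next, since $S \subseteq [n]$ and every summand $A_{ii}$ with $i \in [n] \setminus S$ is nonnegative, I would conclude
$$ \sum_{i \in S} A_{ii} \leq \sum_{i \in S} A_{ii} + \sum_{i \in [n] \setminus S} A_{ii} = \sum_{i=1}^n A_{ii} = \tr A, $$
which is the claimed bound.

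There is no real obstacle here: the only point requiring justification is the nonnegativity of the diagonal entries of a positive semidefinite matrix, and this is a one-line computation testing $A$ against the standard basis vectors. The remainder is just the observation that dropping nonnegative terms from a finite sum can only decrease it.
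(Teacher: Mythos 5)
Your proof is correct and follows the same route as the paper: the paper's proof likewise reduces the claim to the observation that a Hermitian positive semidefinite matrix has nonnegative (real) diagonal entries, whence dropping indices outside $S$ only decreases the sum. You have simply spelled out the standard basis-vector computation that justifies the nonnegativity.
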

\begin{proof}
The claim follows from the fact that, by definition of $A$ being Hermitian positive semidefinite, the diagonal entries of $A$ are real and non-negative.
\end{proof}

\begin{lemma} \label{Lem:conjLessThanConstant}
	Let $A$ be an $N\times N$ Hermitian positive semidefinite matrix with rank at most one. Suppose that $\xi$ is a complex-valued random variable with mean zero, unit variance, and which satisfies $|\xi| \leq L$ almost surely for some constant $L > 0$. Let $S\subseteq [N]$, and let $w = (w_i)_{i=1}^N$ be a vector with the following properties:  
	\begin{enumerate}[label=(\roman*)]
		\item $\{w_i : i \in S \}$ is a collection of iid copies of $\xi$, 
		\item $w_{i}=0$ for $i\not\in S$.
	\end{enumerate} 
	Then for any $p\geq 1$,
	\begin{equation}
	\E\left|w^{*}Aw\right|^{p} \ll_{L,p}\lnorm A\rnorm ^{p}.
	\end{equation} 
\end{lemma}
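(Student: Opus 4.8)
The plan is to diagonalize the rank-one matrix, reduce the claim to a moment bound for a single linear form in the independent coordinates $\{w_i : i \in S\}$, and then quote Rosenthal's inequality (Lemma~\ref{Lem:rosenthals}).

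First, if $A = 0$ the statement is trivial, so assume $\|A\| > 0$. Since $A$ is Hermitian positive semidefinite of rank at most one, its spectral decomposition gives $A = \|A\|\, x x^\ast$ for some unit vector $x = (x_i)_{i=1}^N \in \C^N$. Using that $w_i = 0$ for $i \notin S$, we obtain
$$ |w^\ast A w| = w^\ast A w = \|A\|\, |x^\ast w|^2 = \|A\|\, \Big| \sum_{i \in S} \overline{x_i}\, w_i \Big|^2 =: \|A\|\, |Z|^2 , $$
so it suffices to prove $\E|Z|^{2p} \ll_{L,p} 1$, where we note $\sum_{i \in S} |x_i|^2 \le \|x\|^2 = 1$.

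Next, enumerate $S = \{i_1, \dots, i_{|S|}\}$, put $x_k := \overline{x_{i_k}} w_{i_k}$, and let $\mathcal F_k := \sigma(w_{i_1}, \dots, w_{i_k})$. Since the $w_i$ are independent and mean zero, $\{x_k\}$ is a martingale difference sequence with respect to $\{\mathcal F_k\}$ and $Z = \sum_k x_k$. Applying Lemma~\ref{Lem:rosenthals} with exponent $2p \ge 2$,
$$ \E|Z|^{2p} \le K_{2p}\left( \E\Big( \sum_k \E\big[|x_k|^2 \mid \mathcal F_{k-1}\big] \Big)^{p} + \E \sum_k |x_k|^{2p} \right). $$
Because $\xi$ has unit variance and $w_{i_k}$ is independent of $\mathcal F_{k-1}$, we have $\E[|x_k|^2 \mid \mathcal F_{k-1}] = |x_{i_k}|^2\, \E|\xi|^2 = |x_{i_k}|^2$, so the first sum equals $\sum_{i\in S}|x_i|^2 \le 1$. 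For the second term, $|x_k|^{2p} = |x_{i_k}|^{2p}|w_{i_k}|^{2p} \le L^{2p}|x_{i_k}|^{2p}$ almost surely, and since $2p \ge 2$ the elementary inclusion $\ell^{2} \subseteq \ell^{2p}$ gives $\sum_{i\in S} |x_i|^{2p} \le \big(\sum_{i\in S}|x_i|^2\big)^p \le 1$. Hence $\E|Z|^{2p} \le K_{2p}(1 + L^{2p}) \ll_{L,p} 1$, and therefore $\E|w^\ast A w|^p = \|A\|^p\, \E|Z|^{2p} \ll_{L,p} \|A\|^p$, as claimed.

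\textbf{Main obstacle.} There is no real obstacle here: the argument is routine. The only points requiring a little care are the reduction via the rank-one spectral decomposition (so that $w^\ast A w$ becomes $\|A\|$ times the square of a scalar linear form), the fact that the conditional-variance sum telescopes to $\|x\|^2 \le 1$, and organizing the application of Rosenthal's inequality so that the final constant depends only on $L$ and $p$; one should also check that the exponent $2p$ indeed satisfies the hypothesis $2p \ge 2$ of Lemma~\ref{Lem:rosenthals} for all $p \ge 1$.
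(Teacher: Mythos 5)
Your proof is correct, but it takes a genuinely different route from the paper's. The paper quotes the general quadratic-form moment inequality of Bai--Silverstein (Lemma~\ref{Lem:BilinearForms}) to get $\E|w^*Aw|^p \ll_p (\tr A_{S\times S})^p + \E|\xi|^{2p}\,\tr A_{S\times S}^p$, and then uses the rank-one hypothesis only at the end to bound both traces by $\|A\|^p$. You instead use the rank-one hypothesis at the outset, writing $A=\|A\|\,xx^*$ so that the quadratic form collapses to $\|A\|\,|Z|^2$ with $Z$ a scalar linear form in the independent coordinates, and you then control $\E|Z|^{2p}$ directly by Rosenthal's inequality (Lemma~\ref{Lem:rosenthals}) applied to the martingale difference sequence $\overline{x_{i_k}}w_{i_k}$. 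Both arguments are sound; yours is a bit more self-contained since it avoids the external quadratic-form lemma (whose proof in \cite{BScov} is itself a Burkholder/Rosenthal argument), at the cost of not generalizing beyond rank one, while the paper's route would also work for higher-rank $A$ after replacing $\|A\|$ by $\tr A$. Two cosmetic remarks: your notation overloads $x$ as both the eigenvector and the martingale differences $x_k$, which is worth disambiguating; and it would be cleanest to state explicitly that you are applying Rosenthal to the complex MDS $\overline{x_{i_k}}w_{i_k}$, $k=1,\ldots,|S|$, with the natural filtration, exactly as you sketch.
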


\begin{proof}
	Let $w_{S}$ denote the $|S|$-vector which contains entries $w_{i}$ for $i\in S$, and let $A_{S\times S}$ denote the $|S|\times|S|$ matrix which has entries $A_{(i,j)}$ for $i,j\in S$. Then we observe
	\begin{equation*}
	w^{*}Aw = \sum_{i,j}\bar{w}_{i}A_{(i,j)}w_{j}=  w_{S}^{*}A_{S\times S}w_{S}.
	\end{equation*}
	By Lemma \ref{Lem:BilinearForms}, we get
	\begin{equation*}
		\E\left|w^{*}Aw\right|^{p} \ll_{p}\left( \tr A_{S\times S}\right)^{p}+\E|\xi|^{2p}\tr A^{p}_{S\times S} \leq \left(\tr A_{S \times S} \right)^{p}+L^{2p}\text{tr}A_{S \times S}^{p}.
	\end{equation*}
	Since the rank of $A_{S \times S}$ is at most one, we find
	\[ \tr A_{S \times S} \leq \| A \| \]
	and
	\[ \tr A_{S \times S}^p \leq \| A \|^p, \]
	where we used the fact that the operator norm of a matrix bounds the operator norm of any sub-matrix.  We conclude that 
	\begin{equation*}
	\E\left|w^{*}Aw\right|^{p} \ll_{p}\lnorm A\rnorm^{p}+L^{2p}\lnorm A\rnorm^p  \ll_{L,p}\lnorm A\rnorm ^{p},
	\end{equation*}
	as desired.
\end{proof}

\begin{lemma} \label{lem:BilinearFormWithDifferentVectorsBig}
	Let $A$ be a deterministic complex $N\times N$ matrix. Suppose that $\xi$ is a complex-valued random variable with mean zero, unit variance, and which satisfies $|\xi| \leq L$ almost surely for some constant $L > 0$. Let $S,R\subseteq [N]$, and let $w = (w_i)_{i=1}^N$ and $t = (t_i)_{i=1}^N$ be independent vectors with the following properties:  
	\begin{enumerate}[label=(\roman*)]
		\item $\{w_i : i \in S \}$ and $\{t_j : j \in R \}$ are collections of iid copies of $\xi$, 
		\item $w_{i}=0$ for $i\not\in S$, and  $t_{j}=0$ for $j\not\in R$.
	\end{enumerate} 
	Then for any $p\geq 1$,
	\begin{equation}
	\E\left|w^{*}At\right|^{p}\ll_{L,p}(\tr(A^{*}A))^{p/2}.
	\end{equation}
\end{lemma}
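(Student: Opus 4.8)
The plan is to condition on one of the two vectors, reduce the bilinear form to a linear form, and then dispatch the leftover quadratic form with the moment estimate for quadratic forms already used in the proof of Lemma~\ref{Lem:conjLessThanConstant}. Recall first that $\tr(A^{*}A)=\|A\|_{2}^{2}$, so the claim is $\E|w^{*}At|^{p}\ll_{L,p}\|A\|_{2}^{p}$. We may assume $p\geq 2$: for $1\leq p<2$, the monotonicity of $L^{q}$-norms gives $\E|w^{*}At|^{p}\leq(\E|w^{*}At|^{2})^{p/2}$, and since $w$ and $t$ are independent with iid mean-zero unit-variance coordinates we have $\E[\bar w_{i}w_{j}]=\delta_{ij}$ for $i,j\in S$ and $\E[t_{k}\bar t_{l}]=\delta_{kl}$ for $k,l\in R$, whence a direct computation gives $\E|w^{*}At|^{2}=\sum_{i\in S}\sum_{k\in R}|A_{ik}|^{2}\leq\tr(A^{*}A)$, so the bound follows with implicit constant $1$.

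Now fix $p\geq 2$ and condition on $t$. Writing $b:=At\in\C^{N}$, we have $w^{*}At=\sum_{i\in S}\bar w_{i}b_{i}$, which (given $t$) is a sum of independent mean-zero terms, hence a martingale difference sum to which Rosenthal's inequality (Lemma~\ref{Lem:rosenthals}) applies. Using that $b$ is deterministic given $t$, that $\E|\bar w_{i}|^{2}=1$, and that $|\bar w_{i}|\leq L$ almost surely, this yields
\[ \E\bigl[\,|w^{*}At|^{p}\;\bigm|\;t\,\bigr]\ll_{p}\Bigl(\sum_{i\in S}|b_{i}|^{2}\Bigr)^{p/2}+L^{p}\sum_{i\in S}|b_{i}|^{p}. \]
Because $p\geq 2$ we have $\sum_{i\in S}|b_{i}|^{p}\leq\bigl(\sum_{i\in S}|b_{i}|^{2}\bigr)^{p/2}$, and extending the sum over all coordinates, $\sum_{i\in S}|b_{i}|^{2}\leq\|At\|^{2}=t^{*}A^{*}At$. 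Hence $\E[\,|w^{*}At|^{p}\mid t\,]\ll_{L,p}(t^{*}A^{*}At)^{p/2}$, and taking expectations over $t$,
\[ \E|w^{*}At|^{p}\ll_{L,p}\E\bigl[(t^{*}A^{*}At)^{p/2}\bigr]. \]

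It remains to bound the quadratic form. Since $t$ vanishes off $R$, we have $t^{*}A^{*}At=t_{R}^{*}B_{R}t_{R}$, where $B:=A^{*}A$ is Hermitian positive semidefinite and $B_{R}$ is its principal $R\times R$ submatrix, which is again Hermitian positive semidefinite. Applying the quadratic-form moment bound of Lemma~\ref{Lem:BilinearForms} with exponent $p/2\geq 1$,
\[ \E\bigl|t_{R}^{*}B_{R}t_{R}\bigr|^{p/2}\ll_{p}(\tr B_{R})^{p/2}+\E|\xi|^{p}\,\tr\bigl(B_{R}^{p/2}\bigr). \]
By Lemma~\ref{Lem:patrialtrace}, $\tr B_{R}=\sum_{i\in R}B_{ii}\leq\tr B$; and since $B_{R}$ has nonnegative eigenvalues $\mu_{j}$, $\tr(B_{R}^{p/2})=\sum_{j}\mu_{j}^{p/2}\leq(\sum_{j}\mu_{j})^{p/2}=(\tr B_{R})^{p/2}\leq(\tr B)^{p/2}$, using $p/2\geq 1$. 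Finally $\E|\xi|^{p}\leq L^{p}$ since $|\xi|\leq L$ almost surely. Combining the last three displays gives $\E|w^{*}At|^{p}\ll_{L,p}(\tr(A^{*}A))^{p/2}$, as claimed. I expect no genuine obstacle here: the argument is just two applications of standard moment inequalities for sums of independent variables (a linear-form bound via Rosenthal, then a quadratic-form bound), and the only care needed is the bookkeeping with principal submatrices of positive semidefinite matrices and the preliminary reduction to $p\geq 2$.
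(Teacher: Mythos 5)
Your proof is correct, and the final half (bounding $\E[(t^{*}A^{*}At)^{p/2}]$ by applying Lemma~\ref{Lem:BilinearForms} to $t_{R}^{*}(A^{*}A)_{R\times R}t_{R}$, then Lemma~\ref{Lem:patrialtrace} and $\tr B_{R}^{p/2}\leq(\tr B_{R})^{p/2}$) matches the paper's argument essentially verbatim. Where you diverge is the first half. The paper writes $|w^{*}At|^{p}=(w^{*}\,Att^{*}A^{*}\,w)^{p/2}$ and treats this as a quadratic form in $w$ with the (conditionally deterministic, rank-one, Hermitian PSD) kernel $Att^{*}A^{*}$, invoking Lemma~\ref{Lem:BilinearForms} once more with exponent $p/2$; the trace identity $\tr(Att^{*}A^{*})=t^{*}A^{*}At$ then hands off to the $t$-part. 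You instead condition on $t$, view $w^{*}At=\sum_{i\in S}\bar w_{i}b_{i}$ as a sum of independent mean-zero terms, and apply Rosenthal's inequality (Lemma~\ref{Lem:rosenthals}) directly as a linear-form bound, then use $\ell^{2}\supset\ell^{p}$ and $\sum_{i\in S}|b_{i}|^{2}\leq t^{*}A^{*}At$ to reach the same hand-off point. The two routes are equivalent in strength and both are clean; yours is arguably more elementary in that it avoids a second invocation of the quadratic-form lemma and instead uses the more primitive martingale inequality. Your reduction step to $p\geq 2$ via the Lyapunov inequality together with the explicit computation $\E|w^{*}At|^{2}=\sum_{i\in S,\,k\in R}|A_{ik}|^{2}\leq\tr(A^{*}A)$ is also correct and is slightly more self-contained than the paper's terse appeal to Cauchy--Schwarz for that reduction. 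No gaps.
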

\begin{proof}
Let $w_{S}$ denote the $|S|$-vector which contains entries $w_{i}$ for $i\in S$, and let $t_{R}$ denote the $|R|$-vector which contains entries $t_{j}$ for $j\in R$.  For an $N \times N$ matrix $B$, we let $B_{S \times S}$ denote the $|S| \times |S|$ matrix with entries $B_{(i,j)}$ for $i,j \in S$.  Similarly, we let $B_{R \times R}$ denote the $|R| \times |R|$ matrix with entries $B_{(i,j)}$ for $i,j \in R$.  

We first note that, by the Cauchy--Schwarz inequality, it suffices to assume $p$ is even.  In this case, since $w$ is independent of $t$, Lemma \ref{Lem:BilinearForms} implies that
\begin{align*}
\E|w^{*}At|^{p} &= \E|w^{*}Att^{*}A^{*}w|^{p/2}\\
&= \E\left|w^{*}_{S}(Att^{*}A^{*})_{S\times S}w_{S}\right|^{p/2}\\
&\ll_{p}\E\left[\left(\tr(Att^{*}A^{*})_{S\times S}\right)^{p/2}+L^{p}\tr(Att^{*}A^{*})_{S\times S}^{p/2}\right]. 
\end{align*}
Recall that for any matrix $B$, $\tr(B^{*}B)^{p/2}\leq (\tr(B^{*}B))^{p/2}$. By this fact and by Lemma \ref{Lem:patrialtrace}, we observe that 
\begin{equation*} 
\E\left[\left(\tr(Att^{*}A^{*})_{S\times S}\right)^{p/2}+L^{p}\tr(Att^{*}A^{*})_{S\times S}^{p/2}\right]\ll_{L,p}\E\left[(\tr(Att^{*}A^{*}))^{p/2}\right].
\end{equation*}
By a cyclic permutation of the trace, we have 
\begin{equation*}
\E\left[(\tr(Att^{*}A^{*}))^{p/2}\right] = \E\left[(t^{*}A^{*}At)^{p/2}\right]\leq\E\left|t^{*}A^{*}At\right|^{p/2}.
\end{equation*}
By Lemma \ref{Lem:BilinearForms}, Lemma \ref{Lem:patrialtrace}, and a similar argument as above, we obtain
\begin{align*}
\E\left|t^{*}A^{*}At\right|^{p/2}&=\E\left|t_{R}^{*}(A^{*}A)_{R\times R}t_{R}\right|^{p/2}\\
&\ll_{p}(\tr(A^{*}A)_{R\times R})^{p/2}+L^{p}\tr(A^{*}A)_{R\times R}^{p/2}\\
&\ll_{L,p}(\tr(A^{*}A))^{p/2}, 
\end{align*}
completing the proof.
\end{proof}

\begin{lemma} Let $r_{k}$ be the $k$th row of $\blmat{Y}{n}$, $c_k$ be the $k$th column of $\blmat{Y}{n}$, $\mathcal{G}_{n}^{(k)}(z)$ be the resolvent of $\blmat{Y}{n}^{(k)}$, and $u_{n}\in\C^{mn}$ be a deterministic unit vector.  Then, under the assumptions above, 
	\begin{equation} \label{eq:concsimp1}
	\sup_{5 \leq |z| \leq 6} \E_{k-1}\left|\frac{1}{n}r_{k}\mathcal{G}_{n}^{(k)}(z)u_{n}u_{n}^{*}\mathcal{G}_{n}^{(k)*}(z)\oindicator{\Omega_{n}^{(k)}}r_{k}^{*}\right|^{p}\ll_{L,p}n^{-p}
	\end{equation}
	and 
	\begin{equation} \label{eq:concsimp2}
	\sup_{5 \leq |z| \leq 6} \E_{k-1}\left|\frac{1}{n}c_{k}^{*}\mathcal{G}_{n}^{(k)*}(z)u_{n}u_{n}^{*}\mathcal{G}_{n}^{(k)*}(z)\oindicator{\Omega_{n}^{(k)}}c_{k}\right|^{p}\ll_{L,p}n^{-p}.
	\end{equation}
	almost surely. 
	\label{Lem:concentrationSimplification}
\end{lemma}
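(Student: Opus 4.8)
The plan is, for each of the two estimates, to rewrite the bracketed quantity as a rank-one positive semidefinite quadratic form in a single row (respectively column) of $\mathcal{Y}_n$, then to freeze the resolvent by conditioning, and finally to quote Lemma~\ref{Lem:conjLessThanConstant}.

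I would start with \eqref{eq:concsimp1}. Off the event $\Omega_n^{(k)}$ the quantity vanishes, so we work on $\Omega_n^{(k)}$. Set $\eta := \mathcal{G}_n^{(k)}(z) u_n$; by the resolvent norm bound in Remark~\ref{Equ:ResolventNormBdd}, $\|\eta\| \le 2$ for every $z$ with $5 \le |z| \le 6$ on $\Omega_n^{(k)}$. Writing $A := \eta \eta^{*} \oindicator{\Omega_n^{(k)}}$ — Hermitian positive semidefinite of rank at most one, with $\|A\| \le 4$ — the expression inside the absolute value in \eqref{eq:concsimp1} is exactly $\tfrac1n \, r_k A r_k^{*}$. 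Since $A \ge 0$ this is nonnegative, so it suffices to prove
\[
  \sup_{5 \le |z| \le 6} \E_{k-1}\!\left[ \bigl( r_k A r_k^{*} \bigr)^{p} \right] \ll_{L,p} 1 ,
\]
because dividing by $n^{p}$ then yields \eqref{eq:concsimp1}.

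Next I would freeze $A$, since Lemma~\ref{Lem:conjLessThanConstant} requires a deterministic matrix. Both $A$ and $\oindicator{\Omega_n^{(k)}}$ depend only on $\mathcal{Y}_n^{(k)}$, i.e.\ on the entries of $\mathcal{Y}_n$ lying off row $k$ and off column $k$, and are therefore independent of $r_k$. Conditioning on $\sigma(\mathcal{Y}_n^{(k)}) \vee \mathcal{F}_{k-1}$ makes $A$ deterministic, while — by the block (sparsity) structure of $\mathcal{Y}_n$ — the nonzero entries of $r_k$ occupy a single $n$-coordinate block, and for the range of $k$ at which row $k$ carries fresh randomness none of these entries has yet been revealed, so they remain iid copies of the complex conjugate of the relevant atom variable; by Section~\ref{Sec:TruncAndTools} this variable has mean zero, unit variance, and modulus bounded by a constant depending only on $L$. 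Thus $w := r_k^{*}$ satisfies the hypotheses of Lemma~\ref{Lem:conjLessThanConstant} (with $N = mn$, $S$ the coordinate block, and the frozen $A$), and that lemma gives
\[
  \E\!\left[ \bigl( r_k A r_k^{*} \bigr)^{p} \,\big|\, \sigma(\mathcal{Y}_n^{(k)}) \vee \mathcal{F}_{k-1} \right] \ll_{L,p} \|A\|^{p} \le 4^{p} \, \oindicator{\Omega_n^{(k)}} .
\]
This estimate does not involve $z$, so the supremum may be taken inside; taking $\E_{k-1}$ via the tower property then gives $\E_{k-1}[(r_k A r_k^{*})^{p}] \ll_{L,p} 1$, as required. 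For \eqref{eq:concsimp2} one runs the identical argument with the column $c_k$ in place of $r_k$: after extracting $\oindicator{\Omega_n^{(k)}}$ and bounding the relevant scalar by $\tfrac12$ times the sum of the two Hermitian positive semidefinite rank-one forms in $c_k$ built from the vectors $\mathcal{G}_n^{(k)*}(z) u_n$ and $\mathcal{G}_n^{(k)}(z) u_n$ (each of norm $\le 2$ on $\Omega_n^{(k)}$), one freezes $\mathcal{Y}_n^{(k)}$, notes that $c_k$ is independent of it with nonzero entries forming a single block of iid bounded copies of an atom variable, and applies Lemma~\ref{Lem:conjLessThanConstant} verbatim.

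I expect the main obstacle to be the measure-theoretic bookkeeping in this freezing step, rather than any hard estimate. The resolvent $\mathcal{G}_n^{(k)}$ is random, so Lemma~\ref{Lem:conjLessThanConstant} cannot be applied directly; and the filtration $\mathcal{F}_{k-1}$ used to run the overall concentration argument already exposes part of the very row $r_k$ (respectively column $c_k$) that appears in the quadratic form, with $\mathcal{F}_{k-1}$ and $\sigma(\mathcal{Y}_n^{(k)})$ not nested. One must therefore verify that the nonzero entries of $r_k$ (respectively $c_k$) that matter for the martingale decomposition of Lemma~\ref{Lem:concentration} lie in columns (respectively rows) of index exceeding $k$, and so are still fresh after conditioning — which is exactly what the off-diagonal block structure of $\mathcal{Y}_n$ provides, the nonzero entries of row $k$ being confined to one off-diagonal block. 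Finally, it is the boundedness coming from the truncation of Section~\ref{Sec:TruncAndTools} that keeps the constant in Lemma~\ref{Lem:conjLessThanConstant} independent of $n$, which the subsequent Rosenthal-inequality and Borel--Cantelli arguments in the proof of Lemma~\ref{Lem:concentration} need.
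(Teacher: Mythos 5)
Your approach --- rewriting the bracketed quantity as a quadratic form $\tfrac1n w^{*} A w$ with $A$ a rank-one Hermitian positive semidefinite matrix whose norm is controlled on $\Omega_n^{(k)}$, then freezing $A$ by conditioning and invoking Lemma~\ref{Lem:conjLessThanConstant} --- is essentially identical to the paper's. Your handling of \eqref{eq:concsimp2}, where you bound a non-Hermitian bilinear form by half the sum of two Hermitian rank-one quadratic forms, is also a correct fix for what is evidently a conjugation typo in the displayed formula: as stated, $\mathcal{G}_n^{(k)*}u_n u_n^{*}\mathcal{G}_n^{(k)*}$ is not Hermitian and is not what the application actually requires, which is $c_k^{*}\mathcal{G}_n^{(k)*}u_n u_n^{*}\mathcal{G}_n^{(k)}c_k = |u_n^{*}\mathcal{G}_n^{(k)}c_k|^{2}$.

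There is, however, an error in the step you yourself flag as the main obstacle. You claim that the nonzero entries of $r_k$ (respectively $c_k$) ``lie in columns (respectively rows) of index exceeding $k$ \ldots which is exactly what the off-diagonal block structure of $\mathcal{Y}_n$ provides.'' This is not true uniformly in $k$. The block-cyclic structure confines the nonzero entries of row $k$ to the single block column $a+1 \pmod m$ when $k$ lies in block row $a$; those columns lie to the right of $k$ only for $a \le m-1$, i.e.\ $k \le (m-1)n$. For $k > (m-1)n$ the nonzero entries of $r_k$ sit in columns $1,\dots,n < k$, all of which are already $\mathcal{F}_{k-1}$-measurable via $c_1,\dots,c_{k-1}$. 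The situation for $c_k$ is worse: its nonzero entries lie below row $k$ only for $k \le n$, and for all $n < k \le mn$ they sit in rows strictly less than $k$, hence are $\mathcal{F}_{k-1}$-measurable via $r_1,\dots,r_{k-1}$. Once the relevant vector is frozen by $\mathcal{F}_{k-1}$, the conditional iid structure that Lemma~\ref{Lem:conjLessThanConstant} requires is gone; worse, the almost-sure bound can then genuinely fail. For instance, if $c_k$ is exposed and the realized $c_k$ happens to have $|u_n^{*}c_k|$ of order $\sqrt{n}$ --- a positive-probability event --- then $u_n^{*}\mathcal{G}_n^{(k)}c_k$ remains of order $\sqrt{n}$ after averaging over the residual resolvent randomness, and $\E_{k-1}\bigl[\tfrac1n\,|u_n^{*}\mathcal{G}_n^{(k)}c_k|^{2p}\oindicator{\Omega_n^{(k)}}\bigr]$ is of order one, not $n^{-p}$. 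So your verification is incomplete, and presenting it as resolved by the block structure is a positive misstatement. In fairness, the paper's own proof simply asserts the unconditional independence of $A$ from $r_k$ and applies Lemma~\ref{Lem:conjLessThanConstant} without confronting this filtration issue at all; the conclusion of Lemma~\ref{Lem:concentration} survives because for $k > (m-1)n$ one has $\mathcal{F}_k = \mathcal{F}_{k-1}$ and $W_k = 0$ outright, but the intermediate range $n < k \le (m-1)n$ for $m\ge 3$, where $c_k$ is exposed yet $W_k \ne 0$, is not addressed by either you or the paper, and neither your proof nor the paper's establishes \eqref{eq:concsimp2} for those indices.
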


\begin{proof}
	We will only prove the bound in \eqref{eq:concsimp1} as the proof of \eqref{eq:concsimp2} is identical. For each fixed $z$ in the band $5 \leq |z| \leq 6$, we will show that 
	\[ \E_{k-1} \left|\frac{1}{n}r_{k}\mathcal{G}_{n}^{(k)}(z)u_{n}u_{n}^{*}\mathcal{G}_{n}^{(k)*}(z)\oindicator{\Omega_{n}^{(k)}}r_{k}^{*}\right|^{p}\ll_{L,p}n^{-p} \]
	surely, where the implicit constant does not depend on $z$.  The claim then follows by taking the supremum over all $z$ in the band $5 \leq |z| \leq 6$.  
	
	To this end, fix $z$ with $5\leq |z|\leq 6$. Throughout the proof, we drop the dependence on $z$ in the resolvent as it is clear from context. Note that
	\begin{equation*}
	\E_{k-1}\left|\frac{1}{n}r_{k}\mathcal{G}_{n}^{(k)}u_{n}u_{n}^{*}\mathcal{G}_{n}^{(k)*}\oindicator{\Omega_{n}^{(k)}}r_{k}^{*}\right|^{p}=\frac{1}{n^{p}}\E_{k-1}\left|r_{k}\left(\mathcal{G}_{n}^{(k)}u_{n}u_{n}^{*}\mathcal{G}_{n}^{(k)*}\oindicator{\Omega_{n}^{(k)}}\right)r_{k}^{*}\right|^{p}, 
	\end{equation*}
	 and $\mathcal{G}_{n}^{(k)}u_{n}u_{n}^{*}\mathcal{G}_{n}^{(k)*}\oindicator{\Omega_{n}^{(k)}}$ is independent of $r_{k}$.  In addition, observe that $\mathcal{G}_{n}^{(k)}u_{n}u_{n}^{*}\mathcal{G}_{n}^{(k)*}\oindicator{\Omega_{n}^{(k)}}$ is Hermitian positive semidefinite matrix with rank at most one.  Applying Lemma \ref{Lem:conjLessThanConstant} and Remark \ref{Equ:ResolventNormBdd}, we obtain   
		\begin{equation*}
		\E_{k-1}\left|\frac{1}{n}r_{k}\mathcal{G}_{n}^{(k)}u_{n}u_{n}^{*}\mathcal{G}_{n}^{(k)*}\oindicator{\Omega_{n}^{(k)}}r_{k}^{*}\right|^{p}\ll_{L,p} \frac{1}{n^{p}}\E_{k-1}\lnorm \mathcal{G}_{n}^{(k)}u_{n}u_{n}^{*}\mathcal{G}_{n}^{(k)*}\oindicator{\Omega_{n}^{(k)}}\rnorm^{p}\ll_{L,p}n^{-p}
	\end{equation*}
	surely, and the proof is complete.  
\end{proof}

\begin{lemma} \label{lemma:indicator}
Let $\zeta_1, \ldots, \zeta_{mn}$ be complex-valued random variables (not necessarily independent) which depend on $\mathcal{Y}_{n}$.  Assume
\[ \sup_{k} |\zeta_{k}|\oindicator{\Omega_n^{(k)}} = O(1) \]
almost surely.  Then, under the assumptions above, for any $p \geq 1$, 
\[ \E \left| \sum_{k=1}^{mn} \left( \zeta_k \oindicator{\Omega_n^{(k)}} - \zeta_k \oindicator{\Omega_n \cap \Omega^{(k)}_n}\right) \right|^p = O_p(n^{-p}). \]
\end{lemma}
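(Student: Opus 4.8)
The plan is a short reduction to the tail bound for $\|\mathcal{Y}_n\|$. The first step is the algebraic observation that, since $\Omega_n \subseteq \Omega_n^{(k)}$ (noted right after the definition of $\Omega_n^{(k)}$), one has $\Omega_n \cap \Omega_n^{(k)} = \Omega_n$ and therefore
\[
\zeta_k \oindicator{\Omega_n^{(k)}} - \zeta_k \oindicator{\Omega_n \cap \Omega_n^{(k)}}
= \zeta_k\bigl(\oindicator{\Omega_n^{(k)}} - \oindicator{\Omega_n}\bigr)
= \zeta_k\,\oindicator{\Omega_n^{(k)}}\,\oindicator{\Omega_n^{c}}
\]
for each $1 \le k \le mn$. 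In particular every summand vanishes on $\Omega_n$, so the whole sum is supported on the rare event $\Omega_n^{c}$, and each $\zeta_k$ appearing in it is always multiplied by $\oindicator{\Omega_n^{(k)}}$.

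The second step is a crude pointwise bound. By hypothesis there are deterministic constants $C>0$ and $n_0$ such that, almost surely, $\sup_k |\zeta_k|\oindicator{\Omega_n^{(k)}} \le C$ for all $n \ge n_0$; in the applications each $\zeta_k$ is a bilinear form like $u_n^\ast \mathcal{G}_n^{(k)}(z) v_n$, which is bounded by a deterministic constant on $\Omega_n^{(k)}$ by Remark~\ref{Equ:ResolventNormBdd}, so this $C$ is genuinely non-random. Combining with the first step, almost surely, for $n \ge n_0$,
\[
\left| \sum_{k=1}^{mn} \Bigl(\zeta_k \oindicator{\Omega_n^{(k)}} - \zeta_k \oindicator{\Omega_n \cap \Omega_n^{(k)}}\Bigr) \right|
\;\le\; \sum_{k=1}^{mn} |\zeta_k|\,\oindicator{\Omega_n^{(k)}}\,\oindicator{\Omega_n^{c}}
\;\le\; mn\,C\,\oindicator{\Omega_n^{c}} .
\]

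The final step is to raise this to the $p$-th power and take expectations. Since the left-hand side is a non-negative random variable dominated almost surely by $mnC$, this yields
\[
\E \left| \sum_{k=1}^{mn} \Bigl(\zeta_k \oindicator{\Omega_n^{(k)}} - \zeta_k \oindicator{\Omega_n \cap \Omega_n^{(k)}}\Bigr) \right|^p \;\le\; (mnC)^p\,\P\bigl(\Omega_n^{c}\bigr).
\]
By Lemma~\ref{Lem:specnormbound}, $\Omega_n$ holds with overwhelming probability, so $\P(\Omega_n^{c}) = O_A(n^{-A})$ for every $A>0$; taking $A = 2p$ gives $(mnC)^p\,\P(\Omega_n^{c}) = O_p(n^{p})\cdot O_p(n^{-2p}) = O_p(n^{-p})$, which is the assertion. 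I do not expect any genuine obstacle here: the argument is a one-line reduction to the overwhelming-probability bound for $\|\mathcal{Y}_n\|$ that is already in hand. The only point deserving a word of care is that the implicit constant in the hypothesis be non-random, so that the pointwise bound of the second step survives integration against the super-polynomially small tail probability $\P(\Omega_n^{c})$.
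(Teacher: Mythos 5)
Your proof is correct and follows essentially the same route as the paper: both use $\Omega_n \subseteq \Omega_n^{(k)}$ to express each summand as $\zeta_k\,\oindicator{\Omega_n^{(k)}\cap\Omega_n^c}$, apply the almost-sure $O(1)$ bound to pull out a deterministic constant, and then close via $\Prob(\Omega_n^c)=O_A(n^{-A})$ from Lemma~\ref{Lem:specnormbound}. Your explicit remark that the implicit constant in the hypothesis must be deterministic is a useful piece of care, but it is exactly what the paper's use of the asymptotic notation $O(1)$ already encodes, so there is no substantive difference.
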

\begin{proof}
We will exploit the fact that $\Omega_n \subseteq \Omega_n^{(k)}$ for any $1 \leq k \leq mn$.  Indeed, we have
\begin{align*}
	\E \left| \sum_{k=1}^{mn} \left( \zeta_k \oindicator{\Omega_n^{(k)}} - \zeta_k \oindicator{\Omega_n \cap \Omega^{(k)}_n}\right) \right|^p & \leq \E \left| \sum_{k=1}^{mn} \zeta_k \oindicator{\Omega_n^{(k)} \cap \Omega_n^c}  \right|^p \\
	& \ll_p \E \left( \sum_{k=1}^{mn} \oindicator{\Omega_n^{(k)} \cap \Omega_n^c} \right)^p \\
	&\ll_p n^p \Prob( \Omega_n^c),
\end{align*}
and the claim follows from \eqref{eq:owpOmega_n}.  
\end{proof}

We will made use of the Sherman--Morrison rank one perturbation formula (see \cite[Section 0.7.4]{HJ}). Suppose $A$ is an invertible square matrix, and let $u$, $v$ be vectors. If $1+v^{*}A^{-1}u\neq 0$, then
\begin{equation}
(A+uv^{*})^{-1}=A^{-1}-\frac{A^{-1}uv^{*}A^{-1}}{1+v^{*}A^{-1}u}
	\label{equ:ShermanMorrison1}
\end{equation}
and 
\begin{equation}
(A+uv^{*})^{-1}u=\frac{A^{-1}u}{1+v^{*}A^{-1}u}.
\label{equ:ShermanMorrison2}
\end{equation}

Now, we proceed to prove the main result of this section, Lemma \ref{Lem:concentration}. 
\begin{proof}[Proof of Lemma \ref{Lem:concentration}]
Define
\begin{equation}
\blmat{Y}{n}^{(k1)}:= \blmat{Y}{n}^{(k)}+e_{k}r_{k},\quad\blmat{Y}{n}^{(k2)}:= \blmat{Y}{n}^{(k)}+c_{k}e_{k}^{*}
\label{def:Y(kj)}
\end{equation}  
where $e_{1},\ldots,e_{mn}$ are the standard basis elements of $\C^{mn}$. Also define 
\begin{equation}
\mathcal{G}_{n}^{(kj)}:=\left(\frac{1}{\sqrt{n}}\blmat{Y}{n}^{(kj)}-zI\right)^{-1},\quad j=1,2, 
\end{equation}
and set 
\begin{align*}
& \alpha_{n}^{(k)} := \frac{1}{1+z^{-1}n^{-1}r_{k}\mathcal{G}_{n}^{(k)}c_{k}\oindicator{\Omega_{n}}},\\
& \zeta_{n}^{(k)} :=n^{-1}r_{k}\mathcal{G}_{n}^{(k)}c_{k},\\
&\eta_{n}^{(k)} := n^{-1}r_{k}\mathcal{G}_{n}^{(k)}v_{n}u_{n}^{*}\mathcal{G}_{n}^{(k)}c_{k}.  
\end{align*} 
Using these definitions, we make the following observations.  
\begin{enumerate}[label=(\roman*)]
	\item \label{item:F1} Since the only nonzero element in the $k$-th row and $k$-th column of $\blmat{Y}{n}^{(k)}-zI$ is on the diagonal, 
	\begin{equation*} 
	e_{k}^{*}\mathcal{G}_{n}^{(k)}e_{k}=-z^{-1},\quad e_{k}^{*}\mathcal{G}_{n}^{(k)}v_{n}= -z^{-1}v_{n,k},\quad u_{n}^{*}\mathcal{G}_{n}^{(k)}e_{k}=-z^{-1}\bar{u}_{n,k}
	\end{equation*}
	where $u_{n}=(u_{n,k})_{k=1}^{mn}$ and $v_{n}=(v_{n,k})_{k=1}^{mn}$. 
	\item \label{item:F2} Since the $k$-th elements of $c_{k}$ and $r_{k}$ are zero,
	\begin{equation*}
	e_{k}^{*}\mathcal{G}_{n}^{(k)}c_{k}=0,\quad r_{k}\mathcal{G}_{n}^{(k)}e_{k}=0.
	\end{equation*} 
	\item \label{item:F3} By \eqref{equ:ShermanMorrison1}, \ref{item:F1}, and \ref{item:F2}, 
	\begin{align*}
	e_{k}^{*}\mathcal{G}_{n}^{(k1)}n^{-1/2}c_{k} & = e_{k}^{*}\mathcal{G}_{n}^{(k)}n^{-1/2}c_{k}-\frac{e_{k}^{*}\mathcal{G}_{n}^{(k)}e_{k}n^{-1/2}r_{k}\mathcal{G}_{n}^{(k)}n^{-1/2}c_{k}}{1+n^{-1/2}r_{k}\mathcal{G}_{n}^{(k)}e_{k}}\\
	&=z^{-1}n^{-1}r_{k}\mathcal{G}_{n}^{(k)}c_{k}, 
	\end{align*}
	so that 
	\begin{equation*}
	\frac{1}{1+e_{k}^{*}\mathcal{G}_{n}^{(k1)}\oindicator{\Omega_{n}}n^{-1/2}c_{k}}=\alpha_{n}^{(k)}.
	\end{equation*}
	\item \label{item:F4} By the same argument as \ref{item:F3}, 
	\begin{equation*}
	n^{-1/2}r_{k}\mathcal{G}_{n}^{(k2)}e_{k}=z^{-1}n^{-1}r_{k}\mathcal{G}_{n}^{(k)}c_{k}, 
	\end{equation*}
	so that 
	\begin{equation*}
	\frac{1}{1+n^{-1/2}r_{k}\mathcal{G}_{n}^{(k2)}e_{k}\oindicator{\Omega_{n}}}=\alpha_{n}^{(k)}.
	\end{equation*}
	\item \label{item:F5} By Schur's compliment, 
	\begin{equation*}
	(\mathcal{G}_{n})_{(k,k)} = -\frac{1}{z+n^{-1}r_{k}\mathcal{G}_{n}^{(k)}c_{k}}
	\end{equation*}
	provided the necessary inverses exist (which is the case on the event $\Omega_n$).  Thus, on $\Omega_{n}=\Omega_{n}\cap\Omega_{n}^{(k)}$ and uniformly for $5\leq |z|\leq 6$, by Remark \ref{Equ:ResolventNormBdd},
	\begin{align*}
	\left|\alpha_{n}^{(k)}\right| &=\left|\frac{z}{z+n^{-1}r_{k}\mathcal{G}_{n}^{(k)}c_{k}}\right|\\
	&=|z|\left|(\mathcal{G}_{n})_{(k,k)}\right|\\
	&\leq 12.
	\end{align*}
	On $\Omega_{n}^{c}$, $\alpha_{n}^{(k)}=1$, so we have that, almost surely,  
	\begin{equation*}
		\left|\alpha_{n}^{(k)}\right|\leq 12.
	\end{equation*}
	\item \label{item:F6} By \eqref{equ:ShermanMorrison2} and \ref{item:F3}, 
	\begin{align*}
	u_{n}^{*}\mathcal{G}_{n}n^{-1/2}c_{k}\oindicator{\Omega_{n}} & =\frac{u_{n}^{*}\mathcal{G}_{n}^{(k1)}n^{-1/2}c_{k}\oindicator{\Omega_{n}}}{1+e_{k}^{*}\mathcal{G}_{n}^{(k1)}n^{-1/2}c_{k}}\\
	& =\frac{u_{n}^{*}\mathcal{G}_{n}^{(k1)}n^{-1/2}c_{k}\oindicator{\Omega_{n}}}{1+e_{k}^{*}\mathcal{G}_{n}^{(k1)}n^{-1/2}c_{k}\oindicator{\Omega_{n}}}\\
	& =u_{n}^{*}\mathcal{G}_{n}^{(k1)}n^{-1/2}c_{k}\oindicator{\Omega_{n}}\alpha_{n}^{(k)}. 
	\end{align*}
	\item \label{item:F7} By \eqref{equ:ShermanMorrison2} and \ref{item:F4}, a similar argument as above gives 
	\begin{equation*}
	u_{n}^{*}\mathcal{G}_{n}e_{k}\oindicator{\Omega_{n}} = u_{n}^{*}\mathcal{G}_{n}^{(k2)}e_{k}\oindicator{\Omega_{n}}\alpha_{n}^{(k)}. 
	\end{equation*} 
	\item \label{item:F8} By \eqref{equ:ShermanMorrison1} and \ref{item:F2},
	\begin{align*}
	\mathcal{G}_{n}^{(k1)} &=\mathcal{G}_{n}^{(k)}-\frac{\mathcal{G}_{n}^{(k)}e_{k}n^{-1/2}r_{k}\mathcal{G}_{n}^{(k)}}{1+n^{-1/2}r_{k}\mathcal{G}_{n}^{(k)}e_{k}}\\ 
	&=\mathcal{G}_{n}^{(k)}-\mathcal{G}_{n}^{(k)}e_{k}n^{-1/2}r_{k}\mathcal{G}_{n}^{(k)}.
	\end{align*}
	\item \label{item:F9} By \eqref{equ:ShermanMorrison1} and \ref{item:F2}, and by the same calculation as in \ref{item:F8}, 
	\begin{equation*}
	\mathcal{G}_{n}^{(k2)}=\mathcal{G}_{n}^{(k)}-\mathcal{G}_{n}^{(k)}n^{-1/2}c_{k}e_{k}^{*}\mathcal{G}_{n}^{(k)}. 
	\end{equation*}
	\item \label{item:F10} By definition of $\alpha_{n}^{(k)}$, 
	\begin{equation*}
	z^{-1}(\E_{k}-\E_{k-1})[n^{-1}r_{k}\mathcal{G}_{n}^{(k)}c_{k}\alpha_{n}^{(k)}]=-(\E_{k}-\E_{k-1})[\alpha_{n}^{(k)}].
	\end{equation*}
	\item \label{item:F11} By definition of $\alpha_{n}^{(k)}$ and $\zeta_{n}^{(k)}$, 
	\begin{align*}
	\alpha_{n}^{(k)}-1&=\frac{-z^{-1}n^{-1}r_{k}\mathcal{G}_{n}^{(k)}\oindicator{\Omega_{n}}c_{k}}{1+z^{-1}n^{-1}r_{k}\mathcal{G}_{n}^{(k)}\oindicator{\Omega_{n}}c_{k}}\\
	&=-z^{-1}\zeta_{n}^{(k)}\alpha_{n}^{(k)}\oindicator{\Omega_{n}}.
	\end{align*}
	\item \label{item:F12} The entries of $c_{k}$ and $r_{k}$ have mean zero, unit variance, and are bounded by $4L$ almost surely. In addition, $(r_{k}^{T},c_{k})$ and $\mathcal{G}_{n}^{(k)}\oindicator{\Omega_{n}^{(k)}}$ are independent. By Remark \ref{Equ:ResolventNormBdd}, $\lnorm\mathcal{G}_{n}^{(k)*}u_{n}v_{n}^{*}\mathcal{G}_{n}^{(k)*}\mathcal{G}_{n}^{(k)}v_{n}u_{n}^{*}\mathcal{G}_{n}^{(k)}\rnorm\leq 16$ on $\Omega_{n}^{(k)}$. Thus, by Lemma \ref{lem:BilinearFormWithDifferentVectorsBig}, for any $p\geq2$, 
	\begin{align*}
	\sup_{1\leq k\leq n}\E_{k-1}[|\eta_{n}^{(k)}|^{p}\oindicator{\Omega_{n}}] &\leq \sup_{1\leq k\leq n}\E_{k-1}[|\eta_{n}^{(k)}|^{p}\oindicator{\Omega_{n}^{(k)}}]\\
	&\ll_{L,p}\sup_{1\leq k\leq n} n^{-p}\E_{k-1}\left[\left(\tr(\mathcal{G}_{n}^{(k)*}u_{n}v_{n}^{*}\mathcal{G}_{n}^{(k)*}\mathcal{G}_{n}^{(k)}v_{n}u_{n}^{*}\mathcal{G}_{n}^{(k)})\right)^{p/2}\oindicator{\Omega_{n}^{(k)}}\right]\\
	&\ll_{L,p} n^{-p}
	\end{align*}
	since
	\[ \mathcal{G}_{n}^{(k)*}u_{n}v_{n}^{*}\mathcal{G}_{n}^{(k)*}\mathcal{G}_{n}^{(k)}v_{n}u_{n}^{*}\mathcal{G}_{n}^{(k)} \]
	is at most rank one.  Similarly, Remark \ref{Equ:ResolventNormBdd} give the almost sure bound $\lnorm\mathcal{G}_{n}^{(k)*}\mathcal{G}_{n}^{(k)}\rnorm\leq 4$ on $\Omega_{n}^{(k)}$, which gives
	\begin{align*}
	\sup_{1\leq k\leq n}\E_{k-1}[|\zeta_{n}^{(k)}|^{p}\oindicator{\Omega_{n}}] &\leq \sup_{1\leq k\leq n}\E_{k-1}[|\zeta_{n}^{(k)}|^{p}\oindicator{\Omega_{n}^{(k)}}]\\
	&\ll_{L,p}\sup_{1\leq k\leq n} n^{-p}\E_{k-1}\left[\left(\tr(\mathcal{G}_{n}^{(k)*}\mathcal{G}_{n}^{(k)})\right)^{p/2}\oindicator{\Omega_{n}^{(k)}}\right]\\
	&\ll_{L,p} n^{-p/2}. 
	\end{align*}
\end{enumerate}

With the above observations in hand, we now complete the proof.  By the resolvent identity \eqref{Equ:ResolventIndentity} and Remark \ref{Equ:ResolventNormBdd}, it follows that the function
\[ z \mapsto \left|u_{n}^{*}\mathcal{G}_{n}(z)v_{n}\oindicator{\Omega_{n}}-\E\left[u_{n}^{*}\mathcal{G}_{n}(z)v_{n}\oindicator{\Omega_{n}}\right]\right| \]
is Lipschitz continuous in the region $\{z \in \C : 5 \leq |z| \leq 6 \}$.  Thus, by a standard net argument, it suffices to prove that almost surely, for $n$ sufficiently large, 
\[ \left|u_{n}^{*}\mathcal{G}_{n}(z)v_{n}\oindicator{\Omega_{n}}-\E\left[u_{n}^{*}\mathcal{G}_{n}(z)v_{n}\oindicator{\Omega_{n}}\right]\right|<\varepsilon \]
for each fixed $z \in \mathbb{C}$ with $5 \leq |z| \leq 6$.  To this end, fix such a value of $z$.  Throughout the proof, we drop the dependence on $z$ in the resolvent as it is clear from context. Note that by Markov's inequality and the Borel--Cantelli lemma, it is sufficient to prove that
	\begin{equation}
	\E\left|u_{n}^{*}\mathcal{G}_{n}v_{n}\oindicator{\Omega_{n}}-\E\left[u_{n}^{*}\mathcal{G}_{n}v_{n}\oindicator{\Omega_{n}}\right]\right|^{p}=O_{L,p}(n^{-p/2})
	\end{equation}
	for all $p> 2$.  We now rewrite the above expression as a martingale difference sequence: 
	\begin{equation*}
	u_{n}^{*}\mathcal{G}_{n}v_{n}\oindicator{\Omega_{n}}-\E[u_{n}^{*}\mathcal{G}_{n}v_{n}\oindicator{\Omega_{n}}]	=\sum_{k=1}^{mn}\left(\E_{k}-\E_{k-1}\right)[u_{n}^{*}\mathcal{G}_{n}v_{n}\oindicator{\Omega_{n}}].
	\end{equation*}
	Since $u_{n}^{*}\mathcal{G}_{n}^{(k)}v_{n}\oindicator{\Omega_{n}^{(k)}}$ is independent of $r_k$ and $c_{k}$, one can see that
	$$(\E_{k}-\E_{k-1})[u_{n}^{*}\mathcal{G}_{n}^{(k)}v_{n}\oindicator{\Omega_{n}^{(k)}}]=0,$$ 
	and so
	\begin{equation*}
	u_{n}^{*}\mathcal{G}_{n}v_{n}\oindicator{\Omega_{n}}-\E[u_{n}^{*}\mathcal{G}_{n}v_{n}\oindicator{\Omega_{n}}] =\sum_{k=1}^{mn}\left(\E_{k}-\E_{k-1}\right)\left[u_{n}^{*}\mathcal{G}_{n}v_{n}\oindicator{\Omega_{n}}-u_{n}^{*}\mathcal{G}_{n}^{(k)}v_{n}\oindicator{\Omega_{n}^{(k)}}\right].
	\end{equation*}
	 In view of Lemma \ref{lemma:indicator} and since $\Omega_{n}\cap\Omega_{n}^{(k)}=\Omega_{n}$, it suffices to prove that
	\[ \E \left| \sum_{k=1}^{mn}\left(\E_{k}-\E_{k-1}\right)\left[u_{n}^{*}\mathcal{G}_{n}v_{n}\oindicator{\Omega_{n} }-u_{n}^{*}\mathcal{G}_{n}^{(k)}v_{n}\oindicator{\Omega_n}\right] \right|^p = O_{L,p}(n^{-p/2}) \]
	for all $p > 2$.  Define 
	\begin{equation}
	W_{k}:=\left(\E_{k}-\E_{k-1}\right)\left[u_{n}^{*}\mathcal{G}_{n}v_{n}\oindicator{\Omega_{n}}-u_{n}^{*}\mathcal{G}_{n}^{(k)}v_{n}\oindicator{\Omega_{n}}\right]
	\label{def:W_k}
	\end{equation}
	and observe that $\{W_{k}\}_{k=1}^{mn}$ is a martingale difference sequence with respect to $\{ \mathcal{F}_{k} \}$.
	
	From the resolvent identity \eqref{Equ:ResolventIndentity}, we observe that
	\begin{align*}
	\sum_{k=1}^{mn}W_{k}&=\sum_{k=1}^{mn}\left(\E_{k}-\E_{k-1}\right)\left[u_{n}^{*}\mathcal{G}_{n}\frac{1}{\sqrt{n}} (\blmat{Y}{n}^{(k)}-\blmat{Y}{n})\mathcal{G}_{n}^{(k)}v_{n}\oindicator{\Omega_{n}}\right]\\
	&=-\sum_{k=1}^{mn}\left(\E_{k}-\E_{k-1}\right)\left[u_{n}^{*}\mathcal{G}_{n}\left(\frac{1}{\sqrt{n}}e_{k}r_{k}+\frac{1}{\sqrt{n}}c_{k}e_{k}^{*}\right)\mathcal{G}_{n}^{(k)}v_{n}\oindicator{\Omega_{n}}\right]\\
	&:=-\sum_{k=1}^{mn}(W_{k1}+W_{k2})
	\end{align*}
	where we define
	\begin{equation*} 
	W_{k1}:=\left(\E_{k}-\E_{k-1}\right)\left[u_{n}^{*}\mathcal{G}_{n}n^{-1/2}e_{k}r_{k}\mathcal{G}_{n}^{(k)}v_{n}\oindicator{\Omega_n }\right], 
	\label{def:W_k1}
	\end{equation*}
	and 
	\begin{equation*} 
	W_{k2}:=\left(\E_{k}-\E_{k-1}\right)\left[u_{n}^{*}\mathcal{G}_{n}n^{-1/2}c_{k}e_{k}^{*}\mathcal{G}_{n}^{(k)}v_{n}\oindicator{\Omega_n}\right].
	\label{def:W_k2}
	\end{equation*}
	
	By \ref{item:F1}, \ref{item:F7}, and \ref{item:F9}, we can further decompose
	\begin{align*}
	\sum_{k=1}^{mn} W_{k1} &=\sum_{k=1}^{mn}\left(\E_{k}-\E_{k-1}\right)\left[u_{n}^{*}\mathcal{G}_{n}n^{-1/2}e_{k}r_{k}\mathcal{G}_{n}^{(k)}v_{n}\oindicator{\Omega_n }\right]\\
	&=\sum_{k=1}^{mn}\left(\E_{k}-\E_{k-1}\right)\left[u_{n}^{*}\mathcal{G}_{n}^{(k2)}n^{-1/2}e_{k}r_{k}\mathcal{G}_{n}^{(k)}v_{n}\alpha_{n}^{(k)}\oindicator{\Omega_n }\right]\\
	&=-\sum_{k=1}^{mn}\left(\E_{k}-\E_{k-1}\right)\left[z^{-1}(\bar{u}_{n,k}-u_{n}^{*}\mathcal{G}_{n}^{(k)}n^{-1/2}c_{k})n^{-1/2}r_{k}\mathcal{G}_{n}^{(k)}v_{n}\alpha_{n}^{(k)}\oindicator{\Omega_n }\right]\\
	&=-\sum_{k=1}^{mn}(W_{k11}+W_{k12})
	\end{align*}
	where
	\begin{equation*}
	W_{k11}:= \left(\E_{k}-\E_{k-1}\right)\left[z^{-1}\bar{u}_{n,k}n^{-1/2}r_{k}\mathcal{G}_{n}^{(k)}v_{n}\alpha_{n}^{(k)}\oindicator{\Omega_n }\right]
	\label{def:W_k11}
	\end{equation*}
	and
	\begin{equation*}
	W_{k12}:= -\left(\E_{k}-\E_{k-1}\right)\left[z^{-1}u_{n}^{*}\mathcal{G}_{n}^{(k)}n^{-1}c_{k}r_{k}\mathcal{G}_{n}^{(k)}v_{n}\alpha_{n}^{(k)}\oindicator{\Omega_n }\right].
	\label{def:W_k112}
	\end{equation*}
	Similarly, by \ref{item:F1},\ref{item:F6}, \ref{item:F8}, and \ref{item:F10}, 
	\begin{align*}
	\sum_{k=1}^{mn}W_{k2}&=\sum_{k=1}^{mn}\left(\E_{k}-\E_{k-1}\right)\left[u_{n}^{*}\mathcal{G}_{n}n^{-1/2}c_{k}e_{k}^{*}\mathcal{G}_{n}^{(k)}v_{n}\oindicator{\Omega_n}\right]\\
	&=\sum_{k=1}^{mn}\left(\E_{k}-\E_{k-1}\right)\left[u_{n}^{*}\frac{\mathcal{G}_{n}^{(k1)}n^{-1/2}c_{k}}{1+r_{k}^{*}\mathcal{G}_{n}^{(k1)}c_{k}}e_{k}^{*}\mathcal{G}_{n}^{(k)}v_{n}\oindicator{\Omega_n}\right]\\
	&=\sum_{k=1}^{mn}\left(\E_{k}-\E_{k-1}\right)\left[u_{n}^{*}\frac{\mathcal{G}_{n}^{(k1)}n^{-1/2}c_{k}}{1+r_{k}^{*}\mathcal{G}_{n}^{(k1)}c_{k}\oindicator{\Omega_{n}}}e_{k}^{*}\mathcal{G}_{n}^{(k)}v_{n}\oindicator{\Omega_n}\right]\\
	&=\sum_{k=1}^{mn}\left(\E_{k}-\E_{k-1}\right)\left[u_{n}^{*}\mathcal{G}_{n}^{(k1)}n^{-1/2}c_{k}e_{k}^{*}\mathcal{G}_{n}^{(k)}v_{n}\alpha_{n}^{(k)}\oindicator{\Omega_n}\right]\\
	&=-\sum_{k=1}^{mn}\left(\E_{k}-\E_{k-1}\right)\left[z^{-1}v_{n,k}u_{n}^{*}\mathcal{G}_{n}^{(k1)}n^{-1/2}c_{k}\alpha_{n}^{(k)}\oindicator{\Omega_n}\right]\\
	&=-\sum_{k=1}^{mn}\left(\E_{k}-\E_{k-1}\right)\left[z^{-1}v_{n,k}u_{n}^{*}(\mathcal{G}_{n}^{(k)}-\mathcal{G}_{n}^{(k)}e_{k}n^{-1/2}r_{k}\mathcal{G}_{n}^{(k)})n^{-1/2}c_{k}\alpha_{n}^{(k)}\oindicator{\Omega_n}\right]\\
	&=-\sum_{k=1}^{mn}\left(\E_{k}-\E_{k-1}\right)\left[z^{-1}v_{n,k}(u_{n}^{*}\mathcal{G}_{n}^{(k)}n^{-1/2}c_{k}\alpha_{n}^{(k)}-u_{n}^{*}\mathcal{G}_{n}^{(k)}e_{k}n^{-1}r_{k}\mathcal{G}_{n}^{(k)}c_{k}\alpha_{n}^{(k)})\oindicator{\Omega_n}\right]\\
	&=-\sum_{k=1}^{mn}\left(\E_{k}-\E_{k-1}\right)\left[z^{-1}v_{n,k}(u_{n}^{*}\mathcal{G}_{n}^{(k)}n^{-1/2}c_{k}\alpha_{n}^{(k)}+\bar{u}_{n,k}z^{-1}n^{-1}r_{k}\mathcal{G}_{n}^{(k)}c_{k}\alpha_{n}^{(k)})\oindicator{\Omega_n}\right]\\
	&=-\sum_{k=1}^{mn}\left(\E_{k}-\E_{k-1}\right)\left[z^{-1}v_{n,k}u_{n}^{*}\mathcal{G}_{n}^{(k)}n^{-1/2}c_{k}\alpha_{n}^{(k)}\oindicator{\Omega_n}-z^{-1}\bar{u}_{n,k}v_{n,k}\alpha_{n}^{(k)}\oindicator{\Omega_n}\right]\\
	&=-\sum_{k=1}^{mn}(W_{k21}+W_{k22})
	\end{align*}
	where
	\begin{equation*}
	W_{k21}:=\left(\E_{k}-\E_{k-1}\right)\left[z^{-1}v_{n,k}u_{n}^{*}\mathcal{G}_{n}^{(k)}n^{-1/2}c_{k}\alpha_{n}^{(k)}\oindicator{\Omega_n}\right]
	\label{def:W_k21}
	\end{equation*}
	and 
	\begin{equation*}
	W_{k22}:=-\left(\E_{k}-\E_{k-1}\right)\left[z^{-1}\bar{u}_{n,k}v_{n,k}\alpha_{n}^{(k)}\oindicator{\Omega_n}\right].
	\label{def:W_k22}
	\end{equation*}
	Thus, in order to complete the proof, it suffices to show that for all $p> 2$,
	\begin{equation} \label{eq:concshow}
	\E\left|\sum_{k=1}^{mn}W_{k11}\right|^{p}+\E\left|\sum_{k=1}^{mn}W_{k12}\right|^{p}+\E\left|\sum_{k=1}^{mn}W_{k21}\right|^{p}+\E\left|\sum_{k=1}^{mn}W_{k22}\right|^{p}=O_{L,p}\left(n^{-p/2}\right).
	\end{equation}
	We bound each term individually.
	To begin, observe that by Lemma \ref{Lem:rosenthals}, Lemma \ref{Lem:concentrationSimplification}, and \ref{item:F5}, for any $p>2$, 
	\begin{align*}
	\E\left|\sum_{k=1}^{mn}W_{k11}\right|^{p} &=\E\left|\sum_{k=1}^{mn}\left(\E_{k}-\E_{k-1}\right)\left[z^{-1}\bar{u}_{n,k}n^{-1/2}r_{k}\mathcal{G}_{n}^{(k)}v_{n}\alpha_{n}^{(k)}\oindicator{\Omega_n }\right]\right|^{p}\\
	&\ll_{p} \E\left[\sum_{k=1}^{mn}\E_{k-1}\left|(\E_{k}-\E_{k-1})[z^{-1}\bar{u}_{n,k}n^{-1/2}r_{k}\mathcal{G}_{n}^{(k)}v_{n}\alpha_{n}^{(k)}\oindicator{\Omega_{n}}]\right|^{2}\right]^{p/2}\\
	&\quad \quad \quad \quad +\E\left[\sum_{k=1}^{mn}\left|(\E_{k}-\E_{k-1})[z^{-1}\bar{u}_{n,k}n^{-1/2}r_{k}\mathcal{G}_{n}^{(k)}v_{n}\alpha_{n}^{(k)}\oindicator{\Omega_{n}}]\right|^{p}\right]\\
	&\ll_{p} \E\left[\sum_{k=1}^{mn}|\bar{u}_{n,k}|^{2}\E_{k-1}\left|n^{-1/2}r_{k}\mathcal{G}_{n}^{(k)}v_{n}\oindicator{\Omega_{n}}\right|^{2}\right]^{p/2}\\
	&\quad \quad \quad \quad +\sum_{k=1}^{mn}|\bar{u}_{n,k}|^{p}\E\left|n^{-1/2}r_{k}\mathcal{G}_{n}^{(k)}v_{n}\oindicator{\Omega_{n}}\right|^{p}\\
	&\ll_{p} \E\left[\sum_{k=1}^{mn}|\bar{u}_{n,k}|^{2}\E_{k-1}\left|n^{-1/2}r_{k}\mathcal{G}_{n}^{(k)}v_{n}\oindicator{\Omega_{n}^{(k)}}\right|^{2}\right]^{p/2}\\
	&\quad \quad \quad \quad +\sum_{k=1}^{mn}|\bar{u}_{n,k}|^{p}\E\left|n^{-1/2}r_{k}\mathcal{G}_{n}^{(k)}v_{n}\oindicator{\Omega_{n}^{(k)}}\right|^{p}\\
	&\ll_{L,p}\left(\E\left[\sum_{k=1}^{mn}|\bar{u}_{n,k}|^{2}\cdot n^{-1}\right]^{p/2}+\sum_{k=1}^{mn}|\bar{u}_{n,k}|^{p}\cdot n^{-p/2}\right)\\
	&\ll_{L,p} n^{-p/2},
	\end{align*}
	where we also used Jensen's inequality and the fact that $|z| \geq 5$.  Similarly, by Lemma \ref{Lem:rosenthals}, Lemma \ref{Lem:concentrationSimplification}, and \ref{item:F5}, for any $p>2$, 
	\begin{align*}
	\E\left|\sum_{k=1}^{mn}W_{k21}\right|^{p} &=\E\left|\sum_{k=1}^{mn}\left(\E_{k}-\E_{k-1}\right)\left[z^{-1}v_{n,k}u_{n}^{*}\mathcal{G}_{n}^{(k)}n^{-1/2}c_{k}\alpha_{n}^{(k)}\oindicator{\Omega_n }\right]\right|^{p}\\
	&\ll_{p} \E\left[\sum_{k=1}^{mn}\E_{k-1}\left|(\E_{k}-\E_{k-1})[z^{-1}v_{n,k}u_{n}^{*}\mathcal{G}_{n}^{(k)}n^{-1/2}c_{k}\alpha_{n}^{(k)}\oindicator{\Omega_n }]\right|^{2}\right]^{p/2}\\
	&\quad \quad \quad \quad +\E\left[\sum_{k=1}^{mn}\left|(\E_{k}-\E_{k-1})[z^{-1}v_{n,k}u_{n}^{*}\mathcal{G}_{n}^{(k)}n^{-1/2}c_{k}\alpha_{n}^{(k)}\oindicator{\Omega_n }]\right|^{p}\right]\\
	&\ll_{p} \E\left[\sum_{k=1}^{mn}|v_{n,k}|^{2}\E_{k-1}\left|u_{n}^{*}\mathcal{G}_{n}^{(k)}n^{-1/2}c_{k}\oindicator{\Omega_{n}}\right|^{2}\right]^{p/2}\\
	&\quad \quad \quad \quad +\sum_{k=1}^{mn}|v_{n,k}|^{p}\E\left|u_{n}^{*}\mathcal{G}_{n}^{(k)}n^{-1/2}c_{k}\oindicator{\Omega_{n}}\right|^{p}\\
	&\ll_{p} \E\left[\sum_{k=1}^{mn}|v_{n,k}|^{2}\E_{k-1}\left|u_{n}^{*}\mathcal{G}_{n}^{(k)}n^{-1/2}c_{k}\oindicator{\Omega_{n}^{(k)}}\right|^{2}\right]^{p/2}\\
	&\quad \quad \quad \quad +\sum_{k=1}^{mn}|v_{n,k}|^{p}\E\left|u_{n}^{*}\mathcal{G}_{n}^{(k)}n^{-1/2}c_{k}\oindicator{\Omega_{n}^{(k)}}\right|^{p}\\
	&\ll_{L,p}\left(\E\left[\sum_{k=1}^{mn}|v_{n,k}|^{2}\cdot n^{-1}\right]^{p/2}+\sum_{k=1}^{mn}|v_{n,k}|^{p}\cdot n^{-p/2}\right)\\
	&\ll_{L,p} n^{-p/2}. 
	\end{align*}
	Next, by Lemma \ref{Lem:rosenthals}, \ref{item:F5}, and \ref{item:F12}, for any $p>2$, 
	\begin{align*}
	\E\left|\sum_{k=1}^{mn}W_{k12}\right|^{p} &=\E\left|\sum_{k=1}^{mn}(E_{k}-\E_{k-1})[z^{-1}u_{n}^{*}\mathcal{G}_{n}^{(k)}n^{-1}c_{k}r_{k}\mathcal{G}_{n}^{(k)}v_{n}\alpha_{n}^{(k)}\oindicator{\Omega_{n}}]\right|^{p}\\
	&=\E\left|\sum_{k=1}^{mn}(\E_{k}-\E_{k-1})[z^{-1}\alpha_{n}^{(k)}\eta_{n}^{(k)}\oindicator{\Omega_{n}}]\right|^p\\
	&\ll_{p} \E\left[\sum_{k=1}^{mn}\E_{k-1}\left|(\E_{k}-\E_{k-1})[z^{-1}\alpha_{n}^{(k)}\eta_{n}^{(k)}\oindicator{\Omega_{n}}]\right|^{2}\right]^{p/2}\\
	&\quad \quad \quad \quad +\E\left[\left|(\E_{k}-\E_{k-1})[z^{-1}\alpha_{n}^{(k)}\eta_{n}^{(k)}\oindicator{\Omega_{n}}]\right|^{p}\right]\\
	&\ll_{p} \left(\E\left[\sum_{k=1}^{mn}\E_{k-1}\left|\eta_{n}^{(k)}\oindicator{\Omega_{n}}\right|^2\right]^{p/2}+\sum_{k=1}^{mn}\E\left|\eta_{n}^{(k)}\oindicator{\Omega_{n}}\right|^{p}\right)\\
	&\ll_{L,p} \left(\E\left[\sum_{k=1}^{mn}n^{-2}\right]^{p/2}+\sum_{k=1}^{mn}n^{-p}\right)\\
	&\ll_{L,p} n^{-p/2}+n^{-p+1}\\
	&\ll_{L,p} n^{-p/2}, 
	\end{align*}
	where we also used Jensen's inequality and the fact that $|z| \geq 5$; the last inequality follows from the fact that $p > 2$.  
	
	Finally, moving on to $W_{k22}$, by \ref{item:F11} we can decompose this further as
	\begin{align*}
	\sum_{k=1}^{mn}W_{k22}&=\sum_{k=1}^{mn}(\E_{k}-\E_{k-1})[z^{-1}\bar{u}_{n,k}v_{n,k}\alpha_{n}^{(k)}\oindicator{\Omega_{n}}]\\
	&=\sum_{i=1}^{mn}(\E_{k}-\E_{k-1})[z^{-1}\bar{u}_{n,k}v_{n,k}(1-z^{-1}\zeta_{n}^{(k)}\alpha_{n}^{(k)}\oindicator{\Omega_{n}})]\\
	&=\sum_{k=1}^{mn}(W_{k221}+W_{k222})
	\end{align*}
	where
	\begin{equation*}
	W_{k221}:=(\E_{k}-\E_{k-1})[z^{-1}\bar{u}_{n,k}v_{n,k}]
	\label{def:W_k221}
	\end{equation*}
	and
	\begin{equation*}
	W_{k222}:=-(\E_{k}-\E_{k-1})[z^{-2}\bar{u}_{n,k}v_{n,k}\zeta_{n}^{(k)}\alpha_{n}^{(k)}\oindicator{\Omega_{n}}].
	\label{def:W_k222}
	\end{equation*}
	Since $z^{-1}\bar{u}_{n,k}v_{n,k}$ is deterministic, it follows that 
	\[ \left|\sum_{k=1}^{mn}W_{k221}\right|^{p}=0. \]
	Thus, it suffices to show that $\E\left|\sum_{k=1}^{mn}W_{k222}\right|^p=O_L,p(n^{-p/2})$ for $p>2$. 
	By Lemma \ref{Lem:rosenthals}, \ref{item:F5}, and \ref{item:F12}, we have that for any $p>2$,
	\begin{align*}
	\E\left|\sum_{k=1}^{mn}W_{k222}\right|^{p}&=\E\left|\sum_{k=1}^{mn}(\E_{k}-\E_{k-1})[z^{-2}\bar{u}_{n,k}v_{n,k}\zeta_{n}^{(k)}\alpha_{n}^{(k)}\oindicator{\Omega_{n}}]\right|^{p}\\
	&\ll_{p}\E\left[\sum_{k=1}^{mn}\E_{k-1}\left|(\E_{k}-\E_{k-1})[z^{-2}\bar{u}_{n,k}v_{n,k}\zeta_{n}^{(k)}\alpha_{n}^{(k)}\oindicator{\Omega_{n}}]\right|^{2}\right]^{p/2}\\
	&\quad \quad \quad \quad +\E\left[\sum_{k=1}^{mn}\left|(\E_{k}-\E_{k-1})[z^{-2}\bar{u}_{n,k}v_{n,k}\zeta_{n}^{(k)}\alpha_{n}^{(k)}\oindicator{\Omega_{n}}]\right|^{p}\right]\\
	&\ll_{p} \E\left[\sum_{k=1}^{mn}|\bar{u}_{n,k}|^{2}|v_{n,k}|^{2}\E_{k-1}\left|\zeta_{n}^{(k)}\oindicator{\Omega_{n}}\right|^{2}\right]^{p/2}+\sum_{k=1}^{mn}|\bar{u}_{n,k}|^{p}|v_{n,k}|^{p}\E\left|\zeta_{n}^{(k)}\oindicator{\Omega_{n}}\right|^{p}\\
	&\ll_{L,p} \left[\sum_{k=1}^{mn}|\bar{u}_{n,k}|^{2}|v_{n,k}|^{2}n^{-1}\right]^{p/2}+\sum_{k=1}^{mn}|\bar{u}_{n,k}|^{p}|v_{n,k}|^{p}n^{-p/2}\\
	&\ll_{L,p} n^{-p/2}, 
	\end{align*}
	where we again used Jensen's inequality, the bound $|z| \geq 5$, and the fact that $u_n$ and $v_n$ are unit vectors.  This completes the proof of \eqref{eq:concshow}, and hence the proof of Lemma \ref{Lem:concentration} is complete.  
\end{proof}

\section{Proof of Theorem \ref{Thm:reductions}} \label{sec:combin}
In this section we complete the proof of Theorem \ref{Thm:reductions}.  We continue to work under the assumptions and notation introduced in Section \ref{Sec:Concentration}.  

It remains to prove part \ref{item:tr:isotropic} of Theorem \ref{Thm:reductions}.  In view of Lemma \ref{Lem:concentration} and \eqref{eq:owpOmega_n}, it suffices to show that
\begin{equation} \label{eq:tr:isotropic:show}
	\sup_{5 \leq |z| \leq 6} \left| \E [u_n^\ast \mathcal{G}_n(z) v_n \oindicator{\Omega_n}] + \frac{1}{z} u_n^\ast v_n \right| = o(1). 
\end{equation}
\subsection{Neumann series}
 We will rewrite the resolvent, $\mathcal{G}_{n}$, as a Neumann series.  Indeed, for $|z|\geq 5$,  
\begin{equation} \label{eq:neumannnormbnd}
\frac{1}{\sqrt{n}}\lnorm \frac{\blmat{Y}{n}}{z}\rnorm\leq \frac{9}{10} < 1
\end{equation}
on the event $\Omega_n$, so we may write
\begin{equation*}
 \mathcal{G}_n(z) \oindicator{\Omega_{n}}
=-\frac{1}{z}\left(I\oindicator{\Omega_{n}} + \sum_{k=1}^{\infty}\left(\frac{1}{\sqrt{n}}\frac{\blmat{Y}{n}\oindicator{\Omega_{n}}}{z}\right)^{k}\right)
=-\frac{1}{z}I\oindicator{\Omega_{n}}-\sum_{k=1}^{\infty}\frac{\left(\frac{\blmat{Y}{n}}{\sqrt{n}}\oindicator{\Omega_{n}}\right)^{k}}{z^{k+1}}
\end{equation*}
almost surely. Therefore, using \eqref{eq:owpOmega_n}, we obtain
\begin{align}
\E\left[u_n^{*}\mathcal{G}_{n}v_n\oindicator{\Omega_{n}}\right]&= \E\left[-\frac{1}{z}u_n^{*}v_n\oindicator{\Omega_{n}}-u_n^{*}\sum_{k=1}^{\infty}\frac{\left(\frac{\blmat{Y}{n}}{\sqrt{n}}\right)^{k}}{z^{k+1}}v_n\oindicator{\Omega_{n}}\right]\notag\\
&=-\frac{1}{z}u_n^{*}v_n\P(\Omega_{n})-\sum_{k=1}^{\infty}\frac{1}{z^{k+1}}\E\left[u_n^{*}\left(\frac{\blmat{Y}{n}}{\sqrt{n}}\right)^{k}v_n\oindicator{\Omega_{n}}\right]\notag\\
&= -\frac{1}{z}u_n^{*}v_n + o(1) -\sum_{k=1}^{\infty}\frac{1}{z^{k+1}}\E\left[u_n^{*}\left(\frac{\blmat{Y}{n}}{\sqrt{n}}\right)^{k}v_n\oindicator{\Omega_{n}}\right].\label{Equ:Neumann}
\end{align}
 Showing the sum in \eqref{Equ:Neumann} converges to zero uniformly in the region $\{z \in \C : 5 \leq |z| \leq 6 \}$ will complete the proof of \eqref{eq:tr:isotropic:show}.

\subsection{Removing the indicator function}

In this subsection, we prove the following.  
\begin{lemma} \label{lem:removeindicator}
	Under the assumptions above, for any integer $k \geq 1$,  
	\begin{equation*}
	\left|\E\left[u_n^{*}\left(\frac{1}{\sqrt{n}}\blmat{Y}{n}\right)^{k}v_n\right]-\E\left[u_n^{*}\left(\frac{1}{\sqrt{n}}\blmat{Y}{n}\right)^{k}v_n\oindicator{\Omega_{n}}\right]\right| = o_{k,L}(1).
	\end{equation*}
\end{lemma}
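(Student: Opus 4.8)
The quantity to control is the difference
\[
\E\left[u_n^{*}\left(\tfrac{1}{\sqrt{n}}\blmat{Y}{n}\right)^{k}v_n\right]-\E\left[u_n^{*}\left(\tfrac{1}{\sqrt{n}}\blmat{Y}{n}\right)^{k}v_n\oindicator{\Omega_{n}}\right]
=\E\left[u_n^{*}\left(\tfrac{1}{\sqrt{n}}\blmat{Y}{n}\right)^{k}v_n\oindicator{\Omega_{n}^{c}}\right],
\]
so the whole point is that $\Omega_n^c$ is a rare event and that on it the integrand cannot be too large. The plan is therefore a two-ingredient estimate: a crude deterministic (almost sure) upper bound on $\bigl|u_n^{*}(\tfrac{1}{\sqrt{n}}\blmat{Y}{n})^{k}v_n\bigr|$ that grows only polynomially in $n$, together with the overwhelming-probability bound $\Prob(\Omega_n^c)=O_A(n^{-A})$ for every $A>0$ from Lemma~\ref{Lem:specnormbound} (equation \eqref{eq:owpOmega_n}).

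\textbf{Key steps.} First I would recall that, having dropped the decorations, the entries of each $X_{n,i}$ are the truncated variables, so by Lemma~\ref{lem:Truncate}\ref{item:truncation:iii} we have $|X_{n,i,(a,b)}|\le 4L$ almost surely. Hence, using the block structure of $\blmat{Y}{n}$ and the trivial bound $\|M\|\le\|M\|_2$,
\[
\Bigl\|\tfrac{1}{\sqrt{n}}\blmat{Y}{n}\Bigr\|\le \tfrac{1}{\sqrt{n}}\max_{i}\|X_{n,i}\|_2\le \tfrac{1}{\sqrt{n}}\cdot 4Ln=4L\sqrt{n}
\]
almost surely. Since $u_n,v_n$ are unit vectors this gives the deterministic bound $\bigl|u_n^{*}(\tfrac{1}{\sqrt{n}}\blmat{Y}{n})^{k}v_n\bigr|\le (4L)^{k}n^{k/2}$ almost surely. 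Combining with \eqref{eq:owpOmega_n},
\[
\left|\E\left[u_n^{*}\left(\tfrac{1}{\sqrt{n}}\blmat{Y}{n}\right)^{k}v_n\oindicator{\Omega_{n}^{c}}\right]\right|
\le (4L)^{k}n^{k/2}\,\Prob(\Omega_n^c)
= (4L)^{k}n^{k/2}\,O_{A}(n^{-A})
\]
for every $A>0$; choosing $A=A(k)$ with $A>k/2$ makes the right-hand side $o_{k,L}(1)$, which is exactly the claim.

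\textbf{Main obstacle.} There is essentially none here beyond bookkeeping: the only thing one must be careful about is that a \emph{deterministic} (almost sure), rather than merely in-expectation, polynomial bound on $\bigl|u_n^{*}(\tfrac{1}{\sqrt{n}}\blmat{Y}{n})^{k}v_n\bigr|$ is available — this is precisely what the truncation from Section~\ref{Sec:TruncAndTools} buys us, and it is what lets the super-polynomial decay of $\Prob(\Omega_n^c)$ win. (One could equally run the argument with Cauchy--Schwarz, bounding the integrand in $L^2$ and $\Prob(\Omega_n^c)^{1/2}$, but the plain pointwise bound above already suffices.)
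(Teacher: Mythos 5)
Your proof is correct and follows essentially the same route as the paper: in both cases the key observation is that the truncation gives a deterministic polynomial-in-$n$ bound on $\bigl|u_n^{*}(\tfrac{1}{\sqrt{n}}\blmat{Y}{n})^{k}v_n\bigr|$, which is then beaten by the super-polynomial decay of $\Prob(\Omega_n^c)$ from \eqref{eq:owpOmega_n}.
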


\begin{proof}
Since the entries of $\mathcal{Y}_n$ are truncated, it follows that 
\[ \lnorm \mathcal{Y}_n \rnorm \leq \lnorm \mathcal{Y}_n \rnorm_2 \ll_{L} n \]
almost surely.  Therefore, as $\Prob(\Omega_n^c) = O_A(n^{-A})$ for any $A > 0$, we obtain
\begin{align*}
	\left|\E\left[u_n^{*}\left(\frac{1}{\sqrt{n}}\blmat{Y}{n}\right)^{k}v_n\right]-\E\left[u_n^{*}\left(\frac{1}{\sqrt{n}}\blmat{Y}{n}\right)^{k}v_n\oindicator{\Omega_{n}}\right]\right| &\leq \E \left[ \lnorm \mathcal{Y}_n \rnorm^k \oindicator{\Omega_n^c} \right] \\
	&\ll_{L} n^{k}  \Prob(\Omega_n^c) \\
	&\ll_{L,A} n^{k-A}.
\end{align*}
Choosing $A$ sufficiently large (in terms of $k$), completes the proof.
\end{proof}

\subsection{Combinatorial arguments}

In this section, we will show that 
\begin{equation}
\sup_{5 \leq |z| \leq 6} \left| \sum_{k=1}^{\infty}\frac{\E\left[u_n^{*}\left(\frac{1}{\sqrt{n}}\blmat{Y}{n}\right)^{k}v_n\oindicator{\Omega_{n}}\right]}{z^{k+1}} \right| =o_{L}(1).
\label{Equ:sumtozero}
\end{equation}
In view of \eqref{eq:neumannnormbnd}, the tail of the series can easily be controlled.  Thus, it suffices to show that, for each integer $k \geq 1$, 
\begin{equation*}
\E\left[u_n^{*}\left(\frac{1}{\sqrt{n}}\blmat{Y}{n}\right)^{k}v_n\oindicator{\Omega_{n}}\right] = o_{L,k}(1). 
\end{equation*}
By Lemma \ref{lem:removeindicator}, it suffices to prove the statement above without the indicator function. In particular, the following lemma completes the proof of \eqref{eq:tr:isotropic:show} (and hence completes the proof of Theorem \ref{Thm:reductions}).  

\begin{lemma}[Moment Calculations] \label{Lem:momentstozero}
	Under the assumptions above, for any integer $k \geq 1$, 
	\begin{equation*}
	\E\left[u_{n}^{*}\left(\frac{1}{\sqrt{n}}\blmat{Y}{n}\right)^{k}v_{n}\right] = o_{L,k}(1).
	\end{equation*}
\end{lemma}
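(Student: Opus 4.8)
The plan is to expand $u_n^\ast (\tfrac{1}{\sqrt n}\mathcal{Y}_n)^k v_n$ into a sum over walks of length $k$ in the index set $[mn]$ and estimate the expectation term by term. Write $\mathcal{Y}_n = \sum_{a=1}^m \sum_{i,j} (X_{n,a})_{ij}\, e_{i}^{[a]} (e_{j}^{[a+1]})^\ast$ (indices modulo $m$), so that $\mathcal{Y}_n$ only connects block $a$ to block $a+1$. Consequently $(\tfrac{1}{\sqrt n}\mathcal{Y}_n)^k$ is zero unless $m \mid k$; so assume $k = m\ell$. Expanding,
\[
\E\!\left[ u_n^\ast \Big(\tfrac{1}{\sqrt n}\mathcal{Y}_n\Big)^{k} v_n \right]
= n^{-k/2} \sum_{i_0, i_1, \dots, i_k} \bar u_{n,i_0}\, v_{n,i_k}\,
\E\!\left[ \prod_{t=1}^{k} (X_{n,a_t})_{\,i_{t-1}' , i_t'} \right],
\]
where $a_t$ is the block of step $t$ and $i_t'$ denotes the within-block coordinate of $i_t$. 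The key point is that within each factor matrix $X_{n,a}$ the entries are independent with mean zero, so a nonzero expectation forces every (unordered) pair $\{i_{t-1}', i_t'\}$ appearing with block label $a$ to be repeated at least twice among the steps carrying that same block label; moreover the entries are bounded (by $4L$, after truncation), with unit variance, so each surviving expectation is $O_L(1)$.

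The next step is the standard counting: if the multigraph of steps has $k$ edges but only $e$ distinct edges with every distinct edge used $\geq 2$ times, then $e \leq k/2$, and the number of distinct vertices visited is at most $e+1 \leq k/2 + 1$. The sum over index configurations with a fixed "shape" (which steps coincide with which) therefore contributes at most $O_L(1)$ times $\sum |\bar u_{n,i_0}|\,|v_{n,i_k}|\, n^{(\text{number of free vertices})-1}$ for the internal vertices, but one must be slightly careful about the endpoints since $u_n, v_n$ are arbitrary unit vectors rather than flat. Here I would split: for internal vertices each free vertex gives a factor $n$; for the two endpoint vertices $i_0, i_k$, Cauchy--Schwarz against $\sum_i |u_{n,i}|^2 = \sum_i |v_{n,i}|^2 = 1$ absorbs the endpoint sums at the cost of a bounded constant (and at most one factor $n^{1/2}$ each when $i_0$ or $i_k$ is not pinned to an internal vertex). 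Combining, a shape with $e$ distinct edges contributes $O_{L,k}\big( n^{-k/2} \cdot n^{\,e-1} \cdot n \big) = O_{L,k}(n^{\,e - k/2})$ after the endpoint bookkeeping, which is $o(1)$ as soon as $e < k/2$, i.e. as soon as some edge is used strictly more than twice, or the walk is not a "tree-like double cover" of its edge set in the right way. The number of shapes is $O_k(1)$, so it remains only to rule out, or separately bound, the borderline shapes with $e = k/2$ exactly.

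The main obstacle is precisely these extremal shapes: those in which the $k = m\ell$ steps pair up perfectly into $k/2$ doubled edges forming a tree, visiting exactly $k/2 + 1$ vertices. Naively these would contribute $\Theta(1)$, not $o(1)$. The resolution uses the block structure: because $\mathcal{Y}_n$ moves $a \mapsto a+1$ cyclically, a closed sub-walk must have length divisible by $m$, and a doubled edge $\{i',j'\}$ in block $a$ must be re-traversed by a step also in block $a$ — but re-traversing it in the forward sense again (not as a "backtrack") because every step in block $a$ goes from block-$a$ coordinates to block-$(a+1)$ coordinates in the same orientation; a genuine backtrack would require a step from block $a+1$ back to block $a$, which $\mathcal{Y}_n$ never provides. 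This orientation constraint forces any doubled edge to be traversed twice in the same direction, which makes the would-be "tree" actually contain a cycle (or forces $i_0 = i_k$ with a matched structure), hence $e \leq k/2 - 1$ strictly for all surviving non-negligible shapes, or else the endpoint vectors $u_n,v_n$ enter in a way that is killed by Cauchy--Schwarz with a genuine $n^{-1/2}$ gain. I would phrase this carefully as a lemma about closed walks in the "block-cyclic" digraph, prove $e \le k/2 - 1$ unless the walk is degenerate, handle the degenerate case by the endpoint Cauchy--Schwarz argument (which yields $o(1)$ because then $u_n^\ast v_n$-type quantities appear divided by a power of $n$ or because $u_n, v_n$ are not assumed aligned with coordinate vectors), and thereby conclude $\E[u_n^\ast (\tfrac{1}{\sqrt n}\mathcal{Y}_n)^k v_n] = o_{L,k}(1)$ for every $k \geq 1$.
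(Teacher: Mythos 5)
Your overall plan—expand $u_n^\ast(\tfrac1{\sqrt n}\mathcal Y_n)^k v_n$ over walks, note that nonzero expectation forces every entry to repeat, count shapes, and use the cyclic block structure to rule out the problematic ``tree-doubling'' walks—is the same strategy the paper uses (there it is phrased in terms of $m$-colored $k$-path graphs). However, there is a genuine error at the very first step, and some imprecision in the extremal-shape analysis.

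The false step is ``Consequently $(\tfrac{1}{\sqrt n}\mathcal{Y}_n)^k$ is zero unless $m\mid k$; so assume $k=m\ell$.'' This is not true. The matrix $\mathcal Y_n$ cyclically shifts the block index, so for \emph{every} $k\ge 1$ the power $\mathcal Y_n^k$ has a nonzero block $(\mathcal Y_n^k)^{[a,\,a+k \bmod m]}$ for each $a$; it is never the zero matrix. Since $u_n,v_n\in\mathbb C^{mn}$ are arbitrary unit vectors, $u_n^\ast\mathcal Y_n^k v_n$ does not automatically vanish when $m\nmid k$. (What is true, and is perhaps the source of the confusion, is that the \emph{diagonal} blocks $(\mathcal Y_n^k)^{[a,a]}$ are zero unless $m\mid k$.) The paper treats $m\nmid k$ as a separate case; it turns out to be \emph{easier}, because the unique borderline shape $G^1(1,2,\dots,k/2,1,2,\dots,k/2,1)$ still appears but now has a forced color mismatch (one would need $m\mid k/2$, hence $m\mid k$), so its expectation is zero. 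Your restriction to $k=m\ell$ simply omits this case, so the proof is incomplete as written.

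A secondary issue is the claim ``hence $e\le k/2-1$ strictly for all surviving non-negligible shapes, or else \dots Cauchy--Schwarz.'' The extremal shape with $e=k/2$ (every step parallel to exactly one other step of the same color) \emph{does} survive with nonzero expectation when $m\mid k$ and $k/m$ is even, so one cannot push $e$ below $k/2$ there. What actually saves you is not a smaller $e$ but exactly the point you mention as a fallback: the directionality (no backtracking) forces the walk to be a closed cycle through $k/2$ distinct heights, so $i_0=i_k$, and the endpoint sum becomes $\sum_i|u_{n,i}|\,|v_{n,i}|\le 1$ rather than $\sum_{i,j}|u_{n,i}|\,|v_{n,j}|\le n$. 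Combined with the $n^{k/2-1}$ internal choices this gives $O_{L,k}(n^{k/2-1})$, i.e.\ $O_{L,k}(n^{-1})$ after normalizing—this is the paper's bound. So the cycle/Cauchy--Schwarz part of your reasoning is correct in spirit, but the ``$e\le k/2-1$ strictly'' dichotomy as stated is wrong, and the whole $m\nmid k$ branch needs to be reinstated.
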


To prove Lemma \ref{Lem:momentstozero}, we will expand the above expression in terms of the entries of the random matrices $\iidmat{X}{n}{1}$,\ldots,$\iidmat{X}{n}{m}$. For brevity, in this section we will drop the subscript $n$ from our notation and just write $X_{1}, \ldots, X_m$ for $X_{n,1}, \ldots, X_{n,m}$. Similarly, we write the vectors $u_{n}$ and $v_{n}$ as $u$ and $v$, respectively. 

To begin, we exploit the block structure of $\blmat{Y}{n}$ and write
\begin{align}
\E\left[u^{*}\left(\frac{1}{\sqrt{n}}\blmat{Y}{n}\right)^{k}v\right]&=n^{-k/2}\E\left[\sum_{1\leq a,b\leq\notag m}(u^{*})^{[a]}\left(\blmat{Y}{n}^{k}\right)^{[a,b]}v^{[b]}\right]\\
&=n^{-k/2}\sum_{1\leq a,b\leq m}(u^{*})^{[a]}\E\left[\left(\blmat{Y}{n}^{k}\right)^{[a,b]}\right]v^{[b]}.\label{Equ:BlockSum}
\end{align}
Due to the block structure of $\blmat{Y}{n}$, for each $1\leq a\leq m$, there exists some $1\leq b\leq m$ which depends on $a$ and $k$ such that the $[a,b]$ block of $\blmat{Y}{n}^{k}$ is nonzero and all other blocks, $[a,c]$ for $c\neq b$, are zero. Additionally, the nonzero block entry $(\blmat{Y}{n}^{k})^{[a,b]}$ is $X_{a}X_{a+1}\cdots X_{a+k}$ where the subscripts are reduced modulo $m$ and we use modular class representatives $\{1,\ldots,m\}$ (as opposed to the usual $\{0,..,m-1\}$). To show that the expectation of this sum is $o_{L,k}(1)$, we need to systematically count all terms which have a nonzero expectation. To do so, we use graphs to characterize the terms which appear in the sum. In particular we develop path graphs, each of which corresponds uniquely to a term in the expansion of \eqref{Equ:BlockSum}. For each term, the corresponding path graph will record the matrix entries which appear, the order in which they appear, and the matrix from which they come. We begin with the following definitions. 

\begin{definition}
	We consider graphs where each vertex is specified by the ordered pair $(t,i_{t})$ for integers $1\leq t\leq k+1$ and $1\leq i_{t}\leq n$. We call $t$ the \textit{time} coordinate of the vertex and $i_{t}$ the \textit{height} coordinate of the vertex. Let $(i_{1},i_{2},\ldots,i_{k+1})$ be a $k+1$ tuple of integers from the set $\{1,2,\ldots,n\}$ and let $1\leq a\leq m$. We define an \textit{$m$-colored $k$-path graph} $G^{a}(i_{1},i_{2},\ldots,i_{k+1})$ to be the edge-colored directed graph with vertex set $V=\{(1,i_{1}),\;(2,i_{2}),\ldots,(k+1,i_{k+1})\}$ and directed edges from $(t,i_{t})$ to $(t+1,i_{t+1})$ for $1\leq t\leq k$, where the edge from $(t,i_{t})$ to $(t+1,i_{t+1})$ is color $a+t-1$, with the convention that colors are reduced modulo $m$, and the modulo class representatives are $\{1,\ldots,m\}$. The graph is said to \textit{visit} a vertex $(t,i_{t})$ if there exists an edge which begins or terminates at that vertex, so that $(t,i_{t})\in V$. We say an edge of $G^{a}(i_{1},\ldots,i_{k+1})$ is of \textit{type I} if it terminates on a vertex $(t,i_{t})$ such that $i_{t}\neq i_{s}$ for all $s<t$. We say an edge is of \textit{type II} if it terminates on a vertex $(t,i_{t})$ such that there exists some $s<t$ with $i_{t} = i_{s}$.
\end{definition}
	
	We make some observations about this definition.
	\begin{remark}
		We view $k$ and $n$ as specified parameters.  Once these parameters are specified, the notation $G^{a}(i_{1},\ldots,i_{k+1})$ completely determines the graph. The vertex set $V$ is a subset of the vertices of the $(k+1)\times n$ integer lattice and each graph has exactly $k$ directed edges. In each graph, there is an edge which begins on vertex $(1,i_{1})$ for some $1\leq i_{1}\leq n$ and there is an edge which terminates on vertex $(k+1,i_{k+1})$ for some $1\leq i_{k+1}\leq n$. Each edge begins at $(t,i_{t})$ and terminates on $(t+1,i_{t+1})$ for some integers $1\leq t\leq k$ and $1\leq i_{t},i_{t+1}\leq n$. Additionally, each edge is one of $m$ possible colors. 
		Note that if we think about the edges as ordered by the time coordinate, then the order of the colors is a cyclic permutation of the coloring $1,2,\ldots,m$, beginning with $a$. This cycle is repeated as many times as necessary in order to cover all edges. 
		
		Notice that we call $G^{a}(i_{1},i_{2},\ldots,i_{k+1})$ a path graph because it can be thought of as a path through the integer lattice from vertex $(1,i_{1})$ to $(k+1,i_{k+1})$ for some $1\leq i_{1},i_{k+1}\leq n$. Indeed, by the requirements in the definition, this graph must be one continuous path and no vertex can be visited more than once. We may call an $m$-colored $k$-path graph a \emph{path graph} when $m$ and $k$ are clear from context.
		
		Finally, we can think of an edge as type I if it terminates at a height not previously visited. It is of type II if it terminates at a height that has been previously visited. 
	\end{remark}

\begin{definition}
	For a given $m$-colored $k$-path graph $G^{a}(i_{1},i_{2},\ldots,i_{k+1})$, we say two edges $e_{1}$ and $e_{2}$ of $G^{a}(i_{1},i_{2},\ldots,i_{k+1})$ are \textit{time-translate parallel} if $e_{1}$ begins at vertex $(t,i_{t})$ and terminates at vertex $(t+1,i_{t+1})$ and edge $e_{2}$ begins at vertex $(t',i_{t'})$ and terminates at vertex $(t'+1,i_{t'+1})$ where $i_{t}=i_{t'}$ and $i_{t+1}=i_{t'+1}$ for some $1\leq t,t'\leq k$ with $t\neq t'$. 
\end{definition}

\begin{remark}
	Intuitively, two edges are time-translate parallel if they span the same two hight coordinates at different times. Throughout this section, we shorten the term ``time-translate parallel" and refer to edges with this property as ``parallel" for brevity. We warn the reader that by this, we mean that the edges must span the same heights at two different times. For instance, the edge from $(1,2)$ to $(2,4)$ is not parallel to the edge from $(3,1)$ to $(4,3)$ since they don't span the same heights, although these edges might appear parallel in the geometric interpretation of the word. Also note that two parallel edges need not have the same color. See Figure \ref{Fig:graph8_2M1} for examples of parallel and non parallel edges.
\end{remark}

\begin{definition}
	For a fixed $k$, we say two $m$-colored $k$-path graphs $G^{a}(i_{1},\ldots,i_{k+1})$ and $G^{a}(i'_{1},\ldots,i'_{k+1})$ are \textit{equivalent}, denoted $G^{a}(i_{1},\ldots,i_{k+1})\sim G^{a}(i'_{1},\ldots,i'_{k+1})$, if there exists some permutation $\sigma$ of $\{1,\ldots,n\}$ such that $(i_{1},\ldots,i_{k+1})=(\sigma(i'_{1}),\ldots,\sigma(i'_{k+1}))$. Note here that for two path graphs to be equivalent, the color of the first edge, and hence the color of all edges sequentially, must be the same in both.
\end{definition}

One can check that the above definition of equivalent $m$-colored $k$-path graphs is an equivalence relation. Thus, the set of all $m$-colored $k$-path graphs can be split into equivalence classes. 

\begin{definition}
	For each equivalence class of graphs, the \textit{canonical $m$-colored $k$-path graph} is the unique graph from an equivalence class which satisfies the following condition: If $G^{a}(i_{1},\ldots,i_{k+1})$ visits vertex $(t,i_{t})$, then for every $0 < i<i_{t}$ there exists $s<t$ such that $G^{a}(i_{1},\ldots,i_{k+1})$ visits $(s,i)$.
\end{definition} 

\begin{remark}
	Observe that, intuitively, the canonical representation for $G^{a}(i_{1},\ldots,i_{k+1})$ is the graph which does not ``skip over" any height coordinates. Namely, the canonical graph necessarily begins at vertex $(1,1)$ and at each time step the height of the next vertex can be a most one larger than the maximum height of all previous vertices.
\end{remark}

\begin{figure}[h]
	\includegraphics[scale=0.4]{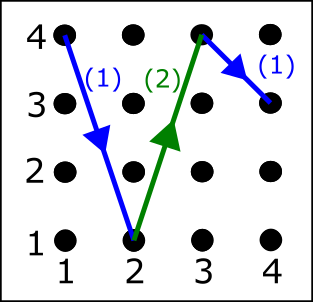}
	\includegraphics[scale=0.4]{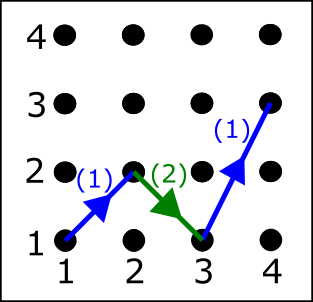}
	\caption{The graphs featured here are two possible 2-colored 3-path graphs, $G^{1}(4,1,4,3)$ and $G^{1}(1,2,1,3)$, which correspond to the terms $X_{1,(4,1)}X_{2,(1,4)}X_{1,(4,3)}$ and $X_{1,(1,2)}X_{2,(2,1)}X_{1,(1,3)}$, respectively. These two graphs are equivalent. The first graph is not a canonical graph while the second graph is a canonical graph.  For ease of notation and clarity, we have drawn all vertices on the integer lattice as dots, with the time axis appearing horizontally and the height axis vertically.  While each dot represents a vertex in the integer latter, not all of the dots represent vertices in the path graphs.  Only dots from which an edge begins or terminates are vertices of the path graph.  We have also colored the edges with blue and green to represent the two colors.  The colors of each edge are also represented by the number in parenthesis, e.g., $(1)$ or $(2)$.  In either graph, the first and third edges are type I edges, while the second edge in each graph is type II.  See Example \ref{Example:TwoEquiv} for further discussion.}
	\label{Fig:3path1and2}
\end{figure}

Now, we return to the task at hand: the proof of Lemma \ref{Lem:momentstozero}. We fix a positive integer $k$, and we expand as in \eqref{Equ:BlockSum}. 
For a fixed value $1 \leq a \leq m$, consider the nonzero block $\left(\blmat{Y}{n}^{k} \right)^{[a, b]}$. We further expand to see 
\begin{equation} \label{Equ:ExpandedTerm}
(u^{*})^{[a]}\E\left[\left(\blmat{Y}{n}^{k}\right)^{[a,b]}\right]v^{[b]} = \sum_{i_{1},\ldots,i_{k+1}}\overline{u}^{[a]}_{i_{1}}\E[X_{a,(i_{1},i_{2})}X_{a+1,(i_{2},i_{3})}\cdots X_{a+k,(i_{k},i_{k+1})}]v^{[b]}_{i_{k+1}},
\end{equation}
where the subscripts $a,\ldots,a+k$ are reduced mod $m$ with representatives $\{1,\ldots,m\}$. Observe that by the structure of $\blmat{Y}{n}$, these subscripts must appear cyclically in the order $a,a+1,\ldots,m,1,\ldots,a-1$, with the order repeating as many times as necessary before ending at $b$.  In particular, the subscripts are uniquely determined by the starting subscript $a$ and the value of $k$.  

We now consider the expectation on the right-hand side of \eqref{Equ:ExpandedTerm}.  Since all entries of each matrix are independent, if an index appears only once in a product, that product will have expectation zero. Therefore, only terms in which every index appears more than once will have a nonzero expectation, and only such terms will contribute to the expected value of the sum. Note that for an entry to appear more than once, we not only need the index of the entries to match but also which of the $m$ matrices the entries came from.   The following definition will assist in encoding each entry on the right-hand side of \eqref{Equ:ExpandedTerm} as a unique $m$-colored $k$-path graph.    

\begin{definition}
	We say the term $\iident{X}{a}{i_{1}}{i_{2}}\iident{X}{a+1}{i_{2}}{i_{3}}\cdots\iident{X}{a+k}{i_{k}}{i_{k+1}}$ from the expansion of \eqref{Equ:ExpandedTerm} \textit{corresponds} to the $m$-colored $k$-path graph $G^{a}(i_{1},\ldots,i_{k+1})$. We use the notation 
\[ x_{G} := X_{a,(i_{1},i_{2})}X_{a+1,(i_{2},i_{3})}\cdots X_{a+k,(i_{k},i_{k+1})} \]
whenever $X_{a,(i_{1},i_{2})}X_{a+1,(i_{2},i_{3})}\cdots X_{a+k,(i_{k},i_{k+1})}$ corresponds to the path graph $G$.  In this case, we also write $u^{*}_{G}$ and $v_{G}$ for $\bar{u}_{i_{1}}^{[a]}$ and $v_{i_{k+1}}^{[a+k]}$, respectively. 
\end{definition}

Each term in the expansion of \eqref{Equ:ExpandedTerm} corresponds uniquely to an $m$-colored $k$-path graph. In terms of the corresponding $m$-colored $k$-path graph, if an edge spans two vertices $(t,i_{t})$ and $(t+1,i_{t+1})$, and has color $a$ then the corresponding matrix product must contain the entry $X_{a,(i_{t},i_{t+1})}$. Thus, the color corresponds to the matrix from which the entry came and the height coordinates correspond to the matrix indices. Repeating indices is analogous to parallel edges, and entries coming from the same matrix corresponds to edges sharing a color. For example, if $X_{4,(3,5)}$ appears at some point in a term, the corresponding $m$-colored $k$-path graph will have an edge from $(t,3)$ to $(t+1,5)$ for some $t$, and the edge will be colored with color $4$.  Thus, a graph corresponds to a term with nonzero expectation if for every edge $e_{1}$, there exists at least one other edge in the graph which is parallel to $e_{1}$ and which has the same color as $e_{1}$. We must systematically count the terms which have nonzero expectation.

Since two equivalent graphs correspond to two terms which differ only by a permutation of indices, and since entries in a given matrix are independent and identically distributed, the expectation of the corresponding terms will be equal. This leads to the following lemma.

\begin{lemma} \label{Lem:EquivGraphsEqualInExpectation}
	If two path graphs $G^{a}(i_{1},\ldots,i_{k+1})$ and $G^{a}(i'_{1},\ldots,i'_{k+1})$ are equivalent, then 
	\begin{equation*}
	\E[x_{G^{a}(i_{1},\ldots,i_{k+1})}] = \E[x_{G^{a}(i'_{1},\ldots,i'_{k+1})}].
	\end{equation*}
\end{lemma}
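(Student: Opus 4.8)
The plan is to exploit the single structural fact that the entries within each random matrix $X_{n,\ell}$ are iid, so that relabelling the rows and columns of every matrix by one common permutation of $\{1,\dots,n\}$ leaves the joint distribution of the family $(X_{n,1},\dots,X_{n,m})$ unchanged. Everything else is bookkeeping with the definition of $x_G$.

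First I would unwind the hypothesis. By definition of equivalence there is a permutation $\sigma$ of $\{1,\dots,n\}$ with $(i_{1},\dots,i_{k+1}) = (\sigma(i'_{1}),\dots,\sigma(i'_{k+1}))$, and by definition of $x_G$,
\begin{equation*}
x_{G^{a}(i_{1},\dots,i_{k+1})} = X_{a,(i_{1},i_{2})}X_{a+1,(i_{2},i_{3})}\cdots X_{a+k,(i_{k},i_{k+1})} = \prod_{t=1}^{k} X_{a+t-1,(\sigma(i'_{t}),\sigma(i'_{t+1}))},
\end{equation*}
where all matrix subscripts are reduced modulo $m$ with representatives in $\{1,\dots,m\}$.

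Next, for each $1 \le \ell \le m$ I would introduce the matrix $X^{\sigma}_{n,\ell}$ defined by $(X^{\sigma}_{n,\ell})_{ij} := X_{\ell,(\sigma(i),\sigma(j))}$. Since $(i,j)\mapsto(\sigma(i),\sigma(j))$ is a bijection of $[n]\times[n]$ and the entries of $X_{n,\ell}$ are iid, $X^{\sigma}_{n,\ell}$ has the same distribution as $X_{n,\ell}$; and because $X_{n,1},\dots,X_{n,m}$ are independent, the tuple $(X^{\sigma}_{n,1},\dots,X^{\sigma}_{n,m})$ has the same joint distribution as $(X_{n,1},\dots,X_{n,m})$. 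Rewriting the product above as $x_{G^{a}(i_{1},\dots,i_{k+1})} = \prod_{t=1}^{k} (X^{\sigma}_{n,a+t-1})_{i'_{t},i'_{t+1}}$, taking expectations, and invoking this distributional identity gives
\begin{equation*}
\E\left[x_{G^{a}(i_{1},\dots,i_{k+1})}\right] = \E\left[\prod_{t=1}^{k} (X^{\sigma}_{n,a+t-1})_{i'_{t},i'_{t+1}}\right] = \E\left[\prod_{t=1}^{k} X_{a+t-1,(i'_{t},i'_{t+1})}\right] = \E\left[x_{G^{a}(i'_{1},\dots,i'_{k+1})}\right],
\end{equation*}
which is the assertion.

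I do not anticipate a genuine obstacle; the only point deserving care is that the distributional invariance must be verified \emph{jointly} across all $m$ matrices rather than for a single one, but this is immediate from the independence of $X_{n,1},\dots,X_{n,m}$ together with the exchangeability of the iid entries within each matrix. (Truncation preserves both properties, so the argument applies verbatim to the decorated matrices of Sections~\ref{Sec:Concentration}--\ref{sec:combin}.)
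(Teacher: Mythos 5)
Your proposal is correct and it follows the same route the paper takes: the paper gives no separate proof of this lemma but instead justifies it in the sentence immediately preceding it, observing that equivalent graphs correspond to terms differing by a permutation of indices, so that iid-ness of each matrix's entries together with independence across the $m$ matrices forces equal expectations. Your write-up simply formalizes that remark by introducing $(X^{\sigma}_{n,\ell})_{ij} := X_{\ell,(\sigma(i),\sigma(j))}$, noting that $(i,j)\mapsto(\sigma(i),\sigma(j))$ is a bijection of $[n]\times[n]$, and appealing to the joint distributional invariance; this is exactly the intended argument, with the independence-across-matrices point correctly flagged. (Minor note: the paper's displayed product in the definition of $x_G$ has a harmless off-by-one in the final subscript, writing $a+k$ where $a+k-1$ is meant; your $\prod_{t=1}^{k} X_{a+t-1,(i_t,i_{t+1})}$ is the correct reading, and the argument is unaffected in any case.)
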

This lemma allows us to characterize graphs with non-zero expectation based on their canonical representation. Before we begin counting the graphs which correspond to terms with nonzero expectation, we present some examples.   

\begin{example}
	\label{Example:TwoEquiv}
	Consider two $2$-colored $3$-path graphs: $G^{1}(4,1,4,3)$ and $G^{1}(1,2,1,3)$. $G^{1}(4,1,4,3)$ is the leftmost graph in Figure \ref{Fig:3path1and2} and $G^{1}(1,2,1,3)$ is the rightmost graph in Figure \ref{Fig:3path1and2}. They correspond to the terms $X_{1,(4,1)}X_{2,(1,4)}X_{1,(4,3)}$ and $X_{1,(1,2)}X_{2,(2,1)}X_{1,(1,3)}$ respectively from the expansion of $u^{*}\left(\frac{1}{\sqrt{n}}\blmat{Y}{n}\right)^{3}v$ where $m=2$. Note that these graphs are equivalent by the permutation which maps $4\mapsto1\mapsto2\mapsto4$. $G^{1}(4,1,4,3)$ is not a canonical graph, while $G^{1}(1,2,1,3)$ is a canonical graph. Observe that since $X_{1,(4,1)}$ appeared first in the product $X_{1,(4,1)}X_{2,(1,4)}X_{1,(4,3)}$, the first edge in the corresponding path graph is an edge of color $1$ spanning from height coordinate $4$ to height coordinate $1$. 
\end{example}

\begin{example}
	Consider a product of the form
	\begin{equation*} \iident{X}{1}{i_{1}}{i_{2}} \iident{X}{1}{i_{2}}{i_{1}}\iident{X}{1}{i_{1}}{i_{2}}\iident{X}{1}{i_{2}}{i_{1}}\iident{X}{1}{i_{1}}{i_{2}}\iident{X}{1}{i_{2}}{i_{1}},\iident{X}{1}{i_{1}}{i_{2}}
	\end{equation*}
	where $i_1, i_2$ are distinct.  This corresponds to a $1$-colored $7$-path graph whose canonical representative can be seen in Figure \ref{Fig:graph8M1}. Since all entries come from matrix $X_{1}$, we know $m=1$. Since there are 7 terms in the product, $k=7$.  This term has expected value 
	$\E\left[ (\iident{X}{1}{i_{1}}{i_{2}})^{4}\right]\E\left[ (\iident{X}{1}{i_{2}}{i_{1}})^{3}\right]$.  Since $X_1$ is an iid matrix, the particular choice of $i_1 \neq i_2$ is irrelevant to the expected value. 
	\label{Ex:8M1}
\end{example}

\begin{figure}[h]
	\includegraphics[scale= 0.4]{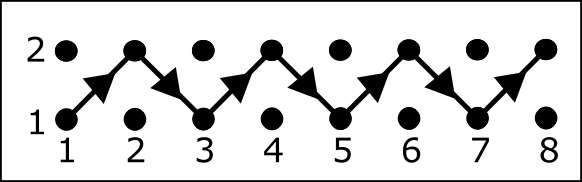}
	\caption{This $1$-colored $7$-path graph corresponds to the product in Example \ref{Ex:8M1}. Since all edges are the same color, they are not labeled with distinct colors. Observe that every edge in this graph is parallel to at least one other edge.  }
	\label{Fig:graph8M1}
\end{figure}

\begin{example}
	Consider a product of the form
	\begin{equation*}
	\iident{X}{1}{i_{1}}{i_{2}}\iident{X}{1}{i_{2}}{i_{3}}\iident{X}{1}{i_{3}}{i_{1}}\iident{X}{1}{i_{1}}{i_{3}}\iident{X}{1}{i_{3}}{i_{4}}\iident{X}{1}{i_{4}}{i_{3}}\iident{X}{1}{i_{3}}{i_{4}}
	\end{equation*}
	where $i_{1},i_{2},i_{3}$ and $i_{4}$ are distinct. This corresponds to a $1$-colored $7$-path graph whose canonical representative is featured in Figure \ref{Fig:graph8_2M1}. Since the only index pair which is appears more than once is $(i_{3},i_{4})$, the corresponding term will have zero expectation. 
	\label{Ex:8_2M1}
\end{example}

\begin{figure}[h]
	\includegraphics[scale= 0.4]{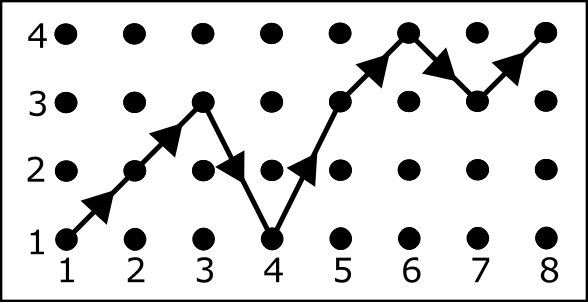}
	\caption{This $1$-colored $7$-path graph is the canonical representative to the graph which corresponds to the product in Example \ref{Ex:8_2M1}. Note that the edge from $(5,3)$ to $(6,4)$ is parallel to the edge from $(7,3)$ to $(8,4)$, but no other edges in the graph are parallel. This implies that the corresponding term will have zero expectation.}
	\label{Fig:graph8_2M1}
\end{figure}

\begin{example} \label{Ex:8M4}
	Let $i_{1},i_{2},i_{3},i_{4}$ be distinct, and consider the product 
	\begin{equation*}
	 \iident{X}{1}{i_{1}}{i_{2}} \iident{X}{2}{i_{2}}{i_{1}}\iident{X}{3}{i_{1}}{i_{2}}\iident{X}{4}{i_{2}}{i_{1}}\iident{X}{1}{i_{1}}{i_{2}}\iident{X}{2}{i_{2}}{i_{1}}\iident{X}{3}{i_{1}}{i_{2}}\iident{X}{4}{i_{2}}{i_{1}}.
	\end{equation*}
	Since there are 8 entries in this product, $k=8$, and as there are entries from 4 matrices, $m=4$. The corresponding canonical $4$-colored $8$-path graph representative is shown in Figure \ref{Fig:8M4}. This term has expectation 
	\[ \E\left[(\iident{X}{1}{i_{1}}{i_{2}})^{2}( \iident{X}{2}{i_{2}}{i_{1}})^{2}(\iident{X}{3}{i_{1}}{i_{2}})^{2}(\iident{X}{4}{i_{2}}{i_{1}})^{2}\right]. \]
\end{example}
\begin{figure}[h]
	\includegraphics[scale= 0.4]{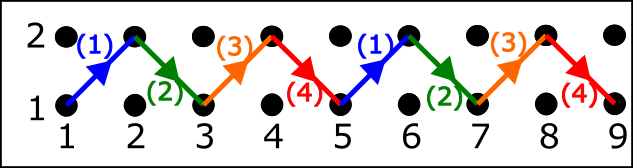}
	\caption{This $4$-colored $8$-path graph corresponds to the product in Example \ref{Ex:8M4}. Since there are eight edges, this is one possible product from $u^\ast\left(n^{-1/2}\blmat{Y}{n}\right)^{8}v $. }
	\label{Fig:8M4}
\end{figure}

\begin{example}
	Consider the product 
	\begin{equation*} 
	\iident{X}{3}{i_{1}}{i_{2}} \iident{X}{1}{i_{2}}{i_{3}}\iident{X}{2}{i_{3}}{i_{4}}\iident{X}{3}{i_{4}}{i_{1}}\iident{X}{1}{i_{1}}{i_{2}}\iident{X}{2}{i_{2}}{i_{4}}\iident{X}{3}{i_{4}}{i_{2}}\iident{X}{1}{i_{2}}{i_{1}}\iident{X}{2}{i_{1}}{i_{5}}.
	\end{equation*}
	Since there are 9 entries in this product, $k=9$ and we can see that there are 3 different matrices so that $m=3$. For any distinct indices $i_{1},i_{2},i_{3},i_{4},i_{5}$, the corresponding canonical $3$-colored $9$-path graph representative is shown in Figure \ref{Fig:9M3}.  In this product, every entry appears only once. Thus the expectation of this product factors, and the term will have expectation zero.
	\label{Ex:9M3}
\end{example}

\begin{figure}[h]
	\includegraphics[scale= 0.4]{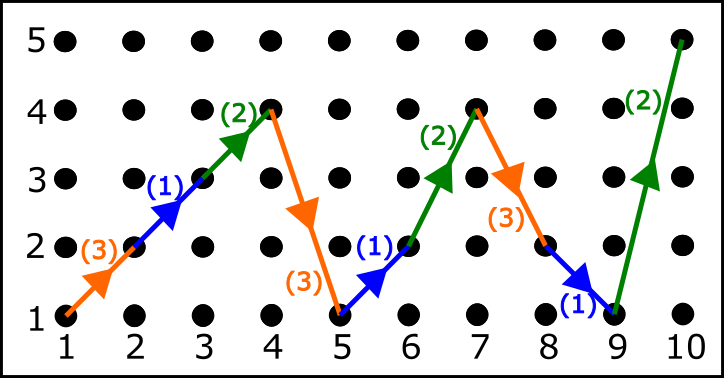}.
	\caption{This $3$-colored $9$-path graph is the canonical representative of the graph corresponding to the term in Example \ref{Ex:9M3}. There are nine edges that appear in this graph and $3$ colors on those edges, indicating that $k=9$ and $m=3$. Since no edges are parallel to another edge of the same color, this path graph corresponds to a term which has zero expectation.}
	\label{Fig:9M3}
\end{figure}

We now complete the proof of Lemma \ref{Lem:momentstozero}, which will occupy the remainder of the section.  
	Let 
	\begin{equation*}
	\Delta_{n,k}^{a} := \{G^{a}(i_{1},\ldots,i_{k+1})\;:\;1\leq i_{1},\ldots,i_{k+1}\leq n\}, 
	\end{equation*}
	and let $\tilde{\Delta}_{n,k}^{a}$ be the set of all canonical graphs in $\Delta_{n,k}^{a}$. We now divide the proof into cases based on the value of $k$.  
	\begin{description}
		\item[Case where $k$ is a multiple of $m$] If $k$ is a multiple of $m$, then by the block structure of $\blmat{Y}{n}$, it follows that $\blmat{Y}{n}^{k}$ is a block diagonal matrix. Since the diagonal blocks are the only nonzero blocks in this case, \eqref{Equ:BlockSum} simplifies to
		\begin{align}
		\E\left[u^{*}\left(\frac{1}{\sqrt{n}}\blmat{Y}{n}\right)^{k}v\right]&=n^{-k/2}\sum_{1\leq a\leq m}(u^{*})^{[a]}\E\left[\left(\blmat{Y}{n}^{k}\right)^{[a,a]}\right]v^{[a]}\notag\\
		&=n^{-k/2}\sum_{1\leq a\leq m}\sum_{G\in\Delta_{n,k}^{a}}u^{*}_{G}\E[x_{G}]v_{G}\label{Equ:FirstSumExpansion}
		\end{align}
		Recall that if $G\in\Delta_{n,k}^{a}$, then $G$ is an $m$-colored $k$-path graph which starts with color, $a$, i.e., $G=G^{a}(i_{1},\ldots,i_{k+1})$ for some $i_{1},\ldots,i_{k+1}\in [n]$. By Lemma \ref{Lem:EquivGraphsEqualInExpectation}, we can reduce the task of counting all terms with nonzero expectation to counting canonical graphs and the cardinality of each equivalence class. 
		
		Observe that if $k=m$, then any term in \eqref{Equ:FirstSumExpansion} will be of the form
		$$\overline{u}_{i_{1}}\E[X_{a,(i_{1},i_{2})}X_{a+1,(i_{2},i_{3})}\cdots X_{a-1,(i_{k},i_{k+1})}]v_{i_{k+1}}$$
		where each matrix contributes only one entry to the above expression. In this case, all terms are independent and the expectation in \eqref{Equ:FirstSumExpansion} is zero. 
		
		Now consider the case where $k=cm$ for some integer $c\geq 2$.  Define 
		\begin{equation}
		h(G)=:\max\{i_{t}\;:\;(t,i_{t})\in V_{\tilde{G}}\},
		\label{Def:h(G)}
		\end{equation}
		where $\tilde{G}$ is the canonical representative for the graph $G$ and $V_{\tilde{G}}$ is the vertex set for the graph $\tilde{G}$. We call $h(G)$ the \textit{maximal height} (or sometimes just \emph{height}) of a graph $G$. Intuitively, $h(G)$ is the number of distinct height coordinates $G$ visits. In terms of the canonical graph, $\tilde{G}$, this is the largest height coordinate visited by an edge in $\tilde{G}$. For each $a$, define
		\begin{align}
		& (\Delta_{n,k}^{a})_{1}:=\{G\in \Delta_{n,k}^{a}\;:\; h(G)>k/2\}\label{Def:Delta1}\\
		& (\Delta_{n,k}^{a})_{2}:=\{G\in \Delta_{n,k}^{a}\;:\;h(G)=k/2\}\label{Def:Delta2}\\
		& (\Delta_{n,k}^{a})_{3}:=\{G\in \Delta_{n,k}^{a}\;:\;h(G)<k/2\}\label{Def:Delta3}.
		\end{align}
		This partitions $\Delta_{n,k}^{a}$ into disjoint subsets. Without loss of generality, we assume that $a=1$ since the argument will be the same for any permutation of the coloring. In this case, all path graphs start with color $1$ and since $k=cm$, the colors $1,2,\ldots,m$ will each repeat $c$ times. We analyze each set of graphs separately.\\
		
		\noindent \underline{Graphs in $(\Delta_{n,k}^{1})_{1}$:}
		
		First, consider the set $(\Delta_{n,k}^{1})_{1}$ and recall that each $G\in(\Delta_{n,k}^{1})_{1}$ must have exactly $k$ edges. Since the expectation of all equivalent graphs is the same, it is sufficient to assume that $G$ is canonical. If $h(G)>k/2$, there must be more than $k/2$ type I edges. If each edge were parallel to at least one other edge in $G$, then there would be more than $k$ edges, a contradiction. Hence there will be at least one edge that is not parallel to any other edge. This implies $\E[x_{G}]=0$ whenever $G\in (\Delta_{n,k}^{1})_{1}$ and thus
		\begin{equation}
		\sum_{G\in(\Delta_{n,k}^{a})_{1}}u^{*}_{G}\E[x_{G}]v_{G}=0.
			\label{Equ:MMultKG1}
		\end{equation}
		
		\noindent \underline{Graphs in $(\Delta_{n,k}^{1})_{2}$:}
		
		Note that if $k$ is odd, then this set will be empty; so assume $k$ is even. Now consider a graph $G\in(\Delta_{n,k}^{1})_{2}$. By Lemma \ref{Lem:EquivGraphsEqualInExpectation}, we can assume that $G$ is canonical. If $G$ has any edges which are not parallel to any other edges, then $\E[x_{G}]=0$ and it does not contribute to the expectation. Thus we can consider only graphs in which every edge is parallel to at least one other edge. Since any $G\in (\Delta_{n,k}^{1})_{2}$ must visit exactly $k/2$ distinct height coordinates and since there must be precisely $k$ edges in $G$, a counting argument reveals that every edge in $G$ must be parallel to exactly one other edge in $G$. This gives way to the following lemma. 
		\begin{lemma}
			Let $k \geq 2$ be any even integer (not necessarily a multiple of $m$). Then there is only one canonical $k$-path graph in $\Delta^{1}_{n,k}$ for which $h(G)=\frac{k}{2}$ and in which each edge is parallel to exactly one other edge.
			\label{Lem:OnlyOne}
		\end{lemma}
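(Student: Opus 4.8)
The plan is to translate the combinatorial conditions on $G$ into statements about an auxiliary directed multigraph on the set of heights, and then run a short degree‑counting (Eulerian trail) argument. First I would record the structural consequences of the hypotheses. Since $G$ is canonical with $h(G)=k/2$, it visits precisely the heights $1,2,\ldots,k/2$, in increasing order of first appearance, and its initial vertex is $(1,1)$. Since $G$ has exactly $k$ edges and every edge is parallel to exactly one other, the edges split into $k/2$ unordered pairs, and within each pair the two edges span the same \emph{ordered} pair of heights; moreover no ordered pair of heights can be spanned by three or more edges (such an edge would be parallel to at least two others), so there are exactly $k/2$ distinct ``arcs'' (ordered height‑pairs), each used exactly twice. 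I would also note that the $m$‑coloring is irrelevant to the count: once the underlying directed path is fixed, the colors along it are forced by the starting color $1$, so counting graphs in $\Delta_{n,k}^{1}$ with the stated properties is the same as counting such underlying paths.

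Next I would set up the multigraph $D$ with vertex set $\{1,\ldots,k/2\}$ having one arc for each edge of $G$, so that $D$ has $k$ arcs, exactly $k/2$ of them distinct, each occurring with multiplicity two, and no isolated vertex (every height in $\{1,\ldots,k/2\}$ lies on the path). The height sequence $i_{1}\to i_{2}\to\cdots\to i_{k+1}$ is then an Eulerian trail in $D$ (it traverses every arc exactly once), and $D$ is connected, being covered by this walk.

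The crucial observation is that, because every arc multiplicity is even, the in‑degree and out‑degree of each vertex of $D$ differ by an even integer; since the existence of an Eulerian trail forces every such difference to lie in $\{-1,0,1\}$, all of them must be $0$. Hence the Eulerian trail is actually an Eulerian circuit, so $i_{1}=i_{k+1}=1$, and the reduced digraph $D'$ (each arc of $D$ taken once) is balanced, connected, has $k/2$ arcs on $k/2$ vertices, and no isolated vertex. A standard argument (sum of out‑degrees equals the number of vertices, each out‑degree at least one) then shows every vertex of $D'$ has in‑degree and out‑degree exactly $1$, so $D'$ is a single directed cycle through all $k/2$ vertices. The canonical condition — start at height $1$, discover heights in increasing order — forces this cycle to be $1\to 2\to\cdots\to k/2\to 1$, so the height sequence of $G$ is this cycle traversed twice, namely $1,2,\ldots,k/2,1,2,\ldots,k/2,1$. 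Conversely, one checks directly that this path graph is canonical, has height $k/2$, and has each edge parallel to exactly one other; thus it is the unique such graph.

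The only genuinely delicate point is the Eulerian‑trail bookkeeping, in particular the step that evenness of arc multiplicities upgrades the Eulerian trail to an Eulerian circuit, which is what pins down $i_{1}=i_{k+1}=1$ and forces $D'$ to be balanced; the remaining degree counting and the verification that the exhibited graph works are routine. I would also dispose of the degenerate case $k=2$ (where $D'$ is a single self‑loop and $G$ is $1\to 1\to 1$) either as part of the general argument or by a one‑line separate check.
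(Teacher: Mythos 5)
Your proposal is correct, and it takes a genuinely different route from the paper's argument. The paper proves Lemma~\ref{Lem:OnlyOne} by a hands-on induction along the time axis: it shows edge by edge that a type II edge cannot appear before time $k/2$ (otherwise a pair-count forces more than $k$ edges), that the first type II edge from $(k/2,k/2)$ must drop to height $1$ (by a contradiction that builds out the forced continuation and finds a third parallel edge), and that the remaining $k/2$ steps are then forced to retrace the initial ascent. Your argument instead collapses the whole path into an auxiliary directed multigraph $D$ on the $k/2$ heights and observes that the height sequence is an Eulerian trail in $D$; the ``each edge parallel to exactly one other'' hypothesis makes every arc multiplicity exactly $2$, so every in/out-degree discrepancy is even, the Eulerian trail must be a circuit, and balance plus the pigeonhole count ($k/2$ distinct arcs on $k/2$ non-isolated vertices) forces the reduced digraph $D'$ to be a single directed Hamiltonian cycle, which canonicity pins down as $1\to 2\to\cdots\to k/2\to 1$. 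The two approaches prove the same thing, but yours replaces the paper's somewhat delicate ``first type II edge must return to height $1$'' contradiction with the clean parity/circuit observation and a degree count, which is shorter and makes the structural reason for uniqueness (the path is a doubled Hamiltonian cycle) more transparent; the paper's version has the virtue of being entirely self-contained and not invoking Eulerian graph theory. One small remark: you correctly note that the coloring is a red herring for this lemma (it is determined by $a$ and the time index), which the paper leaves implicit; and your reduction ``each arc used exactly $0$ or $2$ times'' does rely on the observation that an arc of multiplicity $\geq 3$ would make every one of its edges parallel to at least two others, which you state and which is right under the paper's definition of time-translate parallel (same ordered pair of heights).
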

		The proof of this lemma, which relies on a counting argument and induction, is detailed in Appendix \ref{Sec:OnlyOne}.  In fact, the proof reveals that this one canonical $m$-colored $k$-path graph starting with color $1$  is 
		$$G^{1}(1,2,\ldots,k/2,1,2,\ldots,k/2,1).$$
		If two edges are parallel but are not the same color then the expectation of terms with corresponding canonical graph will be zero. 
				
		If $c$ is odd and $m$ is even, then the edge from $(k/2,k/2)$ to $(k/2+1,1)$ will have color $\frac{m}{2}$ and thus edge from $(k/2+1,1)$ to $(k/2+2,2)$ will have color $\frac{m}{2}+1$. This edge is necessarily parallel to the edge from $(1,1)$ to $(2,2)$, and it is the only edge parallel to the edge from $(1,1)$ to $(2,2)$. But note that the edge from $(1,1)$ to $(2,2)$ had color 1 and $\frac{m}{2}+1$ is not congruent to 1 $\mod$ $m$. Therefore in the case where $c$ is odd and $m$ is even, the canonical $m$-colored $k$-path graph corresponds to a term in the product which has expectation zero.
		
		Finally, if $c$ is even, then the edge from $(k/2,k/2)$ to $(k/2+1,1)$ must have color $m$. Hence the edge from $(k/2+1,1)$ to $(k/2+2,2)$ will have color 1, which is the same color as the edge from $(1,1)$ to $(2,2)$. This means that when $k=cm$ and $c$ is even, every edge in $G^{1}(1,2,\ldots,k/2,1,2,\ldots, k/2,1)$ will be parallel to exactly one other edge of the same color. In particular, note that for this graph 
		\begin{align}
		\left| u^{*}_{G}\E[x_{G}]v_{G} \right| &\leq \left|\overline{u}_{1}^{[1]} \right| \E \left| X_{1,(1,2)}\cdots X_{m,(k/2,1)} \right| \left| v_{1}^{[m]} \right| \notag\\ 
		&\leq \left|\overline{u}_{1}^{[1]} \right|\E \left|X_{1,(1,2)} \right|^{2}\cdots \E\left|X_{m,(k/2,1)}\right|^{2} \left|v_{1}^{[m]} \right| \notag\\
		&\leq \left|\overline{u}_{1}^{[1]}\right| \left|v_{1}^{[m]}\right|. \label{Equ:SimpleInnerProd}
		\end{align} 
		
		For ease of notation, let $\tilde{G}:=G^{1}(1,2,\ldots,k/2,1,2,\ldots,k/2,1)$, and consider $G^{1}(i_{1},\ldots,i_{k+1})\in (\Delta_{n,k}^{1})_{2}$ such that $G^{1}(i_{1},\ldots,i_{k+1})\sim \tilde{G}$.  Observe that there are $n$ options for the first coordinate $i_{1}$ of $G^{1}(i_{1},\ldots,i_{k+1})$. If we fix the first coordinate, then there are at most $(n-1)(n-2)\cdots(n-k/2-1)\leq n^{k/2-1}$ graphs with first coordinate $i_{1}$ which are equivalent to $\tilde{G}$. If we repeated the computation of the expectation of any of these equivalent graphs, we would get a term similar to \eqref{Equ:SimpleInnerProd} but with different starting and ending coordinates, yielding an upper bound of $\left| \overline{u}_{i_{1}} \right| \left| v_{i_{1}} \right|$. Therefore, by the above argument and the Cauchy--Schwarz inequality, we obtain
		\begin{align}
		\left|\sum_{G\in(\Delta_{n,k}^{1})_{2}}u^{*}_{G}\E[x_{G}]v_{G}\right| & \leq \sum_{1\leq i_{1}\leq n}n^{k/2-1}\left|\overline{u}_{i_{1}}\right|\left|v_{i_{1}}\right|\notag\\
		&\leq n^{k/2-1}\lnorm u\rnorm \lnorm v\rnorm \notag\\
		& \leq n^{k/2-1}.\label{Equ:MMultKG2}
		\end{align}


		\noindent \underline{Graphs in $(\Delta_{n,k}^{1})_{3}$:}
		
		Consider an $m$-colored $k$-path graphs $G\in (\Delta_{n,k}^{1})_{3}$, and assume that $G$ is canonical. If $G$ contains any edges which were not parallel to another edge, then the graph will correspond to a term with expectation zero. So consider a canonical graph $G\in (\Delta_{n,k}^{1})_{3}$ such that all edges are parallel to at least one other edge. If $h(G)=1$, then $G=G^{1}(1,1,\ldots,1)$ and so
		\begin{equation*}
		\left| \E[x_{G}] \right| \leq \E \left| \iident{X}{1}{1}{1}\cdots\iident{X}{1}{1}{1}\right| =\E \left| \iident{X}{1}{1}{1} \right|^{k} \leq (4L)^{k}.
		\end{equation*}
		Note that this is the highest possible moment in a term. Let $M:=(4L)^{k}$.  For any canonical $m$-colored $k$-path graph $G\in (\tilde{\Delta}_{n,k}^{1})_{3}$,
		\begin{equation*}
		\E |x_{G} | \leq M. 
		\end{equation*}
		Also note that this bound holds for graphs of all starting colors, not just starting color 1. In addition, for any $G$ with maximal height $h(G)$, there are $n(n-1)\cdots (n-h(G)-1)<n^{h(G)}$ graphs in the equivalence class of $G$. By over counting, we can bound the number of distinct equivalence classes by $k^{k}$ since there are $k$ edges and at each time coordinate, the edge which starts at that time coordinate can terminate at most one height coordinate larger than it started, so any edge has a most $k$ options for an ending coordinate.


		Based on the above observations, we have 
		\begin{align}
		\left|\sum_{G\in  (\Delta_{n,k}^{1})_{3}}u^{*}_{G}\E[x_{G}]v_{G}\right|&\leq \sum_{G\in  (\tilde{\Delta}_{n,k}^{1})_{3}}\left|n^{h(G)}u^{*}_{G}\E[x_{G}]v_{G}\right|\notag\\
		&\leq n^{\frac{k}{2}-\frac{1}{2}}\sum_{G\in  (\tilde{\Delta}_{n,k}^{1})_{3}}\left|u^{*}_{G}\right| \E\left|x_{G} \right| \left| v_{G}\right|\notag\\
		&\leq n^{\frac{k}{2}-\frac{1}{2}}\sum_{G\in  (\tilde{\Delta}_{n,k}^{1})_{3}}M\notag\\
		& \ll_{L,k}n^{\frac{k}{2}-\frac{1}{2}}. \label{Equ:MMultKG3}
		\end{align}

		\noindent \underline{Combining the bounds:}
		
		By \eqref{Equ:MMultKG1}, \eqref{Equ:MMultKG2}, and \eqref{Equ:MMultKG3} we conclude that 
		\begin{align*}
		\left|\sum_{G\in\Delta_{n,k}^{1}}u^{*}_{G}\E[x_{G}]v_{G}\right| & \leq \sum_{i=1}^{3}\left|\sum_{G\in(\Delta_{n,k}^{1})_{i}}u^{*}_{G}\E[x_{G}]v_{G}\right|\\
		&\ll_{L,k} n^{k/2-1/2}.
		\end{align*}
		While the bounds above were calculated for $a=1$, the same argument applies for any $a$ by simply permuting the colors.  Therefore, in the case where $k$ is a multiple of $m$, from \eqref{Equ:FirstSumExpansion} we have
		\begin{align*}
		\left|\E\left[u^{*}\left(\frac{1}{\sqrt{n}}\blmat{Y}{n}\right)^{k}v\right]\right|&\leq n^{-k/2}\sum_{1\leq a\leq m}\left|\sum_{G\in\Delta_{n,k}^{a}}u^{*}_{G}\E[x_{G}]v_{G}\right|\\ 
		&\ll_{L,k}n^{-k/2}\sum_{1\leq a\leq m} n^{k/2-1/2}\\
		&\ll_{L,k}\frac{1}{\sqrt{n}}.
		\end{align*}

		\item[Case where $k$ is not a multiple of $m$] Now assume that $k$ is not a multiple of $m$. If $k<m$, then each matrix has at most one entry in the product on the right-hand side of \eqref{Equ:ExpandedTerm} and all terms will be independent. Hence the expectation will be zero. Therefore, consider the case when $k>m$. Then there must exist some positive integer $c$ such that 
		$$cm<k<(c+1)m.$$
		We can write $k=cm+r$ for some $0<r<c$ and in this case, a computation reveals that the only nonzero blocks in $\blmat{Y}{n}^{k}$ are blocks of the form $[a,a+r]$ where $a$ and $a+r$ are reduced modulo $m$, and the modulo class representatives are $\{1,2,\ldots,m\}$. In this case we can write
		\begin{align}
		\E\left[u^{*}\left(\frac{1}{\sqrt{n}}\blmat{Y}{n}\right)^{k}v\right]&=n^{-k/2}\sum_{1\leq a\leq m}(u^{*})^{[a]}\E\left[\left(\blmat{Y}{n}^{k}\right)^{[a,a+r]}\right]v^{[a+r]}\notag\\
		&=n^{-k/2}\sum_{1\leq a\leq m}\sum_{G\in\Delta_{n,k}^{a}}u^{*}_{G}\E[x_{G}]v_{G}\label{Equ:SecondSumExpansion}
		\end{align}
		
		Again, define $h(G)$ as in \eqref{Def:h(G)} and define $(\Delta_{n,k}^{a})_{1}$, $(\Delta_{n,k}^{a})_{2}$, and $(\Delta_{n,k}^{a})_{3}$ as in \eqref{Def:Delta1}, \eqref{Def:Delta2}, and \eqref{Def:Delta3}, respectively. Without loss of generality, assume that $a=1$.
		
		If a graph $G$ has height greater than $k/2$, by the same argument in the previous case we can see that there must be an edge which is not parallel to any other edge. Therefore when $k$ is not a multiple of $m$ we still have
		\begin{equation}
		\sum_{G\in(\Delta_{n,k}^{1})_{1}}u^{*}_{G}\E[x_{G}]v_{G}=0.
		\label{Equ:MNotMultKG1}
		\end{equation}
		If $G\in(\Delta_{n,k}^{1})_{3}$ has height less than $k/2$, then we may still bound $\E\left|x_{G}\right|\leq M$. Therefore, we may use the same argument as in the previous case to conclude that 
		\begin{equation}
		\left|\sum_{G\in(\Delta_{n,k}^{1})_{3}}u^{*}_{G}\E[x_{G}]v_{G}\right|\ll_{L,k}n^{k/2-1}. 
		\label{Equ:MNotMultKG3}
		\end{equation}
		Thus, we need only to consider graphs in $(\Delta_{n,k}^{a})_{2}$.\\

		\noindent \underline{Graphs in $(\Delta_{n,k}^{1})_{2}$:}
		
		If $k$ is odd, then $(\Delta_{n,k}^{1})_{2}$ is empty, so assume that $k$ is even. Consider a graph $G\in(\Delta_{n,k}^{1})_{2}$ and by Lemma \ref{Lem:EquivGraphsEqualInExpectation}, we may assume that $G$ is canonical. If $G$ has any edges which are not parallel to any other edge, then $\E[x_{G}]=0$, so assume each edge is parallel to at least one other edge. A counting argument reveals that in fact each edge must be parallel to exactly one other edge and by Lemma \ref{Lem:OnlyOne}, we can conclude that in fact $G=G^{1}(1,2,\ldots,k/2,1,2,\ldots,k/2,1)$.
		
		In order for this graph to correspond to a term with nonzero expectation, the colors on the pairs of parallel edges must match. In order for this to happen, we would need the edge from $(k/2+1,1)$ to $(k/2+2,2)$ to have color 1. This would force the edge from $(k/2,k/2)$ to $(k/2+1,1)$ to have color $m$. Note that if we think about drawing edges sequentially with the time coordinate, then this implies that the $k/2$th edge drawn from $(k/2,k/2)$ to $(k/2+1,1)$ is of color $m$, forcing $k/2$ to be a multiple of $m$. However, this would imply that $k$ is also a multiple of $m$, a contradiction. Hence in this case, if $G\in(\tilde{\Delta}_{n,k}^{a})_{2}$, then $\E[x_{G}]=0$. By Lemma \ref{Lem:EquivGraphsEqualInExpectation}, this gives
		\begin{equation}
		\sum_{G\in(\Delta_{n,k}^{a})_{2}}u^{*}_{G}\E[x_{G}]v_{G}=0.
		\label{Equ:MNotMultKG2}
		\end{equation}
		
		\noindent \underline{Combining the bounds:}
		
		By \eqref{Equ:MNotMultKG1}, \eqref{Equ:MNotMultKG3}, and \eqref{Equ:MNotMultKG2} we can see that 
		\begin{align*}
		\left|\sum_{G\in\Delta_{n,k}^{1}}u^{*}_{G}\E[x_{G}]v_{G}\right| & \leq \sum_{i=1}^{3}\left| \sum_{G\in(\Delta_{n,k}^{1})_{i}}u^{*}_{G}\E[x_{G}]v_{G}\right|\\
		&\ll_{L,k} n^{k/2-1}.  
		\end{align*}
		While the bounds above were calculated for $a=1$, the same arguments apply for any $a$ by simply permuting the colors.  Thus, from \eqref{Equ:SecondSumExpansion} we have
		\begin{align*}
		\left|\E\left[u^{*}\left(\frac{1}{\sqrt{n}}\blmat{Y}{n}\right)^{k}v\right]\right|&\leq n^{-k/2}\sum_{1\leq a\leq m}\left|\sum_{G\in\Delta_{n,k}^{a}}u^{*}_{G}\E[x_{G}]v_{G}\right|\\ 
		&\ll_{L,k}n^{-k/2}\sum_{1\leq a\leq m} n^{k/2-1}\\
		&\ll_{L,k}\frac{1}{n}
		\end{align*}
		in the case where $k$ is not a multiple of $m$.

	\end{description}
	Combining the cases above completes the proof of Lemma \ref{Lem:momentstozero}.  

\begin{remark}
	Note that if $m=1$, then $k$ is trivially a multiple of $m$. Hence, the case where $\blmat{Y}{n}$ is an $n\times n$ matrix follows as a special case of the above argument.
\end{remark}

\section{Proofs of results from Section \ref{Sec:RelatedResults}}
\label{Sec:RelatedResultProofs}

Before we prove the results from Section \ref{Sec:RelatedResults}, we must prove an isotropic limit law for products of repeated matrices. 

\begin{theorem}[Isotropic limit law for repeated products] \label{Thm:RepeatedProdIsotropic}
	Assume $\xi$ is a complex-valued random variable with mean zero, unit variance, finite fourth moment, and independent real and imaginary parts.  For each $n \geq 1$, let $X_{n}$ be an $n \times n$ iid random matrix with atom variable $\xi$. Define $\blmat{Y}{n}$ as in \eqref{def:Y} and define $\mathcal{G}_{n}(z)$ as in \eqref{def:G} but with $\iidmat{X}{n}{1}=\iidmat{X}{n}{2}=\cdots=\iidmat{X}{n}{m}=X_{n}$. Then, for any fixed $\delta > 0$, the following statements hold.
	\begin{enumerate}[label=(\roman*)]
		\item \label{item:pw:invertible} Almost surely, for $n$ sufficiently large, the eigenvalues of $\frac{1}{\sqrt{n}} \mathcal{Y}_n$ are contained in the disk $\{z \in \mathbb{C} : |z| \leq 1 + \delta \}$.  In particular, this implies that almost surely, for $n$ sufficiently large, the matrix $\frac{1}{\sqrt{n}} \mathcal{Y}_n - z I$ is invertible for every $z \in \mathbb{C}$ with $|z| > 1 + \delta$.
		\item \label{item:pw:invtbnd} There exists a constant $c>0$ (depending only on $\delta$ and $m$) such that almost surely, for $n$ sufficiently large,
		\begin{equation*}
		\sup_{z\in\C:|z|>1+\delta}\lnorm\mathcal{G}_{n}(z)\rnorm \leq c.
		\end{equation*}
		\item \label{item:pw:isortopic} For each $n \geq 1$, let $u_n, v_n \in \mathbb{C}^{mn}$ be deterministic unit vectors.  Then
		$$ \sup_{z \in \mathbb{C} : |z| > 1 + \delta} \left| u_n^\ast \mathcal{G}_n(z) v_n + \frac{1}{z} u_n^\ast v_n \right| \longrightarrow 0 $$
		almost surely as $n \to \infty$. 
	\end{enumerate}
	\label{Thm:isotropic power}
\end{theorem}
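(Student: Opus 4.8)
The plan is to reduce everything to the single-matrix isotropic law, i.e., to Theorem \ref{thm:isotropic} in the case $m = 1$ (which is already established by the arguments of Sections \ref{Sec:TruncAndTools}--\ref{sec:combin}; see the remark at the end of Section \ref{sec:combin}). The key structural observation is that when $X_{n,1} = \cdots = X_{n,m} = X_n$, the block matrix $\mathcal{Y}_n$ from \eqref{def:Y} is exactly the Kronecker product $\mathcal{Y}_n = C_m \otimes X_n$, where $C_m$ is the $m \times m$ cyclic shift permutation matrix (with $(C_m)_{a,b} = 1$ precisely when $b \equiv a + 1 \pmod m$). Since $C_m$ is unitary with characteristic polynomial $\lambda^m - 1$, it diagonalizes as $C_m = F \Lambda F^\ast$ with $F$ unitary and $\Lambda = \diag(1, \omega, \ldots, \omega^{m-1})$, $\omega := e^{2 \pi \sqrt{-1}/m}$. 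Using the identity $(A \otimes B)(C \otimes D) = (AC) \otimes (BD)$, this yields
\begin{equation*}
	\frac{1}{\sqrt n} \mathcal{Y}_n - z I = (F \otimes I_n) \left[ \diag \left( \frac{\omega^j}{\sqrt n} X_n - z I_n \right)_{j=0}^{m-1} \right] (F^\ast \otimes I_n),
\end{equation*}
and therefore, whenever the relevant inverses exist,
\begin{equation*}
	\mathcal{G}_n(z) = (F \otimes I_n) \left[ \diag \left( \left( \frac{\omega^j}{\sqrt n} X_n - z I_n \right)^{-1} \right)_{j=0}^{m-1} \right] (F^\ast \otimes I_n).
\end{equation*}
Writing $\left( \frac{\omega^j}{\sqrt n} X_n - z I_n \right)^{-1} = \omega^{-j} \left( \frac{1}{\sqrt n} X_n - \omega^{-j} z I_n \right)^{-1}$ expresses $\mathcal{G}_n(z)$ entirely in terms of resolvents of the \emph{single} iid matrix $\frac{1}{\sqrt n} X_n$, evaluated at the points $\omega^{-j} z$, which all share the modulus $|z|$.

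With this decomposition, parts \ref{item:pw:invertible} and \ref{item:pw:invtbnd} follow quickly. Since $\mathcal{Y}_n = C_m \otimes X_n$, the eigenvalues of $\frac{1}{\sqrt n} \mathcal{Y}_n$ are the products $\omega^j \lambda_k \left( \frac{1}{\sqrt n} X_n \right)$, so $\rho \left( \frac{1}{\sqrt n} \mathcal{Y}_n \right) = \rho \left( \frac{1}{\sqrt n} X_n \right)$, and part \ref{item:pw:invertible} follows from part \ref{item:invertible} of Theorem \ref{thm:isotropic} with $m = 1$ (equivalently, from Theorem \ref{thm:nooutlier:iid}). Because $F \otimes I_n$ is unitary, $\| \mathcal{G}_n(z) \| = \max_{0 \le j \le m-1} \| \left( \frac{1}{\sqrt n} X_n - \omega^{-j} z I_n \right)^{-1} \|$, and for $|z| > 1 + \delta$ each $\omega^{-j} z$ also satisfies $|\omega^{-j} z| > 1 + \delta$; part \ref{item:invtbnd} of Theorem \ref{thm:isotropic} with $m = 1$ then bounds each of these by a single constant, giving part \ref{item:pw:invtbnd}.

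For part \ref{item:pw:isortopic}, set $\tilde u_n := (F^\ast \otimes I_n) u_n$ and $\tilde v_n := (F^\ast \otimes I_n) v_n$, which are again deterministic unit vectors, and let $\tilde u_n^{[1]}, \ldots, \tilde u_n^{[m]}$ and $\tilde v_n^{[1]}, \ldots, \tilde v_n^{[m]}$ denote their $n$-dimensional blocks. Then
\begin{equation*}
	u_n^\ast \mathcal{G}_n(z) v_n = \sum_{j=0}^{m-1} \omega^{-j} \, \left( \tilde u_n^{[j+1]} \right)^\ast \left( \frac{1}{\sqrt n} X_n - \omega^{-j} z I_n \right)^{-1} \tilde v_n^{[j+1]}.
\end{equation*}
Applying part \ref{item:isotropic} of Theorem \ref{thm:isotropic} with $m = 1$ to each block (after normalizing the blocks to unit vectors and using that the convergence there is uniform over deterministic unit vectors, so the error scales like $\| \tilde u_n^{[j+1]} \| \, \| \tilde v_n^{[j+1]} \| \le 1$), we get, almost surely and uniformly over $|z| > 1 + \delta$,
\begin{equation*}
	\left( \tilde u_n^{[j+1]} \right)^\ast \left( \frac{1}{\sqrt n} X_n - \omega^{-j} z I_n \right)^{-1} \tilde v_n^{[j+1]} = - \frac{1}{\omega^{-j} z} \left( \tilde u_n^{[j+1]} \right)^\ast \tilde v_n^{[j+1]} + o(1),
\end{equation*}
where we use that $z \mapsto \omega^{-j} z$ is a bijection of $\{ |z| > 1 + \delta \}$ onto itself, so uniformity is preserved. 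Summing over the fixed number of indices $j$, the factors $\omega^{-j}$ cancel against those in front, and since $F^\ast \otimes I_n$ is unitary we have $\sum_{j=0}^{m-1} \left( \tilde u_n^{[j+1]} \right)^\ast \tilde v_n^{[j+1]} = \tilde u_n^\ast \tilde v_n = u_n^\ast v_n$; hence $u_n^\ast \mathcal{G}_n(z) v_n = - \frac{1}{z} u_n^\ast v_n + o(1)$ uniformly over $|z| > 1 + \delta$, which is part \ref{item:pw:isortopic}. (When $m = 1$ the statement is literally Theorem \ref{thm:isotropic} with $m = 1$.)

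I do not expect a serious obstacle: once the identity $\mathcal{Y}_n = C_m \otimes X_n$ is in hand, the argument is pure bookkeeping, and in particular none of the concentration or combinatorial machinery of Sections \ref{Sec:Concentration}--\ref{sec:combin} needs to be revisited. The one point deserving a little care is the handling of uniformity in $z$ under the substitution $z \mapsto \omega^{-j} z$, but as noted this map is a rotation preserving the region $\{ |z| > 1 + \delta \}$, so it causes no difficulty.
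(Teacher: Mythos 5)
Your proof is correct, and the Kronecker/circulant decomposition $\mathcal{Y}_n = C_m \otimes X_n = (F \otimes I_n)(\Lambda \otimes X_n)(F^\ast \otimes I_n)$ is a genuinely cleaner route than the one in the paper, unifying all three parts as immediate consequences of the $m=1$ case of Theorem \ref{thm:isotropic}. The paper's argument for the same theorem treats the parts separately and more by hand: part (i) also goes through $\rho(\tfrac{1}{\sqrt n}X_n) \to 1$; part (ii) uses the explicit block-inverse formula $\mathcal{G}_n(z)^{[a,b]} = z^{(m-1)-\alpha} n^{-\alpha/2} X_n^{\alpha}\bigl(n^{-m/2}X_n^m - z^m I\bigr)^{-1}$ with $\alpha = (b-a)\bmod m$, the factorization $\bigl(n^{-m/2}X_n^m - z^m I\bigr)^{-1} = \prod_{k} (n^{-1/2}X_n - \omega^k z I)^{-1}$ (which is exactly the content of your spectral decomposition of $C_m$), and Lemma \ref{Lem:LeastSingValAwayFromZero} applied to each factor; and part (iii) is handled by a Neumann series on the band $5 \leq |z| \leq 6$, an appeal to \cite[Lemma 2.3]{Tout} for the blocks of $(\tfrac{1}{\sqrt n}\mathcal{Y}_n)^k$, and Vitali's theorem to extend to $|z| > 1+\delta$. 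Your approach buys a single structural identity that replaces the Neumann-series/Vitali machinery of part (iii) and makes the appearance of the $m$-th roots of unity transparent. One small imprecision in your write-up: part \ref{item:isotropic} of Theorem \ref{thm:isotropic} is an a.s. statement for a \emph{fixed} sequence of deterministic unit vectors, not a statement that is literally ``uniform over deterministic unit vectors''; but what you actually do is fine---you apply the theorem to each of the finitely many deterministic normalized sequences $\tilde u_n^{[j+1]}/\|\tilde u_n^{[j+1]}\|$, $\tilde v_n^{[j+1]}/\|\tilde v_n^{[j+1]}\|$ (handling zero blocks trivially) and then rescale by $\|\tilde u_n^{[j+1]}\|\,\|\tilde v_n^{[j+1]}\| \leq 1$, which is a consequence of bilinearity, not of any uniformity claim.
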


\begin{proof}
	Fix $\delta>0$. From \cite[Theorem 1.4]{Tout}, the spectral radius of $\frac{1}{\sqrt{n}}X_{n}$ converges to $1$ almost surely as $n \to \infty$. Thus, $n^{-m/2}(X_{n})^{m}$ has spectral radius converging to $1$ almost surely as well.  It follows that the spectral radius of $n^{-m/2}(\blmat{Y}{n})^{m}$ converges to $1$ almost surely, which in turn implies that the spectral radius of $\frac{1}{\sqrt{n}}\blmat{Y}{n}$ converges to $1$ almost surely as $n \to \infty$, proving claim \ref{item:pw:invertible}.


   	To prove part \ref{item:pw:invtbnd}, we consider two events, both of which hold almost surely. By \cite[Theorem 1.4]{Tout}, there exists a constant $K> 0$ such that almost surely, for $n$ sufficiently large, $n^{-1/2} \|X_n \| \leq K$, and hence, on the same event, $n^{-1/2} \lnorm\blmat{Y}{n}\rnorm \leq K$.  By Lemma \ref{Lem:specnorm} this implies that almost surely, for $n$ sufficiently large,
   	\begin{equation}
   	\sup_{z\in\C:|z|\geq K+1}\lnorm \left(\frac{1}{\sqrt{n}}\blmat{Y}{n}-zI\right)^{-1}\rnorm\leq 1.
   	\end{equation}
   	To deal with $1+\delta\leq |z|\leq K+1$, we observe that 
	\begin{equation} \label{eq:inverseblockbnd}
	\left(\left(\frac{1}{\sqrt{n}}\blmat{Y}{n}-zI\right)^{-1}\right)^{[a,b]} = z^{(m-1)-\alpha}n^{-\alpha/2}X_{n}^{\alpha}\left(n^{-m/2}X_{n}^{m}-z^{m}I\right)^{-1} 
	\end{equation}
	by a block inverse computation, where $\alpha = (b-a) \;(\mathrm{mod}\  m)$. Thus, we have
	\begin{align*} 
	&\lnorm\left(\left(\frac{1}{\sqrt{n}}\blmat{Y}{n}-zI\right)^{-1}\right)^{[a,b]}\rnorm\\ 
	& \quad \leq | z |^{(m-1)-\alpha} \lnorm n^{-\alpha/2}X_{n}^{\alpha}\rnorm\lnorm\left(n^{-m/2}X_{n}^{m}-z^{m}I\right)^{-1}\rnorm\\
	& \quad \leq | z |^{(m-1)-\alpha} \lnorm n^{-\alpha/2}X_{n}^{\alpha}\rnorm \prod_{k=1}^{m}\lnorm\left(n^{-1/2}X_{n}-ze^{2\pi k\sqrt{-1}/m}I\right)^{-1}\rnorm.
	\end{align*}  
	We now bound 
	\begin{equation*}
	\sup_{z\in\C:1+\delta<|z|<K+1}| z|^{(m-1)-\alpha}\lnorm n^{-\alpha/2}X_{n}^{\alpha}\rnorm\prod_{k=1}^{m}\lnorm\left(n^{-1/2}X_{n}-ze^{2\pi k\sqrt{-1}/m}I\right)^{-1}\rnorm.
	\end{equation*}
	Note that almost surely, for $n$ sufficiently large $\lnorm n^{-\alpha/2}X_{n}^{\alpha}\rnorm \leq K^\alpha \leq K^{m-1}$.  Hence, we obtain
	\begin{align*}
	&\sup_{z\in\C:1+\delta<|z|<K+1}| z|^{(m-1)-\alpha}\lnorm n^{-\alpha/2}X_{n}^{\alpha}\rnorm\prod_{k=1}^{m}\lnorm\left(n^{-1/2}X_{n}-ze^{2\pi k\sqrt{-1}/m}I\right)^{-1}\rnorm \\
	&\qquad\qquad \leq (K+1)^{m-1} K^{m-1} \sup_{z\in\C:1+\delta<|z|<K+1}\prod_{k=1}^{m}\lnorm\left(n^{-1/2}X_{n}-ze^{2\pi k\sqrt{-1}/m}I\right)^{-1}\rnorm 
	\end{align*}
	almost surely, for $n$ sufficiently large.  The bound for 
	\[ \sup_{z\in\C:1+\delta<|z|<K+1} \prod_{k=1}^{m}\lnorm\left(n^{-1/2}X_{n}-ze^{2\pi k\sqrt{-1}/m}I\right)^{-1}\rnorm  \]
	follows from Lemma \ref{Lem:LeastSingValAwayFromZero} (taking $m = 1$).  Returning to \eqref{eq:inverseblockbnd}, we conclude that almost surely, for $n$ sufficiently large
	\[ \sup_{z \in \mathbb{C} : 1 + \delta \leq |z| \leq K + 1} \left \| \left(\left(\frac{1}{\sqrt{n}}\blmat{Y}{n}-zI\right)^{-1}\right)^{[a,b]} \right\| \leq c \]
	for some constant $c > 0$ (depending only on $\delta$ and $m$).  Since $1 \leq a,b \leq m$ are arbitrary, the proof of property \ref{item:pw:invtbnd} is complete.

	For \ref{item:pw:isortopic}, \cite[Theorem 1.4]{Tout} yields that almost surely, for $n$ sufficiently large, 
	\begin{equation} \label{eq:910bnd}
		\sup_{|z| \geq 5} \frac{1}{\sqrt{n}}\lnorm\frac{\blmat{Y}{n}}{z}\rnorm\leq \frac{9}{10}<1. 
	\end{equation}
	Thus, we expand the resolvent as a Neumann series to obtain
	$$\mathcal{G}_{n}(z)=-\frac{1}{z}\left(I+\sum_{k=1}^{\infty}\left(\frac{1}{\sqrt{n}}\frac{\blmat{Y}{n}}{z}\right)^{k}\right)=-\frac{1}{z}I-\sum_{k=1}^{\infty}\frac{\left(\frac{\blmat{Y}{n}}{\sqrt{n}}\right)^{k}}{z^{k+1}}. $$
	Thus, we have almost surely, for $n$ sufficiently large, 
	$$u^{*}\mathcal{G}_{n}(z)v=-\frac{1}{z}u^{*}v-\sum_{k=1}^{\infty}\frac{u^{*}\left(\frac{\blmat{Y}{n}}{\sqrt{n}}\right)^{k}v}{z^{k+1}}.$$
	We will show that the series on the right-hand side converges to zero almost surely uniformly in the region $\{z \in \C : 5 \leq |z| \leq 6\}$.  Indeed, from \eqref{eq:910bnd}, the tail of the series is easily controlled.  Thus, it suffices to show that, for each fixed integer $k \geq 1$, 
	\[ \left| u_n^\ast \left( \frac{1}{\sqrt{n}} \mathcal{Y}_n \right)^k v_n \right| = o_k(1). \]
	But this follows from the block structure of $\mathcal{Y}_n$ and \cite[Lemma 2.3]{Tout}.  
	
	We now extend this convergence to the region $\{z \in \C : |z| \geq 1 + \delta\}$.  Let $\eps > 0$.  Let $M \geq 6$ be a constant to be chosen later.  By Vitali's convergence theorem (see, for instance \cite[Lemma 2.14]{BSbook}), it follows that 
	\[ \sup_{1 + \delta \leq |z| \leq M}  \left| u_n^\ast \mathcal{G}_n(z) v_n + \frac{1}{z} u_n^\ast v_n \right| \longrightarrow 0 \]
	almost surely.  In particular, almost surely, for $n$ sufficiently large, 
	\begin{equation} \label{eq:altiscond1}
	\sup_{1 + \delta \leq |z| \leq M}  \left| u_n^\ast \mathcal{G}_n(z) v_n + \frac{1}{z} u_n^\ast v_n \right| \leq \eps.
	\end{equation}
	
	Choose $M_1 > 0$ such that, for all $|z| \geq M_1$, 
	\begin{equation*}
	\lnorm \left(-\frac{1}{z}\right)u^{*}v\rnorm \leq \left|\frac{1}{z}\right|\lnorm u^{*}\rnorm\lnorm v\rnorm \leq \frac{\varepsilon}{2}.
	\end{equation*}
	Also there exists a constant $M_{2} > 0$ such that  
	\begin{equation*}
	\sup_{|z| \geq M_2} \lnorm u^{*}\mathcal{G}_{n}(z)v\rnorm \leq \frac{\varepsilon}{2}
	\end{equation*}
	almost surely, for $n$ sufficiently large, by Lemma \ref{Lem:specnorm} and \cite[Theorem 1.4]{Tout}. Take $M:=\max\{M_{1},M_{2}, 6\}$. Then almost surely, for $n$ sufficiently large, 
	\begin{equation} \label{equ:MtoInfProds}
	\sup_{|z| \geq M} \left|u^{*}\mathcal{G}_{n}(z)v+\frac{1}{z}u^{*}v\right|\leq\varepsilon. 
	\end{equation}
	
	Combining \eqref{eq:altiscond1} and \eqref{equ:MtoInfProds}, we obtain almost surely, for $n$ sufficiently large, 
	\[ \sup_{|z| \geq 1 + \delta}  \left|u^{*}\mathcal{G}_{n}(z)v+\frac{1}{z}u^{*}v\right|\leq\varepsilon. \]
	Since $\eps > 0$ was arbitrary, the proof is complete.  
\end{proof}

We also need the following lemma in order to prove Theorem \ref{thm:nomixedinpower}.

\begin{lemma}
	Let $\xi$ be a complex-valued random variable with mean zero, unit variance, finite fourth moment, and independent real and imaginary parts.  For each $n \geq 1$, let $X_{n}$ be an $n \times n$ iid random matrix with atom variable $\xi$. Let $m$ be a positive integer. Then, for any fixed $\delta > 0$, the following statements hold.  
	\begin{enumerate}[label=(\roman*)]
		\item Almost surely, for $n$ sufficiently large, the eigenvalues of $n^{-m/2}{X}_{n}^{m}$ are contained in the disk $\{z \in \mathbb{C} : |z| \leq 1 + \delta \}$.  In particular, this implies that almost surely, for $n$ sufficiently large, the matrix $n^{-m/2}{X}_{n}^{m}- z I$ is invertible for every $z \in \mathbb{C}$ with $|z| > 1 + \delta$.
		\item There exists a constant $c > 0$ such that almost surely, for $n$ sufficiently large, 
		$$ \sup_{z \in \mathbb{C} : |z| > 1 + \delta} \lnorm \left(n^{-m/2}{X}_{n}^{m}-zI\right)^{-1} \rnorm \leq c. $$
		\item For each $n \geq 1$, let $u_n, v_n \in \mathbb{C}^{n}$ be deterministic unit vectors.  Then 
		$$ \sup_{z \in \mathbb{C} : |z| > 1 + \delta} \left| u_n^\ast \left(n^{-m/2}{X}_{n}^{m}-zI\right)^{-1} v_n + \frac{1}{z} u_n^\ast v_n \right| \longrightarrow 0 $$
		almost surely as $n \to \infty$. 
	\end{enumerate}
	\label{lem:PowerIsotropic}
\end{lemma}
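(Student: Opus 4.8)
The plan is to deduce Lemma \ref{lem:PowerIsotropic} from Theorem \ref{Thm:RepeatedProdIsotropic} (the isotropic law for $\mathcal{G}_n$ in the repeated-matrix setting) together with the block-inverse identity relating $\mathcal{G}_n$ to the resolvent of $n^{-m/2}X_n^m$, via the change of variables $w = z^m$. Throughout, let $\mathcal{Y}_n$ and $\mathcal{G}_n(z) = \big(\frac{1}{\sqrt{n}}\mathcal{Y}_n - zI\big)^{-1}$ be as in \eqref{def:Y} and \eqref{def:G} but with $X_{n,1} = \cdots = X_{n,m} = X_n$, exactly as in Theorem \ref{Thm:RepeatedProdIsotropic}. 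As in the proof of Lemma \ref{Cor:LeastSingValOfProductAwayFromZero}, a block-inverse computation gives, for every $z$ that is not an eigenvalue of $\frac{1}{\sqrt{n}}\mathcal{Y}_n$,
\[ \mathcal{G}_n^{[1,1]}(z) = z^{m-1}\left( n^{-m/2} X_n^m - z^m I \right)^{-1}, \]
equivalently $\big( n^{-m/2}X_n^m - wI \big)^{-1} = z^{1-m}\mathcal{G}_n^{[1,1]}(z)$ for any $m$-th root $z$ of $w$. I would fix $\delta > 0$ and set $\delta' := (1+\delta)^{1/m} - 1 > 0$, so that $|w| > 1 + \delta$ if and only if every $m$-th root $z$ of $w$ satisfies $|z| > 1 + \delta'$.

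For part (i), \cite[Theorem 1.4]{Tout} gives that the spectral radius of $\frac{1}{\sqrt{n}}X_n$ converges to $1$ almost surely, hence so does $\rho(n^{-m/2}X_n^m) = \rho\big(\frac{1}{\sqrt{n}}X_n\big)^m$; thus almost surely, for $n$ sufficiently large, all eigenvalues of $n^{-m/2}X_n^m$ lie in $\{z : |z| \le 1 + \delta\}$, so $n^{-m/2}X_n^m - wI$ is invertible for $|w| > 1 + \delta$ and the displayed identity is valid there. (Alternatively, one can invoke part \ref{item:pw:invertible} of Theorem \ref{Thm:RepeatedProdIsotropic} with parameter $\delta'$, using that $n^{-m/2}\mathcal{Y}_n^m$ is block diagonal with every diagonal block equal to $n^{-m/2}X_n^m$.) For part (ii), part \ref{item:pw:invtbnd} of Theorem \ref{Thm:RepeatedProdIsotropic} furnishes a constant $c > 0$ with $\sup_{|z| > 1 + \delta'}\|\mathcal{G}_n(z)\| \le c$ almost surely for $n$ large; since the operator norm of a submatrix is dominated by that of the full matrix and $|z|^{1-m} \le 1$ for $|z| > 1$ and $m \ge 1$, the identity yields $\|(n^{-m/2}X_n^m - wI)^{-1}\| = |z|^{1-m}\|\mathcal{G}_n^{[1,1]}(z)\| \le c$ for all $|w| > 1 + \delta$.

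For part (iii), given deterministic unit vectors $u_n, v_n \in \mathbb{C}^n$, I would embed them into $\mathbb{C}^{mn}$ as block vectors $\hat u_n, \hat v_n$ whose first block is $u_n$, resp.\ $v_n$, and whose remaining blocks vanish; these are again unit vectors, and $\hat u_n^* \mathcal{G}_n(z)\hat v_n = u_n^* \mathcal{G}_n^{[1,1]}(z) v_n$ while $\hat u_n^* \hat v_n = u_n^* v_n$. Part \ref{item:pw:isortopic} of Theorem \ref{Thm:RepeatedProdIsotropic}, applied with $\delta'$, then gives $\sup_{|z| > 1 + \delta'}\big| u_n^* \mathcal{G}_n^{[1,1]}(z) v_n + z^{-1} u_n^* v_n \big| \to 0$ almost surely; multiplying the bracket by $z^{1-m}$ (whose modulus is at most $1$) and using $z^{1-m}\cdot z^{-1} = z^{-m} = w^{-1}$ together with the displayed identity produces
\[ \sup_{|w| > 1 + \delta}\left| u_n^*\left(n^{-m/2}X_n^m - wI\right)^{-1}v_n + \frac{1}{w}\, u_n^* v_n \right| \longrightarrow 0 \]
almost surely, which is (iii) after renaming $w$ back to $z$. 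The only real point of care is the uniformity of the change of variables $w \mapsto w^{1/m}$: because all $m$ of the $m$-th roots of a given $w$ share the same modulus, each supremum over $\{|w| > 1+\delta\}$ is bounded by the corresponding supremum over $\{|z| > 1 + \delta'\}$ supplied by Theorem \ref{Thm:RepeatedProdIsotropic}, and beyond this bookkeeping the lemma presents no new obstacle — it is essentially a corollary of Theorem \ref{Thm:RepeatedProdIsotropic}.
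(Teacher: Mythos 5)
Your proof is correct and takes essentially the same approach the paper intends: the paper explicitly states that the proof of Lemma~\ref{lem:PowerIsotropic} is ``similar to the proof of Corollary~\ref{cor:ProductIsotropic},'' which deduces the result from the isotropic limit law via the block-inverse identity $\mathcal{G}_n^{[1,1]}(z) = z^{m-1}(n^{-m/2}X_{n,1}\cdots X_{n,m} - z^m I)^{-1}$, and you carry out the analogous deduction from Theorem~\ref{Thm:RepeatedProdIsotropic} using the same identity with $X_{n,1}=\cdots=X_{n,m}=X_n$. The bookkeeping you flag — the change of variable $w=z^m$, the observation that all $m$-th roots of $w$ share one modulus so the supremum transfers cleanly, and the bound $|z|^{1-m}\le 1$ for $|z|>1$ — is handled correctly.
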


The proof of this lemma is similar to the proof of Corollary \ref{cor:ProductIsotropic}; we omit the details.

With these results, we may proceed to the proofs of the results in Section \ref{Sec:RelatedResults}. The proofs of Theorems \ref{Thm:NoOutlierInPowerPert},  \ref{thm:nomixedinpower}, and \ref{Thm:RepeatedProdOutliers} follow the proofs of Theorems \ref{Thm:NoOutlierInProductPert}, \ref{thm:nomixed}, and \ref{thm:outliers}, respectively, verbatim, except for the following changes:
\begin{itemize}
	\item Take $X_{n,1}=\cdots = X_{n,m} = X_{n}$,
	\item Replace all occurrences of Theorem \ref{thm:isotropic} by Theorem \ref{Thm:RepeatedProdIsotropic}
	\item Replace all occurrences of Corollary \ref{cor:ProductIsotropic} by Lemma \ref{lem:PowerIsotropic}. 
	\item The scaling factor of $\sigma$ needs to be replaced by $\sigma^{m}$.
\end{itemize}

\appendix
\section{Proof of Lemma \ref{lem:Truncate}}
\label{Sec:ProofOfTruncation}
In this section, we present the proof of Lemma \ref{lem:Truncate}.
\begin{proof}[Proof of Lemma \ref{lem:Truncate}]
	Take $L_{0} := \sqrt{8 \E|\xi|^{4}}$. We begin by proving \ref{item:truncation:ii}. Observe that
	\begin{align*}
	1 &= \E|\xi|^{2}\\
	&=\E|\Re(\xi)|^{2}+\E|\Im(\xi)|^{2}\\
	&=\E\left[|\Re(\xi)|^{2}\indicator{|\Re(\xi)|\leq L/\sqrt{2}}\right]+\E\left[|\Re(\xi)|^{2}\indicator{|\Re(\xi)|> L/\sqrt{2}}\right]\\
	&\quad \quad +\E\left[|\Im(\xi)|^{2}\indicator{|\Im(\xi)|\leq L/\sqrt{2}}\right]+\E\left[|\Im(\xi)|^{2}\indicator{|\Im(\xi)|> L/\sqrt{2}}\right]\\
	&=\Var(\Re(\tilde{\xi}))+\Var(\Im(\tilde{\xi}))\\ 
	&\quad \quad+\left|\E\left[\Re(\xi)\indicator{|\Re(\xi)|\leq L/\sqrt{2}}\right]\right|^{2}+\E\left[|\Re(\xi)|^{2}\indicator{|\Re(\xi)|> L/\sqrt{2}}\right]\\
	&\quad \quad +\left|\E\left[\Im(\xi)\indicator{|\Im(\xi)|\leq L/\sqrt{2}}\right]\right|^{2}+\E\left[|\Im(\xi)|^{2}\indicator{|\Im(\xi)|> L/\sqrt{2}}\right], 
	\end{align*}
	which implies
	\begin{align*}
	1-\Var(\tilde{\xi})=&\left|\E\left[\Re(\xi)\indicator{|\Re(\xi)|\leq L/\sqrt{2}}\right]\right|^{2}+\E\left[|\Re(\xi)|^{2}\indicator{|\Re(\xi)|> L/\sqrt{2}}\right]\\
	& +\left|\E\left[\Im(\xi)\indicator{|\Im(\xi)|\leq L/\sqrt{2}}\right]\right|^{2}+\E\left[|\Im(\xi)|^{2}\indicator{|\Im(\xi)|> L/\sqrt{2}}\right].  
	\end{align*}
	Thus, using the fact that $\Re(\xi)$ and $\Im(\xi)$ both have mean zero (so, for example, $\E\left[\Re(\xi)\indicator{|\Re(\xi)|\leq L/\sqrt{2}}\right] = -\E\left[\Re(\xi)\indicator{|\Re(\xi)|> L/\sqrt{2}}\right]$) and then applying Jensen's inequality, we obtain 
	\begin{align*}
	|1-\Var(\tilde{\xi})| &\leq 2\E[|\Re(\xi)|^{2}\indicator{|\Re(\xi)|> L/\sqrt{2}}] +2\E[|\Im(\xi)|^{2}\indicator{|\Im(\xi)|> L/\sqrt{2}}]\\
	&\leq 2 \E [| \xi |^2  \indicator{|\xi| > L /\sqrt{2}}] \\
	&\leq \frac{4}{L^2} \E |\xi|^4. 
	\end{align*}
	This concludes the proof of \ref{item:truncation:ii}.  
	
	Property \ref{item:truncation:i} follows easily from \ref{item:truncation:ii} by the choice of $L_{0}$.  Next we move onto the proof of \ref{item:truncation:iii}. One can see that since $\Var(\tilde{\xi})\geq \frac{1}{2}$, 
	\begin{align*}
	\left|\hat{\xi}\right| 
	&\leq \frac{\left|\Re(\xi)\right|\indicator{|\Re(\xi)|\leq L/\sqrt{2}}+\E\left[\left|\Re(\xi)\right|\indicator{|\Re(\xi)|\leq L/\sqrt{2}}\right]}{\sqrt{\Var(\tilde{\xi})}}\\
	&\quad \quad \quad +\frac{\left|\Im(\xi)\right|\indicator{|\Im(\xi)|\leq L/\sqrt{2}}+\E\left[\left|\Im(\xi)\right|\indicator{|\Im(\xi)|\leq L/\sqrt{2}}\right]}{\sqrt{\Var(\tilde{\xi})}}\\
	&\leq 4L
	\end{align*}
	almost surely.  
	
	For \ref{item:truncation:iv}, we observe that $\hat{\xi}$ has mean zero and unit variance by construction. Additionally, since the real and imaginary parts of $\hat{\xi}$ depend only on the real and imaginary parts of $\xi$ respectively, they are independent by construction.  For the fourth moment, we use that $\Var(\tilde{\xi})\geq \frac{1}{2}$ and Jensen's inequality inequality to obtain
	\begin{align*}
	\E|\hat{\xi}|^{4} &\ll  \frac{1}{\Var(\tilde{\xi})^{2}}\left(\E\left[\left|\Re(\xi)\right|^{4}\indicator{|\Re(\xi)|\leq L/\sqrt{2}}\right]+\E\left[\left|\Im(\xi)\right|^{4}\indicator{|\Im(\xi)|\leq L/\sqrt{2}}\right]\right)\\
	& \ll \E |\xi|^4, 
	\end{align*}
	as desired.  
\end{proof}

\section{Proof of Theorem \ref{Thm:LeastTruncSingValNonZero}} \label{sec:singoutlier}

This section is devoted to the proof of Theorem \ref{Thm:LeastTruncSingValNonZero}.  We begin with Lemma \ref{lemma:singoutlier} below, which is based on \cite[Theorem 4]{N}.  Throughout this section, we use $\sqrt{-1}$ for the imaginary unit and reserve $i$ as an index.  

\begin{lemma} \label{lemma:singoutlier}
Let $\mu$ be a probability measure on $[0, \infty)$, and for each $n \geq 1$, let
\[ \mu_n := \frac{1}{n} \sum_{i=1}^n \delta_{\lambda_{n,i}} \]
for some triangular array $\{ \lambda_{n,i} \}_{i \leq n}$ of nonnegative real numbers.  Let $m_n$ by the Stieltjes transform of $\mu_n$ and $m$ be the Stieltjes transform of $\mu$, i.e., 
\[ m_n(z) := \int \frac{ d \mu_n(x) }{ x - z }, \quad m(z) := \int \frac{ d \mu(x) }{x - z } \]
for all $z \in \mathbb{C}$ with $\Im(z) > 0$.  Assume
\begin{enumerate}[label=(\roman*)]
	\item $\mu_n \to \mu$ as $n \to \infty$,
	\item there exists a constant $c > 0$ such that $\mu([0,c]) = 0$,
	\item \label{item:strate} $\sup_{E \in [0,c]} \left| m_n(E + \sqrt{-1} n^{-1/2}) - m(E + \sqrt{-1} n^{-1/2}) \right| = o(n^{-1/2})$.
\end{enumerate}
Then there exists a constant $n_0 \geq 1$ such that $\mu_n([0,c/2]) = 0$ for all $n > n_0$.  
\end{lemma}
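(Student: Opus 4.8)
The plan is to argue by contradiction, exploiting the fact that a point mass of $\mu_n$ in $[0,c/2]$ leaves, at the spectral scale $\eta=n^{-1/2}$, a ``spike'' in $\Im m_n$ that the regularity of the limit $m$ cannot absorb. Before doing so I would record the consequences of the gap hypothesis (ii): since $\mu$ is supported on $[0,\infty)$ with $\mu([0,c])=0$, in fact $\operatorname{supp}\mu\subseteq[c,\infty)$, so $m$ extends holomorphically to $\mathbb{C}\setminus[c,\infty)$ and, for every $z$ with $\operatorname{Re}z\le c/2$,
\[ |m(z)|\le\frac{2}{c},\qquad 0\le\Im m(z)\le\frac{4}{c^{2}}\,\Im z,\qquad |m'(z)|\le\frac{4}{c^{2}}; \]
in particular $m$ is uniformly Lipschitz on $[0,c/2]$ with real boundary values there. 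I would also note that, as $3c/4$ is a continuity point of $\mu$, the weak convergence (i) gives $\mu_n([0,3c/4])\to0$.

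The core step is the following. Suppose, for contradiction, that $\mu_n([0,c/2])>0$ along a subsequence, so that there is an atom $\lambda_{n,i}\in[0,c/2]$; set $z_n:=\lambda_{n,i}+\sqrt{-1}\,n^{-1/2}$. Writing
\[ \Im m_n(z_n)=\frac1n\sum_{j}\frac{n^{-1/2}}{(\lambda_{n,j}-\lambda_{n,i})^{2}+n^{-1}} \]
as a sum of nonnegative Poisson kernels and retaining only the $j=i$ term gives $\Im m_n(z_n)\ge n^{-1/2}$, whereas the estimate above gives $\Im m(z_n)\le\frac{4}{c^{2}}n^{-1/2}$. Hence
\[ |m_n(z_n)-m(z_n)|\ \ge\ \Im m_n(z_n)-\Im m(z_n)\ \ge\ \Bigl(1-\frac{4}{c^{2}}\Bigr)n^{-1/2}, \]
which contradicts \ref{item:strate} for $n$ large. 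This settles the lemma when $c>2$.

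For smaller $c$ this one-point bound is not enough, because at scale $n^{-1/2}$ the spike of a single atom is only comparable to — not larger than — both the admissible error $o(n^{-1/2})$ and the harmless term $\Im m(z_n)$. Following \cite[Theorem 4]{N}, I would remedy this by splitting $\mu_n$ into the part supported on $[0,c/2]$ and the ``bulk'' part supported on $(c/2,\infty)$, with Stieltjes transforms $m_n^{(0)}$ and $m_n^{(1)}$. An integrated form of the Poisson-kernel lower bound, combined with \ref{item:strate}, limits the number of atoms of $\mu_n$ in $[0,c/2]$; the contribution of those atoms to $m_n$ is then negligible at points of $[0,c]+\sqrt{-1}\,n^{-1/2}$ lying a fixed distance away from $[0,c/2]$, so \ref{item:strate} transfers into an estimate on $m_n^{(1)}$ at such ``far'' points. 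Since $m_n^{(1)}$ is holomorphic and uniformly bounded in a fixed neighbourhood of $[0,c/2]$, one propagates this estimate — using $\mu_n\to\mu$ and a normal-families argument — to a neighbourhood of $z_n$; feeding it back into $m_n(z_n)-m(z_n)=\bigl(m_n^{(0)}(z_n)-m(z_n)\bigr)+m_n^{(1)}(z_n)$ isolates the atom's contribution, which is of exact order $n^{-1/2}$ and again violates \ref{item:strate}.

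The main obstacle is exactly this reconciliation of scales: \ref{item:strate} controls $m_n-m$ only at height $n^{-1/2}$, precisely the scale at which a single atom of mass $1/n$ has a borderline effect, so one cannot simply bound $\Im m_n$ by $\Im m$; the delicate points are separating the low-lying eigenvalues of $\mu_n$ from the regular bulk and transporting the bulk estimate from the ``far'' region, where those eigenvalues are harmless, down to a neighbourhood of the suspected atom. (If one instead has control of $m_n-m$ at a scale below $n^{-1/2}$ — available via the local laws in \cite{N} — then the atom's spike is strictly dominant and the one-point computation above goes through verbatim.)
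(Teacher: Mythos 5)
Your one-point computation establishes the lemma only when $c>2$, and you correctly observe that for $c\le 2$ it fails; but the fix you sketch (splitting $\mu_n$ at $c/2$, counting low atoms, and propagating a bulk estimate via normal families) is both unnecessary and under-specified, and the paper's proof closes the gap with a much simpler device you are missing. First \emph{rescale} the rate hypothesis: since
\[ \Im\bigl(m_n-m\bigr)(E+\sqrt{-1}\,n^{-1/2})=\int\frac{n^{-1/2}\,d(\mu_n-\mu)(x)}{(E-x)^2+n^{-1}}, \]
the assumed $o(n^{-1/2})$ bound is equivalent to $\sup_{E\in[0,c]}\bigl|\int\frac{d(\mu_n-\mu)(x)}{(E-x)^2+n^{-1}}\bigr|=o(1)$; at this normalization a single atom of mass $1/n$ at distance $O(n^{-1/2})$ already contributes $\Omega(1)$, not $\Omega(n^{-1/2})$, so there is no $4/c^2$ to compete with. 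Now split the integral at $x=c$ (not $c/2$). On $[0,c]$, $\mu$ vanishes by the gap hypothesis, so the contribution there is the pure $\mu_n$ integral $\int_0^c\frac{d\mu_n(x)}{(E-x)^2+n^{-1}}\ge 1$ whenever $E=\lambda_{n,i}\in[0,c/2]$. On $[c,\infty)$ the kernel is uniformly bounded by $4/c^2$ for $E\in[0,c/2]$, and hence $\int_c^\infty\frac{d(\mu_n-\mu)(x)}{(E-x)^2+n^{-1}}\to 0$ uniformly in such $E$ by the weak convergence $\mu_n\to\mu$ (note that $c$ is a $\mu$-continuity point). Combining gives $1\le o(1)$, a contradiction for every $c>0$. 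The defect in your first computation is that it discards the bulk cancellation: you crudely bound $\Im m(z_n)$ above and subtract it, rather than exploiting $\mu_n\to\mu$ to show that the contributions of $\mu$ and of the $[c,\infty)$-part of $\mu_n$ cancel to leading order.
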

\begin{proof}
Observe that
\begin{align*}
	\Im m_n(E + \sqrt{-1} n^{-1/2}) - \Im m(E + \sqrt{-1} n^{-1/2}) = \int \frac{ n^{-1/2} d (\mu_n - \mu)(x) }{ (E - x)^2 + n^{-1} }.
\end{align*}
From assumption \ref{item:strate}, we conclude that
\[ \sup_{E \in [0,c]} \left| \int \frac{ d (\mu_n - \mu)(x) }{ (E - x)^2 + n^{-1} } \right| = o(1).  \]
We decompose this integral into two parts
\[ \int \frac{ d (\mu_n - \mu)(x) }{ (E - x)^2 + n^{-1} } = \int_0^c \frac{ d \mu_n(x) }{(E - x)^2 + n^{-1}} + \int_c^\infty \frac{ d (\mu_n - \mu)(x) }{ (E - x)^2 + n^{-1} }, \]
where we used the assumption that $\mu([0,c]) = 0$.  

Observe that 
\[ \int_c^\infty \frac{ d (\mu_n - \mu)(x) }{ (E - x)^2 + n^{-1} }  \longrightarrow 0 \]
uniformly for any $E \in [0,c/2]$ by the assumption that $\mu_n \to \mu$.  Therefore, it must be the case that
\[ \sup_{E \in [0,c/2]} \int_0^c \frac{ d \mu_n(x) }{(E - x)^2 + n^{-1}}  \longrightarrow 0. \]
Take $n_0 \geq 1$ such that 
\begin{equation} \label{eq:convcont}
	\sup_{E \in [0,c/2]} \int_0^c \frac{ d \mu_n(x) }{(E - x)^2 + n^{-1}} \leq 1/2 
\end{equation}
for all $n \geq n_0$.  

In order to reach a contradiction, assume there exists $n > n_0$ and $i \in [n]$ such that $\lambda_{n,i} \in [0,c/2]$.  Then 
\begin{align*}
	\sup_{E \in [0,c/2]} \int_0^c \frac{ d \mu_n(x) }{(E - x)^2 + n^{-1}} &= \sup_{E \in [0,c/2]} \frac{1}{n} \sum_{j=1}^n \frac{1}{(E - \lambda_{n,j})^2 + n^{-1}} \\
	&\geq \sup_{E \in [0,c/2]} \frac{1}{n} \frac{1}{(E - \lambda_{n,i})^2 + n^{-1}} \\
	&\geq 1, 
\end{align*}
a contradiction of \eqref{eq:convcont}.  We conclude that $\mu_n([0,c/2]) = 0$ for all $n > n_0$.  
\end{proof}

With Lemma \ref{lemma:singoutlier} in hand, we are now prepared to prove Theorem \ref{Thm:LeastTruncSingValNonZero}.  The proof below is based on a slight modification to the arguments from \cite{N, N2}.  As such, in some places we will omit technical computations and only provide appropriate references and necessary changes to results from \cite{N, N2}.  

Fix $\delta > 0$.  It suffices to prove that
\begin{equation} \label{eq:singvalshow1}
	\inf_{1 + \delta \leq |z| \leq 6} s_{mn} \left( \frac{1}{\sqrt{n}} \mathcal{Y}_n - z I \right) \geq c 
\end{equation}
and 
\begin{equation} \label{eq:singvalshow2}
	\inf_{|z| > 6} s_{mn} \left( \frac{1}{\sqrt{n}} \mathcal{Y}_n - zI \right) \geq c' 
\end{equation}
with overwhelming probability for some constants $c, c' > 0$ depending only on $\delta$.  

The second bound \eqref{eq:singvalshow2} follows by Lemma \ref{Lem:specnorm}.  Indeed, a bound on the spectral norm of $\mathcal{Y}_n$ (which follows from standard bounds on the spectral norms of $X_{n,k}$; see, for example, \cite[Theorem 1.4]{Tout}) gives 
\[ \| \mathcal{Y}_n \| \leq 3 \sqrt{n} \]
with overwhelming probability.  The bound in \eqref{eq:singvalshow2} then follows by applying Lemma \ref{Lem:specnorm}.  

We now turn to the bound in \eqref{eq:singvalshow1}.  To prove this bound, we will use Lemma \ref{lemma:singoutlier}.  Let $\mu_{n,z}$ be the empirical spectral measure constructed from the eigenvalues of 
\[ \left( \frac{1}{\sqrt{n}} \mathcal{Y}_n - z I \right) \left( \frac{1}{\sqrt{n}} \mathcal{Y}_n - z I \right)^\ast. \]
From \cite[Theorem 2.6]{N2},  for all $|z| \geq 1 + \delta$, there exists a probability measure $\mu_z$ supported on $[0, \infty)$ such that $\mu_{n,z} \to \mu_z$ with overwhelming probability.  Moreover, from \cite[Lemma 4.2]{Bcirc} there exists a constant $c > 0$ (depending only on $\delta$) such that $\mu_z([0,c]) = 0$ for all $|z| \geq 1 + \delta$.  Lastly, condition \ref{item:strate} in Lemma \ref{lemma:singoutlier} follows for all $1 + \delta \leq |z| \leq 6$ with overwhelming probability from \cite[Theorem 5]{N}.  Applying Lemma \ref{lemma:singoutlier}, we conclude that
\[ s_{mn} \left( \frac{1}{\sqrt{n}} \mathcal{Y}_n - z I \right) \geq c/2 \]
with overwhelming probability uniformly for all $1+ \delta \leq |z| \leq 6$.  The bound for the infimum can now be obtained by a simple net argument and Weyl's inequality \eqref{eq:weyl}.  The proof of Theorem \ref{Thm:LeastTruncSingValNonZero} is complete.

\section{Proof of Lemma \ref{Lem:OnlyOne}} \label{Sec:OnlyOne}

This section is devoted to the proof of Lemma \ref{Lem:OnlyOne}.  
	
	\begin{proof}[Proof of Lemma \ref{Lem:OnlyOne}]
		The proof proceeds inductively.  We begin with a graph only containing the vertex $(1,1)$ and then add vertices and edges sequentially with time. First, edge 1 is added; it will span from $(1,1)$ to $(2,i_{2})$.  Next, edge 2 is added and will span from $(2,i_{2})$ to $(3,i_{3})$, and so on. We use induction to prove that at each time step $t$, there is only one possible choice for $i_{t+1}$, resulting in a unique canonical graph with maximal height $k/2$ and in which each edge is parallel to exactly one other edge.  

		The edge starting at vertex $(1,1)$ can either be of type II (terminating on $(2,1)$) or of type I (terminating on $(2,2)$). By way of contradiction, assume the edge is type II. Since $G$ still has $k/2-1$ more height coordinates left to reach, it would require at least $k/2-1$ type I edges to reach hight coordinate $k/2$. Since each edge must be parallel to exactly one other edge, at some point there must be a type II edge, returning to a height coordinate previously visited. This edge will also need to be parallel to another edge. Counting all pairs of parallel edges shows that $G$ must have at least $k+1$ more edges, a contradiction. Hence the edge starting at vertex $(1,1)$ must be of type I.

		
		Assume that all edges up to time coordinate $t$, where $1\leq t < k/2-1$, are type I edges. Then $G$ must have an edge starting at vertex $(t+1,t+1)$. This edge can either of type I or type II. In order to reach a contradiction, assume that the edge is type II. Then $G$ must have at least $k/2-t-1$ more type I edges in order to reach the height $k/2$, and $G$ has exactly $k-t-1$ more edges to be added. Visiting each unvisited height coordinate would require at least $k/2-t-1$ more type I edges, and at some point after visiting new height coordinates, $G$ must return to a smaller hight coordinate, resulting in a type II edge. None of these edges could be parallel to any previous edges. Thus, overall $G$ would need to have at least $k-t+1$ more edges, a contradiction to the fact that $G$ must have exactly $k-t-1$ more edges. 
		
		We conclude that each edge of $G$ must be type I until the hight coordinate $k/2$ is reached. Namely, we have vertices $(1,1),\ldots, (k/2,k/2)$.
		
		At this point $G$ must have an edge starting at vertex $(k/2,k/2)$. Note that $G$ has $k/2-1$ edges up to this point, none of which are parallel to any other edge. $G$ must have edges parallel to the edges previously introduced and $G$ has exactly $k/2+1$ edges remaining to do so. Since there are no remaining unvisited height coordinates, the edge which starts at vertex $(k/2,k/2)$ must terminate at $(k/2+1,i_{k/2+1})$ for some $1\leq i_{k/2+1}\leq k/2$, resulting in the first type II edge. See Figure \ref{fig:proofExample} for a visual representation of the graph up to this point.
		
		We now claim that this first type II edge must in fact terminate at $(k/2+1, 1)$. By way of contradiction, suppose this edge terminates at vertex $(k/2+1,i)$ for any $1<i\leq \frac{k}{2}$. Since this is the first type II edge, it cannot be parallel to any other previously drawn edge. Up to this point $G$ has $k/2$ edges drawn and $k/2$ edges remaining to be drawn. Since all edges are by assumption to be parallel to exactly one other, a simple counting argument reveals that each edge drawn from this point on must be parallel to an existing edge. Since there is only one edge which starts at height coordinate $i$, we must now draw the edge starting at $(k/2+1,i)$ and terminating at $(k/2+2,i+1)$. By continuing this argument inductively, we must draw edges which start at $(k/2+j+1, i+j)$ and terminate at vertex $(k/2+j+2, i+j+1)$ for $0\leq j\leq k/2-i$, until the height coordinate $k/2$ is reached again. Now, since $k-i+1<k$, we must draw at least one more edge, and this edge must start at vertex $(k-i,k/2)$. In order to draw an edge parallel to an existing edge, this edge must terminate at vertex $(k-i+1, i)$. However, if we do this, we must now draw an edge parallel to an existing edge which starts a height coordinate $i$, a contradiction because the only previous edges which began at height coordinate $i$ are parallel to one another and there cannot be three edges parallel. This concludes the proof of the claim.    
		
		By the previous claim, the first type II edge must terminate at  $(k/2+1, 1)$. Again, since this is the first type II edge it cannot be parallel to any other edge. Up to this point $G$ has $k/2$ edges drawn and $k/2$ edges remaining to be drawn. Since all edges are by assumption parallel to exactly one other, a simple counting argument reveals that each edge drawn from this point on must be parallel to an existing edge. Since there is only one edge which starts at height coordinate $1$, we must now draw the edge starting at $(k/2+1,1)$ and terminating at $(k/2+2,2)$. By continuing this argument inductively, we must draw edges which start at $(k/2+j+1, j+1)$ and terminate at vertex $(k/2+j+2, j+2)$ for $0\leq j\leq k/2-1$, until the height coordinate $k/2$ is reached again. Up to this point, $k-1$ edges of $G$ have been drawn and we must draw one more edge which starts at vertex $(k,k/2)$. Since there is only one previous edge in the graph which starts at height coordinate $k/2$, the final edge must terminate at height coordinate $1$ and all edges are parallel to exactly one other edge. This results in the vertex set
		\begin{equation*} 
		V=\{(1,1),\;(2,2),\;\dots,(k/2,k/2),\;(k/2+1,1),\;(k/2+2,2),\dots (k,k/2),\;(k+1,1)\}.
		\end{equation*}
		The corresponding canonical $m$-colored $k$-path graph would be 
		\begin{equation*}
		G^{1}(1,2,\ldots,k/2,1,2,\ldots,k/2, 1), 
		\end{equation*}
		and the proof is complete.  
	\end{proof}
	
	\begin{figure}[h]
		\includegraphics[scale = 0.5]{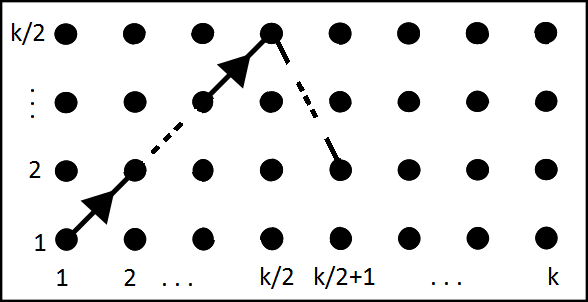}
		\caption{This is an example of a path graph that has type I edges drawn until a height of $k/2$ is reached, then the first type II edge is drawn. }
		\label{fig:proofExample}
	\end{figure}
	
\section{Useful inequalities}

\begin{lemma}[Lemma A.1 from \cite{BScov}]
	For $X = (x_{1},x_{2},\ldots,x_{N})^{T}$ iid standardized complex entries, $B$ an $N\times N$ Hermitian nonnegative definite matrix, we have, for any $p\geq 1$,
	\begin{equation*}
	\E\left|X^{*}BX\right|^{p}\leq K_{p}\left(\left(\emph{tr} B\right)^{p}+\E|x_{1}|^{2p}\emph{tr}B^{p}\right).
	\end{equation*}
	where $K_{p}>0$ depends only on $p$. 
	\label{Lem:BilinearForms}
\end{lemma}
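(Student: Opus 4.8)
The statement is the classical Rosenthal-type moment bound for quadratic forms, and the plan is to deduce it from the corresponding inequalities for \emph{sums of independent} random variables. I would begin by centering: writing $D:=\sum_{i}B_{ii}(|x_{i}|^{2}-1)$ and $Q:=\sum_{i\neq j}\bar x_{i}B_{ij}x_{j}$, one has $X^{*}BX=\tr B+D+Q$ and $\E|X^{*}BX|^{p}\le 4^{p-1}\big((\tr B)^{p}+\E|D|^{p}+\E|Q|^{p}\big)$, so it suffices to bound $\E|D|^{p}$ and $\E|Q|^{p}$; in fact I would aim for the sharper targets $\E|D|^{p}+\E|Q|^{p}\ll_{p}(\tr B^{2})^{p/2}+\E|x_{1}|^{2p}\tr B^{p}$ and then use $(\tr B^{2})^{p/2}\le(\tr B)^{p}$, valid for positive semidefinite $B$. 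It is enough to treat $p\ge 2$, the range $1\le p<2$ following from the case $p=2$ by Jensen's inequality. Two features of the hypotheses will be used repeatedly: that $B$ is Hermitian positive semidefinite, through $B_{ii}\ge 0$ and the Schur-type bounds $\sum_{i}B_{ii}^{r}\le\tr B^{r}$ and $\|B\|^{r-2}\tr B^{2}\le\tr B^{r}\le(\tr B^{2})^{r/2}$; and the log-convexity of the moments of $x_{1}$, which since $\E|x_{1}|^{2}=1$ yields $(\E|x_{1}|^{4})^{p-1}\le\E|x_{1}|^{2p}$ and similar comparisons, used to fold the various moments of $x_{1}$ that arise into the single moment $\E|x_{1}|^{2p}$.

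For the diagonal part $D$, a sum of independent mean-zero random variables $B_{ii}(|x_{i}|^{2}-1)$, I would apply Rosenthal's inequality (Lemma~\ref{Lem:rosenthals} for this independent sequence), obtaining $\E|D|^{p}\ll_{p}\big(\var(|x_{1}|^{2})\sum_{i}B_{ii}^{2}\big)^{p/2}+\E\big||x_{1}|^{2}-1\big|^{p}\sum_{i}B_{ii}^{p}$. Using $\sum_{i}B_{ii}^{2}\le\tr B^{2}$, $\sum_{i}B_{ii}^{p}\le\tr B^{p}$, $\var(|x_{1}|^{2})\le\E|x_{1}|^{4}$, $\E||x_{1}|^{2}-1|^{p}\ll_{p}\E|x_{1}|^{2p}$, and the moment comparisons above, the two terms collapse, after distinguishing the two regimes, to $(\tr B^{2})^{p/2}$ and $\E|x_{1}|^{2p}\tr B^{p}$, which is the claimed bound for $\E|D|^{p}$.

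For the off-diagonal part $Q$, I would write it as a martingale-difference sum $Q=\sum_{k}d_{k}$ relative to $\mathcal{F}_{k}:=\sigma(x_{1},\dots,x_{k})$, with $d_{k}:=\bar x_{k}u_{k}+x_{k}\bar u_{k}$ and $u_{k}:=\sum_{j<k}B_{kj}x_{j}$, and apply Burkholder's inequality (valid for $p\ge 2$), which gives $\E|Q|^{p}\ll_{p}\E\big(\sum_{k}\E[\,|d_{k}|^{2}\mid\mathcal{F}_{k-1}]\big)^{p/2}+\E\sum_{k}|d_{k}|^{p}$. Since $x_{k}$ is independent of $u_{k}$ with $\E|x_{k}|^{2}=1$ and $|\E x_{k}^{2}|\le 1$, one has $\E[\,|d_{k}|^{2}\mid\mathcal{F}_{k-1}]\le 4|u_{k}|^{2}$, so the first term is at most $4^{p/2}\E(X^{*}CX)^{p/2}$ for the Hermitian positive semidefinite matrix $C:=\big(\sum_{k>\max(i,j)}B_{ki}\overline{B_{kj}}\big)_{i,j}$, which satisfies $\tr C^{r}\le(\tfrac12\tr B^{2})^{r}$; this either closes an induction on $p$ (via the centered bound applied to $C$) or can be expanded directly as in the diagonal step, yielding $\ll_{p}(\tr B^{2})^{p/2}$. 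The second term is bounded by $\E|x_{1}|^{p}\sum_{k}\E|u_{k}|^{p}$, and Rosenthal applied to the independent linear forms $u_{k}$ gives $\sum_{k}\E|u_{k}|^{p}\ll_{p}\tr B^{p}\,(1+\E|x_{1}|^{p})$ using $\sum_{j<k}|B_{kj}|^{2}\le(B^{2})_{kk}\le\|B\|B_{kk}$ and the trace inequalities above; since $(\E|x_{1}|^{p})^{2}\le\E|x_{1}|^{2p}$, this gives $\E\sum_{k}|d_{k}|^{p}\ll_{p}\E|x_{1}|^{2p}\tr B^{p}$, completing the bound for $Q$.

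An alternative, in the spirit of Section~\ref{sec:combin}, handles the case of an even integer $p$ by a direct moment expansion: expand $X^{*}BX-\tr B=\sum_{i,j}(\bar x_{i}x_{j}-\delta_{ij})B_{ij}$, multiply out the $p$ factors, and note that by independence and centering a term survives only if the multigraph on the index set obtained by placing one edge $\{i_{\ell},j_{\ell}\}$ per factor has minimum degree at least two, forcing at most $p/2$ distinct indices; grouping surviving terms by the isomorphism type of this multigraph and summing $\prod_{\ell}|B_{i_{\ell}j_{\ell}}|$ over index assignments via $\sum_{j}|B_{ij}|^{2}=(B^{2})_{ii}$ and $\sum_{i}(B^{r})_{ii}=\tr B^{r}$ produces exactly the extreme contributions $(\tr B^{2})^{p/2}$ (a perfect matching, no higher moments) and $\E|x_{1}|^{2p}\tr B^{p}$ (a single vertex of full degree). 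Either way the main obstacle is bookkeeping rather than any single deep estimate: one must route each intermediate quantity through the correct positive-semidefinite trace inequality so that the final bound has $\tr B^{p}$ multiplying the high moment of $x_{1}$ and $(\tr B)^{p}$ (or $(\tr B^{2})^{p/2}$) standing alone, and must fold the moments of $x_{1}$ coming from vertices of different degrees into $\E|x_{1}|^{2p}$ using log-convexity. The constant $K_{p}$ obtained this way depends only on $p$, as required.
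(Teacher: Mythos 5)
This lemma is quoted in the paper directly as Lemma A.1 of the cited reference \cite{BScov} and is not proved in the text, so there is no ``paper's own proof'' to compare against; the relevant comparison is with the classical Bai--Silverstein argument. Your main route (center $X^\ast BX$, treat the diagonal part $D=\sum_i B_{ii}(|x_i|^2-1)$ by Rosenthal's inequality for independent sums, and treat the off-diagonal part $Q$ as a martingale in $\mathcal{F}_k=\sigma(x_1,\dots,x_k)$ via Burkholder, closing the recursion through the induced PSD matrix $C$) is essentially that classical proof, differing only in that you peel off the diagonal as a genuinely independent sum rather than folding it into a single martingale-difference decomposition of $X^\ast BX-\tr B$; both variants hinge on the same ingredients, namely Burkholder/Rosenthal, the Schur majorization $\sum_i A_{ii}^r\le\tr A^r$ for PSD $A$, the bound $(B^2)_{kk}\le\|B\|B_{kk}$, and Lyapunov log-convexity of moments to fold intermediate moments of $x_1$ into $\E|x_1|^{2p}$. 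One step you gloss over, turning $m_4^{p/2}(\tr B^2)^{p/2}$ into the target $(\tr B)^p+\E|x_1|^{2p}\tr B^p$, does not follow from a simple two-case split but does go through by H\"older interpolation on the eigenvalue sequence, $\tr B^2\le(\tr B)^{(p-2)/(p-1)}(\tr B^p)^{1/(p-1)}$, combined with $m_4\le m_{2p}^{1/(p-1)}$ and the weighted AM--GM inequality, so the conclusion stands. Your alternative route, a combinatorial expansion for even integer $p$ classifying surviving index multigraphs, is a genuinely different argument more in the spirit of Section~\ref{sec:combin} of this paper; it is self-contained and avoids the martingale machinery but only covers even integer $p$ directly (the general case then needs a separate interpolation), whereas the Burkholder route handles all $p\ge 2$ uniformly.
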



\begin{lemma}[Follows from Lemma 10 in \cite{PW}] 
	Let $A$ and $B$ be $k\times k$ matrices with $\lnorm A\rnorm, \lnorm B\rnorm = O(1)$.  Then 
	\begin{equation*}
	\left| \det(A)-\det(B)\right| \ll_{k} \lnorm A-B\rnorm.
	\end{equation*}
	\label{Lem:normtodet}
\end{lemma}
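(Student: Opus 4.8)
The plan is to give a short direct argument from the multilinearity of the determinant in its columns, recovering the estimate of \cite[Lemma 10]{PW} in the form needed here. Write $A=[a_1\mid\cdots\mid a_k]$ and $B=[b_1\mid\cdots\mid b_k]$, where $a_j=Ae_j$ and $b_j=Be_j$ are the columns and $e_1,\ldots,e_k$ is the standard basis of $\C^k$. Replacing the columns of $B$ by those of $A$ one at a time and using linearity of $\det$ in each column yields the telescoping identity
\begin{equation*}
	\det(A)-\det(B)=\sum_{l=1}^{k}\det\bigl(a_1,\ldots,a_{l-1},\,a_l-b_l,\,b_{l+1},\ldots,b_k\bigr).
\end{equation*}
So it will suffice to bound each of the $k$ summands by a constant depending only on $k$ times $\lnorm A-B\rnorm$, after which we sum.

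For this I would invoke Hadamard's inequality $|\det(M)|\le\prod_{j=1}^{k}\lVert Me_j\rVert_2$ applied to each summand. In the $l$-th term the columns other than the $l$-th are columns of $A$ or of $B$, so their Euclidean norms are at most $\lnorm A\rnorm$ or $\lnorm B\rnorm$ respectively (since $\lVert Ae_j\rVert_2\le\lnorm A\rnorm$ and similarly for $B$), while the $l$-th column is $(A-B)e_l$, with $\lVert(A-B)e_l\rVert_2\le\lnorm A-B\rnorm$. Hence
\begin{equation*}
	\bigl|\det(a_1,\ldots,a_l-b_l,\ldots,b_k)\bigr|\le\lnorm A-B\rnorm\cdot\bigl(\max\{\lnorm A\rnorm,\lnorm B\rnorm\}\bigr)^{k-1},
\end{equation*}
and summing over $l$ gives $|\det(A)-\det(B)|\le k\,(\max\{\lnorm A\rnorm,\lnorm B\rnorm\})^{k-1}\lnorm A-B\rnorm$, which is $\ll_k\lnorm A-B\rnorm$ once we use the hypothesis $\lnorm A\rnorm,\lnorm B\rnorm=O(1)$.

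There is no real obstacle here; this is a soft estimate and the only point requiring attention is keeping the norms consistent — Hadamard's inequality is phrased in terms of Euclidean column norms, and one must observe that each such column norm is dominated by the operator norm of the matrix it comes from, so that the uniform bound $\lnorm A\rnorm,\lnorm B\rnorm=O(1)$ feeds through and the implicit constant ends up a polynomial in that bound and in $k$. If one prefers to avoid Hadamard's inequality, an alternative is to integrate $\tfrac{d}{dt}\det(M(t))$ over $t\in[0,1]$ with $M(t):=B+t(A-B)$: the integrand equals $\sum_{i,j}C_{ij}(M(t))(A-B)_{ij}$, where $C_{ij}$ is the $(i,j)$ cofactor; each cofactor is the determinant of a $(k-1)\times(k-1)$ submatrix of $M(t)$, hence bounded by $\lnorm M(t)\rnorm^{k-1}\le(\max\{\lnorm A\rnorm,\lnorm B\rnorm\})^{k-1}=O(1)$ uniformly in $t$ (by convexity of $t\mapsto M(t)$), and $|(A-B)_{ij}|\le\lnorm A-B\rnorm$; this gives the same conclusion with implicit constant $k^2(\max\{\lnorm A\rnorm,\lnorm B\rnorm\})^{k-1}$.
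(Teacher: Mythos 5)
Your argument is correct. The paper gives no proof of this lemma — it simply cites \cite[Lemma 10]{PW} — so there is nothing to compare against; your self-contained telescoping-plus-Hadamard argument (or equally the cofactor/fundamental-theorem-of-calculus variant you sketch) is a perfectly good way to supply the missing details, and both yield an explicit constant of the form $k\,(\max\{\lnorm A\rnorm,\lnorm B\rnorm\})^{k-1}$ (respectively $k^2$ in place of $k$), which the hypothesis $\lnorm A\rnorm,\lnorm B\rnorm=O(1)$ turns into an $O_k(1)$ factor as required.
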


\begin{lemma}[Spectral norm bound for large $|z|$; Lemma 3.1 from \cite{OR}]
	Let $A$ be a square matrix that satisfies $\lnorm A\rnorm \leq \mathcal{K}$. Then
	\begin{equation*}
	\lnorm \left(A-zI\right)^{-1}\rnorm \leq \frac{1}{\varepsilon}
	\end{equation*}
	for all $z\in\C$ with $|z|\geq \mathcal{K}+\varepsilon$. 
	\label{Lem:specnorm}
\end{lemma}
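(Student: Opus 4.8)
The plan is to reduce the estimate to a lower bound on the smallest singular value of $A - zI$. First I would note that for $|z| \ge \mathcal{K} + \varepsilon$ we have $|z| > \mathcal{K} \ge \lnorm A \rnorm \ge \rho(A)$, so $z$ is not an eigenvalue of $A$; hence $A - zI$ is invertible and $(A - zI)^{-1}$ is well defined.

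For the norm bound, one clean route is via Weyl's inequality for singular values \eqref{eq:weyl}. Writing $s_1 \ge \cdots \ge s_n$ for the ordered singular values, \eqref{eq:weyl} applied to the matrices $A - zI$ and $-zI$ gives $\left| s_n(A - zI) - s_n(-zI) \right| \le \lnorm A \rnorm$, and since $s_n(-zI) = |z|$ this yields $s_n(A - zI) \ge |z| - \lnorm A \rnorm \ge (\mathcal{K} + \varepsilon) - \mathcal{K} = \varepsilon$. Recalling that $\lnorm M^{-1} \rnorm = 1/s_n(M)$ for an invertible $n \times n$ matrix $M$ (the same fact used in the proof of Proposition \ref{Prop:LargeAndSmallSingVals}), we conclude $\lnorm (A - zI)^{-1} \rnorm = 1/s_n(A - zI) \le 1/\varepsilon$, as desired.

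An alternative, essentially equivalent, argument is a direct Neumann series expansion: since $\lnorm A/z \rnorm = \lnorm A \rnorm / |z| < 1$, we may write $(A - zI)^{-1} = -\frac{1}{z}\left(I - A/z\right)^{-1} = -\frac{1}{z} \sum_{k \ge 0} (A/z)^k$, whence $\lnorm (A - zI)^{-1} \rnorm \le \frac{1}{|z|} \cdot \frac{1}{1 - \lnorm A \rnorm/|z|} = \frac{1}{|z| - \lnorm A \rnorm} \le \frac{1}{\varepsilon}$. There is no real obstacle in this proof; the only point requiring any care is the invertibility/eigenvalue bookkeeping at the start, which is immediate from $|z| > \lnorm A \rnorm$.
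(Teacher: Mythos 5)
Both of your arguments are correct, and the paper itself does not prove this lemma --- it is stated in Appendix~D with a citation to Lemma~3.1 of \cite{OR} and no proof is given. Your Weyl-inequality route (bounding $s_n(A-zI) \ge |z| - \lnorm A \rnorm \ge \varepsilon$ and inverting) and your Neumann-series route both establish the bound; the Neumann expansion is the argument that appears in \cite{OR}, and for what it is worth you even recover the sharper estimate $\lnorm(A-zI)^{-1}\rnorm \le (|z|-\lnorm A\rnorm)^{-1}$ from which the stated bound follows by specializing to $|z| \ge \mathcal{K}+\varepsilon$. The only implicit step worth flagging is that $s_n(-zI) = |z|$ (every singular value of $-zI$ equals $|z|$), which you are using silently in the Weyl argument, but that is immediate.
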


%
%
%
%
%


\begin{thebibliography}{99}


\bibitem{AR} L.F. Abbott, K. Rajan, \emph{Eigenvalue Spectra of Random Matrices for Neural Networks} Phys. Rev. Lett. 97, 188104 (2006).

\bibitem{AbSu} L.F. Abbott, D. Sussillo, \emph{Random Walk Initialization for Training Very Deep Feedforward Networks}, available at {\tt arXiv:1412.6558v3 [cs.NE]}.

\bibitem{AB} G. Akemann, Z. Burda, \emph{Universal microscopic correlation functions for products of independent Ginibre matrices}, J. Phys. A: Math. Theor. \textbf{45} (2012).

\bibitem{ABK} G. Akemann, Z. Burda, M. Kieburg, \emph{Universal distribution of Lyapunov exponents for products of Ginibre matrices}, J. Phys. A: Math. Theor. 47(39) (2014).

\bibitem{AIK} G. Akemann, J. R. Ipsen, M. Kieburg, \emph{Products of Rectangular Random Matrices: Singular Values and Progressive Scattering}, Phys. Rev. E \textbf{88}, (2013).  

\bibitem{AIK2} G. Akemann, J. R. Ipsen, E. Strahov, \emph{Permanental processes from products of complex and quaternionic induced Ginibre ensembles}, Random Matrices: Theory and Applications Vol. 3, No. 4 (2014) 1450014.   

\bibitem{AKW} G. Akemann, M. Kieburg, L. Wei, \emph{Singular value correlation functions for products of Wishart random matrices}, J. Phys. A: Math. Theor. \textbf{46} (2013).

\bibitem{AS} G. Akemann, E. Strahov, \emph{Hole probabilities and overcrowding estimates for products of complex Gaussian matrices}, J. Stat. Phys. (2013), Volume 151, Issue 6, pp 987--1003.  

\bibitem{A}  G. Anderson, \emph{Convergence of the largest singular value of a polynomial in independent Wigner matrices}, Ann. Probab., Volume 41, Number 3B (2013), 2103--2181.

\bibitem{AGG}  P. Arbenz, W. Gander, G. Golub, \emph{Restricted rank modification of the symmetric eigenvalue problem: Theoretical considerations}, Linear Algebra and its Applications \textbf{104} (1988), 75--95.

\bibitem{ACHLM} G.B. Arous, A. Choromanska, M. Henaff, Y. LeCun, M. Mathieu, \emph{The Loss Surfaces of Multilayer Networks}, available at {\tt arXiv:1412.0233v3 [cs.LG]}.

\bibitem{Bcirc} Z.~D.~Bai, {\it Circular law}, Ann. Probab. \textbf{25} (1997), 494--529. 

\bibitem{BP} Z. D. Bai, G. M. Pan, \emph{Limiting behavior of eigenvectors of large Wigner matrices}, J. Stat. Phys. 146 (2012), 519--549.

\bibitem{BScov} Z. D. Bai, J. Silverstein, {\em No eigenvalues outside the support of the limiting spectral distribution of large-dimensional sample covariance matrices}, Ann. Probab. Volume 26, Number 1 (1998), 316--345.

\bibitem{BSbook} Z. D. Bai, J. Silverstein, {\em Spectral analysis of large dimensional random matrices}, Mathematics Monograph Series \textbf{2}, Science Press, Beijing 2006.

\bibitem{BBP} J. Baik, G. Ben Arous, S. P\'{e}ch\'{e}, \emph{Phase transition of the largest eigenvalue for nonnull complex sample covariance matrices}, Ann. Probab. Volume 33, Number 5 (2005), 1643--1697.  

\bibitem{BBC} S. Belinschi, H.Bercovivi, M. Capitaine, \emph{On the outlying eigenvalues of a polynomial in large independent random matrices}, available at {\tt arXiv:1703.08102 [math.PR]}.

\bibitem{BGCR} F. Benaych-Georges, G. C\'{e}bron, J. Rochet, \emph{Fluctuation of matrix entries and application to outliers of elliptic matrices}, available at {\tt arXiv:1602.02929 [math.PR]}.  

\bibitem{BGM} F. Benaych-Georges, A. Guionnet, M. Maida,  {\em Fluctuations of the extreme eigenvalues of finite rank deformations of random 
matrices}, Elec. J. Probab., \textbf{16} (2011), 1621--1662.

\bibitem{BGM2} F. Benaych-Georges, A. Guionnet, M. Maida, {\em Large deviations of the extreme eigenvalues of random deformations of matrices}, Probab. Theory Related Fields \textbf{154}:3-4 (2012), 703--751.

\bibitem{BR} F. Benaych-Georges, R. N. Rao, {\em The eigenvalues and eigenvectors of finite, low rank perturbations of large random matrices},
Adv. Math., \textbf{227}, No. 1, (2011), 494--521.

\bibitem{BR2} F. Benaych-Georges, R. N. Rao, \emph{The singular values and vectors of low rank perturbations of large rectangular random matrices}, J. Multivariate Anal., Vol. 111, 120--135, 2012.

\bibitem{BGR} F. Benaych-Georges, J. Rochet, \emph{Outliers in the Single Ring Theorem}, Probab. Theory Relat. Fields (2016) 165:313--363.

\bibitem{Bhatia} R. Bhatia, \emph{Matrix Analysis}, Graduate Texts in Mathematics, Springer-Verlag New York, 1997.  

\bibitem{B} C. Bordenave, {\em On the spectrum of sum and product of non-hermitian random matrices}, Elect. Comm. in Probab. \textbf{16} (2011), 104--113.

\bibitem{BCap} C. Bordenave, M. Capitaine, \emph{Outlier eigenvalues for deformed i.i.d. random matrices}, to appear in Communications on Pure and Applied Mathematics.

\bibitem{BCCT} C. Bordenave, P. Caputo, D. Chafa\"i, K. Tikhomirov \emph{On the spectral radius of a random matrix}, available at {\tt arXiv:1607.05484v1 [math.CO]}.  

\bibitem{BC} C. Bordenave, D. Chafa\"i, {\em Around the circular law.} Probability Surveys \textbf{9} (2012), 1--89.

\bibitem{BelC} S. Belinschi, M. Capitaine, \emph{Spectral properties of polynomials in independent Wigner and deterministic matrices}, available at {\tt arXiv:1611.07440v1 [math.PR]}.

\bibitem{BJW} Z. Burda, R. A. Janik, B. Waclaw, \emph{Spectrum of the product of independent random Gaussian matrices}, Phys. Rev. E \textbf{81} (2010).

\bibitem{BJLNS} Z. Burda, A. Jarosz, G. Livan, M. A. Nowak, A. Swiech, \emph{Eigenvalues and Singular Values of Products of Rectangular Gaussian Random Matrices}, Phys. Rev. E \textbf{82} (2010).  

\bibitem{Bsurv} Z. Burda, \emph{Free products of large random matrices - a short review of recent developments}, J. Phys.: Conf. Ser. \textbf{473} 012002 (2013).

\bibitem{Bmart} D. L. Burkholder, \textit{Distribution function inequalities for martingales}, Ann. Probab. \textbf{1} 19--42 (1973).

\bibitem{CDF1} M. Capitaine, C. Donati-Martin, D. F\'eral, {\em The largest eigenvalue of finite rank deformation of large Wigner matrices: 
convergence and non universality of the fluctuations}, Ann. Probab., \textbf{37}, (1), (2009), 1--47.

\bibitem{CDF} M. Capitaine, C. Donati-Martin, D. F\'eral,  {\em Central limit theorems for eigenvalues of deformations of Wigner matrices},
Ann. I.H.P.-Prob.et Stat., \textbf{48}, No. 1 (2012), 107--133.

\bibitem{CDFF} M. Capitaine, C. Donati-Martin, D. F\'eral, M. F\'evrier, 
{\em Free convolution with a semi-circular distribution and eigenvalues of spiked deformations of Wigner matrices}, Elec. J. Probab., Vol. 16, No. 64, 1750--1792 (2011).


\bibitem{FP} D. F\'{e}ral, S. P\'{e}ch\'{e}, \emph{The largest eigenvalue of rank one deformation of large Wigner matrices}, Comm. Math. Phys. 272:185--228, 2007.

\bibitem{F} P. J. Forrester, \emph{Lyapunov exponents for products of complex Gaussian random matrices}, J. Stat. Phys. \textbf{151}, 796--808 (2013).

\bibitem{F2} P. J. Forrester, \emph{Probability of all eigenvalues real for products of standard Gaussian matrices}, J. Phys. A ,vol. 47, 065202 (2014).







\bibitem{GTprod} F.~G\"otze, T.~Tikhomirov, \emph{On the Asymptotic Spectrum of Products of Independent Random Matrices}, available at {\tt arXiv:1012.2710}.

\bibitem{HT} U. Haagerup, S. Thorbj\o rnsen, \emph{A new application of random matrices: $Ext(C_{red}^*(F_2))$ is not a group}, Ann. of Math. \textbf{162}, 711--775 (2005).

\bibitem{HJ} R.~A.~Horn, C.~R.~Johnson, \textit{Matrix Analysis}, Cambridge Univ. Press (1991).  

\bibitem{I} J.R. Ipsen \emph{Products of Independent Gaussian Random Matrices}, PhD Thesis, Bielefeld University (2015), available at {\tt arXiv:1510.06128v1 [math-ph]}.

\bibitem{KY} A. Knowles, J. Yin, {\em The isotropic semicircle law and deformation of Wigner matrices}, Communications on Pure and Applied Mathematics, Vol. LXVI, 1663--1749 (2013).

\bibitem{KY2} A. Knowles, J. Yin, \emph{The outliers of a deformed Wigner matrix}, Annals of Probability 2014, Vol. 42, No. 5, 1980--2031.

\bibitem{KZ} A. B. J. Kuijlaars, L. Zhang, \emph{Singular Values of Products of Ginibre Random Matrices, Multiple Orthogonal Polynomials and Hard Edge Scaling Limits}, Communications in Mathematical Physics, Volume 332, Issue 2, pp 759--781 (2014).  



\bibitem{N} Y. Nemish, \emph{No outliers in the spectrum of the product of independent non-Hermitian random matrices with independent entries}, Journal of Theoretical Probability (2016) 1--43.

\bibitem{N2} Y. Nemish, \emph{Local law for the product of independent non-Hermitian random matrices with independent entries}, Electron. J. Probab. Volume 22 (2017), paper no. 22, 35 .

\bibitem{OR} S. O'Rourke, D. Renfrew, \emph{Low rank perturbations of large elliptic random matrices}, 	Electronic Journal of Probability Vol. 19, No. 43, 1--65 (2014). 

\bibitem{ORSV} S. O'Rourke, D. Renfrew, A. Soshnikov, V. Vu, \emph{Products of independent elliptic random matrices}, J. Stat. Phys., Vol. 160, No. 1 (2015), 89--119.

\bibitem{OS} S. O'Rourke, A. Soshnikov, \emph{Products of Independent Non-Hermitian Random Matrices}, Electronic Journal of Probability, Vol. \textbf{16}, Art. 81, 2219--2245, (2011). 

\bibitem{OW} S. O'Rourke, P. Wood, \emph{Spectra of nearly Hermitian random matrices}, Annales de l'Institut Henri Poincar\'{e} - Probabilit\'{e} et Statistiques, Vol. \textbf{53}, No. 3 (2017), 1241--1279.

\bibitem{PW} P. Pagacz, M. Wojtylak, \emph{Random Perturbations of Matrix Polynomials}, available at {\tt arXiv:1703.01858 [math.PR]}.


\bibitem{P} S. P\'{e}ch\'{e}, \emph{The largest eigenvalue of small rank perturbations of Hermitian random matrices}, Prob. Theory Relat. Fields, Vol. \textbf{134}, No. 1, 127--173, 2006.

\bibitem{PRS} A. Pizzo, D. Renfrew, A. Soshnikov, \emph{On Finite Rank Deformations of Wigner Matrices}, Ann. Inst. Henri Poincar\'{e} Probab. Stat., 49(1):64--94, 2013.

\bibitem{R} A. B. Rajagopalan, \emph{Outlier eigenvalue fluctuations of perturbed iid matrices}, available at {\tt arXiv:1507.01441 [math.PR]}.  

\bibitem{RS} D. Renfrew, A. Soshnikov, \emph{On Finite Rank Deformations of Wigner Matrices II: Delocalized Perturbations}, Random Matrices Theory Appl. \textbf{02}, 1250015, 2013.  

\bibitem{Rochet} J. Rochet, \emph{Complex Outliers of Hermitian Random Matrices}, Journal of Theoretical Probability, Vol \textbf{30}, Issue 4 (2017) 1624--1654

\bibitem{S} E. Strahov, \emph{Differential equations for singular values of products of Ginibre random matrices}, J. Phys. A: Math. Theor. \textbf{47} 325203, 2014.  

\bibitem{Tout} T.~Tao, \emph{Outliers in the spectrum of iid matrices with bounded rank perturbations}, Probab. Theory Related Fields \textbf{155} (2013), 231--263. 




\bibitem{TVesd} T.~Tao, V.~Vu, \emph{Random matrices: Universality of ESDs and the circular law}, Ann. Probab. Volume 38, Number \textbf{5} (2010), 2023--2065. 



\end{thebibliography}
\end{document}